\newtheorem{theorem}{Theorem}[section]
\newtheorem{corollary}[theorem]{Corollary}
\newtheorem{proposition}[theorem]{Proposition}
\newtheorem{lemma}[theorem]{Lemma}
\newtheorem{remark}[theorem]{Remark}
\newtheorem{conjecture}[theorem]{Conjecture}
\theoremstyle{definition}
\newtheorem{definition}[theorem]{Definition}
\numberwithin{equation}{section}
\definecolor{airforceblue}{rgb}{0.36, 0.54, 0.66}
\newcommand{\bX}{\mathbb{X}}
\newcommand{\bx}{{\bf x}}
\newcommand{\bV}{\mathbb{V}}
\newcommand{\bY}{\mathbb{Y}}
\newcommand{\bG}{\mathbb{G}}
\newcommand{\bP}{\mathbb{P}}
\newcommand{\by}{\mathbf{y}}
\newcommand{\bA}{\mathbb{A}}
\newcommand{\rF}{\mathrm{F}}
\newcommand{\Gr}{\mathrm{Gr}}
\newcommand{\bL}{\mathbb{L}}
\newcommand{\Q}{\mathbb{Q}}
\newcommand{\Z}{\mathbb{Z}}
\newcommand{\C}{\mathbb{C}}
\newcommand{\F}{\mathbb{F}}
\newcommand{\spa}{\mathrm{Span}}
\newcommand{\Oo}{\mathcal{O}}
\newcommand{\cV}{\mathcal{V}}
\newcommand{\cZ}{\mathcal{Z}}
\newcommand{\cD}{\mathcal{D}}
\newcommand{\cH}{\mathcal{H}}
\newcommand{\cL}{\mathcal{L}}
\newcommand{\tZ}{\tilde{\mathcal{Z}}}
\newcommand{\tN}{\tilde{\mathcal{N}}}
\newcommand{\cG}{\mathcal{G}}
\newcommand{\cN}{\mathcal{N}}
\newcommand{\cM}{\mathcal{M}}
\newcommand{\cF}{\mathcal{F}}
\newcommand{\Exc}{\mathrm{Exc}}
\newcommand{\Kra}{\mathrm{Kra}}
\newcommand{\Int}{\mathrm{Int}}
\newcommand{\Pap}{\mathrm{Pap}}
\newcommand{\GL}{\mathrm{GL}}
\newcommand{\tr}{\mathrm{tr}}
\newcommand{\Lie}{\mathrm{Lie}\, }
\newcommand{\Nilp}{\mathrm{Nilp}\, }
\newcommand{\bfV}{\mathbf{V}}
\newcommand{\bfF}{\mathbf{F}}
\newcommand{\Hor}{\mathrm{Hor}}
\newcommand{\Spec}{\mathrm{Spec}\, }
\newcommand{\Spf}{\mathrm{Spf}\, }
\newcommand{\SpfOF}{{\mathrm{Spf}\,\mathcal{O}_{\breve{F}} }}
\newcommand{\Herm}{\mathrm{Herm}}
\newcommand{\Hom}{\mathrm{Hom}}
\newcommand{\End}{\mathrm{End}}
\newcommand{\rU}{\mathrm{U}}
\newcommand{\val}{\mathrm{v}}
\newcommand{\q}{q}
\newcommand{\pden}{\partial \mathrm{Den}}
\newcommand{\red}{\mathrm{red}}
\newcommand{\spaF}{\mathrm{Span_{\Oo_F}}}
\newcommand{\Den}{\mathrm{Den}}
\newcommand{\LZ}{{}^{\mathbb{L}}\mathcal{Z}^{\mathrm{Kra}}}
\newcommand{\diag}{\mathrm{Diag}}
\newcommand{\OO}{\mathcal O}
\DeclareFontFamily{U}{matha}{\hyphenchar\font45}
\DeclareFontShape{U}{matha}{m}{n}{
	<5> <6> <7> <8> <9> <10> gen * matha
	<10.95> matha10 <12> <14.4> <17.28> <20.74> <24.88> matha12
}{}
\DeclareSymbolFont{matha}{U}{matha}{m}{n}
\DeclareFontFamily{U}{mathx}{\hyphenchar\font45}
\DeclareFontShape{U}{mathx}{m}{n}{
	<5> <6> <7> <8> <9> <10>
	<10.95> <12> <14.4> <17.28> <20.74> <24.88>
	mathx10
}{}
\DeclareSymbolFont{mathx}{U}{mathx}{m}{n}
\DeclareMathSymbol{\obot}         {2}{matha}{"6B}
\DeclareMathSymbol{\bigobot}       {1}{mathx}{"CB}
\author{Qiao He, Yousheng Shi, and  Tonghai Yang}
\address{Department of Mathematics, University of Wisconsin Madison, Van Vleck Hall, Madison, WI 53706, USA}
\email{qhe36@wisc.edu, ORCID: 0000-0001-9273-2687}
\address{School of Mathematical Science, Zhejiang University, 866 Yuhangtang Rd, Hangzhou, 310058, P. R. China}
\email{shi58@wisc.edu,  ORCID: 0000-0003-4230-9244}
\address{Department of Mathematics, University of Wisconsin Madison, Van Vleck Hall, Madison, WI 53706, USA}
\email{thyang@math.wisc.edu,  ORCID: 0000-0002-5821-5755}
\subjclass[2000]{11G18, 14G35, 14G40 }
\begin{document}
	\title{Kudla-Rapoport conjecture for Kr\"amer models}
	
	\begin{abstract} In this paper, we propose a modified Kudla-Rapoport conjecture for the Kr\"amer model of unitary Rapoport-Zink space at a ramified prime, which is a precise identity relating intersection numbers of special cycles to derivatives of Hermitian local density polynomials.  We also introduce the notion of special difference cycles, which has surprisingly simple description. Combining this with induction formulas of Hermitian local density polynomials, we prove the modified Kudla-Rapoport conjecture when $n=3$. Our conjecture, combining with known results at inert and infinite primes, implies arithmetic Siegel-Weil formula for all non-singular coefficients when the level structure of the corresponding unitary Shimura variety is defined by a self-dual lattice.
	\end{abstract}
	
	\maketitle

	\setcounter{tocdepth}{1}
	\tableofcontents

	\section{Introduction}
	In their seminal work \cite{KR1} and \cite{KR2},  Kudla and Rapoport made a conjectural local arithmetic Siegel-Weil formula (the Kudla-Rapoport  conjecture) relating the intersection numbers of special divisors on unitary Rapoport-Zink spaces to the central derivative of certain local density polynomials. A unitary Rapoport-Zink (RZ) space is a local version of a  unitary Shimura variety associated to a general unitary  group $\mathrm{GU}(1,n-1)$. The Kudla-Rapoport conjecture plays a central role in  the arithmetic Siegel-Weil formula for unitary Shimura varieties,  which was
	first proposed by Kudla in \cite{Kudla97} for orthogonal Shimura varieties. When $n=1$ or $2$, the  Kudla-Rapoport conjecture was proved in \cite{KR1}. The case when $n=3$ was proved in \cite{Terstiege}. The general case was proved recently in \cite{LZ} by an ingenious induction. The Archimedean analogue of the  Kudla-Rapoport conjecture was proved in \cite{LiuarithmeticI} and \cite{GS}. The analogue of the Kudla-Rapoport conjecture for GSpin Rapoport-Zink space is formulated and proved in \cite{LZ2}.
	
	Originally the  Kudla-Rapoport conjecture was proposed only for good primes, namely inert primes over which the Rapoport-Zink space has hyperspecial level structure. A modified Kudla-Rapoport conjecture for Rapoport-Zink space with minuscule parahoric level structure over inert primes has been proposed in \cite{cho2022special}.
    For ramified primes, there are two kinds of well-understood arithmetic models of RZ spaces. One is the exotic smooth model which has good reduction, the other is the Kr\"amer model proposed in \cite{Kr} which only has semi-stable reduction. The analogue of the Kudla-Rapoport conjecture for the even-dimensional exotic smooth model was studied in \cite{LL2}, in which case the conjecture can be proved by the same strategy as in \cite{LZ}. For the Kr\"amer model, however, it was expected that  serious modification of the original  Kudla-Rapoport conjecture is needed.  A  precise formulation has  not previously been given. One of the main  goals of this paper is to formulate a precise conjecture (Conjecture \ref{conj:main}) based on earlier work of \cite{Shi2} and \cite{HSY} for the case  $n=2$. We then prove Conjecture \ref{conj:main} for $n = 3$. 
    
    In a very recent joint work with Chao Li (\cite{HLSY}), we proved the conjecture completely. One of the major innovations of \cite{HLSY} is a decomposition formula of primitive local density polynomials, which is inspired by the results in the appendix of this work. Moreover, the geometric side of the 'horizontal` part in \cite{HLSY} essentially follows from the current work. To deal with the vertical part in general, \cite{HLSY} uses partial Fourier transform inspired by \cite{LZ2}. The current work uses explicit computation instead.

	\subsection{The naive conjecture}
	Let $p$ be an odd prime and $F$ be a ramified quadratic field extension of a $p$-adic number field $F_0$ with residue field $\F_\q$.  Fix an algebraic closure $k$ of $\F_\q$. Fix a uniformizer $\pi$ of $F$ such that $\pi_0=\pi^2$ is a uniformizer of $F_0$ and let $\mathrm{v}_\pi$ be the valuation on $F$ such that $\mathrm{v}_\pi(\pi)=1$. Let $\breve{F}_0$  be the completion of a maximal unramified extension of $F_0$ and $\breve{F}\coloneqq F\otimes_{F_0} \breve{F}_0$. Let $\Oo_{\breve F}$ and $\Oo_{\breve{F}_0}$ be the rings of integers of $\breve{F}$ and $\breve{F}_0$ respectively. For a Hermitian lattice or space $M$ of rank $n$, we define its sign as
	\begin{equation}\label{eq:sign}
		\chi(M) = \chi((-1)^{\frac{n(n-1)}{2}}\mathrm{det}(M)) =\pm 1
	\end{equation}
	where $\chi$ is the quadratic character of $F_0^\times$ associated to $F/F_0$.
	We call $M$ split or non-split depending on whether   $\chi(M)=1$ or $-1$.  For a Hermitian matrix $T$, define $\chi(T)$ to be the  sign of its associated Hermitian lattice.
	
	Let $\bY$ and $\bX$ be pre-fixed  framing Hermitian formal $\Oo_F$-modules of signature $(0,1)$ and $(1,n-1)$, respectively, over $\Spec k$. Recall that Hermitian formal $\Oo_F$-modules are a particular kind of formal $p$-divisible groups with $\Oo_F$-action, see Section \ref{subsec:RZspaces}. The space of special quasi-homomorphisms
	\begin{equation}\label{eq:bV introduction}
		\bV=\Hom_{\Oo_F}(\bY,\bX)\otimes_\Z \Q
	\end{equation}
	is equipped with a Hermitian form $h(\,,)$, see \eqref{eq:h(x,y)}.
	Let $\epsilon=\chi(\bV)$. The Rapoport-Zink space $\cN^\Kra_{n,\epsilon}$ parameterizes certain classes of supersingular Hermitian formal $\Oo_F$-modules of signature $(1,n-1)$ over $\SpfOF$, see Section \ref{subsec:RZspaces}. It is a formal scheme over $\SpfOF$ with semi-stable reduction and can be viewed as a regular model of the formal completion of the corresponding global unitary Shimura variety along its basic locus over $p$. When $n$ is odd, $\cN^\Kra_{n,1}$ is isomorphic to  $\cN^\Kra_{n,-1}$. We often write $\cN^\Kra$ instead of $\cN^\Kra_{n,\epsilon}$ for simplicity.
	
	For each subset $L \subset \bV$, define $\cZ^\Kra(L)$  to be the formal subscheme of $\cN^\Kra$ where $\bx$ deforms to a homomorphism for any $\bx\in L$.
	Let $L\subset \bV$ be an $\Oo_F$-lattice of rank $r$. We say $L$ is integral if $h(\,,)|_L$ is non-degenerate and takes values in $\Oo_F$. Let $\bx_1,\ldots,\bx_r$ be a basis of $L$. We define
	\begin{equation}
		{}^\bL \cZ^\Kra(L)=[\Oo_{\cZ^\Kra(\bx_1)}\otimes^{\bL}\cdots \otimes^{\bL}\Oo_{\cZ^\Kra(\bx_r)}]\in K_0(\cN^\Kra)
	\end{equation}	
	where $\otimes^\bL$ is the derived tensor product of complex of coherent sheaves on $\cN^\Kra$ and $K_0(\cN^\Kra)$ is the Grothendieck groups of finite complexes of coherent locally free sheaves on $\cN^\Kra$. By \cite[Corollary C]{Ho2}, ${}^\bL \cZ^\Kra(L)$ is independent of the choice of basis of $L$.
	When  $L$ has rank $n$, we define the intersection number
	\begin{equation}
		\mathrm{Int}(L)=\chi(\cN^\Kra,{}^\bL \cZ^\Kra(L))
	\end{equation}
	where $\chi$ is the Euler characteristic. One can show that $\Int(L)$ is finite, see Lemma \ref{lem:finiteness of Int L}.

	  Let $L$ and $M$ are Hermitian lattices of rank $n$ and $m$ respectively. Moreover, we assume $\mathrm{v}(M)\coloneqq\mathrm{min}\{\mathrm{v}_\pi( h(v,v'))\mid v,v'\in M\} \ge -1.$ We use $\mathrm{Herm}_{L,M}$ to denote the scheme of Hermitian $\Oo_F$-module homomorphisms from $L$ to $M$, which is a scheme of finite type over $\Oo_{F_0}$.  More specifically, for an $\Oo_{F_0}$-algebra $R$, we define
\begin{align*}
L_{R}\coloneqq L\otimes_{\Oo_{F_0}}R, \quad  (x\otimes a,y\otimes b)_{R}\coloneqq \pi (x ,y )\otimes_{\Oo_{F_0}} ab\in  \Oo_{F}\otimes_{\Oo_{F_0}} R \text{ where }x,y \in L, a,b\in R.
\end{align*}

Then
	\begin{align*}
	\mathrm{Herm}_{L, M}(R)=\{ \phi \in  \mathrm{Hom}&_{\Oo_F}(L_{R}, M_{R}) \mid     (\phi(x),\phi(y))_{R}\equiv (x,y)_{R}  \text{ for all }  x,y \in L_{R}\}.
	\end{align*}
To simplify the notation, let $I(M,L,d)$ denote $\mathrm{Herm}_{L, M}(\Oo_{F_0}/(\pi_0^d))$. Then  direct calculation shows that
	\begin{equation}\label{eq:local density M,L intro}
		\alpha(M, L) = \q^{-d n (2m -n)} |I(M,L,d)|
	\end{equation}
	becomes constant for sufficiently large integers $d >0$. We call it the local density (of $M$ representing $L$).

	Let $\cH$ be the (Hermitian) hyperbolic plane  with   Gram matrix $ \cH=\begin{pmatrix}
		0 & \pi^{-1}\\
		-\pi^{-1} & 0
	\end{pmatrix}$. One can show that there is a (local density) polynomial $\alpha(M,L,X)\in \Q[X]$ such that
	\[\alpha(M\obot\cH^k,L)=\alpha(M,L,\q^{-2k}).\]
	Define its derivative by
	\[\alpha'(M,L)\coloneqq -\frac{\partial}{\partial X} \alpha(M,L,X)|_{X=1}.\]
As $\alpha(M,L, X)$ (respectively, $\alpha'(M,L)$) only depends on their Gram matrices $S$ and $T$, we will  also denote it by  $\alpha(S, T, X)$ (respectively, $\alpha'(S,T)$).	Let $M$ be the unique unimodular Hermitian $\Oo_F$-lattice of rank $n$ with $\chi(M)=-\chi(L)$.
	The naive analogue of the local Kudla-Rapoport conjecture  is
	\begin{equation}\label{eq:naiveconjecture}
		\mathrm{Int}(L)=2 \frac{\alpha'(M,L)}{\alpha(M,M)}.
	\end{equation}
	However, this conjectural formula is not even  true for $n=2$ according to the main theorem of \cite{HSY}. The analytic side of the conjecture needs to be modified.
	\subsection{The precise conjecture}
	By \cite[Theorem 1.2]{Shi1},   $\cZ^\Kra(L)$ is empty when $L$ is not integral, so we have
	\[\mathrm{Int}(L)=0.\]
	On the analytic side,  the right hand side of \eqref{eq:naiveconjecture} is   automatically zero  only when $\mathrm{v}(L) \leq -2$, and is sometimes non-zero when  $\mathrm{v}(L) =-1$.
	Thus, there should be   correction terms  involving Hermitian  lattices $M$ with $\mathrm{v}(M) =-1$.
	By \cite{J}, there are $n-1$ equivalent classes of  Hermitian lattices which are direct sum of copies of $\cH$ and unimodular lattices:
	\begin{equation}\label{eq:H n i epsilon intro}
		\cH_{n,i}^{\epsilon}\coloneqq \cH^i\obot I_{n-2i}^{\epsilon} \quad \text{ for } 1\leq i \leq \frac{n}{2}, \quad \epsilon =\pm 1
	\end{equation}
	where we use $I_{n-2i}^{\epsilon}$ to denote the  unimodular Hermitian lattice of rank $n-2i$ with
	\noindent $\chi(I_{n-2i}^{\epsilon})= \chi(\cH_{n,i}^{\epsilon} )=\epsilon$.  When  $n=2r$ is even, we take $I_{0, \epsilon} =0$ and  $\cH_{n, r}^1 = \cH^{r}$.  Then the local arithmetic Siegel-Weil formula, (also known as the Kudla-Rapoport conjecture  at a ramified prime) should be of the following form:
	\begin{equation}\label{eq:localASW}
		\Int(L) = 2 \frac{\alpha'(I_{n}^{-\epsilon}, L)}{\alpha(I_{n}^{-\epsilon}, I_{n}^{-\epsilon})} + \sum_{ i} c_{n,i}^\epsilon  \frac{\alpha(\cH_{n,i}^{\epsilon}, L)}{\alpha(I_{n}^{-\epsilon}, I_{n}^{-\epsilon})},
	\end{equation}
	where $\epsilon=\chi( L)$. Since $\Int(\cH_{n, j}^{\epsilon})=0$,  we should have
	\begin{equation} \label{eq:coeff}
		2 \frac{\alpha'(I_{n}^{-\epsilon},\cH_{n, j}^{\epsilon} )}{\alpha(I_{n}^{-\epsilon}, I_{n}^{-\epsilon})} + \sum_{ i} c_{n,i}^\epsilon  \frac{\alpha(\cH_{n,i}^{\epsilon},\cH_{n,j}^{\epsilon})}{\alpha( I_{n}^{-\epsilon}, I_{n}^{-\epsilon})} =0.
	\end{equation}
	This system of equations turns out to determine the coefficients $c_{n,i}^\epsilon $ uniquely by Theorem \ref{thm: A_epsilon}. We propose the following Kudla-Rapoport conjecture at a ramified prime.
	
	\begin{conjecture}\label{conj:main} The identity $(\ref{eq:localASW})$ always holds with the coefficients $c_{n,i}^\epsilon $ uniquely determined by (\ref{eq:coeff}).
	\end{conjecture}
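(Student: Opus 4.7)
The plan is to establish Conjecture \ref{conj:main} by induction on $n$, using the cases $n=1,2$ from \cite{Shi2, HSY} and $n=3$ from the body of this paper as base cases. The inductive engine adapts the Li--Zhang strategy \cite{LZ} for the unramified setting to the semi-stable Kr\"amer model, with two essential new ingredients native to this paper: the explicit description of special difference cycles, and the induction formulas for Hermitian local density polynomials. The uniqueness Theorem \ref{thm: A_epsilon} reduces the problem to verifying the identity on a sufficiently large, inductively accessible family of integral lattices $L\subset \bV$.

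\textbf{Step 1: linear invariance.} The first step would be to show that both sides of \eqref{eq:localASW}, viewed as functions of the integral lattice $L\subset \bV$, are linear invariant in the sense of \cite{LZ}: invariant under $\mathrm{GL}_n(\Oo_F)$-change of basis and appropriately additive with respect to lattice containments. Analytically this follows from the Cho--Yamauchi formula combined with the induction formulas developed in this paper for $\alpha'(I_{n,-\epsilon},-)$ and $\alpha(\cH_\epsilon^{n,i},-)$. Geometrically, linear invariance of $\Int(L)$ would follow from the functoriality of ${}^\bL\cZ^\Kra$ together with a Grothendieck-group argument on $\cN^\Kra$, once one carefully incorporates the horizontal contribution coming from the Kr\"amer model's exceptional divisors.

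\textbf{Step 2: inductive descent via difference cycles.} Given an integral lattice $L$ of rank $n$, I would choose an auxiliary rank-$n$ integral lattice $L^{\sharp}\supset L$ such that the special difference cycle associated to the pair $(L,L^{\sharp})$ admits the simple closed-form description introduced in this paper. This description should identify that difference cycle, up to explicit horizontal contributions, with a cycle pulled back from $\cN^\Kra_{n-1,\epsilon'}$, thereby reducing $\Int(L)-\Int(L^{\sharp})$ to an intersection number on a lower-dimensional Rapoport--Zink space. Applying the matching analytic induction formulas for $\alpha'(I_{n,-\epsilon},L)$ and each $\alpha(\cH_\epsilon^{n,i},L)$, together with the inductive hypothesis in rank $n-1$, would then reduce \eqref{eq:localASW} in rank $n$ to a finite list of normalizing identities. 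For the latter, the vanishing $\Int(\cH_\epsilon^{n,j})=0$ — which holds because $\mathrm{v}(\cH_\epsilon^{n,j})=-1$ makes these lattices non-integral, so $\cZ^\Kra(\cH_\epsilon^{n,j})=\emptyset$ by \cite[Theorem 1.2]{Shi1} — exactly matches the defining relations \eqref{eq:coeff}, so the uniqueness given by Theorem \ref{thm: A_epsilon} forces the geometric coefficients to agree with the analytic $c_\epsilon^{n,i}$.

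\textbf{Main obstacle.} The principal difficulty, in my view, lies in Step 2: precisely identifying the horizontal contributions to ${}^\bL\cZ^\Kra(L)$ produced by the semi-stable, non-smooth reduction of the Kr\"amer model, and matching them uniformly with the correction sum $\sum_i c_\epsilon^{n,i}\alpha(\cH_\epsilon^{n,i},L)/\alpha(I_{n,-\epsilon},I_{n,-\epsilon})$ on the analytic side. In the $n=3$ case treated in this paper, the matching is accomplished by an essentially hands-on case analysis made possible by the simple shape of special difference cycles in low rank. Propagating this matching uniformly across all $n$ and both signs $\epsilon$, in a way that interacts cleanly with the inductive descent of Step 2 and with both parities of $n$, is the genuinely hard step, and is what a sequel to this paper would have to resolve.
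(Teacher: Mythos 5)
The statement you are addressing is a conjecture: the paper itself proves it only for $n=3$ (Theorem \ref{thm:mainthmintroduction}) and explicitly defers the general case to a sequel, so there is no complete argument in the paper against which your outline could match. More importantly, your submission is not a proof either: by your own admission in the ``Main obstacle'' paragraph, the uniform matching of the correction terms $\sum_i c_\epsilon^{n,i}\,\alpha(\cH_\epsilon^{n,i},L)/\alpha(I_{n,-\epsilon},I_{n,-\epsilon})$ with the geometry for all $n$ and both parities is left open, and that is precisely the content of the conjecture. Steps 1 and 2 are programmatic: ``linear invariance'' of $\Int(L)$ beyond the basis-independence already supplied by \cite{Ho2} is asserted rather than argued, and the analytic input you attribute to a Cho--Yamauchi-type formula is not developed here (the paper's analytic engine is the induction formulas of Section \ref{sec:ind formula} and the primitive densities of the appendix).

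There are also concrete mismatches with what the paper does establish, which suggest Step 2 as written would not go through. The paper's special difference cycle $\cD(L_1)$ is attached to a single lattice $L_1$ (Definition \ref{def:differencecycle}), not to a pair $(L,L^\sharp)$ with an auxiliary overlattice (note also that $L^\sharp$ already denotes the Hermitian dual here), and the reduction to lower rank is not achieved by identifying $\cD$ with a cycle pulled back from $\cN^\Kra_{n-1,\epsilon'}$: even in the proved case $n=3$, Theorem \ref{thm:decomposition of cD intro} exhibits $\cD(L^\flat)$ as an explicit combination of vertical curves $\tN_{\Lambda_2}$ and hyperplane classes $H_{\Lambda_0}$ on exceptional divisors, while its horizontal part is a quasi-canonical lifting cycle or zero (Proposition \ref{prop:horizontalpartofDL}); nothing is pulled back from a lower-dimensional Rapoport--Zink space. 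The actual descent mechanism is Theorem \ref{thm: equivalent form of modified KR}: one reduces the conjecture to the primitive identities $\Int(L)^{(n_1)}=\pden(L)^{(n_1)}$ for decompositions $L=L_1\oplus L_2$, and the $n=3$ proof verifies these by explicit computation of both sides (Sections \ref{sec: local density n=3} and \ref{sec:proof when n=3}), not by comparing $\Int(L)-\Int(L')$ for an overlattice $L'$. Your observation that $\Int(\cH_\epsilon^{n,j})=0$ forces the normalization \eqref{eq:coeff} is correct and is indeed how the paper motivates the coefficients, but it only pins down $c_\epsilon^{n,i}$ via Theorem \ref{thm: A_epsilon}; it does not by itself yield the identity \eqref{eq:localASW} for general $L$. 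As it stands, the proposal restates the strategy one would hope to execute and names the open difficulty, but it does not close the gap.
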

	
	For convenience, we set 
	\begin{equation} \label{eq1.10}
		\partial \Den(L) =2 \frac{\alpha'(I_{n}^{-\epsilon}, L)}{\alpha(I_{n}^{-\epsilon},I_{n}^{-\epsilon} )} + \sum_{ i} c_{n,i}^\epsilon  \frac{\alpha(\cH_{n,i}^{\epsilon}, L)}{\alpha(I_{n}^{-\epsilon},I_{n}^{-\epsilon})}.
	\end{equation}
We remark that because $\Int(L)$ is always an integer, the conjecture	\ref{conj:main} suggests that $\pden(L)$ should be an integer, which is already not obvious.

	The conjecture holds for $\cN^\Kra_{2,\pm 1}$ by results in \cite{Shi2} and \cite{HSY}. In this paper,  we will prove  the conjecture for $n=3$ and provide some partial results in the general case.
	
	\begin{theorem}\label{thm:mainthmintroduction}
		Conjecture \ref{conj:main} is true when $n=3$.
	\end{theorem}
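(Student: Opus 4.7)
The plan is to reduce the rank-$3$ case of Conjecture~\ref{conj:main} to the rank-$2$ case, which is established in \cite{Shi2} and \cite{HSY}. Both sides of the identity vanish when $L$ is not integral (geometrically by \cite[Theorem~1.2]{Shi1}, analytically directly from the definition of $\alpha$), so I may assume $L$ is integral of rank three with sign $\epsilon=\chi(L)$. Using the Jacobowitz classification, I would normalize the Gram matrix of $L$ to a short list of shapes indexed by a few valuation invariants (the $\pi$-adic valuations of the diagonal entries after a suitable basis change), and the proof proceeds case by case on these invariants. Note that for $n=3$ the index $i$ in (\ref{eq:H n i epsilon}) only takes the value $i=1$, so on the analytic side there is a single correction term with coefficient $c^{3,1}_\epsilon$, pinned down by Theorem~\ref{thm: A_epsilon}.

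The geometric engine is the notion of \emph{special difference cycles}. Choose a primitive $\bx\in L$ and decompose $L=\langle\bx\rangle\oplus L^\flat$ with $L^\flat\subset\bx^\perp$ of rank two. There is a natural closed immersion $\cN^\Kra_{2,\epsilon'}\hookrightarrow \cZ^\Kra(\bx)\subset\cN^\Kra_{3,\epsilon}$, and the main claim is that the $K$-theoretic difference between $\Oo_{\cZ^\Kra(\bx)}$ and the pushforward of $\Oo_{\cN^\Kra_{2,\epsilon'}}$ admits a surprisingly simple description, supported on a combination of the exceptional divisor of the Kr\"amer blowup and a handful of horizontal quasi-canonical lifting divisors. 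Granting this, the projection formula gives a decomposition
\[
\mathrm{Int}(L) \;=\; \chi\bigl(\cN^\Kra_{2,\epsilon'},\, {}^\bL\cZ^\Kra(L^\flat)\bigr)\;+\;(\text{horizontal and exceptional contributions}),
\]
where the first term is controlled by the $n=2$ case of the conjecture applied to $L^\flat$, while the second is computable in closed form via length formulas on quasi-canonical lifting rings.

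On the analytic side I would establish a matching induction formula for $\pden(L)$ in terms of $\pden(L^\flat)$ plus explicit correction terms arising from representations of $\langle\bx\rangle$ inside $I_{3,-\epsilon}$ and $\cH^{3,1}_\epsilon$. The starting ingredients are the recursions for the local density polynomials $\alpha(M,L,X)$ obtained by separating primitive and non-primitive contributions with respect to $\bx$, together with the explicit determination of $c^{3,1}_\epsilon$ from (\ref{eq:coeff}). A term-by-term comparison of the two decompositions, combined with the already-known rank-$2$ identity, then yields the theorem.

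The principal obstacle will be establishing the clean description of the special difference cycles on the semi-stable Kr\"amer model. This requires a careful analysis of the Grothendieck-Messing deformation theory of Hermitian formal $\Oo_F$-modules at a ramified prime and detailed bookkeeping of how the exceptional divisor of the Kr\"amer blowup interacts with the $\cZ^\Kra(\bx)$: both the horizontal components of $\cZ^\Kra(\bx)$ and the exceptional contributions must be isolated and their multiplicities computed. Once those geometric intersections are pinned down, the analytic side becomes a mechanical (if intricate) manipulation of Hermitian local density polynomials via the induction formulas developed earlier in the paper, and the remaining comparison is largely algebraic.
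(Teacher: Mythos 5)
Your overall structure---vanishing for non-integral $L$, a case-by-case induction reducing rank three to the proved rank-two results of \cite{Shi2} and \cite{HSY}, and matching analytic induction formulas---is the right plan, and your observation that for $n=3$ the only correction coefficient is $c^{3,1}_\epsilon$ is correct. When $\val(L)=0$ the argument runs essentially as you describe: choose a unimodular $\bx$, so that $\tZ(\bx)\cong\cN^{\Kra}_{2,\epsilon'}$ embeds as a closed subscheme, and the discrepancy $\cZ^{\Kra}(\bx)-\tZ(\bx)=\sum_{\Lambda_0\in\cV^0(\bx)}\Exc_{\Lambda_0}$ produces exactly the correction $|\cV^0(L)|$; this recovers Proposition~\ref{prop: v(L)=0 case} and Corollary~\ref{cor: pDen(T)-pDen(T_2)}.

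Your central geometric claim, however---that $\Oo_{\cZ^{\Kra}(\bx)}$ differs from a pushforward of $\Oo_{\cN^{\Kra}_{2,\epsilon'}}$ by exceptional and quasi-canonical lifting divisors---is only correct when $\bx$ is unimodular, i.e.\ only when $\val(L)=0$. If $\val(L)>0$ there is no unimodular $\bx\in L$; $\tZ(\bx)$ is no longer a copy of $\cN^{\Kra}_{2,*}$, and there is no natural closed immersion of $\cN^{\Kra}_{2,\epsilon'}$ into $\cZ^{\Kra}(\bx)$. Your proposed first term $\chi(\cN^{\Kra}_{2,\epsilon'},{}^\bL\cZ^{\Kra}(L^\flat))$ then has no meaning, and the induction stalls precisely on the cases that constitute the real difficulty.

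The missing ingredient is the decomposition of the \emph{rank-two} special difference cycle $\cD(L^\flat)$, which is where the effort of the proof is concentrated. Rather than slicing along the rank-one divisor and trying to decompose the codimension-two cycle ${}^\bL\cZ^{\Kra}(L^\flat)$, the argument pivots to Theorem~\ref{thm: equivalent form of modified KR} with $n_1=2$: one verifies $\Int(L)^{(2)}=\chi(\cN^\Kra,\cD(L^\flat)\cdot\cZ^\Kra(\bx))=\pden(L)^{(2)}$ for the orthogonal decomposition $L=L^\flat\obot\spa\{\bx\}$. When $\val(L^\flat)>0$, Theorem~\ref{thm:decomposition of cD intro} identifies
\[
\cD(L^\flat)=\sum_{\Lambda_2\in\cV^2,\ L^\flat\subset\Lambda_2^\sharp}\Bigl(2[\Oo_{\tN_{\Lambda_2}}]+\sum_{\Lambda_0\subset\Lambda_2}H_{\Lambda_0}\Bigr)\quad\text{in }\Gr^2 K_0(\cN^\Kra_3),
\]
with \emph{no} horizontal or quasi-canonical piece at all (Proposition~\ref{prop:horizontalpartofDL}), so that intersecting against $\cZ^{\Kra}(\bx)$ reduces to the elementary computations $\Int_\Lambda(\bx)=1_\Lambda(\bx)$ for $\Lambda\in\cV^2$ (Theorem~\ref{thm: Int Lambda}) and $\Int_\Lambda(\bx)=-1_\Lambda(\bx)$ for $\Lambda\in\cV^0$ (Corollary~\ref{cor: P1 dot cZ}). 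Establishing this decomposition requires the Bruhat--Tits analysis of Sections~\ref{sec:reduced locus n=3} and~\ref{sec:intersection of vertical and special}, none of which appears in your sketch. Without Theorem~\ref{thm:decomposition of cD intro}, the positive-valuation cases of the conjecture remain open, so the proposal as written does not prove the theorem.
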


	\subsection{Special difference cycles}\label{subsec:difference cycle intro}
	One of the novelties of this paper is the concept of special difference cycles. Let $L_1$ be an $\Oo_F$-lattice of $\bV$ of rank $n_1 \le n$. Define
	the special difference cycle $\cD(L_1)\in K_0 (\cN^\Kra)$ by
	\begin{equation}\label{eq:difference cycle intro}
		\cD(L_1)={}^\bL\cZ^\Kra(L_1)+\sum_{i=1}^{n_1} (-1)^i \q^{i(i-1)/2} \sum_{\substack{L_1 \subset L'\subset \frac{1}{\pi} L_1\\ \mathrm{dim}_{\F_\q}(L'/L_1)=i}} {}^\bL\cZ^\Kra(L')\in K_0(\cN^\Kra).
	\end{equation}
Here	$\cD(L_1)$ can be seen as a higher codimensional analogue of the difference divisor first introduced in \cite[Definition 2.10]{Terstiege}.
	By the definition and a $q$-adic linear-algebraic inclusion-exclusion principle, we have (see Lemma \ref{lem: decomposition of Z(L) as sums of D(L)})
	\begin{equation}\label{eq: decomposition of Z(L) as sums of D(L)}
		{}^\bL\cZ^\Kra(L_1) =\sum_{\substack{L' \mathrm{ integral}\\ L_1 \subset L' \subset L_{1,F}}}   \cD(L').
	\end{equation}
	Here $L_F =L\otimes_{\Oo_F} F$ for an $\Oo_F$-lattice $L$. The above summation is, in fact, finite.	 Assume that we have a decomposition $L=L_1\oplus L_2$ of $\Oo_F$-lattices such that $L_i$ has rank $n_i$ and $n_1+n_2=n$. Define
	\begin{equation}
		\Int(L)^{(n_1)}=\chi(\cN^\Kra, \cD(L_1)\cdot \cZ^\Kra(L_2))
	\end{equation}
	where $\cdot$ is the product on $K_0(\cN^\Kra)$ induced by tensor product of complexes. Notice that this definition, in fact, depends on the decomposition of $L$.
	
	On analytic side, we  define
	\begin{align}
		\pden(L)^{(n_1)}\coloneqq \pden(L)-\sum_{i=1}^{n_1} (-1)^{i-1} \q^{i(i-1)/2}
		\sum_{\substack{L_1 \subset L_1' \subset L_{1, F} \\ \dim {L_1'/L_1}=i}} \pden(L'_1\oplus L_2).
	\end{align}
	This definition again  depends on the decomposition of $L$. What motivates the definition of $\pden(L)^{(n_1)}$ and  $\cD(L_1)$ is the fact that $\pden(L)^{(n_1)}$ is equal to the derivative of certain primitive local density polynomials, see \cite[Proposition 2.1]{katsurada1999explicit} or Theorem \ref{thm:ind formula reducing valuation} below.
	The analogue of \eqref{eq: decomposition of Z(L) as sums of D(L)} holds for $\pden(L)^{(n_1)}$. As a consequence we have the following theorem (see Theorem \ref{thm: equivalent form of modified KR} for a refinement).
	\begin{theorem} \label{theo1.3}
		Conjecture \ref{conj:main} is true if and only if for every lattice $L=L_1\oplus L_2$ such that $L_i$ has rank $n_i$, we have
		\begin{equation}\label{eq: equivlent form of KR intro}
			\Int(L)^{(n_1)}=\pden(L)^{(n_1)}.
		\end{equation}
	\end{theorem}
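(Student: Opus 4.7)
The plan is to reduce the theorem to two parallel decomposition identities---one geometric, one analytic---that together make Conjecture~\ref{conj:main} and the family of identities \eqref{eq: equivlent form of KR intro} equivalent by a M\"obius-style inversion. Starting from \eqref{eq: decomposition of Z(L) as sums of D(L)} applied to $L_1$, multiplying both sides by $\cZ^\Kra(L_2)$ (using $L_1 \perp L_2$ so that $\cZ^\Kra(L_1') \cdot \cZ^\Kra(L_2) = \cZ^\Kra(L_1' \oplus L_2)$), and taking Euler characteristics yields the geometric decomposition
\[
\Int(L) \;=\; \sum_{\substack{L_1 \subset L_1' \subset L_{1,F}\\ L_1' \oplus L_2 \text{ integral}}} \Int(L_1' \oplus L_2)^{(n_1)},
\]
the sum being finite by integrality.

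The main analytic task is to establish the parallel identity
\[
\pden(L) \;=\; \sum_{\substack{L_1 \subset L_1' \subset L_{1,F}\\ L_1' \oplus L_2 \text{ integral}}} \pden(L_1' \oplus L_2)^{(n_1)},
\]
asserted in the excerpt as the analytic counterpart of \eqref{eq: decomposition of Z(L) as sums of D(L)}. My plan is to expand the right-hand side using the alternating-sum definition of $\pden(\cdot)^{(n_1)}$ and perform a $q$-adic inclusion-exclusion on the finite poset of $\Oo_F$-sublattices between $L_1$ and $\pi^{-1}L_1$, mirroring the combinatorial principle of Lemma~\ref{lem: decomposition of Z(L) as sums of D(L)}. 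The key analytic input will be the interpretation of $\pden(L)^{(n_1)}$ as a derivative of a \emph{primitive} Hermitian local density polynomial (cf.\ \cite[Proposition 2.1]{katsurada1999explicit} and Theorem~\ref{thm:ind formula reducing valuation}), which forces $\pden(L_1' \oplus L_2) = 0$ whenever $L_1' \oplus L_2$ is non-integral and thereby makes the analytic sum effectively finite. A further delicate point will be tracking how the coefficients $c_\epsilon^{n,i}$ behave as the sign $\epsilon = \chi(L_1' \oplus L_2)$ can flip when $L_1'$ varies.

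With both decomposition identities in hand, both directions of the equivalence follow formally. For $(\Leftarrow)$, assume $\Int(M)^{(n_1)} = \pden(M)^{(n_1)}$ holds for every decomposition of $M$; specializing to $n_1 = n$ and $L_2 = 0$ and summing over integral overlattices $L_1' \supset L$ via the two identities yields $\Int(L) = \pden(L)$, i.e., Conjecture~\ref{conj:main}. For $(\Rightarrow)$, assume Conjecture~\ref{conj:main}: substituting $\Int(L_1' \oplus L_2) = \pden(L_1' \oplus L_2)$ termwise into the alternating-sum definitions of $\Int(L)^{(n_1)}$ and $\pden(L)^{(n_1)}$---and using Shi's vanishing \cite[Theorem 1.2]{Shi1} together with the analytic vanishing from the previous step to handle non-integral overlattices---gives the desired equality. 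The hard part will thus be the analytic decomposition: the geometric side is nearly definitional once Lemma~\ref{lem: decomposition of Z(L) as sums of D(L)} is available, whereas its analytic mirror requires Katsurada-type induction formulas and sign bookkeeping.
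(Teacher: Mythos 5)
Your proposal is correct and follows essentially the same blueprint as the paper's Theorem~\ref{thm: equivalent form of modified KR} (of which Theorem~\ref{theo1.3} is the $n_1$-ranging corollary): two parallel decomposition identities (Lemma~\ref{lem: decomposition of Z(L) as sums of D(L)} on the geometric side, Corollary~\ref{cor: decomp of pden(L)} on the analytic side), from which both directions of the equivalence follow formally. The one genuine difference is where you locate the hard work for the analytic identity. You propose to prove $\pden(L) = \sum_{L_1'} \pden(L_1'\oplus L_2)^{(n_1)}$ by repeating the $q$-adic inclusion-exclusion of Lemma~\ref{lem: decomposition of Z(L) as sums of D(L)} verbatim, using an asserted vanishing of $\pden$ at non-integral lattices to terminate the recursion. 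The paper instead obtains Corollary~\ref{cor: decomp of pden(L)} directly from the Kitaoka-type partition formula, Lemma~\ref{lem: ind formula to prim ld}, together with Corollary~\ref{cor: ind structure of partial Den(T)}, which makes the sum finite automatically: the partition is over a finite measure space, and $\beta(I_{n,-\epsilon},L_1'\oplus L_2)^{(n_1)}$ and $\beta(\cH_\epsilon^{n,i},L_1'\oplus L_2)^{(n_1)}$ vanish once $L_1'$ is large enough that $L_1'\oplus L_2$ fails to embed into $M\obot\cH^k$ for any $k$. The paper's route is cleaner because it never needs to invoke a vanishing of $\pden$ itself; the $X^{\ell}$-factor in Lemma~\ref{lem: ind formula to prim ld} plays the role of your ``$q$-adic weights,'' and the integral representation builds the combinatorics in for free.

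Two points to tighten. First, the vanishing you invoke --- ``$\pden(L_1'\oplus L_2)=0$ whenever $L_1'\oplus L_2$ is non-integral'' --- is not correct as stated and is in fact precisely what Conjecture~\ref{conj:main} asserts (and what Proposition~\ref{prop: v(T)<0} proves for $n=3$) in the boundary case $\mathrm{v}(L_1'\oplus L_2)=-1$; indeed the whole point of the correction terms $c_\epsilon^{n,i}$ is that the naive analytic side does \emph{not} vanish at valuation $-1$. What is needed, and what actually holds, is the vanishing of the primitive quantity $\pden(L_1'\oplus L_2)^{(n_1)}$ once $L_1'$ contains a vector of valuation $\leq -2$, which follows from Corollary~\ref{cor: ind structure of partial Den(T)} because no embedding into $M\obot\cH^k$ exists. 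So you should phrase the termination step in terms of $\pden(\cdot)^{(n_1)}$, not $\pden(\cdot)$. Second, the ``sign bookkeeping'' you flag as delicate is actually a non-issue: as $L_1'$ ranges over lattices with $L_1\subset L_1'\subset L_{1,F}$, the $F$-span of $L_1'\oplus L_2$ is always $L_F$, so $\chi(L_1'\oplus L_2)=\chi(L)$ is constant and the coefficients $c_\epsilon^{n,i}$ never change. With these two adjustments your argument is sound and parallels the paper's.
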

	
	We speculate that $\cD(L_1)$ is of a simple form when $n_1=n-1$.
	One strong piece of evidence for this is that the `horizontal' part of $\cD(L_1)$ is either empty or isomorphic to one or two copies of $\Spf W_s$ where $W_s$ is the integer ring of an extension of $\breve{F}$ of degree $\q^s$, see Proposition \ref{prop:horizontalpartofDL}. Another evidence is that
	the intersection of $\cD(L_1)$ with an exceptional divisor in $\cN^\Kra$ is  $\pm 1$ or $0$, see Lemma \ref{lem: Z(L hecke) dot Exc}.
	When $n=3$, we show that $\cD(L_1)$ has a simple decomposition, see Theorem \ref{thm:decomposition of cD intro} below.

	\subsection{The case $n=3$}
	The proof of Theorem \ref{thm:mainthmintroduction} is  divided into three cases, see Section \ref{sec:proof when n=3}. For $\mathrm{v}(L)<0$, we show directly  $\pden(L)=\Int(L)=0$. The case  $\mathrm{v}(L)=0$ is reduced to  the case  $n=2$, which was  proved in \cite{Shi2} and \cite{HSY}. For  $\mathrm{v}(L) >0$, we prove that $\Int(L)^{(2)} = \pden(L)^{(2)}$ for a decomposition $L=L^\flat\obot \spa\{\bx\}$, and then apply Theorem \ref{theo1.3} (more precisely Theorem \ref{thm: equivalent form of modified KR}).
	
	In order to prove $\Int(L)^{(2)} = \pden(L)^{(2)}$, we need to understand the decomposition of $\cD(L^\flat)$.
	We say a lattice $\Lambda\subset \bV$ is a vertex lattice if $\pi \Lambda\subseteq \Lambda^\sharp \subseteq \Lambda$ where $\Lambda^\sharp$ is dual lattice of $\Lambda$ with respect to $h(\, ,)$ and we call $t=\mathrm{dim}_{\F_\q}(\Lambda/\Lambda^\sharp)$ the type of $\Lambda$. This has to be an even integer between $0$ and $n$. We denote the set of vertex lattices of type $t$ by $\cV^t$.
	When $n=3$, a type $2$ lattice $\Lambda_2$ corresponds to a line $\tN_{\Lambda_2}\cong \bP^1_k$ in $\cN^\Kra_3$ and a type $0$ lattice $\Lambda_0$ corresponds to a divisor $\Exc_{\Lambda_0}\cong \bP^2_k$.  Let $H_{\Lambda_0}$ be the hyperplane class of $\Exc_{\Lambda_0}$.
	We have the following theorem.
	
	\begin{theorem}\label{thm:decomposition of cD intro}
		If $\mathrm{v}(L^\flat)>0$, we have the following decomposition of cycles in $\mathrm{Gr}^2 K_0(\cN^\Kra_3)$
		\[\cD(L^\flat)=\sum_{\substack{  \Lambda_2\in \cV^2 \\ L^\flat \subset \Lambda_2^\sharp}}  (2[\Oo_{\tN_{\Lambda_2}}]+\sum_{\substack{\Lambda_0\in\cV^0\\\Lambda_0\subset \Lambda_2}}H_{\Lambda_0})\]
		where $\mathrm{Gr}^\bullet K_0(\cN^\Kra_3)$ is the associated graded ring of $K_0(\cN^\Kra_3)$ with respect to the codimension filtration.
	\end{theorem}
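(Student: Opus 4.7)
The plan is to work inside $\Gr^2 K_0(\cN^\Kra_3)$ and split the argument into a support claim and a multiplicity claim, then aggregate by type-$2$ vertex lattice at the end. Since $\mathrm{v}(L^\flat)>0$, Proposition \ref{prop:horizontalpartofDL} guarantees that $\cD(L^\flat)$ has no horizontal part, so its support is contained in the special fiber of $\cN^\Kra_3$. Combining this with the definition \eqref{eq:difference cycle intro} and the vertex-lattice stratification of each $\cZ^\Kra(L')$ for rank-$2$ integral $L'$, I would first show that the support of $\cD(L^\flat)$ is contained in the union of the projective lines $\tN_{\Lambda_2}$ for $\Lambda_2\in\cV^2$ with $L^\flat\subset \Lambda_2^\sharp$, together with the exceptional divisors $\Exc_{\Lambda_0}$ for type-$0$ lattices $\Lambda_0\subset \Lambda_2$.

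Next I would determine the coefficients numerically. The $\Gr^2$ classes supported on this union are generated by the line classes $[\Oo_{\tN_{\Lambda_2}}]$ and by the hyperplane classes $H_{\Lambda_0}$ inside $\Exc_{\Lambda_0}\cong \bP^2_k$. Lemma \ref{lem: Z(L hecke) dot Exc} already computes the intersection of $\cD(L^\flat)$ with every exceptional divisor and so pins down the coefficient of each $H_{\Lambda_0}$ as $1$; a complementary intersection of $\cD(L^\flat)$ with a test divisor crossing $\tN_{\Lambda_2}$ transversely, for instance a $\cZ^\Kra(\bx)$ for a suitable $\bx\in \bV$ with $\spa\{\bx\}\subset \Lambda_2^\sharp$, should then fix the coefficient on each line at $2$. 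Regrouping these contributions by the containing $\Lambda_2$ reproduces the parenthetical grouping on the right hand side of the claimed identity.

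The main obstacle is the combinatorial cancellation inside the alternating sum \eqref{eq:difference cycle intro}. For a fixed stratum, many overlattices $L'$ with $L^\flat \subset L' \subset \frac{1}{\pi}L^\flat$ contribute, each weighted by $(-1)^i \q^{i(i-1)/2}$, and one has to check that these weights collapse to the clean coefficients $2$ and $1$ independently of $\mathrm{v}(L^\flat)$ and of the Gram matrix of $L^\flat$. I expect this to reduce to $\q$-adic Gaussian-binomial identities counting $\F_\q$-subspaces of $\frac{1}{\pi}L^\flat/L^\flat$ that are compatible with a given $\Lambda_2^\sharp$, analogous in flavor to the identities driving the inversion formula \eqref{eq: decomposition of Z(L) as sums of D(L)}. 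As a sanity check before attacking the general case, I would verify the situation $\mathrm{v}(L^\flat)=1$ by a direct calculation, using it both to confirm the coefficients $2$ and $1$ and to calibrate the sign conventions on the two kinds of generators.
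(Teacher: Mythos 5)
Your outline captures the shape of the argument — locate the support using Proposition \ref{prop:horizontalpartofDL}, then determine the multiplicities by intersecting with test cycles, finally regroup by $\Lambda_2$ — and this is indeed what the paper does. But there is a concrete error in the middle step that would derail the computation.

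You claim that ``Lemma \ref{lem: Z(L hecke) dot Exc} already computes the intersection of $\cD(L^\flat)$ with every exceptional divisor and so pins down the coefficient of each $H_{\Lambda_0}$ as $1$.'' This is not what the lemma gives. With $\mathrm{v}(L^\flat)>0$ one has $t(L^\flat)=2$, so the condition $L^\flat=\Lambda\cap L^\flat_F$ in Lemma \ref{lem: Z(L hecke) dot Exc} never holds, and the lemma yields $\chi(\cN^\Kra,\cD(L^\flat)\cdot[\Oo_{\Exc_{\Lambda_0}}])=0$ for \emph{every} $\Lambda_0\in\cV^0(L^\flat)$. This intersection being zero does not directly give the $H_{\Lambda_0}$-coefficient; combined with Corollary \ref{cor: P1 dot Exc} and Lemma \ref{lem:tVintersectExc}, it only produces the linear relation $\sum_{\Lambda_0\subset\Lambda_2}m(\cD(L^\flat),\Lambda_2)=2\,m(\cD(L^\flat),\Lambda_0)$ coupling the unknown multiplicities on lines to those on hyperplane classes. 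So this test cycle alone cannot decouple the two kinds of coefficients. Moreover, the total multiplicity of $H_{\Lambda_0}$ in $\cD(L^\flat)$ is not uniformly $1$: after regrouping, it equals the number of $\Lambda_2\in\cV^2(L^\flat)$ containing $\Lambda_0$, which is $q+1$ for $\Lambda_0$ in the interior of $\cL(L^\flat)$ and $1$ on the boundary $\mathcal B(L^\flat)$.

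The missing ingredient is the use of a \emph{non-exceptional} test divisor. The paper's Claim 1 computes $m(\cD(L^\flat),\Lambda_0)$ directly by intersecting $\cD(L^\flat)$ with $\tZ(y_0)$ for a suitable $y_0\in\Lambda_0$ with $\spa\{y_0\}^\bot$ nonsplit: by Corollary \ref{cor:tVintersecttZy_0} such a divisor misses the lines $\tN_{\Lambda_2}$, so the intersection number isolates $m(\cD(L^\flat),\Lambda_0)$. This intersection number is then evaluated by pushing into $\cN^\Kra_{2,-1}$ via Proposition \ref{prop:cZxunimodular} and invoking the $n=2$ formula from \cite{Shi2}; the inclusion-exclusion over $L'\supset L^\flat$ in the definition of $\cD$ collapses to $\mu(a,b)-\q\mu(a-1,b)-\mu(a,b-1)+\q\mu(a-1,b-1)$, which gives $q+1$ in the interior and $1$ on the boundary. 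Only then does one feed these values back into the exceptional-divisor relation and run an induction over the Bruhat-Tits tree (Corollary \ref{cor:inductiononbruhattits}) to force $m(\cD(L^\flat),\Lambda_2)=2$ for all $\Lambda_2$. Your proposal's vague suggestion of intersecting with a transverse $\cZ^\Kra(\bx)$ does not explain how to untangle the $\Lambda_0$-contributions from the $\Lambda_2$-contributions, which is precisely the difficulty the paper's two-claim structure is designed to resolve.
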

	Theorem \ref{thm:decomposition of cD intro} is proved by intersecting $\cD(L^\flat)$ with special divisors that are isomorphic to $\cN^\Kra_{2,-1}$ and computing the intersection numbers in two different ways. One way relates the intersection numbers to the main result of \cite{Shi2}.  The other way uses the decomposition in Theorem \ref{thm:decomposition of cD intro} and detects the multiplicity of each component that shows up on the right hand side.

	\subsection{Global application}
	In the  last part of the paper, we apply the local results above to the global intersection problem proposed by \cite{KR2}. For brevity and clarity of exposition we restrict our attention to the case when $F$ is an imaginary quadratic field. We remark here that our results can be applied to the case when $F$ is a general CM field given correct local assumptions.
	Let $\cM^\Kra_{(1,n-1)}$ be the moduli functor over $\Spec \Oo_F$ which parametrizes principally polarized abelian varieties $A$ with an action $\iota:\Oo_F \rightarrow \End(A)$, a compatible polarization $\lambda: A\rightarrow A^\vee$ and a filtration $\cF_A\subset \Lie A$ which satisfies the signature $(1,n-1)$ condition (see \S \ref{subsec:unitary shimura}). Let $V$ be a  Hermitian vector space over $F$ of signature $(n-1,1)$ containing a self-dual lattice $L$. The lattice $L$ determines an open and closed substack
	\[\cM\subset\cM_{(0,1)}\times_{\Spec \Oo_F} \cM^\Kra_{(1,n-1)}\]
	which is an integral model of a unitary Shimura variety.
	For a point in $\cM(S)$ ($S$ an $\Oo_F$-scheme), i.e.,
	a pair $(E,\iota_0,\lambda_0)\in \cM_{(0,1)}(S)$, $(A,\iota,\lambda,\cF_A)\in \cM^\Kra_{(1,n-1)}(S)$, define the locally free $\Oo_F$-module
	\[V'(E,A)=\Hom_{\Oo_F}(E,A).\]
	It is equipped with the Hermitian form $h'(x,y)=\iota_0^{-1}(\lambda_0^{-1}\circ y^\vee \circ \lambda \circ x)$. For a $m \times m$ non-singular Hermitian matrix $T$ with values in $\Oo_F$, let $\cZ(T)$ be the stack of collections $(E,\iota_0,\lambda_0, A,\iota, \lambda,\cF_A,\bx)$ such that $ (E,\iota_0,\lambda_0, A,\iota, \lambda,\cF_A)\in \cM(S)$, $\bx \in V'(E,A)^m$ with $h'(\bx,\bx)=T$. Then $\cZ(T)$ is representable by a Deligne-Mumford stack which is finite and unramified over $\cM$ (\cite[Proposition 2.9]{KR2}). When $t\in \Z_{>0}$, each component of $\cZ(t)$ can be viewed as a divisor by \cite[Proposition 3.2.3]{HowardCMII}. In general, $\cZ(T)$ does not necessarily have the expected codimension which is the rank of $T$.

	Let $\mathcal C =\{ \mathcal C_p\} $ be an incoherent collection of  local  Hermitian spaces of rank $n$ associated to $V$ such that $\mathcal C_\ell\cong V_\ell$ for all finite $\ell$ and $\mathcal C_\infty$ is positive-definite. It is ``incoherent" in the sense that it does not come from a global Hermitian space.
	For a non-singular Hermitian matrix $T$ of rank $n$ with values in $\Oo_F$, let $V_T$ be the Hermitian space with Gram matrix $T$. Define
	\begin{equation}
		\mathrm{Diff}(T,\mathcal C)\coloneqq \{p \text{ a place of } \Q  \mid \mathcal C_p \text{ is not isomorphic to } (V_T)_p\}.
	\end{equation}
	Then $\cZ(T)$ is empty if $|\mathrm{Diff}(T,\mathcal C)|>1$. If $\mathrm{Diff}(T,\mathcal C)=\{p\}$ for a finite prime $p$ inert or ramified in $F$, then the support of $\cZ(T)$ is on the supersingular locus of $\cM$ over $\Spec \F_p$. Let $e$ be the ramification index of $F_p/\Q_p$. Define the arithmetic degree
	\begin{equation}	\widehat{\mathrm{deg}}_T=\chi(\cZ(T),\Oo_{\cZ(t_1)}\otimes^{\mathbb{L}} \cdots \otimes^{\mathbb{L}} \Oo_{\cZ(t_n)})\cdot \log p^{2/e},
	\end{equation}
	where $\otimes^\bL$ stands for derived tensor product on the category of coherent sheaves on $\cM$, $\chi$ is the Euler characteristic and $t_i$ ($1 \le i \le n$) are the diagonal entries of $T$. When $\mathrm{Diff}(T,\mathcal C)=\{\infty\}$, then  $\cZ(T)$ is, in fact, empty and one can use integration of a green current to define the arithmetic degree $\widehat{\mathrm{deg}}_{T}(v)$ with the parameter $v$ being a positive-definite Hermitian matrix $v$ of order $n$ (which will be the imaginary part of $\tau$), see, for example, \cite[\S 15.3]{LZ}.

	On the analytic side, we consider an incoherent Eisenstein series $E(\tau,s,\Phi)$ for a non-standard section $\Phi$ in a degenerate principal series representation of $\rU(n,n)(\bA)$, see Section \ref{subsec:Eisenstein series}. Here $\tau$ is in the Hermitian Siegel upper half space
 \begin{equation}\label{eq:Siegel upper half plane}
     \mathbb{H}_n=\{\tau =u+i v\mid u\in \Herm_n, v\in \Herm_{n,>0}\},
 \end{equation}
    where $\Herm_n$ (respectively, $\Herm_{n,>0}$) is the set of $n\times n$ (positive-definite) Hermitian matrices with values in $\C$ and $s\in \C$.
   Our local conjecture and result imply the following theorem, which extends \cite[Theorem 1.3.1]{LZ} to include ramified primes.

	\begin{theorem}(Arithmetic Siegel-Weil formula for non-singular coefficients) \label{thm:main global intro}
		Assume that the fundamental discriminant of $F$ is $d_F \equiv 1 \pmod 8$ and that Conjecture \ref{conj:main}  holds for every $F_p$ with  $p|d_F$.
		For any non-singular Hermitian matrix $T$ with values in $\Oo_F$ of size $n$, we have
		\[E'_T(\tau, 0,\Phi)=C \cdot \widehat{\mathrm{deg}}_T(v) \cdot q^T, \quad  q^T=\exp(2\pi i \tr(T\tau)),\]
		where $E'_T(\tau, 0,\Phi)$ is the $T$th Fourier coefficient of $E'(\tau, 0,\Phi)$ and $C$ is a constant that only depends on $F$ and $L$. In particular, the arithmetic Siegel-Weil formula holds for $n=2, 3$ for non-singular $T$.
	\end{theorem}

In a very recent joint work with Chao Li (\cite{HLSY}), we proved Conjecture \ref{conj:main}, and so this theorem is now unconditional.
	
	\subsection{Notation}	
	For $\Oo_F$-lattices (respectively, $\Oo_{\breve F}$-lattices) $L$ and $L'$, we write $L \stackrel{t}{\subset} L'$ if $L\subset L' \subset \frac{1}{\pi} L$ and $\dim_{\F_q}(L'/L)=t$ (respectively, $\dim_k (L'/L)=t$). We say a vector $v\in L$ is primitive if $\frac{1}{\pi}v\notin L$.
	
Throughout the paper, we always assume a Hermitian lattice is non-degenerate.	For Hermitian lattices $L$ and $L'$, we use $L\obot L'$ to denote orthogonal direct sum, and $L \oplus L'$ as direct sum of lattices. 	Given a Hermitian lattice $L$ with Hermitian form $(\, ,)$, we consider two different dual lattices of $L$. We use $L^{\sharp}$ (respectively,$\, L^{\vee})$ to denote the dual lattice of $L$ with respect to $(\, ,)$ (respectively,$\, \tr_{F/F_0}(\, ,))$.  Recall that $\mathrm{v}(L)$ is defined to be $\mathrm{min}\{\mathrm{v}_\pi( h(v,v'))\mid v,v'\in L\}$.
	For each Hermitian lattice $L$, there  exists a Jordan decomposition $L=\obot_{i\ge s} L_i$ such that $L_i^{\sharp}=\pi^{-i} L_i$. We call $L$ integral if $s\ge 0$. For an integral lattice $L$, we define
	$$
	t(L)\coloneqq \sum_{i\ge 1}\mathrm{rank}_{\Oo_F}(L_i).
	$$

Following \cite[Definition 2.11]{LL2}, for a lattice $L$ with Hermitian form $(\, ,)$, we may find a basis of $L$ whose Gram matrix is
\begin{align*}
\left(\beta_{1} \pi^{2 b_{1}}\right) \oplus \cdots \oplus\left(\beta_{s} \pi^{2 b_{s}}\right) \oplus\left(\begin{array}{cc}
0 & \pi^{2 c_{1}+1} \\
-\pi^{2 c_{1}+1} & 0
\end{array}\right) \oplus \cdots \oplus\left(\begin{array}{cc}
0 & \pi^{2 c_{t}+1} \\
-\pi^{2 c_{t}+1} & 0
\end{array}\right)
\end{align*}
for some $\beta_{1}, \ldots, \beta_{s} \in \Oo_{F_0}^{\times}$ and $b_{1}, \ldots, b_{s}, c_{1}, \ldots, c_{t} \in \mathbb{Z}$. Moreover, we define its (unitary) fundamental invariants $(a_1,\cdots,a_n)$ to be the unique non-decreasing rearrangement of $(2 b_1,\cdots,2b_s,2c_1+1,\cdots,2c_t+1)$. The partial order of $\Z^n$ induces a partial order on the set of fundamental invariants.

Let $\cH_i=\begin{pmatrix}
		0& \pi^{i}\\
		(-\pi)^{i}& 0
	\end{pmatrix}$ and $\cH=\cH_{-1}$. We also use it to denote a Hermitian lattice with Gram matrix $\cH_i$. Given a Hermitian lattice $M$, we use $M^{[k]}$ to denote $M\obot \cH^k$. We use $I_m^\epsilon$ to denote a unimodular Hermitian lattice of rank $m$ and $\chi(I_m^\epsilon)=\epsilon.$  For a Hermitian matrix $T$, we define  $\mathrm{v}(T)=\mathrm{v}(L)$ where $L$ is a lattice whose Gram matrix is $T$.  We use $\Herm_n(F)$ to denote the set of Hermitian matrices over $F$ of size $n$. When there is no confusion, we also simply denote it as $\Herm_n$.  For $T,T'\in \Herm_n(F)$, we say $T$ is equivalent to $T'$ if there is a $U\in \GL_n(\Oo_F)$ such that $U^*TU=T'$, where $U^*= {}^t \bar U$. In this case, we denote it as $T\approx T'$.

		For $t\in \Oo_{F_0}$, let $\mathrm{v}(t)\coloneqq \mathrm{val}_{\pi_0}(t)$ and write $t=t_0(-\pi_0)^{\mathrm{v}(t)}$. For $x\in \bV$, we set $q(x)=(x,x)$ and $\mathrm{v}(x)=\mathrm{v}(q(x))$. We use $\langle t \rangle$ to denote a lattice $\Oo_F x$ of rank one with $q(x)=t$.

 The notation in each section that is not mentioned here  will be explained at the very beginning of the section.

	\subsection{The structure of the paper}
	The paper is divided into three parts.
	In Part \ref{part:I}, we prove some facts about special cycles for arbitrary $n$. More specifically,
	in Section \ref{sec: RZ space} we recall some basic facts about $\cN^\Kra$ and define special cycles and special difference cycles on it. In Section \ref{sec: Exc}, we compute the intersection number between special cycles and the exceptional divisors. In Section \ref{sec: horizontal part}, we prove a decomposition theorem for the horizontal component of ${}^\bL \cZ^\Kra(L^\flat)$ when $L^\flat$ has rank $n-1$.
	
	Part \ref{part:II} is about Hermitian local densities.
	In Section \ref{sec:ind formula}, we study induction formulas of local density polynomials and relate the local density polynomials with primitive local densities. In Section \ref{sec: Kudla-Rapoport conjecture}, we show that the coefficients $ c_{n,i}^\epsilon $ in \eqref{eq:coeff} are uniquely determined and give an algorithm to compute them. In Sections \ref{sec: eg application} and \ref{sec: local density n=3}, we compute the local density polynomials when $n\leq 3$.
	
	In Part \ref{part:III} we prove Theorem \ref{thm:mainthmintroduction}, i.e. Conjecture \ref{conj:main} for $n=3$. In Section \ref{sec:reduced locus n=3}, we study the reduced locus of the special cycles for $n=3$. In Section \ref{sec:intersection of vertical and special}, we decompose ${}^\bL \cZ^\Kra(L^\flat)$ for $L^\flat$ of rank $2$ and $\val(L^\flat)=0$, and compute the intersection number of $\tN_{\Lambda_2}$ with $\cZ^\Kra(\bx)$.
	Finally, we prove Theorem \ref{thm:decomposition of cD intro} and finish the proof of Theorem \ref{thm:mainthmintroduction} in Section \ref{sec:proof when n=3} and explain its global applications in \S \ref{sec:global application}.

	In Appendix \ref{sec: calc of primitive local density}, we compute the primitive local densities that are used in Part \ref{part:II} of the paper.
	
	\subsection{Acknowledgement} We thank Chao Li for his help during the preparation of this paper. We thank the referees for their careful reading of the paper and their comments which make the paper more readable.

	\part{The geometric side}\label{part:I}
	
	\section{Rapoport-Zink space and special cycle}\label{sec: RZ space}
	We denote $\bar{a}$ the Galois conjugate of $a\in F$ over $F_0$. Let $\Nilp \Oo_{\breve F}$ be the category of $\Oo_{\breve F}$-schemes $S$ such that $\pi$ is locally nilpotent on $S$. For such an $S$, denote its special fiber $S\times_{\Spf \Oo_{\breve F}} \Spec k$ by $\bar S$. Let $\sigma$ be the Frobenius element of $ \breve{F}_0/F_0$.
	
	\subsection{RZ spaces}\label{subsec:RZspaces}
	Let $S\in\Nilp \Oo_{\breve F}$. A $p$-divisible strict $\Oo_{F_0}$-module over $S$ is a $p$-divisible group over $S$ with an $\Oo_{F_0}$ action whose induced action on its Lie algebra is via the structural morphism $\Oo_{F_0}\rightarrow \Oo_S$.
	\begin{definition}
		A formal Hermitian $\Oo_F$-module of dimension $n$ over $S$ is a triple $(X,\iota,\lambda)$ where $X$ is a supersingular $p$-divisible strict $\Oo_{F_0}$-module over $S$  of dimension $n$ and $F_0$-height $2n$ (supersingular means the relative Dieudonn\'e module of $X$ at each geometric point of $S$ has slope $\frac{1}{2}$), $\iota:\Oo_F\rightarrow \End(X)$ is an $\Oo_F$-action and $\lambda:X\rightarrow X^\vee$ is a principal polarization in the category of strict $\Oo_{F_0}$-modules such that the Rosati involution induced by $\lambda$ is the Galois conjugation of $F/F_0$ when restricted on $\Oo_F$. We say $(X,\iota,\lambda)$ satisfies the signature condition $(1,n-1)$ if for all $a\in \Oo_F$ we have
		\begin{enumerate}[leftmargin=*, label=({\roman*})]
			\item $\mathrm{char}(\iota(a)\mid \Lie X)=(T-s(a))\cdot(T-s(\bar{a}))^{n-1} $ where $s:\Oo_F\rightarrow \Oo_S$ is the structure morphism;
			\item the wedge condition proposed in \cite{P}:
   \[ \wedge^n(\iota(a)-s(a)\mid \Lie X)=0,\ \wedge^2(\iota(a)-s(\bar{a})\mid \Lie X)=0.\]
		\end{enumerate}
	\end{definition}
    Let $(\bX,\iota_\bX,\lambda_\bX)$ be a formal Hermitian $\Oo_F$-module of dimension $n$ over $k$, and $N$ be its rational relative Dieudonne module.  Then $N$ is an $2n$-dimensional $\breve{F}_0$-vector space equipped with a $\sigma$-linear operator $\bfF$ and a $\sigma^{-1}$-linear operator $\bfV$. The $\Oo_F$-action  $\iota_\bX:\Oo_F\rightarrow \End(\bX)$ induces on $N$ an $\Oo_F$-action  commuting with $\bfF$ and $\bfV$.  We still denote this induced action by $\iota_\bX$ and denote $\iota_\bX(\pi)$ by $\pi$. Let $\tau\coloneqq \pi \bfV^{-1}$ and $ C\coloneqq N^\tau$. Then $C$ is an $n$-dimensional $F$-vector space equipped with a Hermitian form $(\, ,)_\bX$ defined using the polarization $\lambda_\bX$, see \cite[Equation (2.7)]{Shi1}.  When $n$ is odd, there is a unique choice of $(\bX,\iota_\bX,\lambda_\bX)$ up to quasi-isogenies that preserves  the polarization by a factor in $\Oo_{F_0}^\times$. When $n$ is even, there
	are two such choices according to the sign $\epsilon=\chi(C)$ (see \eqref{eq:sign}) of $C$. See \cite[Remark 2.16]{Shi1} and \cite[Remark 4.2]{RTW}. Fix a formal Hermitian $\Oo_F$-module $(\bY,\iota_\bY,\lambda_\bY)$ of signature $(0,1)$ over $\Spec k$.  It is unique up to $\Oo_F$-linear isomorphisms. Define
	\begin{equation}\label{eq:bV}
		\bV=\Hom_{\Oo_F}(\bY,\bX)\otimes \Q,
	\end{equation}
	which is equipped with a Hermitian form
	\begin{equation}\label{eq:h(x,y)}
		h(x,y)=\lambda_\bY^{-1}\circ y^\vee \circ \lambda_\bX \circ x\in \End^0_F(\bY)\overset{\sim}{\rightarrow}F
	\end{equation}
	where $y^\vee$ is the dual quasi-homomorphism of $y$ and $\End^0_F(\bY)$  is the ring of $F$-linear quasi-endomorphisms of $\bY$.
	The Hermitian spaces $(\bV,h(\, ,))$ and $(C,(\, , )_\bX)$ are related by the $F$-linear isomorphism
	\begin{equation}\label{eq: C V isomorphism}
		b: \bV\rightarrow C, \quad \bx\mapsto \bx(e)
	\end{equation}
	where $e$ is a generator of the relative covariant Dieudonn\'e module $M(\bY)$ of $\bY$.
    Let $(\, ,)_\bY$ be the analogue of $(\, ,)_\bX$ for $\bY$, namely the Hermition form on the rational relative Dieudonn\'e module of $\bY$ defined by $\lambda_\bY$.
    By \cite[Lemma 3.6]{Shi1}, we have
	\begin{equation}\label{eq:h and (,)_X}
		h(\bx,\bx)(e,e)_\bY=(b(\bx),b(\bx))_\bX.
	\end{equation}
	By scaling the Hermitian form $(\, ,)_\bY$ we can assume that
	\[(e,e)_\bY=1,\]
	so $\bV$ and $C$ are isomorphic as Hermitian spaces. We will sometimes identify $\bV$ and $C$.

	\begin{definition}\label{def:NPappas}
		Fix a formal Hermitian $\Oo_F$-module $(\bX,\iota_\bX,\lambda_\bX)$ of dimension $n$ over $k$ with the sign $\epsilon=\chi(C)$.
		The moduli space $\cN_{n,\epsilon}^\Pap$ is the functor sending each $S\in \Nilp \Oo_{\breve F}$ to the groupoid of isomorphism classes of quadruples $ (X,\iota,\lambda,\rho)$ where $(X,\iota,\lambda)$ is a formal Hermitian $\Oo_F$-module over $S$ of signature $(1,n-1)$ and $\rho:X\times_{S}\bar{S} \rightarrow \bX \times_{\Spec k} \bar{S}$ is a quasi-morphism of formal $\Oo_F$-modules of height $0$. An isomorphism between two such quadruples $(X,\iota,\lambda,\rho)$ and $(X',\iota',\lambda',\rho')$ is given by an $\Oo_F$-linear isomorphism $\alpha:X\rightarrow X'$ such that $\rho'\circ(\alpha\times_S \bar{S})=\rho $ and $\alpha^*(\lambda')$ is a $\Oo_{F_0}^\times$ multiple of $\lambda$. We drop the subscript $\epsilon$ in $\cN_{n,\epsilon}^\Pap$ when we do not emphasize on the sign.
	\end{definition}

     By the discussion before \eqref{eq:bV}, when $n$ is odd, two different choices of $\epsilon$ give us isomorphic moduli spaces. When $n$ is even, two different choices of $\epsilon$ give us two sets of non-isomorphic moduli spaces.
     By \cite{RTW}, $\cN_n^\Pap$ is representable by a formal scheme flat and of relative dimension $n-1$ over $\Spf \Oo_{\breve F}$. We remark here that although \cite{RTW} works on the category of $p$-divisible groups (namely when $F_0=\Q_p$), their arguments and results easily extend to the category of strict formal $\Oo_{F_0}$-modules using relative Dieudonn\'e theory or more generally the relative display theory developed in \cite{ACZ}. When $n=1$, we have $\cN_1^\Pap\cong \Spf\Oo_{\breve F}$.
	The universal Hermitian $\Oo_F$-module over $\cN_1^\Pap$ is the canonical lifting $(\cG,\iota_\cG,\lambda_\cG,\rho_\cG)$ of $(\bY,\iota_\bY,\lambda_\bY)$ to $\Spf \Oo_{\breve F}$ in the sense of \cite{G}. When $n>1$, $\cN_n^\Pap$ is regular outside the set of superspecial points over $\Spec k$, which are the points characterized by the condition $\iota(\pi)|_{\Lie X}=0$. The set of superspecial points is in fact the set of type $0$ lattices (see Section \ref{subsec:bruhat tits in general}), hence is isolated and we denote it by Sing.
	\begin{definition}\label{def:NKra}
		Fix $(\bX,\iota_\bX,\lambda_\bX)$ be as in Definition \ref{def:NPappas}. The moduli space $\cN_{n,\epsilon}^\Kra$ is the functor sending each $S\in \Nilp \Oo_{\breve F}$ to the groupoid of isomorphism classes of quintuples $ (X,\iota,\lambda,\rho,\cF)$ where $(X,\iota,\lambda,\rho)\in \cN_{n,\epsilon}^\Pap(S)$ and $\cF$ is a locally free direct summand of $\Lie X$ of rank $n-1$ as an $\Oo_S$-module such that $\Oo_F$ acts on $\Lie X/\cF$ by the structural morphism and acts on $\cF$ by the Galois conjugate of the structural morphism. An isomorphism between two such quintuples $(X,\iota,\lambda,\rho,\cF)$ and $(X',\iota',\lambda',\rho',\cF')$ is an isomorphism $\alpha:(X,\iota,\lambda,\rho)\rightarrow (X',\iota',\lambda',\rho')$ in $\cN_{n,\epsilon}^\Pap(S)$ such that $\alpha^*(\cF')=\cF$. Again we drop the subscript $\epsilon$ in $\cN_{n,\epsilon}^\Kra$ when we do not emphasize on the sign.
	\end{definition}

	By \cite{Kr} (see also \cite[Proposition 2.7]{Shi2}), the natural forgetful functor $\Phi:\cN_n^\Kra\rightarrow \cN_n^\Pap$ forgetting $\cF$ is the blow up of $\cN_n^\Pap$ along its singular locus Sing.  For each point $\Lambda\in \mathrm{Sing}$, its inverse image $\Phi^{-1}(\Lambda)$ is an exceptional divisor $\Exc_\Lambda$ isomorphic to $\bP^{n-1}_k$.

	\subsection{Special cycles}\label{subsec:specialcycles}
	\begin{definition}
		For an $\Oo_F$-lattice $L$ of $\bV$, define $\cZ^\Pap(L)$ to be the subfunctor of $\cN_n^\Pap$ sending each $S\in \Nilp \Oo_{\breve F}$ to the isomorphism classes of tuples $(X,\iota,\lambda,\rho)\in \cN_n^\Pap(S)$ such that for any $x\in L$ the quasi-homomorphism
		\[\rho^{-1}\circ x\circ \rho_\cG: \cG\times_{S} \bar{S} \rightarrow X\times_{S} \bar{S}\]
		extends to a homomorphism $\cG_S\rightarrow X$. For $\bx\in \bV^m$, we let $\cZ^\Pap(\bx)\coloneqq \cZ^\Pap (L)$ where $L=\spa \{\bx\}$. Let
		\[\cZ^\Kra(\bx)=\cZ^\Kra(L)\coloneqq \cZ^\Pap(L)\times_{\cN_n^\Pap} \cN_n^\Kra.\]
		By Grothendieck-Messing theory $\cZ^\Pap(L)$ (hence $\cZ^\Kra(L)$) is a closed formal subscheme of $\cN_n^\Pap$ (respectively $\cN_n^\Kra$). We sometimes add the subscript $ _{n,\epsilon}$ to $\cZ^\Pap(L)$, $\cZ^\Pap(\bx)$, $\cZ^\Kra(L)$ and $\cZ^\Kra(\bx)$ to indicate their ambient moduli spaces.
	\end{definition}

	\begin{definition}\label{def:stricttransformdivisor}
		For an $\Oo_F$-lattice $L\subset \bV$, define $\tZ(L)$ to be the strict transform (see the definition after \cite[Chapter II, Corrollary 7.15]{hartshorne2013algebraic}) of $\cZ^\Pap(L)$ under the blow up $\cN_n^\Kra\rightarrow \cN_n^\Pap$.
	\end{definition}

	\begin{proposition}\label{prop:cZxunimodular}
		Suppose $\chi(\bV)=\epsilon$. Let $L$ be a self-dual lattice of rank $m$ in $\bV$ with $\eta=\chi(L)$.  We have
		\[\cZ_{n,\epsilon}^\Pap(L)\cong \cN_{n-m,\epsilon \eta}^\Pap,\hbox{ and } \tZ_{n,\epsilon}(L)\cong \cN_{n-m,\epsilon \eta}^\Kra.\]
	\end{proposition}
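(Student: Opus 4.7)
The plan is to construct an explicit isomorphism via the splitting induced by a self-dual sublattice. Fix an orthogonal basis $x_1,\dots,x_m$ of $L$ and let $\bX^L\coloneqq \bx_1(\bY)+\dots+\bx_m(\bY)\subset \bX$, viewed as a formal Hermitian $\Oo_F$-submodule of $\bX$ via the special quasi-homomorphisms $\bx_i\in L\subset \bV$. Because $L$ is self-dual, the matrix $\big(\lambda_\bY^{-1}\circ \bx_j^{\vee}\circ \lambda_\bX\circ \bx_i\big)_{ij}$ is exactly the Gram matrix of $L$, hence lies in $\GL_m(\Oo_F)$. This implies that the polarization $\lambda_\bX$ restricted to $\bX^L$ is principal, so $(\bX^L,\iota_\bX|_{\bX^L},\lambda_\bX|_{\bX^L})$ is a framing formal Hermitian $\Oo_F$-module of signature $(0,m)$ (since the summands $\bx_i(\bY)$ all have signature $(0,1)$), and the orthogonal decomposition $\bX=\bX^L\obot\bX^Y$ exists with $\bX^Y$ of signature $(1,n-m-1)$. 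Its associated Hermitian space satisfies $\chi(\bV^Y)=\chi(\bV)/\chi(L)=\epsilon\eta$, so $\bX^Y$ realizes the framing of $\cN^{\Pap}_{n-m,\epsilon\eta}$.

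Next I would construct the isomorphism on $S$-points. Given $(X,\iota,\lambda,\rho)\in\cZ_{n,\epsilon}^\Pap(L)(S)$, the defining condition says each $\bx_i$ extends via $\rho$ to an honest $\Oo_F$-homomorphism $x_i\colon \cG\to X$. Self-duality of $L$ together with Grothendieck--Messing (or direct computation on Dieudonn\'e modules) implies that the induced map $\cG^{L}\coloneqq \bigoplus_i \cG\to X$ is a closed immersion with principally polarized image, so there is a canonical orthogonal decomposition $X\cong \cG^L\obot Y$ in the category of principally polarized formal Hermitian $\Oo_F$-modules over $S$, compatible with the framing $\bX=\bX^L\obot \bX^Y$. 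The tuple $(Y,\iota|_Y,\lambda|_Y,\rho|_Y)$ is then a point of $\cN^{\Pap}_{n-m,\epsilon\eta}(S)$. Conversely, for a point $(Y,\dots)$ of $\cN^{\Pap}_{n-m,\epsilon\eta}(S)$, the tuple $(\cG^L\obot Y,\dots)$ lies in $\cZ^{\Pap}_{n,\epsilon}(L)(S)$, with the $\bx_i$ extending by construction. These are mutually inverse, giving the first isomorphism.

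For the Kr\"amer statement, I would track the filtration $\cF$ through the decomposition $\Lie X=\Lie \cG^L\oplus \Lie Y$. Since $\bX^L$ has signature $(0,m)$, the action of $\Oo_F$ on $\Lie \cG^L$ factors through the conjugate of the structural morphism, so the signature condition forces $\cF\subset \Lie Y$, and $\cF$ is automatically a rank--one direct summand satisfying the Kr\"amer condition for $Y$. Thus the isomorphism $\cZ^{\Pap}_{n,\epsilon}(L)\cong \cN^{\Pap}_{n-m,\epsilon\eta}$ upgrades to an isomorphism of the pullback $\cZ^{\Kra}_{n,\epsilon}(L)=\cZ^{\Pap}_{n,\epsilon}(L)\times_{\cN^{\Pap}_n}\cN^{\Kra}_n$ with the pullback of $\cN^{\Pap}_{n-m,\epsilon\eta}$ along the blow-up $\cN^{\Kra}_n\to \cN^{\Pap}_n$ at the singular locus. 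Under the splitting, the singular (superspecial) locus of $\cZ^{\Pap}_{n,\epsilon}(L)$ matches the singular locus of $\cN^{\Pap}_{n-m,\epsilon\eta}$, and taking strict transforms (rather than total transforms) removes exactly the exceptional divisors over these points; since $\cN^{\Kra}_{n-m,\epsilon\eta}$ is by definition the blow-up of $\cN^{\Pap}_{n-m,\epsilon\eta}$ at its singular locus, we obtain $\tZ_{n,\epsilon}(L)\cong \cN^{\Kra}_{n-m,\epsilon\eta}$.

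The main obstacle is justifying the orthogonal splitting $X\cong \cG^L\obot Y$ globally over an arbitrary base $S\in\Nilp \Oo_{\breve F}$ (not just at geometric points), which requires showing that the idempotent projecting onto $\cG^L$ lifts through nilpotent thickenings; this is where self-duality of $L$ and Grothendieck--Messing are both essential, since self-duality makes the image of $\cG^L\to X$ a direct summand as a polarized $\Oo_F$-module rather than merely up to isogeny.
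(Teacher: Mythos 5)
Your argument for $\cZ^\Pap_{n,\epsilon}(L)\cong \cN^\Pap_{n-m,\epsilon\eta}$ takes essentially the same route as the paper: split off $\cG^L$ using the polarization. (The paper does the rank-one case via the explicit idempotent $e=\bx_0\circ\bx_0^*$ with $\bx_0^*=\lambda_\cG^{-1}\circ\bx_0^\vee\circ\lambda$, then iterates over an orthogonal basis; you bundle all $m$ directions at once.) However, the worry you flag at the end about Grothendieck--Messing is unnecessary and slightly misleading: once the $\bx_i$ have extended to honest homomorphisms $x_i\colon\cG\to X$ over $S$, the projector $e$ is already an actual idempotent in $\End(X)$ because $x_i^*\circ x_j$ computes the Gram matrix, which is invertible by self-duality. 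No lifting of idempotents through nilpotent thickenings is required; the splitting is given by a closed formula over any base.

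The second half of your argument contains a genuine error. The claim that \emph{the signature condition forces $\cF\subset\Lie Y$} is false over the special fiber, where the structural morphism $s\colon\Oo_F\to\Oo_S$ coincides with its conjugate $\bar{s}$ (both factor through $\F_\q$, and $\pi\mapsto 0$). In characteristic $p$ the Kr\"amer condition does \emph{not} force $\cF$ to avoid $\Lie\cG^L$, and indeed it cannot: over a superspecial point $\Lambda_0\in\cZ^\Pap(L)(k)$, the fiber of $\cZ^\Kra(L)\to\cZ^\Pap(L)$ is the full exceptional divisor $\Exc_{\Lambda_0}\cong\bP^{n-1}_k$, whereas the corresponding fiber in $\cN^\Kra_{n-m,\epsilon\eta}$ is a $\bP^{n-m-1}_k$. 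If your claim held, the two would have the same dimension, which contradicts the discrepancy $m$. Taken at face value your argument would prove $\cZ^\Kra(L)\cong\cN^\Kra_{n-m,\epsilon\eta}$, which is false, and it is inconsistent with your own subsequent remark that one must pass from the total transform to the strict transform. The paper sidesteps the filtration entirely: since $\tZ_{n,\epsilon}(L)$ is by definition the strict transform, one invokes the general fact (Hartshorne, II.7.15) that the strict transform of a closed subscheme under a blow-up along a center $C$ is the blow-up of that subscheme along its intersection with $C$; the isomorphism $\cZ^\Pap_{n,\epsilon}(L)\cong\cN^\Pap_{n-m,\epsilon\eta}$ identifies the superspecial locus on both sides, hence $\tZ_{n,\epsilon}(L)$ is the blow-up of $\cN^\Pap_{n-m,\epsilon\eta}$ along its superspecial points, which is $\cN^\Kra_{n-m,\epsilon\eta}$ by definition. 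You should replace the filtration-tracking argument with this appeal to the universal property of strict transforms.
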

	\begin{proof}
		Let us start with the case $L=\spa\{\bx_0\}$ where $\bx_0\in \bV$. Assume that $u=h(\bx_0,\bx_0)$.
		Multiplying the Hermitian form $(\, ,)_\bX$ on $C$ by $u^{-1}$ does not affect the various moduli spaces involved. So we can perform this and assume that $h(\bx_0,\bx_0)=1$. Moreover, the sign of its orthogonal complement in $\bV$ becomes
		\[\epsilon_1=\epsilon \cdot \chi(u^{-1})\cdot \chi(u^{-(n-1)})\cdot\chi(-1)^{n-1}=\epsilon\chi(u)^{n}\chi(-1)^{n-1}.\]
		Then for $(X,\iota,\lambda,\rho)\in \cZ^\Pap_{n,\epsilon}(\bx_0)(S)$, we define
		\[\bx_0^*\coloneqq \lambda_\cG^{-1}\circ \bx_0^\vee\circ \lambda, \ e\coloneqq \bx_0\circ \bx_0^* \in \End(X).\]
		By the fact that $h(\bx_0,\bx_0)=1$ we know that $e$ is an idempotent. It is routine to check that
		\[((1-e)X,(1-e)\iota, (1-e^\vee)\lambda (1-e),\rho (1-e))\]
		is an object in $\cN_{n-1,\epsilon_1}^\Pap(S)$. Conversely given $(Y,\iota_Y,\lambda_Y,\rho_Y)\in \cN_{n-1,\epsilon_1}^\Pap(S)$, the object
		\[(Y\times \cG_S, \iota_Y\times \iota_{\cG_S}, \lambda_Y\times \lambda_{\cG_S},g\circ(\rho_Y\times \rho_{\cG_S}))\]
		is in $\cZ_{n,\epsilon}^\Pap(\bx_0)(S)$ where $g\in \rU(\bV)$ such that $g^{-1}\bx_0$ is the inclusion $0\times \mathrm{id}:\bY \rightarrow \bX_{n-1} \times \bY$ where $\bX_{n-1}$ is the framing object of $\cN_{n-1,\epsilon_1}^\Pap$.
      The above two functors are inverse to each other. This shows that $\cZ_{n,\epsilon}^\Pap(\bx_0)\cong \cN_{n-1,\epsilon_1}^\Pap$. For general $L$ of rank $m$ and determinant $u$, find a basis with Gram matrix $\{1,\ldots,1,u\}$ and apply the above result repeatedly. So we have $\cZ_{n,\epsilon}^\Pap(L)\cong \cN_{n-m,\epsilon_m}^\Pap$ where
		\[\epsilon_m=\epsilon \chi(u)^{n-m+1}\chi(-1)^{(n-m)m+\frac{m(m-1)}{2}}.\]
		Notice that by scaling the Hermitian form by $(-1)^m u$ again we have $\cN_{n-m,\epsilon_m}^\Pap=\cN_{n-m,\epsilon \eta}^\Pap$.
		It then follows from \cite[Chapter II, Corollary 7.15]{hartshorne2013algebraic}  that $\tZ_{n,\epsilon}(L)$ is the blow up of $\cZ_{n,\epsilon}^\Pap(L)$ along its superspecial points, which is $\cN_{n-m,\epsilon \eta}^\Kra$.
	\end{proof}
	
	\begin{corollary}\label{cor:reducedimensionofcZbyone}
		Let $L$ be as in Proposition \ref{prop:cZxunimodular} and $\by\in \bV$ such that $\by\bot L$. Then
		\[\cZ_{n,\epsilon}^\Kra(\by)\cap\tZ_{n,\epsilon}(L)\cong\cZ_{n-m,\epsilon \eta}^\Kra(\by).\]
	\end{corollary}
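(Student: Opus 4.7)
The plan is to upgrade the identification $\tZ_{n,\epsilon}(L) \cong \cN_{n-m,\epsilon\eta}^\Kra$ from the proof of Proposition \ref{prop:cZxunimodular} into a correspondence that matches special divisors on both sides. By iterating the construction, it suffices to treat the case $m=1$, so $L = \spa\{\bx_0\}$; after rescaling the Hermitian form as in the proof of Proposition \ref{prop:cZxunimodular} we may also assume $h(\bx_0,\bx_0) = 1$, and the orthogonal complement of $\bx_0$ in $\bV$ will play the role of the Hermitian space for $\cN_{n-1,\epsilon\eta}^\Kra$.

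Under the equivalence of Proposition \ref{prop:cZxunimodular}, a point $(X,\iota,\lambda,\rho,\cF) \in \tZ_{n,\epsilon}(\bx_0)(S)$ corresponds to its $(1-e)$-part $(Y,\iota_Y,\lambda_Y,\rho_Y,\cF_Y) \in \cN_{n-1,\epsilon\eta}^\Kra(S)$ where $e = \bx_0 \circ \bx_0^*$ is the idempotent built in that proof. At the framing level this rests on the decomposition $\bX \cong (1-e)\bX \times e\bX$ of polarized Hermitian formal $\Oo_F$-modules, with $e\bX \cong \bY$ via $\bx_0$. The orthogonality $\by \perp \bx_0$ then yields
\[
\bx_0^* \circ \by \;=\; \lambda_\cG^{-1}\circ \bx_0^\vee \circ \lambda_\bX \circ \by \;=\; h(\by,\bx_0) \;=\; 0,
\]
so $e \circ \by = 0$ and $\by$ factors through $(1-e)\bX$. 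Viewed as an element of $\Hom_{\Oo_F}(\bY,(1-e)\bX)\otimes \Q$, this is precisely $\by$ seen inside the Hermitian space attached to $\cN_{n-1,\epsilon\eta}^\Kra$.

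It remains to match extension conditions. On $\tZ_{n,\epsilon}(\bx_0)$ the decomposition $X \cong Y \times \cG$ holds at the level of $p$-divisible groups with $\Oo_F$-action and polarization (not merely up to quasi-isogeny), since $e$ is a genuine idempotent; the framing $\rho$ is compatible with $\rho_Y \times \rho_\cG$. Consequently the quasi-homomorphism $\rho^{-1} \circ \by \circ \rho_\cG : \cG \to X$ over $\bar S$ factors through the $Y$ summand, and it extends to a true homomorphism $\cG \to X$ over $S$ if and only if its $Y$-component extends to $\cG \to Y$. The latter condition is exactly the defining condition for $(Y,\iota_Y,\lambda_Y,\rho_Y,\cF_Y) \in \cZ_{n-1,\epsilon\eta}^\Kra(\by)(S)$, which yields the claimed isomorphism of formal subschemes.

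The one point that demands care is precisely this integrality of the product decomposition $X \cong Y \times \cG$ on $\tZ_{n,\epsilon}(\bx_0)$, so that the two extension conditions are literally equivalent; this is implicit in the construction recalled in the proof of Proposition \ref{prop:cZxunimodular}. All other steps are bookkeeping, and the extension to general $m$ follows by induction, peeling off one rank-one orthogonal summand of $L$ at a time.
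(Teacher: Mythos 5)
Your argument is correct and is precisely the one the paper has in mind: the corollary is stated without an explicit proof, and the intended reasoning is to trace the construction of Proposition \ref{prop:cZxunimodular} exactly as you do — orthogonality $\by\perp\bx_0$ gives $\bx_0^*\circ\by=0$, so the quasi-homomorphism $\rho^{-1}\circ\by\circ\rho_\cG$ factors through the $(1-e)$-summand $Y$, and rigidity of $p$-divisible groups over $S$ then makes the extension condition on $\tZ_{n,\epsilon}(\bx_0)$ literally equivalent to the extension condition defining $\cZ^\Kra_{n-1,\epsilon\eta}(\by)$. (One cosmetic remark: since $L$ has rank $m$ in Proposition \ref{prop:cZxunimodular}, the right-hand side of the corollary should read $\cZ_{n-m,\epsilon\eta}^\Kra(\by)$, which is consistent with your treatment of the general case by peeling off rank-one orthogonal summands.)
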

	
	\begin{remark}\label{rmk:tZLandtZxnotthesame}
		It follows directly from the definition that $\tZ(L)$ is a closed  formal subscheme of $\tZ(\bx_1)\cap\cdots \cap \tZ(\bx_r)$ if $\{\bx_1,\ldots,\bx_r\}$ is a basis of $L$. However, in general, these two cannot be identified.
	\end{remark}
	
	\subsection{Bruhat-Tits stratification}\label{subsec:bruhat tits in general}
 For an $\Oo_{\breve F}$-lattice $M$ of $N$, define $M^\sharp$ to be the dual lattice of $M$ with respect to the form $(\, ,)_\bX$.
	Recall the following results.
	\begin{proposition} (\cite[Proposition 2.2 and 2.4]{RTW}) \label{prop:k points of N}  Let
		$\cN(k)$ be the set of $\Oo_{\breve F}$-lattices
		\begin{align*}
			\cN(k)=\{M\subset C\otimes_F \breve{F}\mid M^{\sharp}=M,\,  \pi \tau(M) \subset M \subset \pi^{-1} \tau(M),\,
	 \dim_{k} (M + \tau(M))/M \le 1 \}.
		\end{align*}
		Then
		the map
		\[
		\cN^\Pap(k) \rightarrow \mathcal \cN(k), \quad x=(X, \iota, \lambda, \rho) \mapsto M(x)= \rho (M(X))\subset N
		\]
		is a bijection.
	\end{proposition}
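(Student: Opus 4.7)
The statement is the standard translation of the moduli problem for $\cN^\Pap$ into a purely lattice-theoretic problem via Dieudonn\'e theory, and my plan is to prove it by carefully tracking how each structure on $(X,\iota,\lambda,\rho)$ is encoded on the side of the rational relative Dieudonn\'e module. First I would invoke covariant Dieudonn\'e theory for strict $\Oo_{F_0}$-modules over $k$: this associates to each $(X,\iota)$ an $\Oo_F\otimes_{\Oo_{F_0}}\Oo_{\breve F_0}$-lattice $M(X)$ inside the isocrystal $N$, equipped with operators $F$ and $V$ satisfying $FV=VF=\pi_0$. The quasi-isogeny $\rho$ of height $0$ identifies $M(X)$ with a sublattice $M\subset N$ commensurable with $M(\bX)$ and having the same $\Oo_{\breve F_0}$-volume, which on the level of isolated invariants is exactly the requirement that one can reconstruct a quasi-isogeny back to $\bX$.

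Next I would translate the remaining structures one at a time. The principal polarization $\lambda$ corresponds under Dieudonn\'e theory to a perfect alternating form on $M(X)$ whose $F$-linearized version is compatible with the Rosati involution; combined with the $\Oo_F$-action and the requirement that the Rosati involution restricts to Galois conjugation on $\Oo_F$, this produces a Hermitian form on $M$ and the principality of $\lambda$ is equivalent to $M^\sharp=M$. On the other hand, the inclusions $V(M)\subset M$ and $F(M)\subset M$ inherent to any Dieudonn\'e lattice translate, via the identification $\tau=\pi V^{-1}$ and the relation $FV=\pi_0$, into the double inclusion
\[
\pi\,\tau(M)\subset M\subset \pi^{-1}\tau(M).
\]
The supersingularity hypothesis ensures that $N$ is isoclinic of slope $1/2$, so $C=N^\tau$ is indeed an $F$-form of $N$, and any lattice $M\subset N$ satisfying the above is determined by its intersection with $C\otimes_F\breve F$ in the manner indicated.

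The main obstacle, and where most of the work lies, is the equivalence between the signature $(1,n-1)$ condition on $(X,\iota)$ and the numerical condition $\dim_k(M+\tau(M))/M\le 1$. My plan is to analyze $\Lie X=M/V(M)$ as an $\Oo_F\otimes_{\Oo_{F_0}}k$-module and decompose it according to the two characters of $\Oo_F$ reducing to the structure map; the Kottwitz-type Kr\"amer wedge conditions (i) and (ii) then translate into a bound on the dimension of the $\pi$-isotypic pieces of $V(M)/\pi M$. Rewriting the quotient $(M+\tau(M))/M\cong \tau(M)/(M\cap\tau(M))$ and using $\tau=\pi V^{-1}$ transports this bound to the inequality in the statement; conversely, a lattice $M$ satisfying all the numerical conditions produces, via the inverse of the Dieudonn\'e functor, a $p$-divisible group whose Lie algebra automatically satisfies the wedge conditions in signature $(1,n-1)$.

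Finally I would close the argument by verifying that the map is bijective on $k$-points. Injectivity is immediate from the fact that the covariant Dieudonn\'e functor is fully faithful on $p$-divisible groups over $k$, together with the fact that a quasi-isogeny of height $0$ is determined by the induced embedding of Dieudonn\'e lattices into $N$. For surjectivity, given any $M$ in the stated set I would reconstruct $X$ from $M$ by Dieudonn\'e theory, define $\iota$ from the $\Oo_F$-structure on $M$, define $\lambda$ from the self-duality $M^\sharp=M$, and define $\rho$ from the inclusion $M\subset N=M(\bX)\otimes\Q$. Checking that the resulting quadruple lies in $\cN^\Pap(k)$ reduces to the signature analysis already carried out, completing the bijection.
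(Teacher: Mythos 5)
The paper states this result as a direct citation of \cite[Propositions 2.2 and 2.4]{RTW} and gives no proof of its own, so there is nothing in the text to compare against; your outline is essentially the argument given in that reference. Your translation of the Dieudonn\'e lattice stability $VM\subset M$, $FM\subset M$ into $\pi\tau(M)\subset M\subset\pi^{-1}\tau(M)$ via $F=\pi\tau$ is correct, and the identification of the wedge condition $\wedge^2(\iota(\pi)\mid \Lie X)=0$ with $\dim_k(M+\tau(M))/M\le 1$ follows from $\Lie X\cong M/VM$ and $(\pi M+VM)/VM\cong (\tau(M)+M)/M$, exactly as you indicate. Two small points worth tightening: the conditions (i)--(ii) in the paper's definition of a signature $(1,n-1)$ object are the Kottwitz and Pappas wedge conditions, not the Kr\"amer condition (the latter concerns the extra filtration $\cF$ defining $\cN^{\mathrm{Kra}}$, which plays no role here); and in the surjectivity step one should explicitly note that $M^\sharp=M$ forces $M$ to have the same volume as the self-dual $M(\bX)$, so the reconstructed $\rho$ automatically has height $0$.
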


	We say a lattice $\Lambda\subset C$ is a vertex lattice if $\pi \Lambda\subseteq \Lambda^\sharp \subseteq \Lambda$ where $\Lambda^\sharp$ is dual lattice of $\Lambda$ with respect to $(\, ,)_\bX$, and we call $t=\mathrm{dim}_{\F_\q}(\Lambda/\Lambda^\sharp)$ the type of $\Lambda$. We denote the set of vertex lattices (respectively, of type $t$) by $\cV$ (respectively, $\cV^t$). We say two vertex lattice $\Lambda_1$ and $\Lambda_2$ are neighbours if $\Lambda_1\subset \Lambda_2$ or $\Lambda_2\subset \Lambda_1$.
	Then we can define a simplicial complex $\cL$ as follows.
	When $n$ is odd or when $n$ is even and $C$ is non-split, then an $r$-simplex is formed by $\Lambda_0,\ldots,\Lambda_r$ if any two members of this set are neighbours. When $n$ is even and $C$ is split, we refer to discussion before \cite[3.4]{RTW} for the definition of $\cL$. We also use $\cL_{n,\epsilon}$ to denote $\cL$ if $C$ has dimension $n$ and $\chi(C)=\epsilon$. Again when $n$ is odd, $\cL_{n,1}=\cL_{n,-1}$, hence we use $\cL_n$ to denote it.
	
	By results in Sections 4 and 6 of loc. cit., to each $\Lambda\in\cV^t$ we can associate a Deligne-Lusztig varieties $\cN_\Lambda$ and $\cN_\Lambda^\circ$ of dimension $t/2$, such that
	\begin{align*}
		\cN_{\Lambda}(k)=\{M\in \cN(k)\mid M \subset \Lambda \otimes_{\Oo_F}\Oo_{\breve{F}}\},
	\end{align*}
	and
	\begin{align*}
		\cN_{\Lambda}^{\circ}(k)=\{M\in \cN(k)\mid \Lambda(M)=\Lambda\}.
	\end{align*}
	Here $\Lambda(M)$ is the minimal vertex lattice such that $\Lambda(M)\otimes_{\Oo_F} \Oo_{\breve F}$ contains $M$ which always exists by \cite[Proposition 4.1]{RTW}.
	By Theorem 1.1 of loc.cit., we know that
	\begin{align*}
		\cN_{\Lambda}\coloneqq \bigsqcup_{\Lambda'\in\cL,\Lambda'\subseteq \Lambda}\cN_{\Lambda'}^\circ,
	\end{align*}
	and
	\[\cN^\Pap_{\red}=\bigsqcup_{\Lambda\in\cL}\cN_\Lambda^\circ\]
	where each $\cN_\Lambda$ is a closed subvariety of $\cN^\Pap_{\red}$.
	By loc. cit., we also know that
	\[\cN_\Lambda\cap\cN_{\Lambda'}=\begin{cases}
		\cN_{\Lambda\cap\Lambda'} & \text{ if } \Lambda\cap\Lambda'\in \cV,\\
		\emptyset  & \text{otherwise}.
	\end{cases}\]
	For a lattice $L\subset \bV$, define
	\begin{equation}\label{eq:cVL}
		\cV(L)\coloneqq \{\Lambda\in \cV\mid L\subseteq \Lambda^\sharp\}, \text{ and }\cV^t(L)\coloneqq \{\Lambda\in \cV^t\mid L\subseteq \Lambda^\sharp\}.
	\end{equation}
	When $L=\spa \{\bx\}$ we also denote $\cV(L)$ (respectively, $\cV^t(L)$) by $\cV(\bx)$ (respectively, $\cV^t(\bx)$).
	For any subset $S$ of $\cV$, we define $\cL(S)$ to be the subcomplex of $\cL$ such that a simplex is in $\cL(S)$ if and only if every vertex in it is in $S$.  For a lattice $L$ of $\cV$ and $\bx\in C$, define
	\begin{equation}\label{eq:cLL}
		\cL(L)=\cL(\cV(L)).
	\end{equation}
	When $L=\spa \{\bx\}$ we also denote $\cL(L)$ by $\cL(\bx)$.
	\subsection{Horizontal and vertical part}
	A formal scheme $X$ over $\Spf \Oo_{\breve F}$ is called horizontal (respectively, vertical) if it is flat over $\SpfOF$ (respectively, $\pi$ is locally nilpotent on $\Oo_X$). For a formal scheme $X$ over $\Spf O_{\breve F}$, its horizontal part $X_{h}$ is canonically defined by the ideal sheaf $\Oo_{X,\mathrm{tor}}$ of torsion sections on $\Oo_X$. If $X$ is Noetherian, there exists a $m\in \Z_{>0}$ such that $\pi^m \Oo_{X,\mathrm{tor}}=0$. We define the vertical part $X_{v}\subset X$ to be the closed formal subscheme defined by the ideal sheaf $\pi^m \Oo_X$. Since $ \Oo_{X,\mathrm{tor}}\cap \pi^m \Oo_X=\{0\}$,
	we have the following decomposition by primary decomposition
	\begin{equation}\label{eq:horizontal vertical decomposition of scheme}
		X=X_h\cup X_v
	\end{equation}
	as a union of horizontal and vertical formal subschemes. Notice that the horizontal part  $X_{h}$ is canonically defined whereas the vertical part  $X_{v}$ depends on the choice of $m$.
	
	\begin{lemma}\label{lem:cZNoetherian}
		For a lattice $L^\flat\subset \bV$ of rank greater or equal to $n-1$ with non-degenerate Hermitian form, $\cZ^\Kra(L^\flat)$ is Noetherian.
	\end{lemma}
	\begin{proof}
		The lemma can be proved as in \cite[Lemma 2.9.2]{LZ}.
	\end{proof}
	
	\begin{lemma}\label{lem:cZ_vsupportedoncN_red}
		For a rank $n-1$ lattice $L^\flat\subset \bV$ with non-degenerate Hermitian form, $\cZ^\Kra(L^\flat)_v$ is supported on the reduced locus $\cN^\Kra_{\red}$ of $\cN^\Kra$, i.e., $\Oo_{\cZ^\Kra(L^\flat)_v}$ is annihilated by a power of the ideal sheaf of $\cN^\Kra_{\red}$.
	\end{lemma}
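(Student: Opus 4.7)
The plan is to reduce the claim to a formal consequence of noetherianness combined with the definition of the vertical part. The key input is Lemma \ref{lem:cZnoetherian}: $\cZ^\Kra(L^\flat)$ is a noetherian formal scheme, and since any closed formal subscheme of a noetherian formal scheme is again noetherian, $\cZ^\Kra(L^\flat)_v$ is noetherian as well.

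First I would upgrade local nilpotence to global nilpotence. By definition the vertical part has $\pi$ locally nilpotent on $\Oo_{\cZ^\Kra(L^\flat)_v}$; combined with quasi-compactness coming from noetherianness, this yields a uniform integer $N$ such that $\pi^N$ annihilates $\Oo_{\cZ^\Kra(L^\flat)_v}$. Consequently $\cZ^\Kra(L^\flat)_v$ is a closed subscheme of the thickened special fiber $\cN^\Kra \times_{\SpfOF} \Spec(\Oo_{\breve F}/\pi^N)$, whose underlying topological space coincides with $|\cN^\Kra_{red}|$. In particular the closed immersion $\cZ^\Kra(L^\flat)_v\hookrightarrow \cN^\Kra$ factors set-theoretically through $\cN^\Kra_{red}$.

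Next, let $\mathcal{I}$ denote the ideal sheaf of $\cN^\Kra_{red}$ inside $\cN^\Kra$, i.e.\ the preimage of the nilradical of the structure sheaf of the special fiber. The topological factorization just obtained translates into the statement that the image of $\mathcal{I}$ in $\Oo_{\cZ^\Kra(L^\flat)_v}$ lies inside the nilradical of $\Oo_{\cZ^\Kra(L^\flat)_v}$. Because $\cZ^\Kra(L^\flat)_v$ is noetherian, its nilradical is itself a nilpotent ideal, so some power $\mathcal{I}^k$ annihilates $\Oo_{\cZ^\Kra(L^\flat)_v}$, which is the desired conclusion.

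The proof is essentially formal once noetherianness is in hand, so there is no serious obstacle; the rank $n-1$ and nondegeneracy hypotheses on $L^\flat$ enter only through Lemma \ref{lem:cZnoetherian}. The one conceptual point worth emphasizing is the passage from ``locally nilpotent ideal on a noetherian formal scheme'' to ``globally nilpotent ideal'', which is exactly what forces a topological containment to upgrade to the stated ideal-theoretic annihilation.
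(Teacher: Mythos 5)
Your proof is correct and follows the same strategy as the paper, which simply cites \cite[Lemma 5.1.1]{LZ}: the content is that noetherianness of $\cZ^\Kra(L^\flat)$ (Lemma \ref{lem:cZnoetherian}) promotes local $\pi$-nilpotence, which holds by definition on the vertical part, to global $\pi$-nilpotence. One small remark: the second half of your argument (the detour through the nilradical of $\Oo_{\cZ^\Kra(L^\flat)_v}$) is more general than needed here, since $\cN^\Kra$ has semi-stable reduction, so its special fiber is already reduced and the ideal sheaf of $\cN^\Kra_{red}$ is just $(\pi)$; your first paragraph alone then finishes the proof.
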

	\begin{proof}
		The proof is the same as that of \cite[Lemma 5.1.1]{LZ}.
	\end{proof}
	
	\subsection{Derived special cycles}
	For a locally Noetherian formal scheme $X$ together with a formal subscheme $Y$, denote by $K_0^Y(X)$ the Grothendieck group of finite complexes of coherent locally free $\Oo_X$-modules acyclic outside $Y$. For such a complex $A^\bullet$, denote by $[A^\bullet]$ the element in $K_0^Y(X)$ represented by it. We use $K_0(X)$
	to denote $K_0^X(X)$. Denote by $\rF^i K_0^Y(X)$ the codimension $i$ filtration on $K_0^Y(X)$ and $\Gr^i K_0^Y(X)$ its $i$-th graded piece.
	We have a cup product $\cdot$ on $K_0^Y(X)$ defined by tensor product of complexes:
	\[[A_1^\bullet]\cdot [A_2^\bullet]=[A_1^\bullet \otimes A_2^\bullet].\]
	When $X$ is a scheme, the cup product satisfies (\cite[Section I.3, Theorem 1.3]{soule1994lectures})
	\begin{equation}\label{eq:cupproductfiltration}
		\rF^i K_0^Y(X)_\Q \cdot \rF^j K_0^Y(X)_\Q\subset \rF^{i+j} K_0^Y(X)_\Q.
	\end{equation}
	It is expected that \eqref{eq:cupproductfiltration} is also true when $X$ is a formal scheme. We will only need special cases of this fact which can be checked directly, see, for example, Lemma \ref{lem:Excselfintersect} and \ref{lem:cZisinF^2}.
	
	Let $K'_0(Y)$  be the Grothendieck group of coherent sheaves of $\Oo_Y$-modules on $Y$. When $X$ is regular we have the following isomorphism
	\begin{equation}\label{eq:K^Y and K'(Y)}
		K^Y_0(X)\cong K'_0(Y).
	\end{equation}
	In particular, $K_0(X)\cong K'_0(X)$.

      When $X$ is a regular scheme of dimension $d$, there is an isomorphism of graded rings defined by the Chern character:
      \[\mathrm{ch}:K_0(X)_\Q\cong  \bigoplus_{i=1}^d \mathrm{CH}^i(X)_\Q.\]
      In particular we have
      \[\Gr^i K_0(X)_\Q\cong \mathrm{CH}^i(X)_\Q. \]
	
	Recall that for $\bx\in \bV$, $\cZ^\Kra(\bx)$ is a divisor,
 see \cite[Proposition 4.3]{Ho2}.
	\begin{definition}
		For $\bx=(\bx_1,\ldots,\bx_r)\in \bV^r$, define ${}^\bL \cZ^\Kra(\bx)$ to be
		\begin{equation}\label{eq:derivedcZ}
			[\Oo_{\cZ^\Kra(\bx_1)}\otimes^\bL\cdots \otimes^\bL \Oo_{\cZ^\Kra(\bx_r)}]\in K_0^{\cZ^\Kra(\bx)} (\cN^\Kra)
		\end{equation}
		where $\otimes^\bL$ is the derived tensor product of complexes of  coherent locally free sheaves on $\cN^\Kra$. By \cite[Theorem B]{Ho2}, ${}^\bL\cZ^\Kra(\bx)$ only depends on $L\coloneqq \spa\{\bx\}$, hence can be denoted as ${}^\bL\cZ^\Kra(L)$.
	\end{definition}
	
	\begin{definition}
		When  $L$ has rank $n$, we  define the intersection number
		\begin{equation}\label{eq:intL}
			\mathrm{Int}(L)=\chi(\cN^\Kra,{}^\bL\cZ^\Kra(L))
		\end{equation}
		where $\chi$ is the Euler characteristic.
	\end{definition}
	
	\begin{lemma}\label{lem:finiteness of Int L}
		$\cZ^\Kra(L)$ is properly supported on $\cN_{\red}^\Kra$. In particular, $\mathrm{Int}(L)$ is finite.
	\end{lemma}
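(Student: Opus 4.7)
The plan is to establish two things in sequence: (i) the set-theoretic support of $\cZ^\Kra(L)$ is contained in $\cN^\Kra_{red}$, and (ii) the resulting support is proper over $k$. Finiteness of $\mathrm{Int}(L)$ will then be immediate. I would begin by choosing a direct-sum decomposition $L = L^\flat \oplus \langle \bx \rangle$ with $L^\flat$ of rank $n-1$. Then $\cZ^\Kra(L) = \cZ^\Kra(L^\flat) \cap \cZ^\Kra(\bx)$ scheme-theoretically, and since $\cZ^\Kra(L^\flat)$ is noetherian by Lemma \ref{lem:cZnoetherian} it decomposes as $\cZ^\Kra(L^\flat) = \cZ^\Kra(L^\flat)_h \cup \cZ^\Kra(L^\flat)_v$ into horizontal and vertical parts.

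For the vertical piece, Lemma \ref{lem:cZ_vsupportedoncN_red} already places $\cZ^\Kra(L^\flat)_v$ inside $\cN^\Kra_{red}$, so its intersection with $\cZ^\Kra(\bx)$ lies there too. The key step is the horizontal contribution. Using the explicit description of horizontal cycles from Section \ref{sec: horizontal part} (Proposition \ref{prop:horizontalpartofDL} combined with \eqref{eq: decomposition of Z(L) as sums of D(L)}), each horizontal component of $\cZ^\Kra(L^\flat)_h$ is a copy of $\Spf W_s$, flat over $\Spf \Oo_{\breve F}$ of relative dimension $0$. A generic-fiber point of $\cZ^\Kra(L^\flat)_h \cap \cZ^\Kra(\bx)$ would yield a characteristic-zero lift of the framing formal $\Oo_F$-module admitting all of $L$ as special endomorphisms; but a formal $\Oo_F$-module of dimension $n$ and $F_0$-height $2n$ with signature $(1,n-1)$ cannot carry a rank-$n$ space of special endomorphisms in characteristic zero, since $\bx \notin L^\flat \otimes F$ produces an obstruction to extending $\bx$ over the generic fiber of $\Spf W_s$. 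Hence $\cZ^\Kra(L^\flat)_h \cap \cZ^\Kra(\bx)$ is cut down to a closed subscheme of the special fiber of each $\Spf W_s$, and in particular is contained in $\cN^\Kra_{red}$.

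For properness, I would invoke Proposition \ref{prop:k points of N} to identify $\cZ^\Pap(L)(k)$ with the set of self-dual $\Oo_{\breve F}$-lattices $M \subset C \otimes_F \breve F$ containing $L$. Since $L$ has full rank $n$, any such $M$ is sandwiched between $L$ and $L^\sharp$, and the finiteness of $L^\sharp/L$ forces this set to be finite. Pulling back along the blowup $\cN^\Kra \to \cN^\Pap$, the support of $\cZ^\Kra(L)$ in $\cN^\Kra_{red}$ therefore meets only finitely many Bruhat-Tits strata $\cN_\Lambda^\circ$ and at most finitely many exceptional divisors $\Exc_\Lambda \cong \bP^{n-1}_k$. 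All of these are projective over $k$, so the support is a closed subscheme of a finite union of projective $k$-schemes, hence proper. Consequently $\mathrm{Int}(L) = \chi(\cN^\Kra, {}^\bL\cZ^\Kra(L))$ is a well-defined finite integer.

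The main obstacle is the characteristic-zero vanishing argument for the horizontal contribution: it rests on the non-existence of a generic-fiber lift supporting a Hermitian endomorphism lattice of full rank $n$. In practice this is cleanly implemented via the explicit structure of $\cZ^\Kra(L^\flat)_h$ developed in Section \ref{sec: horizontal part} rather than through an abstract deformation-theoretic argument, since each horizontal component is a one-parameter formal deformation along which the extra condition imposed by $\bx$ is non-trivially obstructed; the characteristic-zero picture then serves as the conceptual explanation of that explicit obstruction.
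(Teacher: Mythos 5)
Your proposal follows the same route as the paper, which simply refers to \cite[Lemma 2.10.1]{LZ}; the decomposition $L=L^\flat\oplus\Oo_F\bx$, the horizontal/vertical splitting of $\cZ^\Kra(L^\flat)$, and the observation that the generic fiber of a quasi-canonical lifting cycle carries only a rank-$(n-1)$ lattice of special homomorphisms (so that $\cZ^\Kra(\bx)$ meets $\tZ(M^\flat)^o$ in an Artinian scheme) are exactly the ingredients.

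There is one genuine misstep, though it is repairable. You claim that the self-dual $\Oo_{\breve F}$-lattices $M$ with $L\otimes\Oo_{\breve F}\subset M\subset L^\sharp\otimes\Oo_{\breve F}$ form a finite set because $L^\sharp/L$ is finite. That is false: $L^\sharp\otimes\Oo_{\breve F}/L\otimes\Oo_{\breve F}$ is a finite-\emph{length} $\Oo_{\breve F}$-module, but $\Oo_{\breve F}$ has the infinite residue field $k$, so such a module can have a positive-dimensional family of submodules, and indeed $\cZ^\Pap(L)(k)$ is frequently infinite (for instance it can contain an entire $\cN_\Lambda\cong\bP^1$ when $n=3$). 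Finiteness of $k$-points is not what properness needs. What you should count instead is the set of vertex lattices $\Lambda\subset C$ (these are $\Oo_F$-lattices defined over $F$) with $L\subset\Lambda^\sharp$: the chain $L\subset\Lambda^\sharp\subset\Lambda\subset L^\sharp$ sandwiches $\Lambda$ between $L$ and $L^\sharp$, and since $L^\sharp/L$ is a finite-length $\Oo_F$-module over the DVR $\Oo_F$ with \emph{finite} residue field $\F_q$, there are only finitely many such $\Lambda$. Hence $\cZ^\Pap(L)_{red}$ is contained in a finite union $\bigcup_{\Lambda\in\cV(L)}\cN_\Lambda$ of projective Deligne--Lusztig varieties, its preimage in $\cN^\Kra$ adjoins only finitely many $\Exc_{\Lambda_0}\cong\bP^{n-1}_k$, and properness follows. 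Your final sentence about finitely many strata is correct; it is the intermediate claim about finitely many $M$ that does not hold and is not needed.
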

	\begin{proof}
		This can be proved exactly the same way as \cite[Lemma 2.10.1]{LZ}.
	\end{proof}
	
	\subsection{Special difference cycles}
	Conjecture \ref{conj:main} and Theorem \ref{thm:ind formula reducing valuation} motivate us to make the following definition.
	\begin{definition}\label{def:differencecycle}
		For $L\subset \bV$ a rank $\ell$ lattice, define the special difference cycle $\cD(L)\in K_0^{\cZ^\Kra(L)} (\cN^\Kra) $ by
		\begin{equation}\label{eq:differencecycle}
			\cD(L)={}^\bL\cZ^\Kra(L)+\sum_{i=1}^\ell (-1)^i \q^{i(i-1)/2} \sum_{\substack{L\subset L'\subset \frac{1}{\pi} L\\ \mathrm{dim}_{\F_\q}(L'/L)=i}} {}^\bL\cZ^\Kra(L').
		\end{equation}
	\end{definition}
	One interesting observation is the following decomposition of ${}^\bL\cZ^\Kra(L)$.
	\begin{lemma}\label{lem: decomposition of Z(L) as sums of D(L)}
		For $L\subset \bV$ a lattice of rank $\ell$, we have the following identity in $K_0^{\cZ^\Kra(L)} (\cN^\Kra)$ where the summation is finite.
		$$
		{}^\bL\cZ^\Kra(L) =\sum_{\substack{L'\mathrm{ integral} \\L \subset L' \subset L_F}}   \cD(L').
		$$
	\end{lemma}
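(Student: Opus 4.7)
The plan is to substitute the definition \eqref{eq:differencecycle} of $\cD(L')$ into the right-hand side, swap the order of summation, and then identify the coefficient of each ${}^\bL\cZ^\Kra(L'')$ by means of a $q$-binomial identity. First I would handle finiteness: if $L$ is not integral, then no integral lattice $L'$ contains $L$ (since integrality of $L'$ would force integrality of $L$), so the sum is empty and both sides vanish, using that $\cZ^\Kra(L) = \emptyset$ for non-integral $L$ by \cite[Theorem 1.2]{Shi1}. If $L$ is integral, then any integral $L'$ with $L \subset L' \subset L_F$ satisfies $L' \subset (L')^\sharp \subset L^\sharp$, so there are only finitely many such $L'$.

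After substituting and exchanging sums, the RHS takes the form $\sum_{L''} c(L'') \cdot {}^\bL\cZ^\Kra(L'')$, where $L''$ ranges over lattices in $L_F$ containing $L$ (only integral $L''$ contribute nonzero terms) and
\[
c(L'') = 1 + \sum_{\substack{L + \pi L'' \subset L' \subsetneq L'' \\ L' \text{ integral}}} (-1)^{i}\, q^{i(i-1)/2}, \qquad i = \dim_{\F_\q}(L''/L').
\]
Integrality of $L'$ is automatic since $L' \subset L''$. The intermediate lattices $L'$ with $L + \pi L'' \subset L' \subset L''$ are in bijection with $\F_\q$-subspaces of $V \coloneqq L''/(L + \pi L'')$, where $\dim L''/L'$ equals the codimension of the corresponding subspace. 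Writing $d = \dim_{\F_\q} V$, the number of such $L'$ with $\dim L''/L' = i$ is the Gaussian binomial $\binom{d}{i}_\q$, so
\[
c(L'') = \sum_{i=0}^{d} (-1)^i\, q^{i(i-1)/2} \binom{d}{i}_\q.
\]

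The $q$-binomial theorem $\prod_{k=0}^{d-1}(1 + xq^k) = \sum_{i=0}^d \binom{d}{i}_\q q^{i(i-1)/2} x^i$ evaluated at $x = -1$ yields $c(L'') = \prod_{k=0}^{d-1}(1 - q^k)$, which is $1$ when $d = 0$ and $0$ when $d \geq 1$ (because of the factor at $k=0$). Finally, $d = 0$ means $L''/(L + \pi L'') = 0$, i.e., $(L''/L) \otimes_{\Oo_F} \F_\q = 0$, which by Nakayama's lemma applied to the finitely generated torsion $\Oo_F$-module $L''/L$ forces $L = L''$. Hence $c(L'') = \delta_{L'' = L}$ and the RHS equals ${}^\bL\cZ^\Kra(L)$, as desired.

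I expect the only substantive step to be the reduction to the $q$-binomial identity; the geometric content is limited to the vanishing ${}^\bL\cZ^\Kra(L'') = 0$ for non-integral $L''$ and the observation that $\cD(L')$ has been defined precisely so that this inversion works, so the main delicacy is bookkeeping (ensuring that the parametrization of $L'$ by subspaces of $V$ correctly accounts for the dimension $i$ appearing in \eqref{eq:differencecycle}).
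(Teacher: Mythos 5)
Your proof is correct. It differs from the paper's in structure though not in its essential ingredient. The paper proceeds by induction on the fundamental invariant of $L$: it assumes the identity holds for all lattices $L'$ strictly between $L$ and $L_F$, substitutes those expansions into the definition \eqref{eq:differencecycle} of $\cD(L)$ rearranged as an expression for ${}^\bL\cZ^\Kra(L)$, and then shows that the resulting coefficient of each $\cD(L'')$ equals $1$ via the same vanishing
\[
\sum_{i=0}^{m}(-1)^i q^{i(i-1)/2}\binom{m}{i}_q = \prod_{k=0}^{m-1}(1-q^k) = 0 \quad (m\ge 1),
\]
for which the authors cite Tamagawa. You instead expand each $\cD(L')$ on the right-hand side via its definition, swap the order of summation, and directly compute the coefficient $c(L'')$ of ${}^\bL\cZ^\Kra(L'')$, parametrizing the intermediate lattices by $\F_q$-subspaces of $L''/(L+\pi L'')$ and applying the $q$-binomial theorem; Nakayama then pins down $L''=L$ as the unique surviving term. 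Both arguments are instances of M\"obius inversion on the lattice of $\Oo_F$-overlattices, and both use the same $q$-binomial identity, but your version avoids the induction and is arguably the cleaner of the two: it makes transparent that $\cD$ was defined precisely so that this inversion holds, and it reveals the coefficient as an explicit product $\prod_{k=0}^{d-1}(1-q^k)$ rather than only as a number that happens to vanish. One small point worth making explicit in a final write-up is the identification $L''/(L+\pi L'') \cong (L''/L)\otimes_{\Oo_F}\F_q$, which you invoke when passing to Nakayama; it is immediate but deserves a line.
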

	\begin{proof}
		First of all, if $L$ is not integral, neither is $L'$ if $L\subset L'$. In this case ${}^\bL\cZ^\Kra(L)=0$ and the summation index on the right hand side of the identity in the lemma is empty. This proves the lemma when $\val(L)<0$. We can now prove the identity by induction on the fundamental invariant of $L$. Assume that the lemma is proved for all $L'\subset L_F$ with $L\subsetneq L'$.
		
		For $L'$ with $L\subsetneq L'\subset \frac{1}{\pi} L$,  we have
		\[{}^\bL\cZ^\Kra(L') =\displaystyle\sum_{L' \subset L'' \subset L'_F}   \cD(L'')\]
		by the induction hypothesis. Combining this with \eqref{eq:differencecycle}, we can write
		\[{}^\bL\cZ^\Kra(L) =\displaystyle\sum_{L \subset L'' \subset L_F} m(L'')  \cD(L'')\]
		where $m(L'')\in \Z$. Now it suffices to show $m(L'')=1$ for  any $L''$ such that $L\subset L'' \subset L_F$.
		
		First, notice that $m(L)=1$. For any $L''$ such that $L\subset L''\subsetneq L_F$, let $M'=\frac{1}{\pi}L\cap L''$ and $m=\dim_{\F_\q} (M'/L)$. We have
		\begin{align}\label{eq: inc exc}
			m(L'')=-\sum_{i=1}^m (-1)^i \q^{i(i-1)/2} \sum_{\substack{L\subset L'\subset M'\\ \mathrm{dim}_{\F_\q}(L'/L)=i}} 1=1
		\end{align}
		by evaluating the identity in the corollary to \cite[Lemma $12$]{tamagawa1963zeta} at $t=1$.
	\end{proof}
	\begin{remark}
		When $\ell=1$ and $L=\mathrm{Span}\{\bx\}$, the Cartier divisor
		\[\cD(L)=\cZ(\bx)-\cZ(\frac{1}{\pi}\bx)\]
		is the difference divisor $\cD(\bx)$ defined in \cite[Definition 2.10]{Terstiege}.
	\end{remark}
	\begin{definition}\label{def:prim int}
		Assume $L=L_1\oplus L_2$, where $L_i$ is of rank $n_i$ and $n_1+n_2=n$. We define
		\begin{align}\label{eq:prim int}
			\Int(L)^{(n_1)}=\chi(\cN^\Kra, \cD(L_1)\cdot {}^{\bL}\cZ^{\Kra}(L_2)).
		\end{align}
		Notice that  $\Int(L)^{(n_1)}$ depends on the decomposition $L=L_1\oplus L_2$.
	\end{definition}

	\section{Special cycles and  exceptional divisors}\label{sec: Exc}
	For a formal subscheme $\cZ$ of $\cN^\Kra$, we use the notation $\otimes_{\cZ}$ (respectively, $\otimes_{\cZ}^\bL$) instead of $\otimes_{\Oo_\cZ}$ (respectively, $\otimes_{\Oo_\cZ}^\bL$). We also simply write $\otimes$ (respectively, $\otimes^\bL$) instead of $\otimes_{\cN^\Kra}$ (respectively, $\otimes_{\cN^\Kra}^\bL$).
	Let us first recall the following distribution law of derived tensor product. In this section, we identify $\bV$ with $C$ by the isomorphism $b$ defined in \eqref{eq: C V isomorphism}.
	\begin{lemma}\label{lem: distribution law of derived tensor}
		Assume that $\mathcal{A}_i$ ($1\leq i \leq k$) is in the derived category of bounded coherent sheaves on $\cN^\Kra$ and $i:\cZ\rightarrow \cN^\Kra$ is a closed embedding of formal subscheme. Then the following identity holds in the derived category of bounded coherent sheaves on $\cZ$.
		\[i^*(\mathcal{A}_1\otimes^\bL\ldots  \otimes^\bL\mathcal{A}_k\otimes^\bL \Oo_{\cZ})
		=i^*(\mathcal{A}_1\otimes^\bL \Oo_{\cZ}) \otimes_\cZ^\bL\ldots \otimes_\cZ^\bL i^*(\mathcal{A}_k\otimes^\bL \Oo_{\cZ}).\]
	\end{lemma}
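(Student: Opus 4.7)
The plan is to make the identity manifest by choosing flat resolutions and thereby reducing it to a purely underived algebraic identity about base change along a closed immersion. First I would, for each $i$, replace $\mathcal{A}_i$ by a bounded-above complex $P_i^{\bullet}$ of flat $\Oo_{\cN^\Kra}$-modules quasi-isomorphic to it; such resolutions exist because $\cN^\Kra$ is locally noetherian and regular and the $\mathcal{A}_i$ are bounded coherent. After this replacement every occurrence of $\otimes^\bL$ on the left-hand side becomes an ordinary tensor product of representatives. Since flatness is preserved under base change, each $P_i^{\bullet}\otimes\Oo_\cZ$ is term-wise flat over $\Oo_\cZ$, so the factor $i^{*}(\mathcal{A}_i\otimes^\bL\Oo_\cZ)$ appearing on the right-hand side is represented by a complex already adapted for subsequent $\otimes_\cZ^\bL$-products, and no further Tor corrections arise.

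With these choices in hand, both sides of the identity become realizations of the single complex
\[
	P_1^{\bullet} \otimes_{\Oo_{\cN^\Kra}} \cdots \otimes_{\Oo_{\cN^\Kra}} P_k^{\bullet} \otimes_{\Oo_{\cN^\Kra}} \Oo_\cZ,
\]
viewed as a complex of $\Oo_\cZ$-modules on $\cZ$. The left-hand side is this complex by definition. The right-hand side matches it after applying $(k-1)$ times the elementary formula
\[
	(M \otimes_{\Oo_{\cN^\Kra}} \Oo_\cZ) \otimes_{\Oo_\cZ} (N \otimes_{\Oo_{\cN^\Kra}} \Oo_\cZ) = M \otimes_{\Oo_{\cN^\Kra}} N \otimes_{\Oo_{\cN^\Kra}} \Oo_\cZ,
\]
which is an immediate consequence of associativity of tensor product together with the idempotence $\Oo_\cZ \otimes_{\Oo_\cZ} \Oo_\cZ = \Oo_\cZ$.

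The lemma is a standard compatibility that would be proved in exactly the same way for a closed immersion of ordinary schemes, so there is no real obstacle. The only point that requires care is the existence and functoriality of the flat resolutions in the formal-scheme setting, together with the bookkeeping to verify that interpreting each factor $i^{*}(\mathcal{A}_i\otimes^\bL\Oo_\cZ)$ as a complex on $\cZ$ via its canonical $\Oo_\cZ$-module structure is compatible with the reorganization above; both of these are handled once we use flat representatives, which is possible by the regularity of $\cN^\Kra$ and the coherence hypotheses on the $\mathcal{A}_i$.
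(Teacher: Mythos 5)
Your proof is correct and takes essentially the same approach as the paper's: both arguments pick term-wise acyclic-for-$\otimes$ representatives so that each derived tensor product becomes an ordinary one, observe that the representatives remain acyclic-for-$\otimes$ after base change to $\cZ$, and then invoke the elementary associativity/idempotence identity to identify both sides with $P_1^\bullet\otimes\cdots\otimes P_k^\bullet\otimes\Oo_\cZ$. The only cosmetic difference is that you use flat resolutions while the paper takes complexes of locally free sheaves (which is the more natural choice here given that $\cN^\Kra$ is regular and the paper's $K_0$-groups are built from locally free complexes, and which also delivers boundedness of the representatives for free, whereas your ``bounded-above'' phrasing leaves a small loose end that regularity and finite Tor-dimension would anyway close); this does not affect the substance of the argument.
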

	\begin{proof}
		We can take locally free representatives of $A_i^\bullet$ of $\mathcal{ A}_i$. Then $A_1^\bullet\otimes\cdots  \otimes A_k^\bullet$ is again a complex of locally free sheaves on $\cN^\Kra$, hence a locally free representatives of $\mathcal{A}_1\otimes^\bL\cdots  \otimes^\bL\mathcal{A}_k$.
		Hence $i^*(\mathcal{A}_1\otimes^\bL\cdots  \otimes^\bL\mathcal{A}_k\otimes^\bL \Oo_{\cZ})$  can be represented by $A_1^\bullet \otimes\cdots \otimes A_k^\bullet \otimes\Oo_\cZ$.
		Meanwhile $A_i^\bullet\otimes\Oo_\cZ$ is a representative of $\mathcal{A}_i\otimes^\bL\Oo_\cZ$ in the derived category of bounded coherent sheaves on $\cN^\Kra$ and is also a complex of locally free sheaves on $\cZ$. Hence $i^*(\mathcal{A}_1\otimes^\bL \Oo_{\cZ}) \otimes_\cZ^\bL\cdots \otimes_\cZ^\bL i^*(\mathcal{A}_k\otimes^\bL \Oo_{\cZ})$ can be represented by $(A_1^\bullet \otimes\Oo_\cZ) \otimes_{\cZ}\cdots \otimes_{\cZ} (A_k^\bullet \otimes\Oo_\cZ)$.
		Now by the distribution law of tensor products we have
		\[A_1^\bullet \otimes\cdots \otimes A_k^\bullet \otimes\Oo_\cZ=(A_1^\bullet \otimes\Oo_\cZ) \otimes_{\cZ}\cdots \otimes_{\cZ} (A_k^\bullet \otimes\Oo_\cZ).\]
		This finishes the proof of the lemma.
	\end{proof}
	\begin{proposition}\label{prop:multiplicityofExc}
		Assume that the dimension of $\bV$ is $n\geq 2$. Then for each $\bx\in \bV$, $\cZ^\Kra(\bx)$ is a divisor. Moreover, we have the following decomposition of Cartier divisors
		\begin{equation}\label{eq:m_Lambda}
			\cZ^\Kra(\bx)=\tZ(\bx)+\sum_{\Lambda \in \cV^0, \bx \in \Lambda} (m_\Lambda(\bx)+1) \Exc_\Lambda
		\end{equation}
		where $m_\Lambda(\bx)$ is the largest integer $m$ such that $\pi^{-m} \cdot \bx\in \Lambda$.
	\end{proposition}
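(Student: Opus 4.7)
My plan is to first exhibit a decomposition of the form $\cZ^\Kra(\bx) = \tZ(\bx) + \sum_\Lambda n_\Lambda\, \Exc_\Lambda$, then identify each $n_\Lambda$ with the multiplicity of $\cZ^\Pap(\bx)$ at the corresponding superspecial point, and finally compute that multiplicity by a local calculation. That $\cZ^\Kra(\bx)$ is a Cartier divisor is cited from \cite[Proposition 4.3]{Ho2}. Since $\Phi \colon \cN^\Kra \to \cN^\Pap$ is an isomorphism outside the superspecial locus, and $\tZ(\bx)$ is by definition the strict transform of $\cZ^\Pap(\bx)$, the two Cartier divisors $\cZ^\Kra(\bx)$ and $\tZ(\bx)$ agree off $\bigsqcup_{\Lambda \in \cV^0} \Exc_\Lambda$. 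Their difference is therefore $\sum_{\Lambda \in \cV^0} n_\Lambda\, \Exc_\Lambda$ for non-negative integers $n_\Lambda$ that vanish for all but finitely many $\Lambda$.

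For the second step, I would choose a local equation $f \in \hat{\Oo}_{\cN^\Pap, p_\Lambda}$ for $\cZ^\Pap(\bx)$ at the superspecial point $p_\Lambda$ corresponding to $\Lambda$. The standard behaviour of pullback of a Cartier divisor under a blow-up gives $\Phi^* f = e^{\ord_{p_\Lambda}(f)} \tilde f$, where $e$ is a local equation of $\Exc_\Lambda$ and $\tilde f$ defines $\tZ(\bx)$, so that $n_\Lambda = \ord_{p_\Lambda}(f)$. By Proposition \ref{prop:k points of N}, $p_\Lambda$ is the point with associated Dieudonn\'e lattice $M = \Lambda \otimes_{\Oo_F} \Oo_{\breve F}$, and the deformation criterion shows $p_\Lambda \in \cZ^\Pap(\bx)$ exactly when $b(\bx) \in M$, equivalently $\bx \in \Lambda$. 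When $\bx \notin \Lambda$, $f$ is a unit at $p_\Lambda$ and $n_\Lambda = 0$, matching the convention in the stated sum.

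The main obstacle is to prove $\ord_{p_\Lambda}(f) = m_\Lambda(\bx) + 1$ when $\bx \in \Lambda$. I would proceed via Grothendieck-Messing theory on the complete local ring at $p_\Lambda$, using the explicit Pappas local model (equivalently, the description of a neighborhood of $\Exc_\Lambda$ in $\cN^\Kra$ from \cite{Kr}). The universal deformation of the superspecial Hermitian $\Oo_F$-module at $p_\Lambda$ parametrizes perturbations of the Hodge filtration by local coordinates on a formal neighborhood of relative dimension $n-1$, and extending $\bx$ to a genuine homomorphism amounts to requiring $b(\bx)$ to lie in a universal submodule expressed in those coordinates. Writing $\bx = \pi^m \by$ with $\by \in \Lambda$ primitive and $m = m_\Lambda(\bx)$, factoring each $\pi$ out multiplies the defining equation by one universal $\pi$, so its order rises by $m$ from the primitive case $\bx = \by$; the primitive case itself yields order $1$ because $\by$ imposes a nontrivial linear condition on the Hodge line. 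This gives $\ord_{p_\Lambda}(f) = m + 1$. The bookkeeping is delicate but standard, and closely parallels the local computations carried out in \cite{Kr}, \cite{Shi1}, and \cite{Terstiege}.
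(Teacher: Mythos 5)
Your first step (citing \cite[Proposition 4.3]{Ho2}, observing that $\cZ^\Kra(\bx)$ and $\tZ(\bx)$ differ only by a sum of exceptional divisors, and that $\Exc_\Lambda$ appears precisely when $\bx\in\Lambda$ via \cite[Proposition 3.7]{Shi1}) agrees with the paper. The remaining steps, however, have real gaps.

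First, the identity $n_\Lambda=\ord_{p_\Lambda}(f)$ via ``standard behaviour of pullback under a blow-up'' is not standard here: $p_\Lambda$ is a \emph{singular} point of $\cN^\Pap$, and the familiar formula (pullback = strict transform + multiplicity times exceptional) is proved for blow-ups of regular centers in regular ambient schemes. In the singular case the coefficient of $\Exc_\Lambda$ in $\Phi^*\cZ^\Pap(\bx)$ is the order of vanishing of $\Phi^*f$ at the generic point of $\Exc_\Lambda$, and identifying this with the $\mathfrak m_{p_\Lambda}$-adic order of $f$ requires information about the tangent cone at $p_\Lambda$ (in fact a local model computation). This needs to be justified, not cited as standard.

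Second, and more seriously, the inductive claim ``factoring each $\pi$ out multiplies the defining equation by one universal $\pi$, so its order rises by $m$'' is false as stated: if the local equation of $\cZ^\Pap(\pi^m\by)$ at $p_\Lambda$ were $\pi^m\cdot f_\by$ up to units, then $\cZ^\Pap(\pi^m\by)$ would contain the special fiber $\{\pi=0\}$, which it does not. The actual way the vanishing order of $f$ grows as one multiplies $\bx$ by $\pi$ is a genuine computation; it is exactly the hard part. Likewise, the base case (``the primitive case itself yields order $1$'') is asserted without proof and is not obviously true at a singular point. These two claims \emph{are} the proposition, and the sketch neither proves them nor points to a place where they are proved in the generality needed. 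The paper sidesteps all of this by choosing a unimodular $\Lambda'\subset\Lambda$ of rank $n-2$ orthogonal to $\pi^{-m}\bx$ (Lemma \ref{lem:unimodularL}), observing that $\tZ(\Lambda')\cong\cN^\Kra_{2,\epsilon\eta}$ (Proposition \ref{prop:cZxunimodular}) cuts $\cZ^\Kra(\bx)$, $\tZ(\bx)$, and $\Exc_\Lambda$ in a regular transverse way, thereby reducing the multiplicity to the $n=2$ case, which was computed by explicit local model analysis in \cite[Theorem 4.5]{Shi2} and \cite[Theorem 4.1]{HSY}. That reduction is both shorter and rests on already-published computations; your route would require redoing the local model analysis of those references in arbitrary $n$, which you have not done.
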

	\begin{proof}
		The fact that $\cZ^\Kra(\bx)$ is a divisor is due to \cite[Proposition 4.3]{Ho2}. By \cite[Proposition 3.7]{Shi1}, the superspecial point corresponding to a type $0$ lattice $\Lambda$ is in $\cZ^\Pap(\bx)$ if and only if $\bx \in \Lambda$. Hence $\Exc_\Lambda\subset \cZ^\Kra(\bx)$  if and only if $\bx \in \Lambda$. Since $\cN_{n,\epsilon}^\Kra$ is regular, we must have a decomposition as in \eqref{eq:m_Lambda} and the only job left is to determine the multiplicity of each $\Exc_\Lambda$.
		
		Fix a type 0 lattice $\Lambda$ and let $m\coloneqq m_\Lambda(\bx)$. Then $\pi^{-m}\cdot\bx$ is a primitive vector in $\Lambda$. By Lemma \ref{lem:unimodularL}, there exists a decomposition
		\[\Lambda=\Lambda_2\obot\Lambda'\]
		where $\Lambda_2$ and $\Lambda'$ are unimodular lattices of rank $2$ and $n-2$ respectively and $\pi^{-m}\cdot\bx\in \Lambda_2$. Let $\eta=\chi(\Lambda')$. By applying Proposition \ref{prop:cZxunimodular}, we see that $\tZ_{n,\epsilon}(\Lambda')\cong \cN_{2,\epsilon \eta}^\Kra$. Moreover we have
		the following proper intersections
		\begin{align*}
			\cZ^\Kra_{n,\epsilon}(\bx) \cap \tZ_{n,\epsilon}(\Lambda')&=\cZ^\Kra_{2,\epsilon \eta}(\bx),\ \tZ_{n,\epsilon}(\bx) \cap \tZ_{n,\epsilon}(\Lambda')=\tZ_{2,\epsilon \eta}(\bx),
		\end{align*}
		and
		\begin{align*}
			\Exc_\Lambda \cap \tZ_{n,\epsilon}(\Lambda')&=\Exc_{\Lambda_2},
		\end{align*}
		where $\Exc_{\Lambda_2}$ is the exceptional divisor in $\cN_{2,\epsilon \eta}^\Kra$ corresponding to the vertex lattice $\Lambda_2$. Hence the multiplicity of $\Exc_{\Lambda}$ in $\cZ^\Kra_{n,\epsilon}(\bx)$ is the same as the multiplicity of $\Exc_{\Lambda_2}$ in $\cZ^\Kra_{2,\epsilon \eta}(\bx)$. Now the proposition follows from \cite[Theorem 4.6]{Shi2} and \cite[Theorem 4.1]{HSY}.
	\end{proof}

	The Chow ring $\mathrm{CH}^\bullet(\Exc_\Lambda)\cong \Gr^\bullet K_0(\Exc_\Lambda)$ is isomorphic to $\Z[H_\Lambda]/(H_\Lambda^{n-1}-1)$ where $H_\Lambda$ is the hyperplane class of $\Exc_\Lambda$ represented by any $\bP^{n-2}_k$ in $\Exc_\Lambda$.
	\begin{proposition}\label{prop:tZintersectExc}
		Assume  $ \dim \bV =n \geq 2$. Assume $\bx\in \bV$ such that $h(\bx,\bx)\neq 0$ and $\Lambda$ is a type $0$ vertex lattice containing $\bx$. Let $m\coloneqq m_\Lambda (\bx)$ as in Proposition \ref{prop:multiplicityofExc}. Then $\tZ(\bx)$ and $\Exc_\Lambda$ intersect properly and
		\[[\Oo_{\tZ(\bx)\cap \Exc_\Lambda}]=(2m+1)H_\Lambda \in \mathrm{CH}^1(\Exc_\Lambda).\]
	\end{proposition}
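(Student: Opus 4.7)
The plan is to follow the same reduction to the $n=2$ case that was used in the proof of Proposition \ref{prop:multiplicityofExc}, combined with the $n=2$ formula of \cite[Theorem 4.5]{Shi2} and \cite[Theorem 4.1]{HSY}. First I would record that, since $\tZ(\bx)$ is by definition the strict transform of $\cZ^\Pap(\bx)$ under $\cN^\Kra \rightarrow \cN^\Pap$, no exceptional divisor is contained in $\tZ(\bx)$. In particular $\Exc_\Lambda \not\subset \tZ(\bx)$, so $\tZ(\bx) \cap \Exc_\Lambda$ is a proper intersection of codimension one in $\Exc_\Lambda \cong \bP^{n-1}_k$, and hence
\[
[\Oo_{\tZ(\bx) \cap \Exc_\Lambda}] = c \cdot H_\Lambda \in \mathrm{CH}^1(\Exc_\Lambda)
\]
for some $c \in \Z_{\ge 0}$. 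The task is then to show $c = 2m+1$.

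Next I would apply the decomposition lemma used in Proposition \ref{prop:multiplicityofExc} to write $\Lambda = \Lambda_2 \obot \Lambda'$ with $\Lambda_2$ unimodular of rank $2$ containing $\pi^{-m}\bx$, and $\Lambda'$ unimodular of rank $n-2$. Setting $\eta = \chi(\Lambda')$, Proposition \ref{prop:cZxunimodular} yields $\tZ_{n,\epsilon}(\Lambda') \cong \cN^\Kra_{2,\epsilon\eta}$, together with the proper intersections
\[
\tZ_{n,\epsilon}(\bx) \cap \tZ_{n,\epsilon}(\Lambda') = \tZ_{2,\epsilon\eta}(\bx), \qquad \Exc_\Lambda \cap \tZ_{n,\epsilon}(\Lambda') = \Exc_{\Lambda_2},
\]
the latter being a copy of $\bP^1_k$ sitting inside $\Exc_\Lambda$. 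Inside the copy of $\cN^\Kra_{2,\epsilon\eta}$, the $n=2$ case yields that the zero-dimensional intersection $\tZ_{2,\epsilon\eta}(\bx) \cap \Exc_{\Lambda_2}$ has length $2m+1$.

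On the other hand, this same zero-cycle is the restriction of $c \cdot H_\Lambda$ along the embedding $\Exc_{\Lambda_2} \hookrightarrow \Exc_\Lambda \cong \bP^{n-1}_k$, and so its length equals $c \cdot \deg_{H_\Lambda}(\Exc_{\Lambda_2})$. The hard step is therefore to verify that $\Exc_{\Lambda_2}$ is linearly embedded in $\Exc_\Lambda$, i.e., $\deg_{H_\Lambda}(\Exc_{\Lambda_2}) = 1$. The natural route uses the local model of the Kr\"amer blow-up at the superspecial point $\Lambda$: one identifies $\Exc_\Lambda$ canonically with the projectivization of the tangent space at $\Lambda$, so that the orthogonal decomposition $\Lambda = \Lambda_2 \obot \Lambda'$ realizes $\Exc_{\Lambda_2}$ as the linear subspace $\bP(\Lambda_2/\pi\Lambda_2) \subset \bP(\Lambda/\pi\Lambda)$. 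Alternatively, one can bootstrap from a favorable test vector: choose a primitive $\bx_0 \in \Lambda_2$ with $h(\bx_0,\bx_0) \in \Oo_{F_0}^\times$, so that $m_\Lambda(\bx_0) = 0$; the same analysis then gives $c_0 \cdot \deg_{H_\Lambda}(\Exc_{\Lambda_2}) = 1$ with $c_0 \in \Z_{\ge 0}$, forcing $\deg_{H_\Lambda}(\Exc_{\Lambda_2}) = 1$ and $c_0 = 1$. Either approach concludes $c = 2m+1$ and completes the proof.
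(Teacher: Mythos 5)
Your proof is correct and follows essentially the same route as the paper: decompose $\Lambda=\Lambda_2\obot\Lambda'$ with $\Lambda_2$ unimodular of rank $2$ containing $\pi^{-m}\bx$, reduce along $\tZ(\Lambda')\cong\cN^\Kra_{2,\epsilon\eta}$, and compute $\chi(\cN^\Kra,\Oo_{\tZ(\bx)}\otimes^{\bL}\Oo_{\Exc_\Lambda}\otimes^{\bL}\Oo_{\tZ(\Lambda')})$ two ways, matching the $n=2$ value $2m+1$ against $c\cdot\deg_{H_\Lambda}(\Exc_{\Lambda_2})$. The only place you go beyond the paper is in actually justifying $\deg_{H_\Lambda}(\Exc_{\Lambda_2})=1$ (the paper simply asserts that the intersection number of $H_\Lambda$ with $\Exc_{\Lambda_2}$ is $1$), and both of your routes---the local-model identification $\Exc_\Lambda\cong\bP(\Lambda/\pi\Lambda)$ realizing $\Exc_{\Lambda_2}=\bP(\Lambda_2/\pi\Lambda_2)$ as a line, and the bootstrap via a unit-norm test vector $\bx_0\in\Lambda_2$, which pins down $c_0\cdot\deg_{H_\Lambda}(\Exc_{\Lambda_2})=1$ with both factors nonnegative integers---are sound.
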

	\begin{proof}
		First $\tZ(\bx)$ and $\Exc_\Lambda$ are Cartier divisors with no common component, so they intersect properly.
		Let $m=m_\Lambda(\bx)$ and $\bx'\coloneqq \pi^{-m}\cdot\bx$.
		By assumption $m\geq 0$. By Proposition \ref{prop: ind for t}, we have
		\[\{v\in \Lambda\mid h(\bx',v)=0\}=\mathrm{Span}\{\by\}\obot \Lambda'\]
		where $\val(\by)=\val(\bx')$ and $\Lambda'$ is unimodular. Let $\eta=\chi(\Lambda')$ and
		\[\Lambda_2\coloneqq \{v\in \Lambda\mid v\bot \Lambda'\}.\]
		$\Lambda_2$ is rank $2$ unimodular and contains $\bx'$.
		
		By Proposition \ref{prop:cZxunimodular},  we have $\tZ(\Lambda')\cong \cN_{2,\epsilon \eta}^\Kra$. In particular, $\tZ(\Lambda')$ is regular.
		By Corollary \ref{cor:reducedimensionofcZbyone}, we know that $\tZ(\Lambda')\cap\tZ(\bx)=\tZ_{2,\epsilon \eta}(\bx)$. In particular $\tZ(\Lambda')$ and $\tZ(\bx)$ intersect properly as $\tZ_{2,\epsilon \eta}(\bx)$ is a divisor in $\cN^\Kra_{2,\epsilon \eta}$. On the other hand $\tZ(\Lambda')\cap \Exc_\Lambda$ is the exceptional divisor $\Exc_{\Lambda_2}$ in $\cN^\Kra_{2,\epsilon \eta}$.
		Since $\Exc_\Lambda\cong \bP^{n-1}_k$, it is also regular. Our strategy is to compute the intersection number
		\[\chi(\cN^\Kra,\Oo_{\tZ(\bx)}\otimes^{\bL} \Oo_{\Exc_\Lambda}\otimes^{\bL} \Oo_{\tZ(\Lambda')})\]
		in two different ways. By Lemma \ref{lem: distribution law of derived tensor}, one way is
		\begin{equation}\label{eq:firstwaytZandExc}
			\chi(\tZ(\Lambda'), \Oo_{\tZ(\Lambda')\cap \tZ(\bx)}\otimes^{\bL}_{\tZ(\Lambda')} \Oo_{\tZ(\Lambda')\cap \Exc_\Lambda})
		\end{equation}
		where we use the fact that the intersections  $\tZ(\Lambda')\cap \tZ(\bx)$ and $\tZ(\Lambda')\cap \Exc_\Lambda$ are proper (see for example \cite[Lemma B.2]{zhang2021AFL}).
		The other way is, by Lemma \ref{lem: distribution law of derived tensor},
		\begin{equation}\label{eq:secondwaytZandExc}
			\chi(\Exc_\Lambda, \Oo_{\tZ(\bx)\cap \Exc_\Lambda}\otimes^\bL_{\Exc_\Lambda} \Oo_{\tZ(\Lambda')\cap \Exc_\Lambda}).
		\end{equation}
		When $\epsilon \eta=-1$, by Proposition 3.11 and Theorem 4.5 of \cite{Shi2}, we know that \eqref{eq:firstwaytZandExc} is equal to $2m+1$. When $\epsilon \eta=1$, by Lemma 3.10, Theorem 4.1 and Lemma 5.2 of \cite{HSY}, we know that \eqref{eq:firstwaytZandExc} is equal to $2m+1$ as well. Since the intersection number of $H_\Lambda$ with $\Exc_{\Lambda_2}\cong \bP^1_k$ in $\Exc_\Lambda$ is $1$, the proposition follows.
	\end{proof}
	
	\subsection{Intersection numbers involving the exceptional divisors}
	\begin{lemma}\label{lem:Excselfintersect}
		The class of $\underbrace{\Oo_{\Exc_\Lambda}\otimes^\bL \cdots\otimes^\bL\Oo_{\Exc_\Lambda}}_{m}$ in $\mathrm{CH}^{m-1}(\Exc_\Lambda)$ is
		\noindent  $(-2H_\Lambda)^{m-1}$.
	\end{lemma}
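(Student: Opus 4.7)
The plan is to combine a Koszul-type computation for the derived tensor with knowledge of the normal bundle of $\Exc_\Lambda$ in $\cN^\Kra$.

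\textbf{Self-intersection of $\Exc_\Lambda$.} First I would establish $\Oo_{\cN^\Kra}(\Exc_\Lambda)|_{\Exc_\Lambda} \cong \Oo_{\bP^{n-1}}(-2)$, equivalently $c_1(\Oo_{\cN^\Kra}(\Exc_\Lambda)|_{\Exc_\Lambda}) = -2H_\Lambda$ in $\mathrm{CH}^1(\Exc_\Lambda)$. For $n=2$, this is the classical self-intersection of a $(-2)$-curve coming from the resolution of an $A_1$-type singularity, implicit in \cite{Shi2} and \cite{HSY}. For $n \ge 3$, I would reduce to the $n=2$ case along the lines of Proposition \ref{prop:tZintersectExc}: choose a decomposition $\Lambda = \Lambda_2 \obot \Lambda'$ with $\Lambda_2$ unimodular of rank $2$ and $\Lambda'$ unimodular of rank $n-2$ (via Lemma \ref{lem:unimodularL}), and set $\eta = \chi(\Lambda')$. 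Then $\tZ(\Lambda') \cong \cN^\Kra_{2,\epsilon\eta}$ by Proposition \ref{prop:cZxunimodular}, with $\Exc_\Lambda \cap \tZ(\Lambda') = \Exc_{\Lambda_2} \cong \bP^1$ sitting as a line inside $\Exc_\Lambda \cong \bP^{n-1}$. Restriction of divisors identifies $\Oo_{\cN^\Kra}(\Exc_\Lambda)|_{\tZ(\Lambda')}$ with $\Oo_{\cN^\Kra_{2,\epsilon\eta}}(\Exc_{\Lambda_2})$, and further restriction to $\Exc_{\Lambda_2}$ is $\Oo_{\bP^1}(-2)$ by the $n=2$ case. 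Since $\mathrm{Pic}(\Exc_\Lambda) = \Z \cdot H_\Lambda$ restricts isomorphically onto $\mathrm{Pic}(\Exc_{\Lambda_2}) = \Z$, this pins down $c_1 = -2H_\Lambda$.

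\textbf{Koszul computation.} Resolving $\Oo_{\Exc_\Lambda}$ by the two-term complex $[\Oo_{\cN^\Kra}(-\Exc_\Lambda) \to \Oo_{\cN^\Kra}]$ in $m-1$ of the $m$ tensor slots and pairing against the remaining copy of $\Oo_{\Exc_\Lambda}$, all differentials in the resulting Koszul-type complex vanish since they are given by the local equation of $\Exc_\Lambda$. Writing $\omega = \Oo_{\cN^\Kra}(-\Exc_\Lambda)|_{\Exc_\Lambda}$, this yields in $K_0(\Exc_\Lambda)$ the identity
\[
\bigl[\Oo_{\Exc_\Lambda}^{\otimes^{\bL} m}\bigr] = \sum_{k=0}^{m-1} (-1)^k \binom{m-1}{k}[\omega^{\otimes k}].
\]
Applying the Chern character $K_0(\Exc_\Lambda)_\Q \to \mathrm{CH}^\bullet(\Exc_\Lambda)_\Q$, this becomes $(1-e^{c_1(\omega)})^{m-1}$. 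Since $1 - e^x = -x + O(x^2)$ and $c_1(\omega) = 2H_\Lambda$ by the previous step, the $(m-1)$-degree component equals $(-2H_\Lambda)^{m-1}$. The equality is integral because $\mathrm{CH}^\bullet(\bP^{n-1})$ is torsion-free.

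\textbf{Main obstacle.} The Koszul/Chern-character step is a formal manipulation, so the substantive content lies in the self-intersection statement $\Oo_{\cN^\Kra}(\Exc_\Lambda)|_{\Exc_\Lambda} \cong \Oo_{\bP^{n-1}}(-2)$. The reduction to the $n=2$ case is conceptually clean but requires carefully verifying that the Cartier divisor $\Exc_\Lambda$ on $\cN^\Kra$ restricts compatibly to $\Exc_{\Lambda_2}$ on $\cN^\Kra_{2,\epsilon\eta}$ under the isomorphism $\tZ(\Lambda') \cong \cN^\Kra_{2,\epsilon\eta}$ of Proposition \ref{prop:cZxunimodular}, and that $\Exc_{\Lambda_2}$ sits as a linear $\bP^1$ inside $\Exc_\Lambda$—geometric facts which should follow from the explicit blow-up description of $\cN^\Kra$ along its superspecial locus.
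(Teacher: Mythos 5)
Your proposal is correct but follows a different route from the paper. The paper's proof passes to the local model $N^\Kra$ of \cite{Kr} and exploits the decomposition of the special fiber as Cartier divisors, $N^\Kra_s = \Exc + Z_2$ (equation (4.11) of \cite{Kr}), with $\Exc\cdot Z_2 = 2H$; since $[\Oo_{\Exc}\otimes^\bL \Oo_{N^\Kra_s}]=0$ (the uniformizer kills $\Oo_{\Exc}$), linearity $[\Oo_{N^\Kra_s}] = [\Oo_{\Exc}]+[\Oo_{Z_2}]$ in $K_0$ immediately gives $[\Oo_{\Exc}\otimes^\bL\Oo_{\Exc}] = -2H$, and the general $m$ follows from Lemma \ref{lem: distribution law of derived tensor}. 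In contrast, you compute the normal bundle $\Oo_{\cN^\Kra}(\Exc_\Lambda)|_{\Exc_\Lambda}\cong\Oo(-2)$ by restricting along $\tZ(\Lambda')\cong\cN^\Kra_{2,\epsilon\eta}$ (mirroring the reduction in Proposition \ref{prop:tZintersectExc}), then expand via the Koszul resolution and the Chern character. Both are valid, and your Koszul step is exactly the content that Lemma \ref{lem: distribution law of derived tensor} captures. The paper's route is more self-contained and avoids invoking the $n=2$ statement separately; yours is more modular, but since the $n=2$ input from \cite{Shi2} and \cite{HSY} is itself established via the local model, the two arguments ultimately rest on the same local-model geometry. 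One small caveat in your write-up: you should make explicit that $\Exc_{\Lambda_2}$ is a \emph{line} in $\Exc_\Lambda\cong\bP^{n-1}$, i.e.\ that $H_\Lambda\cdot[\Oo_{\Exc_{\Lambda_2}}]=1$; this fact is indeed used (and implicitly asserted) in the paper's Proposition \ref{prop:tZintersectExc}, and it is what makes the Pic restriction argument conclude.
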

	\begin{proof}
		To study this intersection, it suffices to consider the local model $N^\Kra$ constructed in \cite{Kr}. Let $N^\Kra_s$ be its special fiber.
		Recall by equation (4.11) loc. cit., we have
		$$
		N^\Kra_s =\Exc +  Z_2
		$$
		as Cartier divisors where $\Exc$ is the exceptional divisor of $N^\Kra$ and $Z_2$ is a divisor in $N^\Kra$ which intersect properly with $\Exc$. Their intersection is $2H$ where $H$ is the hyperplane class of $\Exc$.
		Since $\Exc$ is properly supported on $N^\Kra$, we have
		\[[\OO_{\Exc}\otimes^\bL \Oo_{N^\Kra_s}]=0.\]
		Hence
		\begin{align*}
			0&=[\Oo_\Exc\otimes^\bL_{N^\Kra} \Oo_{N^\Kra_s}]\\
			&=[\Oo_\Exc\otimes^\bL_{N^\Kra} \Oo_\Exc]+[\Oo_\Exc \otimes^\bL_{N^\Kra} \Oo_{Z_2}]\\
			&= [\Oo_\Exc\otimes^\bL_{N^\Kra} \Oo_\Exc]+2H.
		\end{align*}
		This proves the lemma when $m=2$. The general case now follows from Lemma \ref{lem: distribution law of derived tensor}.
	\end{proof}
	\begin{corollary}\label{cor:cZintersectExc}
		Let $\Lambda\in \cV^0$ and $\bx\in \Lambda$. Then we have the following identity in $\mathrm{CH}^1(\Exc_\Lambda)$:
		\[[\Oo_{\Exc_{\Lambda}}\otimes^\bL \Oo_{\cZ^\Kra(\bx)}]=-H_{\Lambda}.\]
	\end{corollary}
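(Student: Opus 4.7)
The plan is to expand $\Oo_{\cZ^\Kra(\bx)}$ using the Cartier divisor decomposition from Proposition \ref{prop:multiplicityofExc}, intersect each piece with $\Exc_\Lambda$, and collect terms.

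First I would write
\[
[\Oo_{\Exc_\Lambda}\otimes^\bL \Oo_{\cZ^\Kra(\bx)}]
=[\Oo_{\Exc_\Lambda}\otimes^\bL \Oo_{\tZ(\bx)}]
+\sum_{\Lambda'\in \cV^0,\ \bx\in \Lambda'}(m_{\Lambda'}(\bx)+1)\,[\Oo_{\Exc_\Lambda}\otimes^\bL \Oo_{\Exc_{\Lambda'}}].
\]
For the first term, Proposition \ref{prop:tZintersectExc} gives a proper intersection equal to $(2m+1)H_\Lambda$, where $m=m_\Lambda(\bx)$. For the sum, I would split off the diagonal contribution $\Lambda'=\Lambda$, which by Lemma \ref{lem:Excselfintersect} contributes $(m+1)(-2H_\Lambda)$.

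The remaining task is to show that for each $\Lambda'\neq \Lambda$ in $\cV^0$ the class $[\Oo_{\Exc_\Lambda}\otimes^\bL \Oo_{\Exc_{\Lambda'}}]$ vanishes. This is immediate because the exceptional divisors $\Exc_\Lambda$ and $\Exc_{\Lambda'}$ are disjoint: they are the preimages under $\Phi\colon \cN^\Kra_n\to\cN^\Pap_n$ of two distinct isolated superspecial points of $\cN^\Pap_n$, so their structure sheaves are supported on disjoint closed formal subschemes and any locally free resolution of one is acyclic on the other.

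Combining the contributions gives
\[
[\Oo_{\Exc_\Lambda}\otimes^\bL \Oo_{\cZ^\Kra(\bx)}]=(2m+1)H_\Lambda-2(m+1)H_\Lambda=-H_\Lambda
\]
in $\mathrm{CH}^1(\Exc_\Lambda)$, as desired. The only nontrivial step is the disjointness argument for $\Lambda'\neq\Lambda$, but this follows directly from the description of the blow up $\Phi$ recorded after Definition \ref{def:NKra}, so no serious obstacle arises.
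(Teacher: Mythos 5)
Your proof is correct and follows exactly the paper's own argument: decompose $\cZ^\Kra(\bx)$ via Proposition~\ref{prop:multiplicityofExc}, apply Proposition~\ref{prop:tZintersectExc} for the strict transform term, and Lemma~\ref{lem:Excselfintersect} for the self-intersection of $\Exc_\Lambda$. The only addition is that you spell out why the cross-terms $[\Oo_{\Exc_\Lambda}\otimes^\bL\Oo_{\Exc_{\Lambda'}}]$ vanish for $\Lambda'\neq\Lambda$ (disjointness of exceptional divisors over isolated superspecial points), a step the paper leaves implicit; this is a harmless and correct elaboration.
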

	\begin{proof}
		By Propositions \ref{prop:multiplicityofExc}, \ref{prop:tZintersectExc} and Lemma \ref{lem:Excselfintersect}, we have the following identity in $\mathrm{CH}^1(\Exc_\Lambda)$:
		\[[\Oo_{\cZ^\Kra(\bx)}\otimes^\bL\Oo_{\Exc_\Lambda}]=[(2m_\Lambda(\bx)+1)-2(m_\Lambda(\bx)+1)] H_\Lambda=-H_\Lambda.\]
		This finishes the proof of the corollary.
	\end{proof}
	
	\begin{corollary}\label{cor:multiplecZintersectExc}
		Assume that $n-m\geq 1$ and $\Exc_\Lambda\subset \cZ^\Kra(\bx_1)\cap\ldots\cap \cZ^\Kra(\bx_m)$, then
		\[\chi(\cN_n^\Kra,\Oo_{\cZ^\Kra(\bx_1)}\otimes^{\bL}\ldots\Oo_{\cZ^\Kra(\bx_m)}\otimes^\bL\underbrace{\Oo_{\Exc_\Lambda}\otimes^{\bL} \cdots \otimes^{\bL} \Oo_{\Exc_\Lambda}}_{n-m})=(-1)^{n-1}\cdot 2^{n-m-1}.\]
	\end{corollary}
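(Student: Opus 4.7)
The plan is to push the entire derived intersection onto the smooth projective variety $\Exc_\Lambda\cong\bP^{n-1}_k$ and then compute the resulting product inside $\mathrm{CH}^\bullet(\Exc_\Lambda)\cong\Z[H_\Lambda]/(H_\Lambda^{n-1}-1)$. Since $\Exc_\Lambda$ is regular, the Grothendieck group and Chow ring coincide (after tensoring with $\Q$) via the codimension filtration, and the Euler characteristic of $H_\Lambda^{n-1}$ is $1$, so it suffices to identify the class of the full derived product on $\Exc_\Lambda$ in top codimension.

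First, I would apply Lemma \ref{lem: distribution law of derived tensor} to the inclusion $i:\Exc_\Lambda\hookrightarrow\cN_n^\Kra$, separating out one of the $n-m$ factors $\Oo_{\Exc_\Lambda}$ to serve as the ambient restriction. This rewrites
\begin{align*}
\Oo_{\cZ^\Kra(\bx_1)}\otimes^{\bL}\cdots\otimes^{\bL}\Oo_{\cZ^\Kra(\bx_m)}\otimes^{\bL}\Oo_{\Exc_\Lambda}^{\otimes^{\bL}(n-m)}
\end{align*}
as
\begin{align*}
i^*(\Oo_{\cZ^\Kra(\bx_1)}\otimes^{\bL}\Oo_{\Exc_\Lambda})\otimes^{\bL}_{\Exc_\Lambda}\cdots\otimes^{\bL}_{\Exc_\Lambda} i^*(\Oo_{\cZ^\Kra(\bx_m)}\otimes^{\bL}\Oo_{\Exc_\Lambda})\otimes^{\bL}_{\Exc_\Lambda} i^*(\Oo_{\Exc_\Lambda}^{\otimes^{\bL}(n-m)}),
\end{align*}
and the desired Euler characteristic is the Euler characteristic of this class on $\Exc_\Lambda$.

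Next, I would plug in the two ingredients already available. By Corollary \ref{cor:cZintersectExc}, each factor $i^*(\Oo_{\cZ^\Kra(\bx_j)}\otimes^{\bL}\Oo_{\Exc_\Lambda})$ has class $-H_\Lambda$ in $\mathrm{CH}^1(\Exc_\Lambda)$. By Lemma \ref{lem:Excselfintersect}, the remaining factor $i^*(\Oo_{\Exc_\Lambda}^{\otimes^{\bL}(n-m)})$ has class $(-2H_\Lambda)^{n-m-1}$ in $\mathrm{CH}^{n-m-1}(\Exc_\Lambda)$. Multiplying these classes via the cup product in $\Gr^\bullet K_0(\Exc_\Lambda)$ yields
\begin{align*}
(-H_\Lambda)^m\cdot(-2H_\Lambda)^{n-m-1}=(-1)^{n-1}\,2^{n-m-1}\,H_\Lambda^{n-1},
\end{align*}
which has Euler characteristic $(-1)^{n-1}\cdot 2^{n-m-1}$ as claimed.

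The only subtle point is checking that the graded multiplication $\mathrm{F}^1\cdot\mathrm{F}^{n-m-1}\subset\mathrm{F}^{n-1}$ is honest here, i.e.\ that the cup product really picks up the expected codimension; this is automatic because $\Exc_\Lambda\cong\bP^{n-1}_k$ is a regular projective scheme, where the map $\mathrm{CH}^\bullet\to\Gr^\bullet K_0$ is a ring isomorphism after tensoring with $\Q$ (see \eqref{eq:cupproductfiltration} and \eqref{eq:K^Y and K'(Y)}). No further obstacle appears, and the main step is simply the bookkeeping of signs and factors of $2$ above.
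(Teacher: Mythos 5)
Your proof is correct and follows essentially the same route as the paper's: apply Lemma~\ref{lem: distribution law of derived tensor} to push the computation onto $\Exc_\Lambda\cong\bP^{n-1}_k$, use Corollary~\ref{cor:cZintersectExc} to replace each $i^*(\Oo_{\cZ^\Kra(\bx_j)}\otimes^\bL\Oo_{\Exc_\Lambda})$ by $-H_\Lambda$, use Lemma~\ref{lem:Excselfintersect} to replace the $n-m$-fold self-intersection of $\Exc_\Lambda$ by $(-2H_\Lambda)^{n-m-1}$, and multiply in the Chow ring of $\bP^{n-1}_k$. The sign bookkeeping $(-1)^m(-2)^{n-m-1}=(-1)^{n-1}2^{n-m-1}$ and the Euler-characteristic step are handled exactly as in the paper.
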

	\begin{proof}
		By Corollary \ref{cor:cZintersectExc}, Lemmas \ref{lem: distribution law of derived tensor} and \ref{lem:Excselfintersect}, we have
		\begin{align*}
			&\chi(\cN_n^\Kra,\Oo_{\cZ^\Kra(\bx_1)}\otimes^{\bL}\ldots\Oo_{\cZ^\Kra(\bx_m)}\otimes^\bL\underbrace{\Oo_{\Exc_\Lambda}\otimes^{\bL} \cdots \otimes^{\bL} \Oo_{\Exc_\Lambda}}_{n-m})\\
			&=\chi(\Exc_\Lambda,\underbrace{(-H_\Lambda)\otimes_{\Exc_\Lambda}^{\bL} \cdots \otimes_{\Exc_\Lambda}^{\bL} (-H_\Lambda)}_{m}\otimes_{\Exc_\Lambda}^\bL\underbrace{(-2H_\Lambda)\otimes_{\Exc_\Lambda}^{\bL} \cdots \otimes_{\Exc_\Lambda}^{\bL} (-2H_\Lambda)}_{n-m-1})\\
			&=(-1)^m\cdot(-2)^{n-m-1}.
		\end{align*}
	\end{proof}
	
	For  $\Lambda\in \cV^0$, let  $\bP^1_\Lambda$ be any $\bP^1_k$ in $\Exc_\Lambda$, and
	\begin{equation}\label{eq: definition of Int Lambda_0}
		\Int_{\Lambda}(\bx)=\chi(\cN^\Kra, \Oo_{\cZ^\Kra(\bx)}\otimes^\bL\Oo_{\bP_\Lambda^1}).
	\end{equation}
	
	\begin{corollary}\label{cor: P1 dot Exc}
		For $\Lambda\in \cV^0$,   we have
		\begin{equation}\label{eq:ExcintersectP1}
			\chi(\cN^\Kra,\Oo_{\Exc_\Lambda}\otimes^\bL \Oo_{\bP^1_{\Lambda}})=-2.
		\end{equation}
	\end{corollary}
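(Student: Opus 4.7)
The plan is to reduce the computation to a self-intersection of $\Exc_\Lambda$ on $\Exc_\Lambda$, where Lemma~\ref{lem:Excselfintersect} has already done the work. Since $\bP^1_\Lambda \subset \Exc_\Lambda$, the structure sheaf $\Oo_{\bP^1_\Lambda}$ on $\cN^\Kra$ is the pushforward of its restriction to $\Exc_\Lambda$ along the closed immersion $i : \Exc_\Lambda \hookrightarrow \cN^\Kra$. The projection formula then gives
\[
\Oo_{\Exc_\Lambda} \otimes^\bL \Oo_{\bP^1_\Lambda} \;=\; i_*\bigl( Li^* \Oo_{\Exc_\Lambda} \otimes^\bL_{\Exc_\Lambda} \Oo_{\bP^1_\Lambda}\bigr),
\]
so that, taking Euler characteristics and using $Li^* \Oo_{\Exc_\Lambda} = \Oo_{\Exc_\Lambda} \otimes^\bL \Oo_{\Exc_\Lambda}$ (now viewed as a class on $\Exc_\Lambda$), we obtain
\[
\chi\bigl(\cN^\Kra, \Oo_{\Exc_\Lambda} \otimes^\bL \Oo_{\bP^1_\Lambda}\bigr) \;=\; \chi\bigl(\Exc_\Lambda, (\Oo_{\Exc_\Lambda} \otimes^\bL \Oo_{\Exc_\Lambda}) \otimes^\bL_{\Exc_\Lambda} \Oo_{\bP^1_\Lambda}\bigr).
\]

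Next, I would apply Lemma~\ref{lem:Excselfintersect} with $m=2$, which identifies the class of $\Oo_{\Exc_\Lambda} \otimes^\bL \Oo_{\Exc_\Lambda}$ in $\mathrm{CH}^1(\Exc_\Lambda)$ with $-2H_\Lambda$. Since the remaining factor $\Oo_{\bP^1_\Lambda}$ has dimension $1$ in $\Exc_\Lambda \cong \bP^{n-1}_k$, only the codimension-one part of the first factor contributes to the Euler characteristic (any lower-codimension contribution intersected with a $1$-dimensional cycle lies in a Chow group above the dimension of $\Exc_\Lambda$ and hence vanishes upon taking $\chi$). Thus the Euler characteristic on the right reduces to the degree of the zero-cycle $-2H_\Lambda \cdot [\bP^1_\Lambda]$ on $\bP^{n-1}_k$.

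Finally, a line meets a hyperplane in $\bP^{n-1}_k$ in a single reduced point, so $H_\Lambda \cdot [\bP^1_\Lambda] = 1$, giving the desired value $-2$. I do not anticipate any real obstacle: the only mildly delicate point is justifying that lower-codimension contributions in the class of $\Oo_{\Exc_\Lambda} \otimes^\bL \Oo_{\Exc_\Lambda}$ drop out, but this is automatic from dimension reasons once the computation is moved onto $\Exc_\Lambda \cong \bP^{n-1}_k$. (As a sanity check, when $n=2$ one has $\bP^1_\Lambda = \Exc_\Lambda$, and the formula recovers the $m=2$, $n-m-1=0$ case of Corollary~\ref{cor:multiplecZintersectExc}.)
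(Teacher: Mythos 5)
Your proposal is correct and follows essentially the same route as the paper: both push the computation onto $\Exc_\Lambda$ (the paper via Lemma~\ref{lem: distribution law of derived tensor} and the identity $\Oo_{\bP^1_\Lambda}=\Oo_{\Exc_\Lambda}\otimes_{\Oo_{\Exc_\Lambda}}\Oo_{\bP^1_\Lambda}$, you via the projection formula, which amounts to the same thing), apply Lemma~\ref{lem:Excselfintersect} with $m=2$ to get $-2H_\Lambda$, and conclude from $H_\Lambda\cdot[\bP^1_\Lambda]=1$. Your remark justifying why only the codimension-one graded piece contributes is a point the paper leaves implicit, and it is correct: the product lands in $\rF^{n-1}K_0(\Exc_\Lambda)$, whose image in $\Gr^{n-1}$ is the product of graded classes, and $\chi$ on that top filtration step is the degree map.
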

	\begin{proof}
		By Lemma \ref{lem:Excselfintersect}, we have
		\begin{align*}
			&\chi(\cN^\Kra,\Oo_{\Exc_\Lambda}\otimes^\bL \Oo_{\bP^1_{\Lambda}})\\
			&=\chi(\cN^\Kra,\Oo_{\Exc_\Lambda}\otimes^\bL (\Oo_{\Exc_\Lambda} \otimes_{\Oo_{\Exc_\Lambda}} \Oo_{\bP^1_{\Lambda}}))\\
			&=\chi(\Exc_\Lambda,(\Oo_{\Exc_\Lambda}\otimes^\bL \Oo_{\Exc_\Lambda}) \otimes_{\Oo_{\Exc_\Lambda}} \Oo_{\bP^1_{\Lambda}})\\
			&=-2\chi(\Exc_\Lambda,H_\Lambda\cdot [\Oo_{\bP^1_{\Lambda}}])\\
			&=-2.
		\end{align*}
	\end{proof}
	
	\begin{corollary}\label{cor: P1 dot cZ} For $\Lambda\in \cV^0$,   we have
		\[\Int_{\Lambda}(\bx)=-1_{\Lambda}(\bx).\]
	\end{corollary}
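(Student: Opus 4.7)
The plan is to split into the two cases $\bx \notin \Lambda$ and $\bx \in \Lambda$, corresponding to the values $1_\Lambda(\bx)=0$ and $1$ respectively.

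First, for the case $\bx \notin \Lambda$, I would argue that $\cZ^\Kra(\bx) \cap \Exc_\Lambda = \emptyset$, so in particular $\cZ^\Kra(\bx) \cap \bP^1_\Lambda = \emptyset$ and $\Int_\Lambda(\bx)=0$. The key observation is that $\cZ^\Kra(\bx) = \cZ^\Pap(\bx) \times_{\cN^\Pap} \cN^\Kra$ and the blowup map sends $\Exc_\Lambda$ to the single superspecial point corresponding to $\Lambda$. By the characterization used in the proof of Proposition \ref{prop:multiplicityofExc} (via \cite[Proposition 3.7]{Shi1}), this superspecial point lies in $\cZ^\Pap(\bx)$ if and only if $\bx \in \Lambda$, so the intersection is empty when $\bx \notin \Lambda$.

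For the case $\bx \in \Lambda$, I would use Corollary \ref{cor:cZintersectExc} together with Lemma \ref{lem: distribution law of derived tensor}. Since $\bP^1_\Lambda \subset \Exc_\Lambda$, we can write $\Oo_{\bP^1_\Lambda} = \Oo_{\Exc_\Lambda} \otimes_{\Oo_{\Exc_\Lambda}} \Oo_{\bP^1_\Lambda}$ and push the computation onto $\Exc_\Lambda$:
\begin{align*}
\Int_\Lambda(\bx) &= \chi\bigl(\cN^\Kra, \Oo_{\cZ^\Kra(\bx)} \otimes^\bL \Oo_{\Exc_\Lambda} \otimes^\bL_{\Exc_\Lambda} \Oo_{\bP^1_\Lambda}\bigr) \\
&= \chi\bigl(\Exc_\Lambda, \bigl(\Oo_{\Exc_\Lambda} \otimes^\bL \Oo_{\cZ^\Kra(\bx)}\bigr) \otimes^\bL_{\Exc_\Lambda} \Oo_{\bP^1_\Lambda}\bigr).
\end{align*}
By Corollary \ref{cor:cZintersectExc} the first factor equals $-H_\Lambda$ in $\mathrm{CH}^1(\Exc_\Lambda)$, and since the hyperplane class $H_\Lambda$ meets a line $\bP^1_\Lambda \subset \Exc_\Lambda \cong \bP^{n-1}$ in a single point (assuming $n \geq 2$; the case $n=1$ does not arise since $\cV^0$ is empty when $n=1$ and there are no exceptional divisors), this gives $\Int_\Lambda(\bx) = -1$.

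Combining both cases yields $\Int_\Lambda(\bx) = -1_\Lambda(\bx)$. There is no serious obstacle here: this is a direct bookkeeping consequence of Corollaries \ref{cor:cZintersectExc} and \ref{cor: P1 dot Exc} and the description of $\cZ^\Kra(\bx) \cap \Exc_\Lambda$. The only subtle point is making sure to treat $\bx \notin \Lambda$ via the geometric (set-theoretic) emptiness of the intersection, since Corollary \ref{cor:cZintersectExc} is stated only for $\bx \in \Lambda$.
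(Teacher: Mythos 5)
Your proof is correct and takes essentially the same approach as the paper: split on $\bx\in\Lambda$ versus $\bx\notin\Lambda$, use the set-theoretic emptiness of $\cZ^\Kra(\bx)\cap\Exc_\Lambda$ in the first case, and push forward to $\Exc_\Lambda$ via Corollary \ref{cor:cZintersectExc} in the second. (Your concluding sentence cites Corollary \ref{cor: P1 dot Exc} as an ingredient, but it is not actually used; only Corollary \ref{cor:cZintersectExc} is needed.)
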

	\begin{proof}
		If $\bx\notin \Lambda$, then the intersection number is apparently $0$. Otherwise,  by Corollary \ref{cor:cZintersectExc}, we have
		\begin{align*}
			&\chi(\cN^\Kra,\Oo_{\cZ^\Kra(\bx)}\otimes^\bL \Oo_{\bP^1_{\Lambda}})\\
			&=\chi(\Exc_\Lambda,(\Oo_{\cZ^\Kra(\bx)}\otimes^\bL \Oo_{\Exc_\Lambda}) \otimes_{\Oo_{\Exc_\Lambda}} \Oo_{\bP^1_{\Lambda}})\\
			&=-\chi(\Exc_\Lambda,H_\Lambda\cdot [\Oo_{\bP^1_{\Lambda}}])\\
			&=-1.
		\end{align*}
	\end{proof}

	The above results suggest that the difficulty in computing $\mathrm{Int}(L)$ mainly lies in computing
	\[\chi(\cN^\Kra,\Oo_{\tZ(x_1)}\otimes^{\bL}\cdots \otimes^{\bL}\Oo_{\tZ(x_n)}).\]
	We end this section by studying the intersection number of difference cycle with exceptional divisors.
	\begin{lemma}\label{lem: Z(L hecke) dot Exc}
		If $L^\flat$ has rank $n-1$, then for any $\Lambda\in \cV^0(L^\flat)$, we have
		\[\chi(\cN^\Kra,\cD(L^\flat)\cdot [\Oo_{\Exc_{\Lambda}}])=\begin{cases}
			(-1)^{n-1} & \text{ if } L^\flat=\Lambda\cap L^\flat_F,\\
			0 & \text{ otherwise}.
		\end{cases}\]
	\end{lemma}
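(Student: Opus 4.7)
The plan is to expand $\cD(L^\flat)$ by its defining formula \eqref{eq:differencecycle} and reduce the computation to intersections of the form $\chi(\cN^\Kra, {}^\bL\cZ^\Kra(L')\cdot [\Oo_{\Exc_\Lambda}])$ for integral overlattices $L^\flat \subset L' \subset \frac{1}{\pi}L^\flat$. Each such $L'$ again has rank $n-1$, so Corollary \ref{cor:multiplecZintersectExc} (with $m=n-1$, hence $n-m-1=0$) applies \emph{provided} $\Exc_\Lambda \subset \cZ^\Kra(L')$, giving the contribution $(-1)^{n-1}$. The condition $\Exc_\Lambda \subset \cZ^\Kra(L')$ amounts to $L' \subset \Lambda$: if a basis vector $\bx$ of $L'$ fails to lie in $\Lambda$, then by \cite[Proposition 3.7]{Shi1} the superspecial point $\Lambda \in \cN^\Pap(k)$ lies outside $\cZ^\Pap(\bx)$, so its fiber $\Exc_\Lambda$ is disjoint from $\cZ^\Kra(\bx)$; disjointness of supports forces the derived tensor (and hence the Euler characteristic) to vanish.

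Next I count the contributing $L'$. The assignment $L' \mapsto L'/L^\flat$ is a bijection between integral overlattices with $L^\flat \stackrel{i}{\subset} L'$ and $i$-dimensional $\F_q$-subspaces of $\frac{1}{\pi}L^\flat/L^\flat \cong \F_q^{n-1}$, under which $L' \subset \Lambda$ becomes $L'/L^\flat \subset W$, where $W := (\Lambda \cap \frac{1}{\pi}L^\flat)/L^\flat$. Setting $d := \dim_{\F_q} W$, there are exactly $\binom{d}{i}_q$ contributing $L'$, and assembling the terms yields
\begin{equation*}
\chi(\cN^\Kra, \cD(L^\flat)\cdot [\Oo_{\Exc_\Lambda}]) = (-1)^{n-1}\sum_{i=0}^{d} (-1)^{i} q^{i(i-1)/2} \binom{d}{i}_{q}.
\end{equation*}
By the same identity from \cite{tamagawa1963zeta} invoked in the proof of Lemma \ref{lem: decomposition of Z(L) as sums of D(L)}, the right-hand sum equals $\prod_{j=0}^{d-1}(1-q^j)$, which is $1$ if $d=0$ and $0$ if $d\geq 1$.

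It remains to recognize $d = 0$ as $L^\flat = \Lambda \cap L^\flat_F$. The reverse implication is immediate. Conversely, if $d=0$ but some $v \in \Lambda \cap L^\flat_F$ lies outside $L^\flat$, choose the minimal $k\geq 1$ with $v \in \pi^{-k}L^\flat$; then $\pi^{k-1}v \in \Lambda$ (as $\Lambda$ is $\Oo_F$-stable) lies in $\frac{1}{\pi}L^\flat \setminus L^\flat$, contradicting $d=0$. Combining these steps gives the claimed dichotomy.

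The main substantive point is step two, namely pinning down exactly when $\Exc_\Lambda$ contributes: the positive contribution $(-1)^{n-1}$ comes uniformly from Corollary \ref{cor:multiplecZintersectExc} whenever $L'\subset\Lambda$, while the disjointness argument for $L'\not\subset\Lambda$ kills everything else. Once this dichotomy is in place, the proof reduces to a clean $q$-binomial identity, and the rest is lattice bookkeeping.
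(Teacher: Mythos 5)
Your proof is correct and follows essentially the same route as the paper: both expand $\cD(L^\flat)$, identify $\Exc_\Lambda \subset \cZ^\Kra(L')$ with $L'\subset\Lambda$ (equivalently $L'\subset M'=\frac{1}{\pi}L^\flat\cap\Lambda$), apply Corollary \ref{cor:multiplecZintersectExc} to each contributing term, and finish with the Gaussian-binomial identity from \cite{tamagawa1963zeta}. Your exposition is slightly more explicit (writing the sum via $\binom{d}{i}_q$ and spelling out the argument that $d=0$ is equivalent to $L^\flat=\Lambda\cap L^\flat_F$), but the content coincides with the paper's proof.
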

	\begin{remark}
		We have $L^\flat=\Lambda\cap L ^\flat_F$ if and only if $L^\flat$ is of type (see \eqref{eq:t(L)} and Lemma \ref{lem:tMflatleq1} below) $1$ or $0$ and $\Lambda$ is at the boundary of the $\cL(L^\flat)$.
	\end{remark}
	\begin{proof}
		Define
		\[M'\coloneqq \frac{1}{\pi}L^\flat \cap \Lambda \text{ and } m\coloneqq \dim_{\F_\q}(M'/L^\flat).\]
		Then for $L'$ such that $L^\flat\subset L'\subset \frac{1}{\pi} L^\flat$, we know that $\cZ^\Kra(L')$ intersects $\Exc_{\Lambda}$ if and only if $L'\subset M'$. For such $L'$, by Corollary \ref{cor:multiplecZintersectExc}, we have
		\begin{equation}
			\chi(\cN^\Kra,{}^\bL\cZ^\Kra(L')\cdot [\Oo_{\Exc_{\Lambda}}])=(-1)^{n-1}.
		\end{equation}
		Hence
		\[\chi(\cN^\Kra,\cD(L^\flat)\cdot [\Oo_{\Exc_{\Lambda}}])=(-1)^{n-1}[1+\sum_{i=1}^m (-1)^i \q^{i(i-1)/2} \sum_{\substack{L^\flat\subset L'\subset M'\\ \mathrm{dim}_{\F_\q}(L'/L^\flat)=i}} 1].\]
		Notice that $m=0$ if and only if $M'=L^\flat$ which is equivalent to the condition $L^\flat=\Lambda\cap L^\flat_F$. In this case the summation in \eqref{eq:multiplicityofMflatinD_h} is over an empty set hence \eqref{eq:multiplicityofMflatinD_h} is equal to $1$.
		If $m>0$ we know \eqref{eq:multiplicityofMflatinD_h} is equal to $0$ by \eqref{eq: inc exc}.
	\end{proof}

	\section{Horizontal components of special cycles}\label{sec: horizontal part}
	Given an integral Hermitian lattice $L$ we can have its Jordan decomposition:
	\begin{equation}\label{eq:Jordandecomposition}
		L=\obot_{t\geq 0} L_t
	\end{equation}
	where $L_t$ is $\pi^t$-modular, see \cite{J}. Define the type of $L$ to be
	\begin{equation}\label{eq:t(L)}
		t(L)=\sum_{t\geq 1} \mathrm{rank}_{\Oo_F}(L_t).
	\end{equation}

	\subsection{Quasi-canonical lifting cycles}\label{subsec:quasi canonical lifting}
	Assume that $\mathrm{dim}(\bV)=2$.
	When $\chi(\bV)=-1$, for $\by\in \bV$, by \cite[Theorem 4.5]{Shi2}, we have the following equality of Cartier divisors on $\cN_{2,-1}^\Kra$.
	\[\tZ_{2,-1}(\by)=\cZ_0+\sum_{s=1}^{\val(\by)} (\cZ_{s}^+ +\cZ_{s}^-).\]
    Here $\cZ_0$ (respectively, $\cZ_{s}^{\pm}$) is a canonical (respectively, quasi-canonical) lifting cycle of level $0$ (respectively, $s$), see \cite[\S 3]{Shi2}. Moreover by \cite[Proposition 3.12]{Shi2}, $\cZ_s^+$ and $\cZ_s^-$ do not intersect when $s\geq 1$.
    Let $\Oo_s:=\Oo_{F_0}+\Oo_{F}\cdot \pi_0^s$ and $M_s$ be the finite abelian extension of ${\breve F}$ corresponding to the subgroup $\Oo_s^\times$ under local class field theory. Let $W_s$ be the integral closure of $\Oo_{\breve F}$ in $M_s$. Then we have $\cZ_0\cong \SpfOF$ and $\cZ_{s}^{\pm}\cong \Spf W_s$. Define the primitive part of $\tZ_{2,-1}(\by)$ to be
	\[\tZ_{2,-1}(\by)^\circ\coloneqq \left\{\begin{array}{cc}
		\cZ_{\val(\by)}^+ +\cZ_{\val(\by)}^-   & \text{ if } \val(\by)>0,  \\
		\cZ_0 & \text{ if } \val(\by)=0.
	\end{array}\right.\]
	When $\chi(\bV)=1$, for $\by\in \bV$ such that $\val(\by)\geq 0$, by \cite[Theorem 4.1]{HSY}, we have  the following equality of Cartier divisors on $\cN_{2,1}^\Kra$.
	\[\tZ_{2,1}(\by)=\cZ_0+\cZ_v(\by),\]
    where $\cZ_0\cong \SpfOF$ is a canonical lifting cycle and $\cZ_v(\by)$ is a Cartier divisor whose structure sheaf is annihilated by $\pi^N$ for some $N> 0$. Define the primitive horizontal part of $\tZ_{2,1}(\by)$ to be
    \[\tZ_{2,1}(\by)^\circ\coloneqq \left\{\begin{array}{cc}
		0   & \text{ if } \val(\by)>0,  \\
		\cZ_0 & \text{ if } \val(\by)=0.
	\end{array}\right.\]

	\subsection{Horizontal cycles}
\begin{definition}\label{def:horizontal lattice}
Let $M^\flat$ be a rank $n-1$ integral lattice in $\bV$. We say that  $M^\flat$ is horizontal if one of the following conditions is satisfied
\begin{enumerate}
    \item $M^\flat$ is unimodular.
    \item $M^\flat$ is of the form $M^\flat=M\obot \spa\{\by\}$ where $M$ is a unimodular sublattice of rank $n-2$ such that $(M_F)^\bot$ (the perpendicular complement of $M_F$ in $\bV$) is non-split.
\end{enumerate}
Notice that condition (2) is independent of the choice of $M$. We denote the set of horizontal lattices by $\Hor$.
\end{definition}
     For a rank $n-1$ integral lattice $L^\flat$, define
	\begin{equation}\label{eq:horizontalmodules}
		\Hor(L^\flat)\coloneqq \{M^\flat \in \Hor \mid L^\flat\subseteq M^\flat\}.
	\end{equation}
	Let $M^\flat \subset \bV$ be a lattice of rank $n-1$ and type $1$ or $0$. We can decompose $M^\flat$ as
    \begin{equation}\label{eq:Mflat decom}
       M^\flat= M\obot \spa\{\by\},
    \end{equation}
     for some unimodular lattice $M$ of rank $n-2$. Then Proposition \ref{prop:cZxunimodular} and its corollary imply that
	\[\tZ(M^\flat)\cong \tZ_{2,\chi((M_F)^\bot)}(\by).\]
    Under this isomorphism, define $\tZ(M^\flat)^\circ$ to be the formal subscheme of $\tZ(M^\flat)$ isomorphic to $\tZ_{2,\chi((M_F)^\bot)}(\by)^\circ$. By the discussion in \S \ref{subsec:quasi canonical lifting}, $\tZ(M^\flat)^\circ$ is nonempty if and only if $M^\flat\in \Hor$, in which case it consists of the union of irreducible components of $\tZ(M^\flat)$ isomorphic to $\Spf W_s$. In particular, $\tZ(M^\flat)^\circ$ is independent of the choice of $M$.
	\begin{theorem}\label{thm:horizontalpart}
		Let $L^\flat$ be a rank $n-1$ integral lattice in $\bV$, then
		\begin{equation}\label{eq:horizontalpart}
			\cZ^\Kra(L^\flat)_h=\bigcup_{M^\flat \in \Hor(L^\flat)} \tZ(M^\flat)^\circ.
		\end{equation}
		In particular, $\cZ^{\Kra}(L^\flat)_h$ is  of pure dimension $1$. Moreover we have the following identity in $\Gr^{n-1}K_0(\cN^\Kra)$:
		\[[\Oo_{\cZ^\Kra(L^\flat)_h}]=\sum_{M^\flat \in \Hor(L^\flat)} [\Oo_{\tZ(M^\flat)^\circ}].\]
	\end{theorem}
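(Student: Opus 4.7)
The plan is to reduce everything to the case $n=2$, where the structural descriptions of $\tZ_{2,\pm 1}(\by)$ recalled above from \cite{Shi2} and \cite{HSY} give the theorem directly. The reduction tool is Proposition \ref{prop:cZxunimodular}: for any rank-$(n-2)$ unimodular sublattice $M^\flat_0$ of an integral rank-$(n-1)$ lattice $M^\flat \supseteq L^\flat$, there is an isomorphism $\tZ(M^\flat_0) \cong \cN^\Kra_{2,\chi(M^\flat,\bV)}$, and by Corollary \ref{cor:reducedimensionofcZbyone} the cycle $\tZ(M^\flat) = \tZ(M^\flat_0) \cap \tZ(\by)$ corresponds to $\tZ_{2,\chi(M^\flat,\bV)}(\by)$, where $M^\flat = M^\flat_0 \obot \spa\{\by\}$ is a Jordan decomposition.

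For the inclusion $\supseteq$, given $M^\flat \in \Mod_h(L^\flat)$, the definition of $\Mod_h$ is tailored precisely so that, through the identification above, $\tZ(M^\flat)^o$ is a quasi-canonical lifting cycle at $n=2$: nonempty, horizontal, and of pure dimension 1. Since $L^\flat \subseteq M^\flat$ yields $\cZ^\Kra(L^\flat) \supseteq \tZ(M^\flat) \supseteq \tZ(M^\flat)^o$, we obtain $\tZ(M^\flat)^o \subseteq \cZ^\Kra(L^\flat)_h$.

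For the converse inclusion $\subseteq$, I would attach to each horizontal formal point $z\colon \Spf W \to \cZ^\Kra(L^\flat)$ the maximal $\Oo_F$-sublattice
\[
M^\flat(z) = \{\bx \in \bV \mid \rho^{-1}\circ \bx \circ \rho_\cG \text{ extends to a homomorphism over } W\} \supseteq L^\flat.
\]
Its integrality follows from the compatibility of the principal polarizations on $\cG$ and $X$. To show $M^\flat(z) \in \Mod_h(L^\flat)$, the argument is a case analysis on $t(M^\flat(z))$: if $t(M^\flat(z)) \geq 2$, splitting off a rank-$(n-2)$ unimodular summand and applying Proposition \ref{prop:cZxunimodular} reduces $z$ to a horizontal point of a rank-2 cycle attached to an integral lattice of type $\geq 2$, which has empty horizontal part in either sign case by the $n=2$ descriptions---contradicting horizontality of $z$. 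The sign restriction that excludes $M^\flat \in \Mod_1$ with $\chi(M^\flat,\bV)=+1$ arises similarly from the fact that $\tZ_{2,1}(\by)$ is purely vertical for $\val(\by) \geq 1$. Maximality of $M^\flat(z)$ together with the explicit structure of the $n=2$ horizontal cycles then places $z$ in the primitive part $\tZ(M^\flat(z))^o$.

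For the $K_0$-identity in $\Gr^{n-1} K_0(\cN^\Kra)$, the uniqueness of the assignment $z \mapsto M^\flat(z)$ shows that distinct $\tZ(M^\flat)^o$'s overlap in dimension strictly less than 1, so they contribute independently to the codimension-$(n-1)$ graded piece, and each is reduced by the explicit $n=2$ descriptions. The main obstacle will be the type and sign constraints on $M^\flat(z)$---in particular the crucial bound $t(M^\flat(z)) \leq 1$. This requires a careful Grothendieck-Messing-theoretic argument on how the Kr\"amer filtration $\cF$ and the polarization jointly restrict extensions of endomorphisms along horizontal deformations, and is essentially a ramified-prime analogue of the horizontal-part analysis in the unramified Kudla-Rapoport theory; tracking the two signs through the reduction is the most delicate bookkeeping.
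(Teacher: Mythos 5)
Your inclusion $\supseteq$ is fine and matches the paper's, and you correctly identify the type bound $t(M^\flat(z))\le 1$ as the crux. But the sketch you give for it does not close, and the step you flag as ``the main obstacle'' is indeed where your plan breaks.

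The reduction-to-$n=2$ move fails precisely in the case $t(M^\flat(z))\ge 2$ that you want to rule out. If $t(M^\flat(z))=t\ge 2$, the unimodular Jordan block of $M^\flat(z)$ has rank only $n-1-t\le n-3$, so there is no rank-$(n-2)$ unimodular summand to split off. Splitting off the actual unimodular part via Proposition \ref{prop:cZxunimodular} lands you in $\cN^\Kra_{t+1}$ with a rank-$t$ lattice of type $t$ --- that is, you have reduced the statement for rank $n-1$ to the statement for rank $t$, which for $t\ge 2$ is a nontrivial instance of the very theorem you are proving, not the known $n=2$ input. So the case analysis is circular for $t\ge 2$.

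The paper avoids this entirely by a different, essentially algebraic, route: for a horizontal $\Oo_K$-point $z$, set $L=\Hom_{\Oo_F}(T_p\cG,T_pG)$, which is a \emph{self-dual} rank-$n$ Hermitian $\Oo_F$-lattice, and observe that $M^\flat(z)\cong\Hom_{\Oo_F}(\cG,G)$ is realized inside $\bV$ as $L^\flat_F\cap i_k(i_K^{-1}(L))$, i.e.\ as the intersection of a self-dual rank-$n$ lattice with an $(n-1)$-dimensional subspace. Lemma \ref{lem:tMflatleq1} (a pure lattice-theoretic statement, cf.\ \cite[Lemma 4.5.1]{LZ}, \cite[Lemma 2.24(2)]{LL2}) then gives $t(M^\flat(z))\le 1$ with no Grothendieck--Messing deformation input at all. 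The sign exclusion of $\Mod_1$ with $\chi=+1$ you state correctly; that part does come from the $n=2$ descriptions once the type bound is in hand.

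Separately, your justification of the $K_0$-identity is incomplete. Showing that distinct $\tZ(M^\flat)^o$ overlap in dimension $0$ and that each $\tZ(M^\flat)^o$ is reduced only controls the support and prevents double counting; it does not show that the scheme $\cZ^\Kra(L^\flat)_h$ has multiplicity one along each horizontal component. For that one still needs the deformation-theoretic step, which the paper carries out by lifting a horizontal $\Oo_K$-point to $R=\Oo_K[x]/(x^2)$ via Grothendieck--Messing and showing the lift of the Hodge filtration is forced (hence unique), so the multiplicity is $1$.
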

	\begin{proof} The proof largely follows \cite[Section 4.4]{LZ}. Let $K$ be a finite extension of $\breve F$. Assume that $z$ is  an irreducible component of $ \cZ^\Kra(L^\flat)(\Oo_K)=\cZ^\Pap(L^\flat)(\Oo_K)$, and let $G$ be the corresponding formal $\Oo_F$-module over $\Oo_K$.
		Define
		\[L\coloneqq \Hom_{\Oo_F}(T_p\cG,T_p G)\]
		where $\cG$ is the canonical lifting and $T_p$ is the integral $p$-adic Tate module. Here $L$  is  an $\Oo_F$-module of rank $n$ equipped with the Hermitian form
		\[\{x,y\}=\lambda_\cG^\vee \circ y^\vee \circ \lambda_G\circ x,\]
		under which it is self-dual.
		We have two inclusions (preserving Hermitian forms)
		\[i_K:\Hom_{\Oo_F}(\cG,G)_F \rightarrow L_F,\]
		and
		\[i_k:\Hom_{\Oo_F}(\cG,G)_F \rightarrow \bV.\]
		By Lemma 4.4.1 of loc.cit., we have
		\begin{equation}
			\Hom_{\Oo_F}(\cG,G)=i^{-1}_K(L).
		\end{equation}
		Let
		\[M^\flat\coloneqq (L^\flat_F)\cap i_{k}(i_K^{-1}(L)) \cong \Hom_{\Oo_F}(\cG,G).\]
		Then  $z \subset \cZ(M^\flat)(\Oo_K)$.
		Lemma \ref{lem:tMflatleq1} below implies that $t(M^\flat)\leq 1$. Hence we know that $z$ is one of the irreducible component of $\tZ(M^\flat)^\circ\cong \tZ_{2,\chi((M_F)^\bot)}(\by)$ assuming the decomposition of $M^\flat$ as in \eqref{eq:Mflat decom}. The non-emptiness of $\tZ(M^\flat)^\circ$ implies that $M^\flat\in \Hor$.
        It remains to prove that $z$ has multiplicity $1$ in $\cZ^\Kra(L^\flat)$. Consider $R$-points of both sides of \eqref{eq:horizontalpart}, where $R\coloneqq \Oo_K[x]/(x^2)$. As in \cite{Kr} (see \cite[Appendix of Chapter 3]{RZ}) we know
		\[\mathbb{D}(\cG)(R)\cong \Oo_F\otimes_{\Oo_{F_0}}R, \text{ and }  \mathbb{D}(G)(R)\cong (\Oo_F\otimes_{\Oo_{F_0}}R)^n \]
		where $\mathbb{D}$ is the $\Oo_{F_0}$-relative Dieudonn\'e crystal. Define
		\[\tilde{e}_0=1\otimes 1\in \mathbb{D}(\cG)(R),\quad  \tilde{f}_0=\pi\otimes 1 \in \mathbb{D}(\cG)(R).\]
		Then the Hodge submodule $\cF_0$ of $\mathbb{D}(\cG)(R)$ is spanned by
		\[(1\otimes \pi )\tilde{e}_0+\tilde{f}_0.\]
		Here $\mathbb{D}(G)(R)$ is equipped with an $\Oo_F$-invariant symplectic form $\langle,\rangle$ and we can assume that  $\mathbb{D}(G)(R)$ has  a basis $\{\tilde{e}_1,\ldots,\tilde{e}_n,\tilde{f}_1,\ldots,\tilde{f}_n\}$ such that
		\[(\pi\otimes 1)\tilde{e}_i=\tilde{f}_i, \quad  \langle \tilde{e}_i,\tilde{f}_j \rangle=\delta_{ij}.\]
		Since any element in $L^\flat$ is $\Oo_F$-linear, we can arrange a change of basis if necessary and assume that
		\begin{align*}
			&L^\flat((1\otimes \pi )\tilde{e}_0+\tilde{f}_0)=\spa_{R}\{(1\otimes\pi^{a_1})((1\otimes \pi )\tilde{e}_1+\tilde{f}_1), \ldots,(1\otimes\pi^{a_{n-1}})((1\otimes \pi )\tilde{e}_{n-1}+\tilde{f}_{n-1})\}.
		\end{align*}
		Now $\mathbb{D}(G)(\Oo_K)=\mathbb{D}(G)(R)\otimes_R \Oo_K$. Let $e_i=\tilde{e}_i\otimes 1$ and $f_i=\tilde{f}_i\otimes 1$ respectively.
		There is an exact sequence of free $\Oo_{F}\otimes_{\Oo_{F_0}}\Oo_K$-modules (the Hodge filtration)
		\[0\rightarrow \mathrm{Fil}\rightarrow \mathbb{D}(G)(\Oo_K)\rightarrow \Lie G\rightarrow 0\]
		where $\mathrm{Fil}$ is isotropic with respect to $\langle,\rangle$. We must have $L^\flat((1\otimes \pi )e_0+f_0)\subset \mathrm{Fil}$. Hence we have
		\[(1\otimes \pi )e_1+f_1, \ldots,(1\otimes \pi )e_{n-1}+f_{n-1}\subset \mathrm{Fil}.\]
		Since $\mathrm{Fil}$ is isotropic and by the signature condition, we have
		\[\mathrm{Fil}=\spa_{\Oo_K}\{(1\otimes \pi )e_1+f_1, \ldots,(1\otimes \pi )e_{n-1}+f_{n-1},(1\otimes \pi )e_n-f_n\}.\]
		Since $(x)\subset R$ has a nilpotent p.d. structure, by Grothendieck-Messing theory, a lift $\tilde{z}$ of $z$ to $\cZ^\Kra(L^\flat)(R)$ corresponds to a lift of $\mathrm{Fil}$ to an isotropic $\Oo_F\otimes_{\Oo_{F_0}}R$-module $\widetilde{\mathrm{Fil}}$ in $\mathbb{D}(G)(R)$ containing the image of $L^\flat$. By the same reasoning as above, we must have
		\[\widetilde{\mathrm{Fil}}=\spa_{R}\{(1\otimes \pi )\tilde{e}_1+\tilde{f}_1, \ldots,(1\otimes \pi )\tilde{e}_{n-1}+\tilde{f}_{n-1},(1\otimes \pi )\tilde{e}_n-\tilde{f}_n\}.\]
		Hence such lift is unique. This implies that the multiplicity of $z$ in $\cZ^\Kra(L^\flat)$ is one.
	\end{proof}
	\begin{lemma}\label{lem:tMflatleq1}
		Let $L$ be a self-dual Hermitian lattice of rank $n$ and $W$ be a $n-1$ dimensional subspace of $L_F $. Then $t(M^\flat)\le 1$ for $M^\flat=L\cap W$.
	\end{lemma}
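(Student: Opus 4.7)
The plan is to show that $M^{\flat,\sharp}/M^\flat$ is a cyclic $\Oo_F$-module. This implies the claim immediately: writing the Jordan decomposition $M^\flat = \obot_{i\geq 0} M^\flat_i$ with $M^\flat_i$ of rank $r_i$ and $\pi^i$-modular, we have
\[
M^{\flat,\sharp}/M^\flat \;\cong\; \bigoplus_{i\geq 1}(\Oo_F/\pi^i)^{r_i},
\]
so cyclicity of the left hand side forces at most one $i\geq 1$ to have $r_i > 0$, and then $r_i = 1$; hence $t(M^\flat) = \sum_{i\geq 1} r_i \leq 1$.

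First I would observe that $L/M^\flat = L/(L\cap W)$ injects into $L_F/W$, so it is a torsion-free $\Oo_F$-module. Being of rank one, it is free of rank one, and we may pick $e\in L$ so that $L = M^\flat \oplus \Oo_F e$. Assuming the Hermitian form restricts non-degeneratly to $W$ (which is implicit in speaking of $M^{\flat,\sharp}$), I would then consider the $\Oo_F$-linear map
\[
\psi\colon M^{\flat,\sharp}\longrightarrow F/\Oo_F,\qquad v\longmapsto (v,e) + \Oo_F.
\]
Since $L = L^\sharp$, every $m\in M^\flat \subset L$ satisfies $(m,e)\in\Oo_F$, so $\psi$ kills $M^\flat$ and descends to $\bar\psi\colon M^{\flat,\sharp}/M^\flat \to F/\Oo_F$.

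The key step is to check that $\bar\psi$ is injective. If $v\in M^{\flat,\sharp}$ satisfies $(v,e)\in\Oo_F$, then combining this with $(v,M^\flat)\subset\Oo_F$ and the decomposition $L = M^\flat\oplus\Oo_F e$ gives $(v,L)\subset\Oo_F$. Thus $v\in L^\sharp = L$ inside $L_F$, and since $v\in W$ we conclude $v\in L\cap W = M^\flat$. Every finite sub-$\Oo_F$-module of $F/\Oo_F$ is of the form $\pi^{-s}\Oo_F/\Oo_F$ and therefore cyclic, so $M^{\flat,\sharp}/M^\flat$ is cyclic as well. I do not anticipate a serious obstacle; the argument essentially amounts to the observation that self-duality of the ambient lattice $L$ reduces the entire discriminant module $M^{\flat,\sharp}/M^\flat$ to the image of a single pairing against the complementary vector $e$.
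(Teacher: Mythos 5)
Your argument is correct and complete. You correctly observe that $L/M^\flat$ is torsion-free of rank one (since it injects into the one-dimensional $F$-space $L_F/W$), so the inclusion $M^\flat\subset L$ splits as $L=M^\flat\oplus\Oo_F e$; then pairing against $e$ gives a map $M^{\flat,\sharp}\to F/\Oo_F$ which kills $M^\flat$ (because $L=L^\sharp$) and whose kernel is exactly $M^\flat$ (because a vector in $W$ pairing integrally with both $M^\flat$ and $e$ pairs integrally with $L$, hence lies in $L^\sharp=L$, hence in $L\cap W=M^\flat$). Since finite $\Oo_F$-submodules of $F/\Oo_F$ are cyclic, $M^{\flat,\sharp}/M^\flat$ is cyclic, and reading off the Jordan decomposition gives $t(M^\flat)\le 1$.

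This is a genuinely different route from the paper. The paper cites the proof of Lemma 4.5.1 in \cite{LZ} (with the warning that in the ramified case the upper-left $(n-1)\times(n-1)$ block of the Gram matrix may contain hyperbolic-type blocks $\begin{pmatrix}0&\pi^a\\(-\pi)^a&0\end{pmatrix}$), i.e.\ an explicit Gram-matrix normal-form argument, or alternatively points to \cite[Lemma 2.24(2)]{LL2}. Your proof replaces this matrix computation with a structural observation: self-duality of $L$ forces the whole discriminant module $M^{\flat,\sharp}/M^\flat$ to embed into the cyclic $\Oo_F$-module $F/\Oo_F$ via pairing against the complementary generator. The advantage is that your argument is coordinate-free, makes no distinction between the ramified and unramified (or split) cases, and in particular avoids the bookkeeping the paper warns about; it also isolates cleanly which hypotheses are used (just $L=L^\sharp$ and that $\Oo_F$ is a DVR). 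The paper's approach, by contrast, produces an explicit normal form for the Gram matrix of $M^\flat$, which is more information than the lemma strictly requires but can be useful downstream.
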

	\begin{proof}
		This is exactly the same as the proof of \cite[Lemma 4.5.1]{LZ}. Notice that in our case we may need some blocks $\begin{pmatrix}
		    0 & \pi^a \\
			(-\pi)^a & 0
		\end{pmatrix}$ in the upper left $(n-1)\times (n-1)$ block of $T$ as in loc.cit. Alternatively, see \cite[Lemma 2.24(2)]{LL2}.
	\end{proof}
	
	We end this subsection with the following lemma.
	\begin{lemma}\label{lem:quasicanonicalliftingintersectExc}
		Assume $M^\flat\in \Hor$. Then $\tZ(M^\flat)^\circ$ intersects the special fiber of $\cN_{n,\epsilon}^\Kra$ at a unique $\Exc_\Lambda$ for some  $\Lambda \in \cV^0$. Moreover \[\chi(\cN^\Kra,\Oo_{\tZ(M^\flat)^\circ}\otimes^{\bL} \Oo_{\Exc_\Lambda})=\begin{cases}
		    1 & \text{ if } M^\flat \text{ is unimodular}, \\
            2 &  \text{ otherwise}.
		\end{cases} \]
	\end{lemma}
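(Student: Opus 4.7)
The strategy is to cut down to the two-dimensional Kr\"amer model via the unimodular part of $M^\flat$, and then invoke the explicit structure theorems of \cite{Shi2} and \cite{HSY}. I would begin by fixing a Jordan decomposition $M^\flat = M^\flat_0 \obot \spa\{\by\}$ where $M^\flat_0$ is the self-dual summand of rank $n-2$ and $\spa\{\by\}$ is the unique (possibly zero) non-unimodular block. By Proposition \ref{prop:cZxunimodular}, the closed formal subscheme $\tZ(M^\flat_0) \hookrightarrow \cN^\Kra_{n,\epsilon}$ is isomorphic to $\cN^\Kra_{2,\eta}$ for $\eta = \chi(M^\flat,\bV)$, and by the definition of $\tZ(M^\flat)^o$ in \eqref{eq:primitivecycle} this identification carries $\tZ(M^\flat)^o$ onto the primitive horizontal cycle $\tZ_{2,\eta}(\by)^o$ inside the two-dimensional model.

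Next I would classify the exceptional divisors of $\cN^\Kra_{n,\epsilon}$ meeting $\tZ(M^\flat_0)$: since $M^\flat_0$ is self-dual, these are exactly the $\Exc_\Lambda$ with $\Lambda \in \cV^0$ and $\Lambda \supset M^\flat_0$. Each such $\Lambda$ decomposes uniquely as $\Lambda = M^\flat_0 \obot \Lambda_2$ with $\Lambda_2 \subset (M^\flat_{0,F})^\perp$ a type-$0$ lattice in the two-dimensional complement, and $\Exc_\Lambda \cap \tZ(M^\flat_0)$ is identified with the corresponding $\Exc_{\Lambda_2}$ in $\cN^\Kra_{2,\eta}$. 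Combining this with the distribution law (Lemma \ref{lem: distribution law of derived tensor}) gives
\[
\chi\bigl(\cN^\Kra,\Oo_{\tZ(M^\flat)^o}\otimes^{\bL}\Oo_{\Exc_\Lambda}\bigr) \;=\; \chi\bigl(\cN^\Kra_{2,\eta},\Oo_{\tZ_{2,\eta}(\by)^o}\otimes^{\bL}\Oo_{\Exc_{\Lambda_2}}\bigr),
\]
so it suffices to establish both assertions of the lemma inside the two-dimensional Kr\"amer model.

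For the two-dimensional case, I would invoke the explicit decompositions of $\tZ_{2,\eta}(\by)$ as sums of (quasi-)canonical lifting cycles from \cite[Theorem 4.5]{Shi2} (for $\eta = -1$) and \cite[Theorem 4.1]{HSY} (for $\eta = 1$). When $\val(\by) = 0$, $\tZ_{2,\eta}(\by)^o$ is a single canonical lifting ($\cZ_0$ or $\tZ_0$), whose unique reduction point lies on one exceptional divisor and meets it transversally, giving intersection number $1$. The genuine obstacle is the case $\eta = -1$ with $\val(\by) > 0$, where $\tZ_{2,-1}(\by)^o = \cZ^+_{\val(\by)} + \cZ^-_{\val(\by)}$ is reducible; here one must pin down the precise exceptional divisor met by the two quasi-canonical lifting components in the Kr\"amer blowup, verify the claimed uniqueness of $\Lambda$, and compute the total K-theoretic intersection with $\Exc_{\Lambda_2}$. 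This fine geometric analysis---tracking the reductions of the Dieudonn\'e lattices attached to the conjugate quasi-canonical liftings through the blowup at the superspecial point---should be the technical heart of the argument, and can be extracted from the explicit coordinate computations underpinning \cite[Theorem 4.5]{Shi2}.
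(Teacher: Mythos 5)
Your reduction to the two-dimensional Kr\"amer model via Proposition \ref{prop:cZxunimodular} and the distribution law (Lemma \ref{lem: distribution law of derived tensor}) is exactly the paper's argument, and your identification of the relevant sign $\chi(M^\flat,\bV)$ is correct. The two-dimensional computation you flag as a remaining ``genuine obstacle'' is not something you need to re-derive from coordinates: the paper closes the proof by invoking \cite[Proposition 3.11]{Shi2} (when $\chi(M^\flat,\bV)=-1$, which already handles the reducible $\tZ_{2,-1}(\by)^o=\cZ^+_{\val(\by)}+\cZ^-_{\val(\by)}$ with $\val(\by)>0$) and \cite[Lemma 5.2]{HSY} (when $\chi(M^\flat,\bV)=1$); these are precisely the intersection numbers of the primitive quasi-canonical lifting cycle with the exceptional divisor in $\cN^\Kra_{2,\pm 1}$, so locating those two lemmas---rather than the decomposition theorems you cited---finishes the argument.
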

	\begin{proof}
		By the definition of $\Hor$, we can find a decomposition of $M^\flat$
		\[M^\flat=M\obot \{\bx\}\]
		such that $M$ is self-dual. Let $\Lambda$ be any vertex lattice containing $M^\flat$.
If $M^\flat$ is unimodular, then $\Lambda$ has to be of the form $M^\flat\obot L'$ where $L'$ is the unique unimodular lattice in $(M^\flat_F)^\bot$. If $M^\flat$ is of the form $M\obot L'$ such that $M$ is of rank $n-2$ and  $(M_F)^\bot$ is non-split, then the proof of \cite[Theorem 3.10]{Shi1} implies that there is a unique vertex lattice $\Lambda'$ in $(M_F)^\bot$ which is of unimodular (this fact is the same as the fact that the Bruhat-Tits building of $(M_F)^\bot$ has only one point). Then $\Lambda$ must be of the form $M\obot\Lambda'$. In both cases, $\Lambda$ is unique and is of type $0$.

        Assume $\chi(M)=\eta$. By Proposition \ref{prop:cZxunimodular}, $\tZ(M)\cong \cN^\Kra_{2,\epsilon\eta}$. Moreover $\tZ(M)\cap \Exc_{\Lambda}=\bP^1_k$ is an exceptional divisor in $\cN^\Kra_{2,\epsilon\eta}$. Hence by Lemma \ref{lem: distribution law of derived tensor}, we have
		\[\chi(\cN_{n,\epsilon}^\Kra,\Oo_{\tZ(M^\flat)^\circ}\otimes^{\bL} \Oo_{\Exc_\Lambda})= \chi(\cN_{2,\epsilon\eta}^\Kra,\Oo_{\tZ(M^\flat)^\circ}\otimes_{\cN^\Kra_{2,\epsilon\eta}}^{\bL} \Oo_{\bP^1_k}).\]
		Now the lemma follows from \cite[Lemma 5.2]{HSY} when $\epsilon\eta=1$, and from \cite[Proposition 3.11]{Shi2} when $\epsilon\eta=-1$.
	\end{proof}
	
	\subsection{The horizontal part of special difference cycles}
	Definition \ref{def:differencecycle} motivates us  to make the following definition.
	\begin{definition}
		When $L^\flat$ is a rank $n-1$ integral lattice, define $\cD(L^\flat)_h\in \Gr^{n-1} K_0(\cN^\Kra)$ by
		\begin{equation}
			\cD(L^\flat)_h=[\Oo_{\cZ^\Kra(L^\flat)_h}]+\sum_{i=1}^{n-1} (-1)^i \q^{i(i-1)/2} \sum_{\substack{L^\flat\subset L'\subset \frac{1}{\pi} L^\flat\\ \mathrm{dim}_{\F_\q}(L'/L^\flat)=i}} [\Oo_{\cZ^\Kra(L')_h}].
		\end{equation}
	\end{definition}
	\begin{proposition}\label{prop:horizontalpartofDL}
		Assume $L^\flat$ is a rank $n-1$ integral lattice, then
		\[\cD(L^\flat)_h=\begin{cases}
			\tZ(L^\flat)^\circ  & \text{ if } L^\flat\in \Hor,\\
			0    & \text{ if } L^\flat\notin \Hor.
		\end{cases}\]
	\end{proposition}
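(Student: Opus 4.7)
The plan is to combine the decomposition of $\cZ^\Kra(L')_h$ from Theorem \ref{thm:horizontalpart} with the $q$-adic inclusion–exclusion identity already used in Lemma \ref{lem: decomposition of Z(L) as sums of D(L)}. The starting move is to substitute
\[
[\Oo_{\cZ^\Kra(L')_h}]=\sum_{M^\flat \in \Mod_h(L')} [\Oo_{\tZ(M^\flat)^o}]
\]
into the definition of $\cD(L^\flat)_h$ and then swap the order of summation, grouping terms according to the lattice $M^\flat\in\Mod_h$ with $L^\flat\subseteq M^\flat$.

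After the swap, the coefficient of $[\Oo_{\tZ(M^\flat)^o}]$ becomes
\[
c(M^\flat)\;=\;1\;+\;\sum_{i=1}^{n-1}(-1)^i q^{i(i-1)/2}\,\#\!\left\{L':\ L^\flat\subsetneq L'\subseteq M^\flat\cap\tfrac{1}{\pi}L^\flat,\ \dim_{\F_q}(L'/L^\flat)=i\right\}.
\]
Set $N=M^\flat\cap\tfrac{1}{\pi}L^\flat$ and $m=\dim_{\F_q}(N/L^\flat)$. The next step is to observe that lattices $L'$ with $L^\flat\subseteq L'\subseteq N$ of $\F_q$-codimension $i$ over $L^\flat$ are in bijection with $i$-dimensional $\F_q$-subspaces of $N/L^\flat$, and that $N/L^\flat$ is exactly the $\pi$-torsion subgroup of $M^\flat/L^\flat$. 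Hence $m=0$ if and only if $M^\flat=L^\flat$, since any nonzero finitely generated torsion $\Oo_F$-module has nontrivial $\pi$-torsion.

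Now the $q$-binomial inclusion–exclusion identity recorded as equation \eqref{eq: inc exc} (i.e.\ the corollary to Lemma 12 of \cite{tamagawa1963zeta} evaluated at $t=1$) yields $c(M^\flat)=\delta_{m,0}$. Consequently, only $M^\flat=L^\flat$ can contribute, and it does contribute if and only if $L^\flat$ itself lies in $\Mod_h$ (so that $L^\flat\in\Mod_h(L^\flat)$). This gives exactly the two cases of the proposition.

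There is no serious obstacle: Theorem \ref{thm:horizontalpart} does the geometric work of identifying $\cZ^\Kra(L')_h$ with a union of quasi-canonical lifting cycles, and the combinatorial identity is the same one used to prove Lemma \ref{lem: decomposition of Z(L) as sums of D(L)}. The only mild subtlety is checking that the intersection $M^\flat\cap\tfrac{1}{\pi}L^\flat$ captures precisely the $L'$'s that appear, but this is immediate from the definitions. The rest is bookkeeping.
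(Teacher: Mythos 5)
Your argument matches the paper's proof essentially line for line: both substitute the decomposition of $\cZ^\Kra(L')_h$ from Theorem \ref{thm:horizontalpart}, regroup by $M^\flat\in\Mod_h(L^\flat)$, reduce the coefficient to a count of $\F_q$-subspaces of $N/L^\flat$ with $N=M^\flat\cap\frac{1}{\pi}L^\flat$, and apply the $q$-inclusion--exclusion identity \eqref{eq: inc exc}. The only cosmetic difference is that you spell out why $m=0$ forces $M^\flat=L^\flat$ (via the $\pi$-torsion of $M^\flat/L^\flat$), which the paper simply asserts; this is a correct and helpful elaboration, not a new route.
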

	\begin{proof}
		By Theorem \ref{thm:horizontalpart}, it suffices to compute the multiplicity of an irreducible component in $\tZ(M^\flat)^\circ$ in $\cD(L)_h$ for all $M^\flat\in \Hor(L^\flat)$ (see \eqref{eq:horizontalmodules}). For such a $M^\flat$, define
		\[M'\coloneqq \frac{1}{\pi}L^\flat \cap M^\flat \text{ and } m\coloneqq \mathrm{dim}_{\F_\q}(M'/L^\flat).\]
		Then for a lattice  $L'$ with $L^\flat\subset L'\subset \frac{1}{\pi} L^\flat$, we know that $\tZ(M^\flat)^\circ$ is in $\cZ^\Kra(L')_h$ if and only if $L'\subset M'$. Hence the multiplicity of an irreducible components in $\tZ(M^\flat)^\circ$ in $\cD(L)_h$ is
		\begin{equation}\label{eq:multiplicityofMflatinD_h}
			1+\sum_{i=1}^m (-1)^i \q^{i(i-1)/2} \sum_{\substack{L^\flat\subset L'\subset M'\\ \mathrm{dim}_{\F_\q}(L'/L^\flat)=i}} 1.
		\end{equation}
		Notice that $m=0$ if and only if $M'=M^\flat=L^\flat$, in this case the summation in \eqref{eq:multiplicityofMflatinD_h} is over an empty set, hence \eqref{eq:multiplicityofMflatinD_h} is equal to $1$.
		If $m>0$,  \eqref{eq:multiplicityofMflatinD_h} is equal to $0$ by \eqref{eq: inc exc}.
	\end{proof}

	\part{The analytic side}\label{part:II}
	
	\section{Induction formula and primitive local density}\label{sec:ind formula}
	
	In this section, we study various induction formulas of local density polynomials.
	Let $M$ be a  Hermitian  $\Oo_F$-lattice of rank $m$ with $\mathrm{v}(M)\coloneqq\mathrm{min}\{\mathrm{v}_\pi( h(v,v'))\mid v,v'\in M\} \ge -1$.  and let $M^{[k]}= \cH^k\obot M$ for an integer $k \ge 0$.  Let $L$ be a Hermitian  $\Oo_F$-lattice of rank $n$.

	There is a polynomial $\alpha(M, L, X)$ of $X$---the local density polynomial---such that
	\begin{equation}\label{eq:definition of local density polynomial}
		\alpha(M, L,  q^{-2k}) =\int_{\Herm_n(F)} \int_{(M^{[k]})^n} \psi(\langle Y,T(\bx)-T\rangle) d\bx \, dY,
	\end{equation}
	where $T(\bx)$ is the moment matrix of $\bx$, $d\bx$ is the Haar measure on $(M^{[k]})^n$ with total volume $1$, $dY$ is the Haar measures on $\Herm_n(F)$ such that $\Herm_n(\Oo_{F})$ has total volume $1$,  and $\psi$ is an additive character of $F_0$ with conductor $\Oo_{F_0}$.  Finally, we define  $\langle X,Y\rangle =\mathrm{Tr}(XY)$ on $\Herm_n$. We also use the notation $\alpha(M, L)=\alpha(M, L, 1)$ and
	\begin{equation}
		\alpha'(M, L) =-\frac{\partial}{\partial X} \alpha(M,L, X)|_{X=1}.
	\end{equation}

There is another way to define $\alpha(M,L,X)$ as follows.  We use $\mathrm{Herm}_{L,M}$ to denote the scheme of Hermitian $\Oo_F$-module homomorphisms from $L$ to $M$, which is a scheme of finite type over $\Oo_{F_0}$.   More specifically, for an $\Oo_{F_0}$-algebra $R$, we define
\begin{align*}
L_{R}\coloneqq L\otimes_{\Oo_{F_0}}R, \quad  (x\otimes a,y\otimes b)_{R}\coloneqq \pi (x ,y )\otimes_{\Oo_{F_0}} ab\in  \Oo_{F}\otimes_{\Oo_{F_0}} R \text{ where }x,y \in L, a,b\in R.
\end{align*}
Then
	\begin{align*}
	\mathrm{Herm}_{L, M}(R)=\{ \phi \in  \mathrm{Hom}&_{\Oo_F}(L_{R}, M_{R}) \mid     (\phi(x),\phi(y))_{R}\equiv (x,y)_{R}  \text{ for all }  x,y \in L_{R}\}.
	\end{align*}
To simplify the notation, we let
\begin{align}\label{eq:I(M,L_T,d)}
 I(M,L,d) \coloneqq \mathrm{Herm}_{L, M}(\Oo_{F_0}/(\pi_0^d)).
 \end{align}
 Then a direct calculation as in \cite[Lemma 6.1]{Shi2} shows that
	\begin{equation}\label{eq:local density M,L}
		\alpha(M, L) = \q^{-d n (2m -n)} |I(M,L,d)|
	\end{equation}
	for sufficiently large integers $d >0$. Since $\alpha(M,L, X)$ only depends on the Gram matrices  of $M$ and $L$, we may also denote it by $\alpha(S, T, X)$ if $S$ and $T$ are the Gram matrices of $M$ and $L$.
	
	Now we define primitive local density polynomials. For $1 \le \ell \le n$, let
	\begin{align}\label{eq: def of M^{n,l}}
		(M^{[k]})^{n,(\ell)}=\{ \bx=(x_1, \cdots, x_n) \in (M^{[k]})^n \mid  \dim \hbox{Span}\{x_1, \cdots, x_\ell\} =\ell  \hbox{ in } M^{[k]}/\pi M^{[k]}\}.
	\end{align}
	For
	$L=L_1\oplus L_2$, where $L_1=\mathrm{Span}\{l_1,\cdots,l_{\ell}\}$ and $L_2=\mathrm{Span}\{l_{\ell+1},\cdots,l_n\}$, we define the local $\ell$-primitive density to be
	\begin{equation}
		\beta(M^{[k]}, L_1\oplus L_2)^{(\ell)}= \int_{\Herm_n(F)} \int_{(M^{[k]})^{n,(\ell)}} \psi( \langle Y, T(\bx) -T \rangle) d\bx \,  dY.
	\end{equation}
	When $\ell \not= n$, the above definition depends on a choice of $L=L_1\oplus L_2$. Hence we always fix such a decomposition $L=L_1\oplus L_2$ in this case. When $L=L_1\obot L_2$, and $L_i$ is represented by $T_i$, we also denote $\beta(M, L_1\obot L_2)^{(\ell)}$ as $\beta(S, \diag(T_1,T_2))^{(\ell)}$. When $\ell=n$, we simply denote $\beta(M, L_1\oplus L_2)^{(\ell)}$ as $\beta(M,L)$.

	\begin{lemma}\label{lem: ind formula to prim ld}
		Assume $L=L_1 \oplus L_2$ where $\mathrm{rank}(L_1)=n_1$. Then
		$$\alpha(M,L,X)=\sum_{L_1\subset L_1'\subset L_{1,F}}(\q^{n-m}X)^{\ell(L_1'/L_1)}\beta(M,L_1'\oplus L_2,X)^{(n_1)},$$
		where $\ell(L_1'/L_1)=\mathrm{length}_{\Oo_F}L_1'/L_1$.
	\end{lemma}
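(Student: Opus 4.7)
The plan is to verify the identity after specializing $X = q^{-2k}$ for each integer $k \ge 0$, where both sides become explicit integrals via \eqref{eq:definition of local density polynomial}, and then promote to the polynomial identity by matching at infinitely many points. Fix $k$, work with $\alpha(M, L, q^{-2k}) = \alpha(M_k, L)$ via \eqref{eq:definition of local density polynomial}, and split $\bx = (\bx_1, \bx_2) \in M_k^{n_1} \times M_k^{n_2}$ along the decomposition $L = L_1 \oplus L_2$. For generic $\bx_1$ (those whose coordinates are $F$-linearly independent in $M_{k,F}$, a full-measure condition), define the \emph{saturation} $L_1^{\max}(\bx_1) \subset L_{1,F}$ to be the largest lattice containing $L_1$ to which the map $l_i \mapsto x_i$ extends as an $\Oo_F$-homomorphism to $M_k$. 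This stratifies the integration domain over lattices $L_1 \subset L_1' \subset L_{1,F}$ of rank $n_1$; only finitely many strata contribute for fixed $k$ since any $\bx_1 \in M_k^{n_1}$ bounds $\ell(L_1'/L_1)$ in terms of $k$ and the invariants of $L_1$.

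For each $L_1'$ in the stratification, fix a basis $l_j'$ of $L_1'$ and express the inclusion $L_1 \hookrightarrow L_1'$ by a matrix $B \in M_{n_1}(\Oo_F)$ with $\ell := \mathrm{v}_\pi(\det B) = \ell(L_1'/L_1)$. The substitution $\bx_1 = B\bx_1'$ bijects $\{\bx_1 : L_1^{\max}(\bx_1) = L_1'\}$ with the primitive locus of $\bx_1' \in M_k^{n_1}$ whose coordinates are linearly independent in $M_k/\pi M_k$, and since $B \otimes \mathrm{id}_{M_k}$ on $M_k^{n_1}$ has $F$-determinant $(\det B)^{m+2k}$, one obtains $d\bx_1 = q^{-(m+2k)\ell} d\bx_1'$. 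Setting $A := \mathrm{diag}(B, I_{n_2})$, the Gram matrix of $L$ satisfies $T = A T' A^*$ where $T'$ is the Gram matrix of $L_1' \oplus L_2$, and $T(\bx) = A T(\bx') A^*$, so substituting $Y' := A^* Y A$ converts $\langle Y, T(\bx) - T\rangle$ into $\langle Y', T(\bx') - T'\rangle$. The Jacobian of $Y \mapsto A^* Y A$ on $\Herm_n(F_0)$, computed by reducing to diagonal $B$ and summing contributions from the $Y_{11}$, $Y_{12}$, and $Y_{22}$ blocks, equals $|N_{F/F_0}(\det B)|_{F_0}^n = q^{-n\ell}$, giving $dY = q^{n\ell} dY'$.

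Multiplying the two Jacobians, each $L_1'$-stratum contributes $q^{(n-m-2k)\ell} \beta(M_k, L_1' \oplus L_2)^{(n_1)} = (q^{n-m} X)^\ell \beta(M, L_1' \oplus L_2, X)^{(n_1)}$ at $X = q^{-2k}$. Summing over $L_1'$ yields the identity at all $X = q^{-2k}$; since $\alpha(M, L, X)$ is a polynomial and a parallel derivation identifies each $\beta(M, L_1' \oplus L_2, X)^{(n_1)}$ as a polynomial in $X$, equality at infinitely many values promotes to the polynomial identity. The main obstacle is the careful Jacobian computation for $Y \mapsto A^* Y A$ on Hermitian matrices over the ramified quadratic extension $F/F_0$, together with verifying that the saturation stratification corresponds exactly to the primitive locus under the change of variables $\bx_1 = B \bx_1'$.
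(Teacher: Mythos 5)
Your proposal is correct and takes essentially the same route as the paper: you stratify $M_k^{n_1}$ by the saturation of $\bx_1$ (the paper packages the same stratification as a sum over $U\backslash G$ with $G=\GL_{n_1}(F)\cap M_{n_1}(\Oo_F)$), perform the same two changes of variables in $\bx$ and in $Y$, and recover the factor $|\det|^{2k+m-n}=(q^{n-m}X)^{\ell}$ at $X=q^{-2k}$ before matching at infinitely many $k$ to promote to the polynomial identity. The Jacobian $q^{-n\ell}$ for $Y\mapsto A^*YA$ on $\Herm_n(F_0)$ that you compute directly is obtained in the paper via the trace-pairing invariance $\langle Y, T[g]\rangle=\langle Y[g^*],T\rangle$, but the two computations are equivalent.
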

	\begin{proof}
		This is the analogue of \cite[Lemma 3]{kitaoka1983note}. Let
		$G=\GL_{n_1}(F)\cap \mathrm{M}_{n_1}(\Oo_F)$ and $U=\GL_{n_1}(\Oo_F)$. By choosing a basis $\{l_1,\cdots,l_{n_1}\}$ of $L_1$, we may identify $U\backslash G$ with $\{L_1'\mid L_1\subset L_1'\subset L_{1,F}\}$ by sending $g$ to $L_1\cdot g^{-1}$.  Then the identity we want to prove is equivalent to
		\[\alpha(M,L,X)=\sum_{g\in U\backslash G}|\det g|^{2k+m-n} \beta(M,L_1\cdot g^{-1}\obot L_2,X)^{(n_1)},\]
		where $|\pi|=\q^{-1}.$
		By a partition of $M^n_{k}$, we have
		\begin{align*}
			\alpha(M,L,X)&=\int_{\Herm_n(F)} dY \int_{(M^{[k]})^n} \psi(\langle Y,T(\bx)-T\rangle) d\bx\\
			&=  \sum_{g\in U\backslash  G } \int_{\Herm_n(F)} dY \int_{(M^{[k]})^{n,(n_1)}\cdot g_1} \psi(\langle Y,T(\bx)-T\rangle) d\bx,
		\end{align*}
		where $g_1=\diag(g, I_{n-n_1})$, and the action of $g_1$ is simply matrix multiplication on the $n$ components of $M^{n,(n_1)}$. Now
		\begin{align*}
			&\int_{\Herm_n(F)} dY \int_{(M^{[k]})^{n,(n_1)}\cdot g_1} \psi(\langle Y,T(\bx)-T\rangle) d\bx\\
			&= |\det g_1|^{2k+m} \int_{\Herm_n(F)} dY \int_{(M^{[k]})^{n,(n_1)}} \psi(\langle Y,T(\bx g_1)-T\rangle) d\bx\\
			&= |\det g_1|^{2k+m} \int_{\Herm_n(F)} dY \int_{(M^{[k]})^{n,(n_1)}} \psi(\langle Y,(T(\bx)-T[g_1^{-1}])[g_1]\rangle) d\bx\\
			&= |\det g_1|^{2k+m} \int_{\Herm_n(F)} dY \int_{(M^{[k]})^{n,(n_1)}} \psi(\langle Y[g_1^*],T(\bx)-T[g_1^{-1}]\rangle) d\bx\\
			&= |\det g_1|^{2k+m-n} \int_{\Herm_n(F)} dY \int_{(M^{[k]})^{n,(n_1)}} \psi(\langle Y,T(\bx)-T[g_1^{-1}]\rangle) d\bx\\
			&= |\det g_1|^{2k+m-n} \beta(M^{[k]},L\cdot g_1^{-1})^{(n_1)}.
		\end{align*}
		Here $T[g]\coloneqq g^{*}Tg$.		Now the  lemma is clear.
	\end{proof}

	\begin{theorem}\label{thm:ind formula reducing valuation} Let  $L$ be as in Lemma \ref{lem: ind formula to prim ld}. Then
		\begin{align*}
			\alpha(M, L, X)
			&= \sum_{i=1}^{n_1} (-1)^{i-1} \q^{i(i-1)/2+i(n-m)}X^{i}
		 \cdot \sum_{\substack{L_1 \subset L_1' \subset \pi^{-1}L_{1} \\ \dim {L_1'/L_1}=i}} \alpha(M, L_1'\oplus L_2,X) + \beta(M, L,X)^{(n_1)}.
		\end{align*}
	\end{theorem}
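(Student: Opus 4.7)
The strategy is to invert Lemma \ref{lem: ind formula to prim ld} via the same $q$-adic inclusion-exclusion identity that appears in \eqref{eq: inc exc}. Setting $y \coloneqq q^{n-m}X$, my plan is to apply Lemma \ref{lem: ind formula to prim ld} to every $\alpha$ appearing on both sides of the claimed identity, thereby rewriting each side as an explicit $\Z[y]$-linear combination of primitive densities $\beta(M, L_1''\oplus L_2, X)^{(n_1)}$ indexed by over-lattices $L_1 \subseteq L_1'' \subset L_{1,F}$. The left-hand side immediately assigns the coefficient $y^{\ell(L_1''/L_1)}$ to $\beta(M, L_1''\oplus L_2, X)^{(n_1)}$, so it suffices to verify that the right-hand side assigns the same coefficient to each such $\beta$.

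Fixing such an $L_1''$ and swapping the order of summation, the coefficient extracted from the right-hand side equals
\begin{equation*}
\delta_{L_1'', L_1} + \sum_{i=1}^{n_1} (-1)^{i-1} q^{i(i-1)/2} y^i \sum_{\substack{L_1 \stackrel{i}{\subset} L_1' \\ L_1' \subseteq L_1''}} y^{\ell(L_1''/L_1')}.
\end{equation*}
The inner sum runs over intermediate lattices $L_1' \subseteq N \coloneqq \pi^{-1}L_1 \cap L_1''$; writing $k \coloneqq \ell(L_1''/L_1)$ and $m' \coloneqq \dim_{\F_q}(N/L_1)$, additivity of length gives $\ell(L_1''/L_1') = k-i$, and the admissible $L_1'$ are parameterized by the $i$-dimensional $\F_q$-subspaces of $N/L_1$, of which there are $\binom{m'}{i}_q$. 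The coefficient thus collapses to $y^k \sum_{i=1}^{m'}(-1)^{i-1} q^{i(i-1)/2}\binom{m'}{i}_q$.

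The remaining input is the $q$-binomial identity $\sum_{i=0}^{m'}(-1)^i q^{i(i-1)/2}\binom{m'}{i}_q = 0$ for $m' \ge 1$, which is exactly the specialization already invoked in \eqref{eq: inc exc}. A short check shows $m' \ge 1$ whenever $L_1 \subsetneq L_1''$: picking any $v \in L_1'' \setminus L_1$ with minimal $a \ge 1$ satisfying $\pi^a v \in L_1$, the element $\pi^{a-1} v$ lies in $N \setminus L_1$. Combined with the case $L_1'' = L_1$ (handled by $\delta_{L_1'', L_1}$), this yields the coefficient $y^k$ on both sides, completing the matching. The argument is essentially bookkeeping once Lemma \ref{lem: ind formula to prim ld} and the $q$-binomial identity are in hand; the only mildly subtle step will be correctly pinning down the range $L_1' \subseteq N$ arising from the two simultaneous constraints $L_1' \subseteq \pi^{-1}L_1$ and $L_1' \subseteq L_1''$, and verifying $m' \ge 1$ in the nontrivial case.
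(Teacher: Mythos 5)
Your argument is correct and matches the paper's proof, which the authors describe as combining Lemma~\ref{lem: ind formula to prim ld} with the inclusion-exclusion argument of Lemma~\ref{lem: decomposition of Z(L) as sums of D(L)} run in reverse; your coefficient-matching computation using the Gaussian binomial vanishing $\sum_{i=0}^{m'}(-1)^i q^{i(i-1)/2}\binom{m'}{i}_q = 0$ is exactly that M\"obius inversion spelled out explicitly.
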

	\begin{proof}
		This is an analogue of \cite[Proposition 2.1]{katsurada1999explicit}.
		The proof follows from a combination of the argument (in a reverse order) in	\ref{lem: decomposition of Z(L) as sums of D(L)} and Lemma \ref{lem: ind formula to prim ld}.
	\end{proof}

	Motivated by Theorem \ref{thm:ind formula reducing valuation}, we give the following definition.
	\begin{definition}\label{def:prim pden}
		Let $L =L_1 \oplus L_2$ be as in Lemma \ref{lem: ind formula to prim ld}. We define
		\begin{align}\label{eq:prim den}
			\pden(L)^{(n_1)}\coloneqq \pden(L)-\sum_{i=1}^{n_1} (-1)^{i-1} \q^{i(i-1)/2}
			\sum_{\substack{L_1 \subset L_1' \subset L_{1, F} \\ \dim {L_1'/L_1}=i}} \pden(L'_1\oplus L_2).
		\end{align}
	\end{definition}
	
	\begin{corollary}\label{cor: ind structure of partial Den(T)} Let $L =L_1 \oplus L_2$ be as in Lemma \ref{lem: ind formula to prim ld}, and $\epsilon=\chi(L)$. Then
		\begin{align*}
			\pden(L)^{(n_1)}=\frac{1}{\alpha(I_{n}^{-\epsilon},I_{n}^{-\epsilon})}\left(2\beta'(I_{n}^{-\epsilon},L)^{(n_1)}+\sum_{i}c^{n,i}_{\epsilon}\beta(\cH_{n,i}^{\epsilon},L)^{(n_1)}\right).
		\end{align*}
	\end{corollary}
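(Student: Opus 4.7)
The plan is to start from Definition \ref{def:prim pden}, clear the denominator by multiplying through by $A \coloneqq \alpha(I_{n,-\epsilon}, I_{n,-\epsilon})$, substitute the expression (\ref{eq1.10}) for $\pden$, and then apply Theorem \ref{thm:ind formula reducing valuation} separately to the $\alpha(\cH^{n,i}_\epsilon, -)$ pieces and to the $\alpha'(I_{n,-\epsilon}, -)$ pieces. Before doing this I would first check that the coefficients $c^{n,i}_\epsilon$ do not change when $L_1$ is replaced by an overlattice $L_1' \subset L_{1,F}$: indeed, if $L_1 = L_1' \cdot H$ with $H \in M_{n_1}(\Oo_F)$, then $\det(L_1)/\det(L_1') = N_{F/F_0}(\det H)$, which lies in the kernel of the quadratic character $\chi$, so $\chi(L_1' \oplus L_2) = \chi(L) = \epsilon$. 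This legitimizes pulling the same constants $c^{n,i}_\epsilon$ across both $\pden(L)$ and all $\pden(L_1' \oplus L_2)$ appearing in the definition.

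The first main step handles the $\cH$-contribution. Specializing Theorem \ref{thm:ind formula reducing valuation} to $M = \cH^{n,j}_\epsilon$, so that $m = n$ and the factor $\q^{i(n-m)}$ becomes $1$, and then setting $X = 1$, one reads off the identity
\[
\alpha(\cH^{n,j}_\epsilon, L) - \sum_{i=1}^{n_1}(-1)^{i-1}\q^{i(i-1)/2}\sum_{L_1'}\alpha(\cH^{n,j}_\epsilon, L_1'\oplus L_2) = \beta(\cH^{n,j}_\epsilon, L)^{(n_1)}.
\]
Multiplying by $c^{n,j}_\epsilon$ and summing over $j$ converts the $\cH$-part of $A \cdot \pden(L)^{(n_1)}$ into $\sum_j c^{n,j}_\epsilon \beta(\cH^{n,j}_\epsilon, L)^{(n_1)}$, as required.

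The second step treats the $I_{n,-\epsilon}$-contribution by differentiating Theorem \ref{thm:ind formula reducing valuation} with respect to $X$ at $X = 1$, using the convention $\alpha'(M, L) = -\partial_X \alpha(M, L, X)|_{X=1}$. For $M = I_{n,-\epsilon}$ the same rank argument gives $\q^{i(n-m)} = 1$, and the aim is the parallel identity
\[
\alpha'(I_{n,-\epsilon}, L) - \sum_{i=1}^{n_1}(-1)^{i-1}\q^{i(i-1)/2}\sum_{L_1'}\alpha'(I_{n,-\epsilon}, L_1'\oplus L_2) = \beta'(I_{n,-\epsilon}, L)^{(n_1)}.
\]
Once this is in hand, combining with the $\cH$-identity from the previous paragraph yields the corollary. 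The main technical obstacle is bookkeeping in this differentiation: since the induction formula contains factors $X^i$, the product rule produces extra terms of the form $i \cdot \alpha(I_{n,-\epsilon}, L_1'\oplus L_2)$. These must be absorbed correctly into the definition of $\beta'(I_{n,-\epsilon}, L)^{(n_1)}$, i.e., interpreted as part of the derivative of the primitive local density polynomial $\beta(I_{n,-\epsilon}, L, X)^{(n_1)}$ at $X = 1$. Making this precise is essentially a repeated use of Lemma \ref{lem: ind formula to prim ld} and the same $q$-adic inclusion-exclusion that proves Lemma \ref{lem: decomposition of Z(L) as sums of D(L)}; everything then assembles into the claimed formula.
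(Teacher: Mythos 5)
Your overall strategy is the right one, and the first paragraph's check that $\chi(L_1' \oplus L_2) = \chi(L)$ for all overlattices $L_1'$ is a genuine and necessary observation. The treatment of the $\cH$-contribution (specialize Theorem \ref{thm:ind formula reducing valuation} to $M = \cH^{n,j}_\epsilon$, so $m = n$, set $X=1$, multiply by $c^{n,j}_\epsilon$ and sum) is correct.

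However, the final paragraph contains a genuine gap. You correctly notice that differentiating the identity in Theorem \ref{thm:ind formula reducing valuation} at $X=1$ produces, via the product rule on the factor $X^i$, extra terms proportional to $i\cdot\alpha(I_{n,-\epsilon}, L_1'\oplus L_2, 1)$. Your proposed resolution — that these terms "must be absorbed correctly into the definition of $\beta'(I_{n,-\epsilon}, L)^{(n_1)}$" — is not a valid argument: with $\beta'(M,L)^{(n_1)} = -\partial_X\beta(M,L,X)^{(n_1)}|_{X=1}$ (the only consistent reading, parallel to $\alpha'$), the term $\beta'$ is already accounted for separately and has no way to "swallow" the leftovers. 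What actually makes the extra terms disappear is a vanishing: $\alpha(I_{n,-\epsilon}, L_1'\oplus L_2, 1) = 0$ for every overlattice $L_1'$, precisely because — by the sign computation from your own first paragraph — $\chi(L_1'\oplus L_2) = \epsilon \ne -\epsilon = \chi(I_{n,-\epsilon})$, so $L_1'\oplus L_2$ spans a Hermitian space not isometric to $(I_{n,-\epsilon})_F$ and admits no isometric embedding into $I_{n,-\epsilon}$, forcing the local density at $k=0$ to vanish. Once you replace "absorbed into $\beta'$" by "vanish because $\alpha(I_{n,-\epsilon}, L_1'\oplus L_2) = 0$", the claimed identity
\begin{align*}
\alpha'(I_{n,-\epsilon}, L) - \sum_{i=1}^{n_1}(-1)^{i-1}\q^{i(i-1)/2}\sum_{L_1'}\alpha'(I_{n,-\epsilon}, L_1'\oplus L_2) = \beta'(I_{n,-\epsilon}, L)^{(n_1)}
\end{align*}
follows at once, and the rest of your argument assembles correctly. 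The references to Lemma \ref{lem: ind formula to prim ld} and Lemma \ref{lem: decomposition of Z(L) as sums of D(L)} in your last sentence are not needed for this step.
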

	
	As a corollary of Lemma \ref{lem: ind formula to prim ld}, we have the following.
	\begin{corollary}\label{cor: decomp of pden(L)}
		Let $L =L_1 \oplus L_2$ be as in Lemma \ref{lem: ind formula to prim ld}. Then we have the following identity where the summation is finite:
		$$
		\pden(L)=\sum_{L_1 \subset L_1' \subset L_{1,F}}   \pden(L_1'\oplus L_2)^{(n_1)}.
		$$
	\end{corollary}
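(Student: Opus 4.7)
The strategy is to mimic, almost verbatim, the proof of Lemma \ref{lem: decomposition of Z(L) as sums of D(L)}, with $\pden$ playing the role of ${}^{\bL}\cZ^{\Kra}$ and $\pden(\cdot)^{(n_1)}$ playing the role of the difference cycle $\cD$. Indeed, Definition \ref{def:prim pden} is the precise analytic analogue of Definition \ref{def:differencecycle}, and the corollary to be proved is the analytic mirror of that geometric decomposition. For this reason the same $q$-adic inclusion–exclusion argument should go through essentially unchanged.

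Concretely, the plan is to substitute Definition \ref{def:prim pden} into the right-hand side of the claimed identity, producing a double sum indexed by pairs $(L_1', L_1'')$ with $L_1 \subset L_1' \subset L_1''$ and $\pi L_1'' \subset L_1'$. Reorganizing by the outer lattice $M \coloneqq L_1''$, the right-hand side takes the form
\[
\sum_{L_1 \subset M \subset L_{1,F}} c(M)\, \pden(M\oplus L_2),
\]
and the task reduces to verifying that $c(L_1)=1$ and $c(M)=0$ for every $M \supsetneq L_1$.

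The case $M=L_1$ is immediate since only the outermost term (with $L_1'=L_1$) contributes. For $M \supsetneq L_1$, set $M'\coloneqq \pi^{-1}L_1 \cap M$ and $m\coloneqq \dim_{\F_q}(M'/L_1)\ge 1$; the $L_1'$ contributing to $c(M)$ are exactly those lying between $L_1$ and $M'$, so
\[
c(M)=1+\sum_{i=1}^{m}(-1)^{i}\q^{i(i-1)/2}\#\{L_1' : L_1 \stackrel{i}{\subset} L_1' \subset M'\}.
\]
This vanishes by evaluating the identity in the Corollary to Lemma 12 of \cite{tamagawa1963zeta} at $t=1$, exactly as in \eqref{eq: inc exc}.

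It remains only to justify finiteness of the outer sum, which follows because $\pden(L_1'\oplus L_2)$ (and hence $\pden(L_1'\oplus L_2)^{(n_1)}$) vanishes once $L_1'\oplus L_2$ fails to be integral, leaving only finitely many admissible $L_1'$. I do not foresee a genuine obstacle: the entire argument is a formal manipulation, with the only nontrivial input being the Tamagawa identity already used in \eqref{eq: inc exc}. The only mild subtlety is bookkeeping — ensuring that the summation range in Definition \ref{def:prim pden} is read as $L_1 \stackrel{i}{\subset} L_1' \subset \pi^{-1}L_1$ (consistent with Theorem \ref{thm:ind formula reducing valuation}) so that the combinatorics matches that of Lemma \ref{lem: decomposition of Z(L) as sums of D(L)}.
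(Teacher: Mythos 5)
Your strategy is correct and is essentially what the paper intends: the corollary is the analytic mirror of Lemma \ref{lem: decomposition of Z(L) as sums of D(L)}, obtained by substituting Definition \ref{def:prim pden} (with the inner sum read as $\sum_{L_1\stackrel{i}{\subset} L_1'}$, as you rightly note) into the right-hand side, collecting by the outer lattice $M$, and invoking the same $q$-binomial vanishing as in \eqref{eq: inc exc}. The paper's terse ``as a corollary of Lemma \ref{lem: ind formula to prim ld}'' can alternatively be unwound through $\alpha(I_{n,-\epsilon},L,X)=\sum_{L_1'}X^{\ell(L_1'/L_1)}\beta(I_{n,-\epsilon},L_1'\oplus L_2,X)^{(n_1)}$ at $X=1$, the sign-mismatch vanishing $\beta(I_{n,-\epsilon},L_1'\oplus L_2)^{(n_1)}=0$, and Corollary \ref{cor: ind structure of partial Den(T)}; this is the same combinatorics in different packaging.

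There is a slip in your reorganization, though. After collecting by $M=L_1''$, the $L_1'$ contributing to $c(M)$ satisfy $L_1\subset L_1'$ and $L_1'\stackrel{i}{\subset}M$, i.e.\ $L_1+\pi M\subset L_1'\subset M$ with $\dim_{\F_\q}(M/L_1')=i$. This is \emph{not} the set of lattices between $L_1$ and $M'=\pi^{-1}L_1\cap M$ that you describe; you have transplanted the parametrization from the proof of Lemma \ref{lem: decomposition of Z(L) as sums of D(L)}, but there the inner lattices sit near the small lattice $L$ (because the inductive hypothesis is applied to a superlattice of $L$), whereas here Definition \ref{def:prim pden} is applied with $L_1'$ as base, so the contributing $L_1'$ sit near the large lattice $M$. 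The computation survives because the two index sets have the same cardinality: the first counts codimension-$i$ subspaces of $M/(L_1+\pi M)\cong\F_\q^{m}$, the second counts $i$-dimensional subspaces of $(\pi^{-1}L_1\cap M)/L_1\cong\F_\q^{m'}$, and $m=m'$ since multiplication by $\pi$ on the finite $\Oo_F$-module $M/L_1$ has kernel and cokernel of equal order. So your displayed formula for $c(M)$ and the conclusion $c(M)=0$ for $M\supsetneq L_1$ are numerically correct, but the description of the contributing lattices should be fixed (or the equality of the two counts justified). For finiteness, it is also cleaner to argue that $\pden(L_1''\oplus L_2)=0$ once $\mathrm{v}(L_1''\oplus L_2)\le -2$, which forces $\pden(L_1'\oplus L_2)^{(n_1)}=0$ for all but finitely many $L_1'$, rather than invoking integrality of $L_1'\oplus L_2$ directly.
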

	We may reduce the identity $\Int(L)=\pden(L)$ to a primitive version as the following theorem shows.
	\begin{theorem}\label{thm: equivalent form of modified KR}
		Let  $L=L_1 \oplus L_2 \subset \bV$ be  as in Lemma \ref{lem: ind formula to prim ld}.

		(1) \quad 	 Conjecture \ref{conj:main} is true for $L$ if for every $L_1 \subset L_1' \subset L_{1, F}$, we have
		$$
		\Int(L_1' \oplus L_2)^{(n_1)}= \pden(L_1' \oplus L_2)^{(n_1)}.
		$$
		
		(2)\quad   If  Conjecture \ref{conj:main} holds for all lattices $L'=L_1'\oplus L_2$ of $\bV$ of rank $n$ with $L_1 \subset L_1' \subset L_{1, F}$, then
		$$ \Int(L_1 \oplus L_2)^{(n_1)} =\pden(L_1 \oplus L_2)^{(n_1)}.$$

		(3) \quad   For $1 \le n_1 \le n$,  Conjecture \ref{conj:main} is true if and only if for every lattice $L =L_1 \oplus L_2 \subset \bV$ with $\mathrm{rank}(L_1)=n_1$, one has
		$$ \Int(L_1 \oplus L_2)^{(n_1)} =\pden(L_1 \oplus L_2)^{(n_1)}.$$
	\end{theorem}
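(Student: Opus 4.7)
The plan is to prove the three parts in the order (2), (1), (3), since (2) is a direct unfolding, (1) requires an additional vanishing argument, and (3) follows immediately from the first two.

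For Part (2), I would start from
\[
\Int(L_1\oplus L_2)^{(n_1)} = \chi\bigl(\cN^\Kra,\,\cD(L_1)\cdot {}^{\bL}\cZ^\Kra(L_2)\bigr)
\]
and expand $\cD(L_1)$ using Definition \ref{def:differencecycle}. This rewrites $\Int(L_1\oplus L_2)^{(n_1)}$ as
\[
\Int(L_1\oplus L_2) + \sum_{i=1}^{n_1} (-1)^i \q^{i(i-1)/2}\!\! \sum_{\substack{L_1\subset L_1'\subset \pi^{-1}L_1\\ \dim L_1'/L_1=i}}\!\! \Int(L_1'\oplus L_2).
\]
Every lattice $L_1'$ occurring here satisfies $L_1\subset L_1'\subset L_{1,F}$, so the hypothesis of Part (2) (Conjecture \ref{conj:main} for each such $L_1'\oplus L_2$) lets me replace each $\Int(L_1'\oplus L_2)$ by $\pden(L_1'\oplus L_2)$. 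The resulting expression is precisely the definition of $\pden(L_1\oplus L_2)^{(n_1)}$ from Definition \ref{def:prim pden}, giving the claim.

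For Part (1), I would use the two parallel decomposition formulas
\[
\Int(L_1\oplus L_2)=\!\!\sum_{\substack{L_1\subset L_1'\subset L_{1,F}\\ L_1'\text{ integral}}}\!\! \Int(L_1'\oplus L_2)^{(n_1)}, \qquad \pden(L_1\oplus L_2)=\!\!\sum_{L_1\subset L_1'\subset L_{1,F}}\!\! \pden(L_1'\oplus L_2)^{(n_1)},
\]
from Lemma \ref{lem: decomposition of Z(L) as sums of D(L)} and Corollary \ref{cor: decomp of pden(L)}. The geometric sum is only over \emph{integral} $L_1'$ because ${}^{\bL}\cZ^\Kra(L')=0$ for non-integral $L'$, so any $L'\supset L_1'$ with $L_1'$ non-integral produces a vanishing $\cD(L_1')$ and hence $\Int(L_1'\oplus L_2)^{(n_1)}=0$. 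The hypothesis of Part (1) supplies $\Int(L_1'\oplus L_2)^{(n_1)}=\pden(L_1'\oplus L_2)^{(n_1)}$ for \emph{all} $L_1'$, integral or not; in particular, the non-integral terms on the analytic side also vanish. Thus the two sums agree term by term, yielding $\Int(L_1\oplus L_2)=\pden(L_1\oplus L_2)$.

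Part (3) is then immediate: the forward implication follows from applying Part (2) to every decomposition $L=L_1\oplus L_2$, and the reverse from applying Part (1). The argument is essentially careful bookkeeping via two parallel M\"obius-type decompositions; the only substantive input beyond definitions is the vanishing of $\cD(L_1')$ for non-integral $L_1'$, which follows because all the ${}^{\bL}\cZ^\Kra(L')$ appearing in Definition \ref{def:differencecycle} for such $L_1'$ are zero. I therefore expect no serious obstacle beyond tracking the index sets faithfully.
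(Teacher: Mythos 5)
Your proposal is correct and follows the same route as the paper: part (2) by unfolding Definitions \ref{def:differencecycle} and \ref{def:prim pden}, part (1) via the parallel decompositions in Lemma \ref{lem: decomposition of Z(L) as sums of D(L)} and Corollary \ref{cor: decomp of pden(L)}, and part (3) from (1) and (2). The extra remark you supply — that $\cD(L_1')=0$ (hence $\Int(L_1'\oplus L_2)^{(n_1)}=0$) for non-integral $L_1'$, because $\cZ^\Kra(L')=\emptyset$ forces ${}^\bL\cZ^\Kra(L')=0$ for every $L'\supset L_1'$, so that the mismatch in index sets between the geometric sum (over integral $L_1'$) and the analytic sum (over all $L_1'$) causes no trouble — is exactly the point the paper's one-line proof leaves implicit, and you handle it correctly.
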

	\begin{proof} (1)  follows  from Lemma \ref{lem: decomposition of Z(L) as sums of D(L)} and Corollary \ref{cor: decomp of pden(L)}.  (2) follows from Definitions \ref{def:differencecycle} and \ref{def:prim pden}. (3) follows from  (1) and (2).
	\end{proof}

	For the rest of this section, we assume that $M$ is unimodular of rank $m$ with a Gram matrix $\diag(I_{m-1}, \nu)$. To go further with
	the calculation of $\alpha(M,L,X)$, we need an induction formula for  $\beta(M,L,X)^{(\ell)}$ as follows.  The proof is essentially the same as that of Corollary $9.11$ of \cite{KR1}, and is left to the reader.

	\begin{proposition}\label{prop:ind formula reducing size}
		Let $L=L_1 \obot L_2$, where $L_j$ is of rank $n_j$. Let $C(M^{[k]}, L_1)$ be the   $\mathrm{U}(M^{[k]})$-orbits of sublattices $M(i) \subset M^{[k]}$ such that $M(i)$ is isometric to $L_1$, and write $C(M^{[k]}, L_1)=\sqcup_{i\in J}\{ M(i) \}$. Then
		\begin{align*}
			&\beta(M, L, X)^{(n_1)}
		 =\sum_{i\in J} |M:M(i) \obot M(i)^{\perp}|^{-n_2} |M(i)^{\vee}:M(i)|^{n_2} \beta_i(M, L_1, X) \alpha(M(i)^{\perp},L_2),
		\end{align*}
		where
		\begin{align*}
			&\beta_i(M, L_1, X)
			=\lim_{d\to \infty}\q^{-d n_1 (2m+4k -n_1)}
		 \#\{ \phi \in I(M^{[k]}, L_1,d)^{(n_1)} \mid  \exists\, \ \Phi \in \mathrm{U}(M) \text{ with } \phi(L_1)=\Phi(M(i))\} ,
		\end{align*}
		and
		\begin{align*}
			I(M^{[k]},L_1,d)^{(n_1)}\coloneqq \{\phi\in I(M^{[k]},L_1,d)\mid \mathrm{rank}_{\mathbb{F}_\q}\phi(L_1)\otimes_{\Oo_F} \mathbb{F}_\q =n_1\}.
		\end{align*}
		Recall that $I(M^{[k]},L_1,d)$ is defined in \eqref{eq:I(M,L_T,d)}.
	\end{proposition}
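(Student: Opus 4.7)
The plan is to follow the strategy of Corollary 9.11 of \cite{KR1}, adapting it to the Hermitian primitive setting. The starting point is to exploit the orthogonal decomposition $L=L_1 \perp L_2$: split $\bx=(\bx_1,\bx_2)$ with $\bx_1\in M_k^{n_1,(n_1)}$ (primitive modulo $\pi$) and $\bx_2\in M_k^{n_2}$, and write $Y\in\Herm_n(F_0)$ in blocks as $Y_1\in\Herm_{n_1}(F_0)$, $Y_2\in\Herm_{n_2}(F_0)$, and an off-diagonal block $W$. Since $T=\mathrm{diag}(T_1,T_2)$ while $T(\bx)$ has cross block $(\bx_1,\bx_2)$, integrating $W$ against $\psi$ forces the cross term to vanish, and the integral factorizes into a contribution from $\bx_1$ and a contribution from $\bx_2$ with $\bx_2$ constrained to lie in the orthogonal complement of the $\Oo_F$-span of $\bx_1$ inside $M_k$.

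Next, for each primitive $\bx_1$, its $\Oo_F$-span $M'\subset M_k$ is isometric to $L_1$ and is a direct summand of $M_k$ modulo $\pi$. Since $M_k=M\perp\cH^k$ with $\cH^k$ split hyperbolic and $M$ unimodular, Witt-type transitivity for unimodular Hermitian lattices allows us to move $M'$ into $M$ by an element of $\mathrm{U}(M_k)$. Partitioning primitive $\bx_1$ by the $\mathrm{U}(M)$-orbit $\{M(i)\}_{i\in J}$ of their image accounts for all possibilities, and the number of primitive $\phi_1$ whose image lies in the orbit of $M(i)$ is captured by $\beta_i(M,L_1,X)$ directly from its definition.

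Once $\phi_1$ with image $M(i)$ is fixed, the residual integration in $\bx_2$ is over the orthogonal complement of $M(i)$ inside $M_k$, which differs from $M(i)^\perp\perp\cH^k$ only by a finite-index inclusion. Comparing the Haar measures on these two lattices and on their trace-form duals yields the two correction factors $|M:M(i)\perp M(i)^\perp|^{-n_2}$ and $|M(i)^\vee:M(i)|^{n_2}$, while the surviving integral is exactly $\alpha(M(i)^\perp,L_2)$; no $X$-dependence remains because all $k$ hyperbolic planes are absorbed into the complement inside $M_k$. Summing over $i\in J$ produces the stated formula.

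The main technical obstacle is the careful bookkeeping of the Haar-measure normalization constants, particularly identifying the two index factors $|M:M(i)\perp M(i)^\perp|$ and $|M(i)^\vee:M(i)|$ unambiguously. These arise from the discrepancy between the orthogonal complement of $M(i)$ inside $M$ (which may not split off as a direct summand) and the image of the orthogonal complement of $\mathrm{Span}(\bx_1)$ after projecting off the hyperbolic planes of $M_k$; a clean verification requires choosing bases compatible with the elementary divisors of the pair $M(i)\subset M$ and tracking how the unimodular form on $M$ restricts. Once the constants are pinned down, the remaining identities reduce to the Jacobian-of-substitution computations already carried out in \cite{KR1}.
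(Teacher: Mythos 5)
Your high-level outline matches the intended adaptation of Corollary 9.11 of \cite{KR1}: split $\bx=(\bx_1,\bx_2)$, integrate out the off-diagonal block of $Y$ to force $\bx_2$ into the (approximate) orthogonal complement of $\spaF\{\bx_1\}$, partition the primitive $\bx_1$ by the $\mathrm{U}(M_k)$-orbit of its span, and factor the count into $\beta_i$ and $\alpha(M(i)^\perp,L_2)$ up to measure constants.

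There is, however, a concrete error. The sentence ``Witt-type transitivity for unimodular Hermitian lattices allows us to move $M'$ into $M$ by an element of $\mathrm{U}(M_k)$'' is false, and the proposition does not say this. The representatives $M(i)$ are sublattices of $M_k=\cH^k\obot M$, and they are classified by the invariant $d(\varphi)=\dim_{\F_q}\Pr_{\cH^k}(M')$ mod $\pi$ (Lemma \ref{lem:cancel primitive}); when $d(\varphi)>0$ the sublattice cannot be moved into $M$. Indeed if $L_1=\cH$ it does not embed into the unimodular $M$ at all, and Proposition \ref{prop: ind for t} parts (1)--(4) exhibit exactly two orbits, one in $\cH^k$ and one in $M$. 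If every primitive $\spaF\{\bx_1\}$ could be placed inside $M$, then by Witt applied inside the unimodular $M$ the index set $J$ would be a singleton and the sum in the proposition would collapse, contradicting both Propositions \ref{prop: ind for t} and \ref{prop: ind rank 2}. The correct statement is Lemma \ref{lem:cancel primitive}: one can always bring $M'$ into $\cH^{d(\varphi)}\obot I_{4-2d(\varphi)}$, with $d(\varphi)$ as the orbit invariant; this is precisely why the formula splits into a nontrivial sum over $i\in J$ and why $M(i)^\perp$ picks up a differing number of hyperbolic planes for different $i$.

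Two smaller points. First, you defer the entire verification of the index factors $|M_k:M(i)\perp M(i)^\perp|^{-n_2}\,|M(i)^\vee:M(i)|^{n_2}$ to ``Jacobian-of-substitution computations already carried out in \cite{KR1}''; since these factors are the main place where the Hermitian ramified case differs from the inert orthogonal case, one should at least indicate how they arise from the lattice chain $\pi^{a}(M(i)\perp M(i)^\perp)\subset M(i)\perp M(i)^\perp\subset M_k\subset M(i)^\vee\perp(M(i)^\perp)^\vee$, i.e., from comparing the Haar measure on $\{\bx_2:(\bx_1,\bx_2)\equiv 0\}$ with that on $(M(i)^\perp)^{n_2}$. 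Second, the remark that ``no $X$-dependence remains because all $k$ hyperbolic planes are absorbed into the complement'' is misleading: $X=q^{-2k}$, so both $\beta_i(M,L_1,X)$ and $\alpha(M(i)^\perp,L_2)$ depend on $k$, and indeed in Proposition \ref{prop: ind rank 2} the $\alpha$-factors are written with explicit $X$.
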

	
	One special case is that $L=\cH^i\obot L_2$.	Since any sublattice of $M^{[k]}=M\obot \cH^k$ isometric to $\cH^i$ is always a direct summand of $M^{[k]}$ and $\alpha(M,\cH^i,X)=\beta(M,\cH^i,X)^{(2i)}=\beta(M,\cH^i,X)$,  the above proposition specializes as follows.
	\begin{corollary}\label{cor: L=H^i obot L_2}
		Assume $L=\cH^i\obot L_2$, then 	
		\begin{align}
			\alpha(M,L,X)=\beta(M,\cH^i,X)\alpha(M,L_2,\q^{2i}X)=\alpha(M,\cH^i,X)\alpha(M,L_2,\q^{2i}X).
		\end{align}
	\end{corollary}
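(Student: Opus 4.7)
The plan is to apply Proposition~\ref{prop:ind formula reducing size} with $L_1=\cH^i$, converting the resulting identity of local densities at $X=\q^{-2k}$ into the polynomial identity in $X$. Two ingredients are needed: the identification $\alpha(M,L,X)=\beta(M,L,X)^{(2i)}$ (and likewise for $L=\cH^i$) via Theorem~\ref{thm:ind formula reducing valuation}, and a Witt-type statement that every isometric copy of $\cH^i$ in $M_k=M\obot\cH^k$ splits off as a direct orthogonal summand and that all such summands lie in a single $\mathrm{U}(M_k)$-orbit.

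For the first ingredient, by Theorem~\ref{thm:ind formula reducing valuation} applied with $L_1=\cH^i$ it suffices to show $\alpha(M,L_1'\oplus L_2,X)\equiv 0$ for every $L_1'$ with $\cH^i\subsetneq L_1'\subset \pi^{-1}\cH^i$. Any such $L_1'$ contains $\pi^{-1}v$ for some primitive $v\in\cH^i$; pairing $\pi^{-1}v$ with the hyperbolic partner of $v$ inside $\cH^i$ yields an inner product of $\pi$-valuation $-2$. But the Hermitian form on $M_k$ takes values in $\pi^{-1}\Oo_F$, and the slack permitted by the congruence defining $I(M_k,L_1'\oplus L_2,d)$ is absorbed by $\pi_0^d\cdot\Herm_n(\Oo_{F_0})^\vee\subset \pi^{-1}\Oo_F$ for $d\ge 1$. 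Hence for large $d$ no admissible $\phi$ exists, $\alpha(M_k,L_1'\oplus L_2)=0$ for all large $k$, and the polynomial $\alpha(M,L_1'\oplus L_2,X)$ vanishes identically. The same argument with $L_2=0$ gives $\alpha(M,\cH^i,X)=\beta(M,\cH^i,X)^{(2i)}$.

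For the second ingredient, Witt's extension theorem for non-degenerate Hermitian lattices guarantees that (for $k\ge i$) every isometric copy of $\cH^i$ in $M_k$ is an orthogonal direct summand, and all such summands form a single $\mathrm{U}(M_k)$-orbit; Witt cancellation on the global invariants identifies the orthogonal complement with $M\obot\cH^{k-i}$. Fixing a representative $M(i_0)$, one has $|M_k:M(i_0)\perp M(i_0)^\perp|=1$ and $|M(i_0)^\vee:M(i_0)|=1$, the latter because $p$ is odd, so the trace form on $\cH$ has unit discriminant over $\Oo_{F_0}$. Uniqueness of the orbit further yields $\beta_{i_0}(M_k,\cH^i,X)=\beta(M_k,\cH^i,X)^{(2i)}$. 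Substituting into Proposition~\ref{prop:ind formula reducing size} and using $\alpha(M\obot\cH^{k-i},L_2)=\alpha(M,L_2,\q^{2i}X)|_{X=\q^{-2k}}$ gives an identity at $X=\q^{-2k}$ for every $k\ge i$, which upgrades (by infinitely many matching values) to the desired polynomial identity. The main obstacle is the vanishing argument in the second paragraph, which requires carefully matching the valuation of the forbidden inner products against the slack permitted by $\pi_0^d\cdot\Herm_n(\Oo_{F_0})^\vee$; the Witt-theoretic input is standard.
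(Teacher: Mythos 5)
Your proposal is correct and follows essentially the same route as the paper's one-sentence sketch: identify $\alpha(M,L,X)$ with the primitive density $\beta(M,L,X)^{(2i)}$ by showing the correction terms in Theorem~\ref{thm:ind formula reducing valuation} vanish, note that any copy of $\cH^i$ in $M_k$ splits off orthogonally (so there is a single $\mathrm{U}(M_k)$-orbit and the index factors in Proposition~\ref{prop:ind formula reducing size} are $1$), and use Witt cancellation to identify the complement with $M\obot\cH^{k-i}$. You have filled in the valuation argument for the vanishing of $\alpha(M,L_1'\oplus L_2,X)$ and the computation that $|\cH^\vee:\cH|=1$ for $p$ odd, both of which the paper leaves implicit; these details check out.
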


	We end this section with two more special cases of Proposition \ref{prop:ind formula reducing size}. Proofs are given in Appendix \ref{sec: calc of primitive local density}.
	
	\begin{proposition} \label{prop: ind for t} Let the notation  be as in Proposition \ref{prop:ind formula reducing size}. Assume $n_1=1$ and $L_1 =\langle t \rangle$ where $t\in \Oo_{F_0}.$
		\begin{enumerate}
			\item There always exists a primitive vector $M(1)\in \cH^k$ with $q(M(1))=t$, and
			\begin{align*}
				M(1)^{\perp} \cong \cH^{k-1} \obot I_{m}^{\chi(M)} \obot \langle -t \rangle.
			\end{align*}
			Here $\langle t \rangle$ denotes a lattice $\Oo_F v$ of rank one with $(v, v)=t$.
			
			\item 	If $\mathrm{v}(t)=0$, then there exist a primitive vector $M(0)\in M$ with $q(M(0))=t$, and
			\begin{align*}
				M(0)^{\perp}\cong  	\cH^{k}\obot I_{m-2}^{\epsilon_{m-2}} \obot \langle \nu t \rangle.
			\end{align*}
			Here $\epsilon_{m-2}=\chi((-1)^{ (m-2)(m-3)/2})$.
			
			\item If $\mathrm{v}(t)>0$, then there exist a primitive vector $M(0)\in M$ with $q(M(0))=t$ only when $M$ is isotropic (i.e. $\exists\, v\in M$ with $q(v)=0$). In this case,
			\begin{align*}
				M(0)^{\perp}\cong 	\cH^{k}\obot I_{m-2}^{\chi(M)} \obot \langle -t \rangle.
			\end{align*}
			Assuming the existence of $M(1)$ and $M(0)$, we have
			\begin{align*}
				|M^{[k]}:M(i)\obot M(i)^{\perp}|^{-1}|M(i)^{\vee}:M(i)|=\begin{cases}
					1 & \text{ if $i=1$},\\
					\q & \text{ if $i=0$}.
				\end{cases}
			\end{align*}
			\item
			Under the action of $\mathrm{U}(M^{[k]})$,	$v$ is either in the same orbit of a fixed vector $M(1)\in \cH^k$ or a fixed vector $M(0)\in M$.

			\item We have the following induction formula:
			\begin{align*}
				&\beta(M, L,X)^{(1)} =\beta_1(M,L_1,X) \alpha(M(1)^{\perp},L_2)+q^{n-1}\beta_0(M, L_1,X) \alpha(M(0)^{\perp},L_2).
			\end{align*}
			Moreover,
			\begin{enumerate}
				\item  for any $L_1$, $$\beta_1(M, L_1,X)=1-X.$$
				\item assume $\mathrm{v}(t)=0$, then
				\begin{align*}
					\beta_0(M, L_1 ,X)=\begin{cases}
						(1+\chi(M) \chi(L)q^{-\frac{m-1}{2}})X & \text{ if $m$ is odd},\\
						(1-\chi(M)q^{-\frac{m}{2}})X & \text{ if $m$ is even}.
				\end{cases}\end{align*}
				\item assume $\mathrm{v}(t)>0$, then
				\begin{align*}\beta_0(M, L_1 ,X)=\begin{cases}
						(1-q^{1-m})X& \text{ if $m$ is odd},\\
						\left(1-q^{1-m}+\chi(M)(q-1)q^{-\frac{m}{2}}\right)X& \text{ if $m$ is even}.
				\end{cases}\end{align*}
			\end{enumerate}
		\end{enumerate}
	\end{proposition}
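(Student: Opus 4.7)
\medskip

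\noindent\textbf{Proof proposal.} The proposition has five interlocking parts, and my plan is to treat them in the order stated, since each later part relies on explicit data from the earlier ones.

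First, for parts (1)--(3) I would construct the primitive vectors $M(i)$ and identify their orthogonal complements by linear algebra over $\Oo_F$. For part (1), inside a single hyperbolic factor $\cH \subset \cH^k$ with basis $e_1,e_2$ satisfying $(e_i,e_i)=0$ and $(e_1,e_2)=\pi^{-1}$, I would write down the explicit primitive vector $M(1) = e_1 + \tfrac{\pi t}{2}e_2$ (using that $p$ is odd and $\bar\pi=-\pi$), verify $q(M(1))=t$ by direct calculation, and then solve for its orthogonal complement in $\cH$. A rank-and-discriminant comparison shows $M(1)^\perp \cap \cH \cong \langle -t\rangle$, giving $M(1)^\perp = \cH^{k-1}\obot M\obot \langle -t\rangle$. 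For part (2), when $\val(t)=0$ the unit $t/\nu$ (with $\nu$ the last diagonal entry of $S$) allows me to realize $M(0)$ inside the factor $\langle \nu\rangle$ of $M=I_{m-1}\obot \langle \nu\rangle$ up to an isometry of $M$; the orthogonal complement is then $I_{m-2,\epsilon_{m-2}}\obot \langle \nu t\rangle$ by Jordan decomposition. For part (3), when $\val(t)>0$, representing $t$ primitively in a unimodular $M$ forces $q$ to vanish on the residue, hence requires an isotropic primitive vector mod $\pi$, which exists precisely when $M$ is isotropic; I would produce $M(0)$ by a hyperbolic-plane embedding and again identify $M(0)^\perp$. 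The local index $|M_k:M(i)\perp M(i)^\perp|^{-1}|M(i)^\vee:M(i)|$ is then a direct determinant calculation giving $1$ or $q$ as stated.

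Second, for part (4) I would invoke Witt's extension theorem for Hermitian lattices over $\Oo_F$ (as in \cite{J}): two primitive vectors with the same norm lie in the same $\mathrm U(M_k)$-orbit if and only if their orthogonal complements are isometric. Since $M_k$ has Jordan type $\cH^k\obot I_m$ with a unique hyperbolic summand of positive valuation and a unique unimodular summand, any primitive $v$ with $q(v)=t$ projects nontrivially to exactly one of these two summands mod $\pi$, which pairs it with either $M(1)$ or $M(0)$.

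Third, for part (5) I would specialize Proposition \ref{prop:ind formula reducing size} to $n_1=1$, $L_1=\langle t\rangle$. The orbit decomposition from part (4) produces exactly the two terms with weights read off from part (3). The formula $\beta_1(M,L_1,X)=1-X$ is uniform and follows from the standard fact that primitive representations inside a hyperbolic plane count like the difference $(\text{all})-(\text{non-primitive})$, which telescopes to $1-X$ in the local density polynomial. For $\beta_0$ I would reduce mod $\pi$: since $F/F_0$ is ramified, a unimodular Hermitian lattice reduces to a quadratic $\F_q$-space of rank $m$ with discriminant determined by $\chi(M)$, and the primitive count for $q(v)=\bar t$ becomes a classical quadratic form representation count over $\F_q$. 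The Gauss-sum calculations give the four case-by-case formulas (odd/even $m$, $\val(t)=0$ or $>0$); the $\chi(M)\chi(L)$ and $\chi(M)$ factors come from Legendre symbols of the discriminants.

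The main obstacle will be the $\beta_0$ computations in part (5), because they hide the most structure: reducing the primitive local density down to a finite count over $\F_q$ requires tracking how the signs $\chi(M)$ and $\chi(L)$ match with the residual quadratic form, and handling the $\val(t)>0$ isotropic case (where the mod-$\pi$ count concerns isotropic primitive vectors) requires a separate Witt-decomposition argument that splits off a hyperbolic line before counting. Once those two counts are done cleanly, assembling the induction formula is immediate from Proposition \ref{prop:ind formula reducing size}.
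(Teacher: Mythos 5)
Your overall plan mirrors the paper's: explicit constructions for parts (1)--(3), transitivity of $\mathrm U(M_k)$ for part (4), and a reduction of the $\beta_i$ to counts over the residue field for part (5). But two of the steps, as written, do not go through.

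In part (2), the claim that ``the unit $t/\nu$ \ldots allows me to realize $M(0)$ inside the factor $\langle\nu\rangle$'' fails in general. A vector in $\langle\nu\rangle$ has Hermitian norm $\nu\,z\bar z$ for $z\in\Oo_F$, so this requires $t/\nu\in\mathrm{Nm}_{F/F_0}(\Oo_F^\times)$. Since $F/F_0$ is ramified, $\mathrm{Nm}_{F/F_0}(\Oo_F^\times)$ has index $2$ in $\Oo_{F_0}^\times$, so for half the units $t$ the stated embedding does not exist, and no isometry of $M$ can repair a rank-one obstruction. The paper instead works inside a rank-two unimodular summand: when $M$ is isotropic it splits off a $\cH_0$ and writes $M(0)=\frac t2 v'_{2k+1}+v'_{2k+2}$; when $M$ is anisotropic of rank $2$ (Gram matrix $\diag(1,\nu)$, $\chi(-\nu)=-1$) it uses that the associated form on $\Oo_F^2$ represents every unit because the norm map from the \emph{unramified} extension $E=F_0(\sqrt{-\nu})$ is surjective on units. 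This rank-two step is not optional; you would need to rebuild part (2) around it. It is also exactly what makes part (3) work --- there the failure of $t$ to lie in the unit norms of the anisotropic form is why no primitive $M(0)$ exists when $\mathrm v(t)>0$ and $M$ is anisotropic.

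A smaller mischaracterization occurs in part (4). A primitive vector of $M_k$ can project primitively to \emph{both} $\cH^k$ and $M$ simultaneously; the relevant dichotomy is whether the projection to $\cH^k$ is primitive in $\cH^k$ or not, not which unique summand receives a nontrivial image. The paper handles the two cases via Lemmas \ref{lem:primitive in H, vector} and \ref{lem:vector primitive in S} (the latter shows that when $\Pr_{\cH^k}(v)$ is not primitive one can conjugate $v$ into $M$), rather than quoting a Witt-type cancellation for lattices as a black box. Your appeal to Witt extension is plausible with the right form of Jacobowitz's results, but as stated it elides the part that actually requires work. Finally, for part (5) your plan is consistent with the paper's: $\beta_1$ reduces to $\beta(\cH^k,L)$ (via part (4)) and is then evaluated by an explicit formula, while $\beta_0$ reduces by Lemma \ref{lem: cancel beta_0} to a count of primitive representations of $\bar t$ by the residue quadratic form; the case split on $\mathrm v(t)$ and the parity of $m$, and the appearance of $\chi(M)$ and $\chi(L)$, come out of that finite count.
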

	\begin{proof}
		Parts  (1)---(4) are proved in subsection \ref{subsec: 1-4 5.9}. The induction formula for $\beta(M, L,X)^{(1)}$ follows from Proposition \ref{prop:ind formula reducing size}. For the formula of $\beta_i(M, L_1 ,X)$, see Corollaries \ref{cor: beta2} and  \ref{cor: beta N2}.
	\end{proof}
	\begin{proposition}\label{prop: ind rank 2}
		Let the notation be as in Proposition \ref{prop:ind formula reducing size}. Assume $\mathrm{v}(L_1)>0$ and $n_1=2$. Then we have a partition of $ C(M^{[k]}, L_1)=\bigsqcup_{i=0}^{2}C_i(M^{[k]},L_1)$ such that for any $M(i)\in C_i(M^{[k]},L_1)$, $M(i)^{\bot}$ is isometric to
		$$
		(-L_1) \obot \cH^{k-i} \obot M^{(i)}.
		$$
		Here $M^{(i)}$ is a unimodular $\Oo_F$-lattice of rank $m-2(2-i)$ and has determinant $(-1)^i \det L$.
		
		Moreover,  we have
		\begin{align}\label{eq: T_1}
			\beta(M, L,X)^{(2)}=\sum_{i=0}^{2}\q^{(2-i)(n-2)}\beta_i(M,L_1,X)\alpha(M(i)^{\perp}, L_2,X),
		\end{align}
		where
		\begin{align*}
			\beta_2(M,L_1,X)&=(1-X)(1-\q^2 X),\\
			\beta_1(M,L_1,X)&=
			\q(\q+1)\left( (1-\q^{1-m})+\delta_{e}(m)\chi(M)(\q-1)\q^{-\frac{m}{2}}\right)X(1-X),\\
			\beta_0(M,L_1,X)
			 &=\begin{cases}
				\q(1-\q^{1-m})(1-\q^{3-m})X^2& \text{if $m$ is odd},\\
				\q\left( (1-\q^{2-m})+\chi(M)(\q^2-1)\q^{-\frac{m}{2}}\right)(1-\q^{2-m})X^2& \text{if $m$ is even}.
			\end{cases}
		\end{align*}
		Here $\delta_e(m)=1$ or $0$ depending on whether $m$ is even or odd.
	\end{proposition}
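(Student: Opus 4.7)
The plan is to prove the proposition in three stages: (i) classify the $U(M_k)$-orbits of sublattices of $M_k$ isometric to $L_1$ and compute their orthogonal complements; (ii) derive the formula (\ref{eq: T_1}) from Proposition \ref{prop:ind formula reducing size}; (iii) compute each $\beta_i(M, L_1, X)$ explicitly.

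For stage (i), since $\mathrm{v}(L_1) > 0$ the Hermitian form on $L_1$ is anisotropic and one can diagonalize $L_1 = \langle t_1 \rangle \obot \langle t_2 \rangle$ with $\mathrm{v}(t_j) \geq 1$. I would construct orbit representatives $M(i)$ inductively: first pick a primitive length-$t_1$ vector in $M_k$, which by Proposition \ref{prop: ind for t}(4) lies in one of two $U(M_k)$-orbits (inside $\cH^k$ or inside $M$); then pick a primitive length-$t_2$ vector in its orthogonal complement, which again lies in one of two orbits. The four combinatorial configurations collapse to the three orbits $C_0, C_1, C_2$ indexed by the total number $2-i$ of hyperbolic planes absorbed. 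Iteratively applying Proposition \ref{prop: ind for t}(1)--(3) and Witt cancellation then yields the claimed form $M(i)^\perp = (-L_1) \obot \cH^{k-i} \obot M^{(i)}$ with $M^{(i)}$ unimodular of rank $m - 2(2-i)$; the determinant $(-1)^i \det L_1$ follows from multiplicativity of determinants along the iteration.

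Stage (ii) amounts to inserting these representatives into Proposition \ref{prop:ind formula reducing size}. The index factor $|M_k : M(i) \perp M(i)^\perp|^{-n_2} |M(i)^\vee : M(i)|^{n_2}$ equals $q^{(2-i)n_2} = q^{(2-i)(n-2)}$, which I would verify by iterating the rank-one index computation of Proposition \ref{prop: ind for t}(3): each hyperbolic absorption contributes a factor $q^{n_2}$, while a non-absorbing rank-one step contributes $1$. For stage (iii), I would apply Proposition \ref{prop: ind for t}(5) twice. The primitive embedding of $L_1 = \langle t_1 \rangle \obot \langle t_2 \rangle$ decomposes into a primitive embedding of $\langle t_1 \rangle$ into orbit $j_1 \in \{0,1\}$ of $M_k$ followed by a primitive embedding of $\langle t_2 \rangle$ into orbit $j_2 \in \{0,1\}$ of the orthogonal complement, with $i = j_1 + j_2$. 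The product $\beta_{j_1}(\cdot, \langle t_1 \rangle, X) \cdot \beta_{j_2}(\cdot, \langle t_2 \rangle, X)$ gives the $(j_1, j_2)$-contribution, with rank and sign updated after the first step; for $i = 1$ one sums the two orderings. Using $\beta_1 = 1-X$ and Proposition \ref{prop: ind for t}(5)(c) (applicable since $\mathrm{v}(t_j) > 0$) yields the stated closed forms, after absorbing the $q^{n_1-1}$ weighting from Proposition \ref{prop: ind for t}(5) into the cross term to produce the $q(q+1)$ factor of $\beta_1$.

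The main obstacle will be the bookkeeping in stage (iii): the formula for $\beta_0$ in Proposition \ref{prop: ind for t}(5)(c) differs qualitatively between odd and even $m$, and after one iteration the rank drops by $2$ while the sign $\chi(M^{(1)})$ may shift. Verifying that the iterated products simplify to the stated closed forms---especially the cancellation of the mixed $\chi(M) \chi(L_1)$-type terms in the even case, and the symmetric combination of the two orderings producing precisely $q(q+1)(\cdot)X(1-X)$ in $\beta_1$---requires careful tracking of these parity-dependent updates, and is the main technical core of the proof.
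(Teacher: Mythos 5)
Your stage (i) begins with a claim that is false: the hypothesis $\mathrm{v}(L_1)>0$ does not make $L_1$ anisotropic, nor does it make $L_1$ diagonalizable. The lattice $\cH_a$ for odd $a\geq 1$ satisfies $\mathrm{v}(\cH_a)=a>0$, is split (hence isotropic over $F$), and admits no diagonal Gram matrix: a rank-one $\Oo_F$-Hermitian lattice has Gram entry in $F_0$ of even $\pi$-valuation, so a $\pi^a$-modular block of odd $a$ must have rank two and is forced to be $\cH_a$. This case is not a corner case---the paper explicitly applies Proposition \ref{prop: ind rank 2} with $L_1\cong\cH_a$ in the proof of Proposition \ref{prop: pden T_a}, which is needed for the main theorem. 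Your iterative scheme (primitive vector of length $t_1$, then primitive vector of length $t_2$ in its orthogonal complement) presupposes an orthogonal basis for $L_1$, so it does not get off the ground in this case.

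Even when $L_1$ is diagonal there is a second gap: after absorbing the first vector, its orthogonal complement has the form $\cH^{k'}\obot M'\obot \langle -t_1\rangle$ (Proposition \ref{prop: ind for t}(1)--(3)), which is not of the shape $\cH^k\obot(\text{unimodular})$ required for the two-orbit dichotomy of Proposition \ref{prop: ind for t}(4). You would therefore need a strictly more general orbit classification than the one proved in the paper. Likewise, your factorization of the primitive density into a product $\beta_{j_1}\cdot\beta_{j_2}$ conflates ``$\phi$ primitive on $L_1$'' (meaning $\bar w_1,\bar w_2$ independent in $M_k/\pi M_k$) with ``$w_1$ primitive in $M_k$ and $w_2$ primitive in $w_1^\perp$''; these are not equivalent because the reduction of $w_1^\perp$ mod $\pi$ does not coincide with its image inside $M_k/\pi M_k$, and the integral $\beta_i$ in \eqref{eq: beta integral version} partitions by $\dim\mathrm{Pr}_{\cH^k}\phi(L_1)\bmod\pi$, not by ordered pairs of rank-one orbit labels. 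The paper instead proves the orbit decomposition in one shot (Proposition \ref{prop:A perbofHi}, via Lemmas \ref{lem: perp of prim lattice} and \ref{lem:cancel primitive}, indexed by $d(\varphi)$), and computes $\beta_2,\beta_0,\beta_1$ by three separate methods (Lemma \ref{lem:beta(H,L)}, Lemma \ref{lem: cancel beta_0} reducing to a residue-field isometry count, and a direct partition-by-$\alpha$ argument in Proposition \ref{prop:beta1 new method}) that handle $\cH_i$ and diagonal $L_1$ uniformly or as separately stated cases.
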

	\begin{proof}
		Equation \eqref{eq: T_1} follows from Proposition \ref{prop:ind formula reducing size} and Proposition \ref{prop:A perbofHi}. For the formula of $\beta_i(S,L_1,X)$, see Corollaries \ref{cor: beta2}, \ref{cor:beta0} and  Proposition \ref{prop:beta1 new method}.
	\end{proof}

	\section{The modified Kudla-Rapoport conjecture}\label{sec: Kudla-Rapoport conjecture}

Recall that the Hermitian lattices used to define the correction terms are of the following forms:
	\begin{equation}\label{eq:H n i epsilon}
		\cH_{n,i}^{\epsilon}\coloneqq \cH^i\obot I_{n-2i}^{\epsilon} \quad \text{ for } 1\leq i \leq \frac{n}{2}, \quad \epsilon =\pm 1
	\end{equation}
	where  $I_{n-2i}^{\epsilon}$ is the  unimodular Hermitian lattice of rank $n-2i$ with
	\noindent $\chi(I_{n-2i}^{\epsilon})= \chi(\cH_{n,i}^{\epsilon} )=\epsilon$. When $n=2r$ is even, we take $I_{0}^{\epsilon} =0$ and  $\cH_{n, r}^1 = \cH^{r}$.

	\begin{theorem} \label{thm: A_epsilon}
		Let $r_\epsilon=\frac{n-1}2$ when $n$ is odd, and  $r_\epsilon=\lfloor\frac{n+\epsilon}2\rfloor$ when $n$ is even. In the following we just write $r_{\epsilon}$ as $r$:
		$$
		A^\epsilon =(A_{i, j}^\epsilon) =
		\begin{pmatrix}
			\alpha(\cH_{n,1}^{\epsilon}, \cH_{n,1}^{\epsilon})
			&\alpha(\cH_{n,2}^{\epsilon}, \cH_{n,1}^{\epsilon})
			&\cdots
			&\alpha(\cH_{n,r}^{\epsilon}, \cH_{n,1}^{\epsilon})
			\\
			0 &\alpha(\cH_{n,2}^{\epsilon}, \cH_{n,2}^{\epsilon})
			&\cdots
			&\alpha(\cH_{n,r}^{\epsilon}, \cH_{n,2}^{\epsilon})
			\\
			\cdots &\cdots &\cdots &\cdots
			\\
			0& 0&0
			&\alpha(\cH_{n,r}^{\epsilon}, \cH_{n,r}^{\epsilon})
		\end{pmatrix},
		$$
		$$
		B^\epsilon
		= {}^t(\alpha'(I_{n}^{-\epsilon}, \cH_{\epsilon}^{n, 1}), \cdots,
		\alpha'(I_{n}^{-\epsilon}, \cH_{n,r}^{\epsilon})),
		$$
		and
		$$
		C^\epsilon={}^t(c_\epsilon^{n, 1}, \cdots, c_\epsilon^{n, r}),
		$$
		where $c_{n,i}^\epsilon $ is as in Conjecture \ref{conj:main}.

		Then  $C^\epsilon$ is the solution of the equation
		\begin{equation}\label{eq: AC=B}
			A^\epsilon C^\epsilon=-2 B^{\epsilon}.
		\end{equation}
		Moreover,
		\begin{align}\label{eq: A^{jj}}
			A_{j,j}^\epsilon=2q^{\frac{(n-2j)(n-2j-1)}{2}} \prod_{0<s\le j}(1-\q^{-2s})\prod_{1 \le s \le \lfloor\frac{n-2j-1}{2}\rfloor}(1-\q^{-2s})
			\begin{cases}
				1 &\text{ if $n$ is odd},\\
				1 -\epsilon  \q^{-\frac{n-2j}{2}}  &\text{ if $n$ is even}.
			\end{cases}
		\end{align}
		Finally,  for $i<j$,
		\begin{align}\label{eq: A^{ij}}
			A_{i,j}^\epsilon= A_{j,j}^\epsilon\cdot \begin{cases}
				I(n-2i,\frac{n-2i-1}{2},j-i)& \text{ if $n$ is odd},\\
				I(n-2i,\frac{n-2i-1+\epsilon}{2},j-i)& \text{ if $n$ is even},
			\end{cases}
		\end{align}
		where
		\begin{align*}
			I(n,d,k)\coloneqq  \prod_{s=1}^{k}\frac{(\q^{d-s+1}-1)(\q^{n-d-s}+1)}{\q^s-1}.
		\end{align*}
	\end{theorem}

	\begin{proof}
		First notice that $\alpha(\cH_{n, i}^\epsilon, \cH_{n,j}^{\epsilon})=0$ if  $i<j$. Thus, (\ref{eq:coeff}) is indeed equivalent to \eqref{eq: AC=B}, and there exists a unique solution $C_{\epsilon}$.
		
		Now we compute $A_{j,j}^\epsilon$ explicitly. Corollary \ref{cor: L=H^i obot L_2} and Lemma \ref{lem: cancel beta_2} imply that
		\begin{align*}
			\alpha(\cH_{n,j}^{\epsilon},\cH_{n,j}^{\epsilon})&=\alpha(\cH^j,\cH^j)\alpha(I_{n-2j}^{\epsilon},I_{n-2j}^{\epsilon}).
		\end{align*}
		According to Lemma \ref{lem:beta(H,L)},
		\begin{align*}
			\alpha(\cH^j,\cH^j)=\prod_{0<s\le j}(1-\q^{-2s}).
		\end{align*}
		By Lemma \ref{lem: cancel beta_0},
		\begin{align*}
			\alpha(I_{n-2j}^{\epsilon},I_{n-2j}^{\epsilon})=|\mathrm{O}(\overline{I}_{n-2j}^{\epsilon})(\mathbb{F}_\q)|,
		\end{align*}
		where $\overline{I}_{n-2j}^{\epsilon}=I_{n-2j}^\epsilon \otimes_{\Oo_F} \Oo_{F}/(\pi)$ is the  space over $\mathbb{F}_q$ with the naturally induced quadratic form.
		Now \eqref{eq: A^{jj}} follows from  the well-known formula:
		\begin{align*}
			&|\mathrm{O}(\overline{I}_{n-2j}^{\epsilon})(\mathbb{F}_\q)|
		 =\begin{cases}
				2q^{\frac{(n-2j)(n-2j-1)}{2}}\prod _{s=1}^{\frac{n-2j-1}{2}}(1-\q^{-2s})& \text{ if $n$ is odd},\\
				2q^{\frac{(n-2j)(n-2j-1)}{2}}(1-\epsilon \q^{-\frac{n-2j}{2}})\prod _{s=1}^{\frac{n-2j}{2}-1}(1-\q^{-2s})& \text{ if $n$ is even}.
			\end{cases}
		\end{align*}
		
		To obtain \eqref{eq: A^{ij}}, notice that
		(Corollary \ref{cor: L=H^i obot L_2})
		\begin{align*}
			\alpha(\cH_{n,j}^{\epsilon},\cH_{n,i}^{\epsilon})=\alpha(\cH_{n,j}^{\epsilon},\cH^{i}) \alpha(\cH_{n-2i,j-i}^{\epsilon},I_{n-2i}^{\epsilon}),
		\end{align*}
		and
		\begin{align*}
			\alpha(\cH_{n,j}^{\epsilon},\cH_{n,j}^{\epsilon})=\alpha(\cH_{n,j}^{\epsilon},\cH^{i}) \alpha(\cH_{n-2i,j-i}^{\epsilon},\cH_{n-2i,j-i}^{\epsilon}).
		\end{align*}
		Hence
		\begin{align*}
			\frac{A_{i, j}^\epsilon}{A_{j, j}^\epsilon}=		\frac{\alpha(\cH_{n,j}^{\epsilon},\cH_{n,i}^{\epsilon})}{\alpha(\cH_{n,j}^{\epsilon},\cH_{n,j}^{\epsilon})}=\frac{\alpha(\cH_{n-2i,j-i}^{\epsilon},I_{n-2i}^{\epsilon})}{\alpha(\cH_{n-2i,j-i}^{\epsilon},\cH_{n-2i,j-i}^{\epsilon})}.
		\end{align*}
		
		Fix an $\Oo_F$-lattice $L$ that is represented by $I_{n-2i}^{\epsilon}$. According to Lemma \ref{independentofT}, to compute $\frac{A_{i, j}^\epsilon}{A_{j, j}^\epsilon}$, we need to count the number of lattices $L'$ in $ L_{F}$ such that contain $L\subset L'$ and $L' \cong \cH_{n-2i,j-i}^{\epsilon}$, which is equivalent to the following condition:
		\begin{align*}
			\pi L \stackrel{j-i}{\subset} \pi L' \stackrel{n-2j}{\subset} (L')^\sharp\stackrel{j-i}{\subset}  L \stackrel{j-i}{\subset}{L'}.
		\end{align*}
		Since $L'$ and $\pi L'$ determine each other,  we just need to count $\pi L'$ satisfying the above condition. We regard $\pi L'/\pi L$ as a $(j-i)$-dimensional subspace of $L/\pi L$, where $L/\pi L$ is equipped with quadratic form $(x,y)/\pi$.
		
		Claim: The condition
		\begin{align*}
			\pi L' \stackrel{}{\subset} (L')^\sharp
		\end{align*}
		is equivalent to the condition that $\pi L'/\pi L$ is an isotropic subspace of $L/\pi L.$
		
		Indeed,  assume $\pi L'/\pi L$ is an isotropic subspace of $L/\pi L.$ Then $(\pi x, \pi y)\in \pi \Oo_F$ for any $x,y\in L'$, which is equivalent to $(x,\pi y)\in \Oo_F$ for any $x,y\in L'$.  The latter condition is the same as $  L' \stackrel{}{\subset} (L')^\sharp$. The other direction is clear.
		
		Therefore
		$\frac{A_{i, j}^\epsilon}{A_{j, j}^\epsilon}$ is the number of $(j-i)$-dimensional isotropic subspaces of $L/\pi L$. According to \cite[Lemma 3.2.2]{LZ2}, it equals to
		$$\begin{cases}
			I(n-2i,\frac{n-2i-1}{2},j-i)& \text{ if $n$ is odd},\\
			I(n-2i,\frac{n-2i-1+\epsilon}{2},j-i)& \text{ if $n$ is even}.
		\end{cases}$$
	\end{proof}

	According to Theorem \ref{thm: A_epsilon},
		in order to solve $C^\epsilon$, we need to know  $B^\epsilon$ and $A^\epsilon$. Here, $B^\epsilon$ can be calculated by applying Corollary \ref{cor: L=H^i obot L_2} and Proposition \ref{prop: ind for t} inductively. The following lemma can be used to compute $A^\epsilon$.
	\begin{lemma} \label{independentofT}
		Let $F/F_0$ be a quadratic $p$-adic field extension, and let $L$ and $M$ be two $\Oo_F$-Hermitian lattices of rank $n$.  Then $\frac{\alpha(M, L)}{\alpha(M, M)}$ is equal to the number of lattices $L'$ in  $L_F$   containing  $L$  and isometric to $M$.
	\end{lemma}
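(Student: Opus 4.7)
The plan is to identify both sides with the cardinality of the set of $U(M)$-orbits on $\mathrm{Isom}(L, M)$, the set of $\Oo_F$-linear isometric embeddings $L \hookrightarrow M$. First we may assume $L_F \cong M_F$ as Hermitian $F$-spaces, since otherwise no approximate isometry $L \to M$ lifts to arbitrary precision (one checks $\alpha(M,L)=0$) and no $L' \subset L_F$ of full rank can be isometric to $M$, so both sides vanish. Fix once and for all an $F$-linear isometry identifying $L_F$ with $M_F$, and write this common space as $V$.

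Next I would establish the bijection
\[
\mathrm{Isom}(L, M)/U(M) \;\xleftrightarrow{\ 1\!:\!1\ }\; \{\, L' \subset V : L \subset L',\ L' \cong M\,\}
\]
as follows. Given $\phi\colon L \hookrightarrow M$, its $F$-linear extension $\phi_F\colon V \xrightarrow{\sim} M_F$ pulls back $M$ to the lattice $L' := \phi_F^{-1}(M) \subset V$, which visibly contains $L$ and is isometric to $M$ via $\phi_F$. Replacing $\phi$ by $g \circ \phi$ for $g \in U(M)$ leaves $L'$ unchanged, so the assignment descends to orbits. Conversely, any $L' \supset L$ in $V$ with $L' \cong M$ admits an isometry $\eta\colon L' \xrightarrow{\sim} M$, unique up to post-composition by $U(M)$, whose restriction $\eta|_L$ lies in $\mathrm{Isom}(L, M)$. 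A direct check shows these two maps are mutually inverse, and note that $U(M)$ acts freely on $\mathrm{Isom}(L, M)$ because any $\phi$ has image spanning $M_F$.

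It then remains to show
\[
\frac{\alpha(M, L)}{\alpha(M, M)} \;=\; \bigl| \mathrm{Isom}(L, M)/U(M) \bigr|.
\]
Starting from $\alpha(M, L) = \lim_{d \to \infty} q^{-dn^2}\,|I(M,L,d)|$, the idea is that for $d$ sufficiently large, each $\bar\phi \in I(M,L,d)$ lifts by Hensel/Grothendieck-Messing to a genuine element of $\mathrm{Isom}(L, M)$, and the size of the fiber of the reduction map $\mathrm{Isom}(L,M) \to I(M,L,d)$ depends only on the common rank $n$ and on $d$, not on the isometry class of $L$. Applying this in parallel to $L$ and to $M$ (where $\mathrm{Isom}(M,M) = U(M)$), the common lifting factor cancels in the ratio $|I(M,L,d)|/|I(M,M,d)|$, leaving precisely the number of $U(M)$-orbits on $\mathrm{Isom}(L, M)$. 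Combined with the bijection above, this gives the lemma.

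The main obstacle is the last paragraph: one must pin down the precise lifting/smoothness claim for the scheme of isometric embeddings $\mathrm{Isom}(L, M)$ over $\Spec \Oo_{F_0}$, and verify that its $\Oo_{F_0}/\pi_0^d$-points match $I(M,L,d)$ up to a factor independent of $L$. For Hermitian lattices at a ramified prime, the dual pairing with $\Herm_n(\Oo_{F_0})^\vee$ in the definition of $I(M,L,d)$ is exactly what makes this identification hold on the nose, so the argument is standard once this compatibility is set up.
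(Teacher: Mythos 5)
Your proposal is correct and follows essentially the same strategy as the paper's proof: identify the local density $\alpha(M,L)$ with a $p$-adic volume of the set $\mathrm{Isom}(L,M)$ of isometric embeddings, then use the (free) $U(M)$-action on that set to identify the set of orbits with $\{L' \supset L : L'\cong M\}$ and cancel a common volume factor. The only material difference is that you leave the crucial last step (the precise relationship between $|I(M,L,d)|$ and the volume of $\mathrm{Isom}(L,M)$, together with the $L$-independence of the normalization) as a sketch, whereas the paper closes this gap by citing the local density-to-volume identity of Gan--Yu, namely $\alpha(M,L)=\mathrm{vol}(X,d\nu_p)\,\frac{\mathrm{vol}(\Herm_n(\Oo_F),d\beta)}{\mathrm{vol}(M^n,d\alpha)}$ with $\nu_p=\alpha/h^*\beta$; the paper then phrases the orbit decomposition not via $U(M)$-orbits explicitly but via the fibers $X_{L'}=\{\phi: \phi_F^{-1}(M)=L'\}$, which coincide with your orbits. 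So your plan is sound; to complete it you would simply substitute the Gan--Yu formula for your heuristic lifting-factor argument.
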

	\begin{proof}
		The proof is a generalization of that of Proposition 10.2 of \cite{KR2} and works for both inert and ramified primes.
		
		Let us assume that there is an isometric embedding from $L$ into $M$, otherwise both sides of the identity in the lemma are zero. In this case, we have a fixed $L_F \cong M_F$.
		Let $\alpha$ (respectively, $\beta$) be a top degree translation invariant form on $L_F^n$ (respectively, $\Herm_n(F)$). Let $\nu_p=\alpha/h^*(\beta)$ where
		\[h: \quad L_F^n\rightarrow \Herm_n(F),\quad x\mapsto (x,x).\]
		Define $X$ to be the set of $F$-linear isometric embeddings from $L$ into $M$. By fixing a basis of $L_F$ and regarding $\phi\in X$ as a linear isometry from $L_F$ to itself, we identify $X$ as a subset of $L_F^n$.
		By the argument in Section 3 of \cite{GY} (in particular Lemma 3.4), we know that
		\begin{equation}\label{eq:alpha(S',T)}
			\alpha(M,L)=\mathrm{vol}(X,d\nu_p) \frac{\mathrm{vol}(\Herm_n(\Oo_{F}),d\beta)}{\mathrm{vol}((M)^n,d\alpha)}.
		\end{equation}
		For any $\phi\in X$ regarded as a linear isometry from $L_F$ to itself, the lattice $L_\phi\coloneqq \phi^{-1}(M)$ is a lattice containing $L$. Conversely, for any $L'$ containing $L$ and isometric to $M$, there is a $\phi\in X$ such that $L_\phi=L'$. Hence we have a partition
		\[X=\bigsqcup_{L \subset L'} X_{L'}, \quad X_{L'}\coloneqq \{\phi\in X\mid L_\phi=L'\}.\]
		Since each $L'$ is isomorphic to $M$,  all the $X_{L'}$ have the same volume  as that of $X_M$. Specializing (\ref{eq:alpha(S',T)}) to $L=M$, we see
		\begin{equation}\label{eq:alpha(S',S')}
			\alpha(M,M)=\mathrm{vol}(X_{M},d\nu_p) \frac{\mathrm{vol}(\Herm_n(\Oo_{F}),d\beta)}{\mathrm{vol}((M)^n,d\alpha)}.
		\end{equation}
		Dividing  equation \eqref{eq:alpha(S',T)} by \eqref{eq:alpha(S',S')}, we prove the lemma.
	\end{proof}
	\begin{remark}
		When $F/F_0$  is unramified and $M$ is unimodular, the lemma was proved by equation (3.6.1.1) of \cite{LZ}.
	\end{remark}
	Now we specialize Theorem \ref{thm: A_epsilon} to the case $n=3$.
	\begin{lemma}\label{lem: coeff n=3}
		Assume  $n=3$ and $\epsilon =\chi(L)$. Then $c_{3,1}^\epsilon=\frac{q^2}{1+q}$, hence $$\pden(L)=2\frac{\alpha'(I_{3}^{-\epsilon},L)}{\alpha(I_{3}^{-\epsilon},I_{3}^{-\epsilon})}+\frac{\q^2}{1+\q}\frac{\alpha(\cH_{3,1}^{\epsilon},L)}{\alpha(I_{3}^{-\epsilon},I_{3}^{-\epsilon})}.$$
	\end{lemma}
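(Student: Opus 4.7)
The plan is to apply Theorem \ref{thm: A_epsilon}. Since $n=3$ is odd, the definition in that theorem gives $r_\epsilon = 1$, so $A_\epsilon$ is a $1\times 1$ matrix, $C_\epsilon = (c_\epsilon^{3,1})$, and the linear system $A_\epsilon C_\epsilon = -2 B_\epsilon$ collapses to a single scalar identity
\begin{equation*}
	\alpha(\cH_\epsilon^{3,1},\cH_\epsilon^{3,1}) \cdot c_\epsilon^{3,1} \;=\; -2\,\alpha'(I_{3,-\epsilon},\cH_\epsilon^{3,1}).
\end{equation*}
Hence the lemma reduces to computing both sides of this identity and substituting $c_\epsilon^{3,1}$ back into the definition \eqref{eq1.10} of $\pden(L)$, which in this case contains only the single correction term $i=1$.

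The left-hand side is read off directly from formula \eqref{eq: A^{jj}} with $n=3$ and $j=1$: the $\epsilon$-dependent factor equals $1$ (because $n$ is odd), the second product is empty, and one finds $\alpha(\cH_\epsilon^{3,1},\cH_\epsilon^{3,1}) = 2(1-\q^{-2})$. For the right-hand side, I would write $\cH_\epsilon^{3,1} = \cH \obot I_{1,\epsilon}$ and apply Corollary \ref{cor: L=H^i obot L_2} to factor the local density polynomial as
\begin{equation*}
	\alpha(I_{3,-\epsilon},\cH_\epsilon^{3,1},X) \;=\; \alpha(I_{3,-\epsilon},\cH,X)\cdot \alpha(I_{3,-\epsilon},I_{1,\epsilon},\q^2 X).
\end{equation*}
Each factor is a polynomial in $X$ that can be computed by iterating Proposition \ref{prop: ind for t}, with $L_1 = \langle t\rangle$ for suitable $t$: the rank-$2$ factor $\alpha(I_{3,-\epsilon},\cH,X)$ is handled by splitting off a primitive hyperbolic vector of valuation $-1$ (case (1) of the proposition), while the rank-$1$ factor $\alpha(I_{3,-\epsilon},I_{1,\epsilon},\q^2 X)$ follows from a single direct application of the same proposition.

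Applying the Leibniz rule at $X=1$ and plugging in the explicit formulas for $\beta_1$ and $\beta_0$ of Proposition \ref{prop: ind for t} yields, after simplification, $\alpha'(I_{3,-\epsilon},\cH_\epsilon^{3,1}) = 1-\q$. Solving the scalar equation then gives
\begin{equation*}
	c_\epsilon^{3,1} \;=\; \frac{-2(1-\q)}{2(1-\q^{-2})} \;=\; \frac{\q^2(\q-1)}{(\q-1)(\q+1)} \;=\; \frac{\q^2}{1+\q},
\end{equation*}
and substitution into \eqref{eq1.10} yields the claimed formula for $\pden(L)$. The main obstacle is the derivative computation: although each invocation of Proposition \ref{prop: ind for t} is mechanical, one has to track carefully the $\chi(M)$ signs appearing in $\beta_0$ (which depend on the parity and sign of the unimodular ambient lattice) and verify that $c_\epsilon^{3,1}$ comes out independent of $\epsilon$, which is both a nontrivial cancellation and a necessary consistency check with the statement.
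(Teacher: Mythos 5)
Your overall architecture is exactly the paper's: reduce to the $1\times 1$ linear system of Theorem \ref{thm: A_epsilon}, read $\alpha(\cH_\epsilon^{3,1},\cH_\epsilon^{3,1})=2(1-q^{-2})$ from \eqref{eq: A^{jj}}, factor $\alpha(I_{3,-\epsilon},\cH_\epsilon^{3,1},X)=\alpha(I_{3,-\epsilon},\cH,X)\cdot\alpha(I_{3,-\epsilon},I_{1,\epsilon},q^2X)$ via Corollary \ref{cor: L=H^i obot L_2}, and substitute back into \eqref{eq1.10}. Your rank-one factor also checks out: applying Proposition \ref{prop: ind for t} with $L_1=\langle t\rangle$, $L_2=0$, $m=3$ odd and $\chi(M)\chi(L)=-\epsilon\cdot\epsilon=-1$ gives $\alpha(I_{3,-\epsilon},I_{1,\epsilon},X)=(1-X)+(1-q^{-1})X=1-q^{-1}X$, hence $1-qX$ after the shift $X\mapsto q^2X$; this reproduces what the paper gets from Lemma \ref{lem:alpha(S_{a,b})}, so your route there is a valid variant.

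The gap is in the rank-two factor. Proposition \ref{prop: ind for t} (and the Proposition \ref{prop:ind formula reducing size} it specializes) requires an \emph{orthogonal} decomposition of the source lattice, $L=\langle t\rangle\obot L_2$ with $t\in\Oo_{F_0}$. The lattice $\cH=\cH_{-1}$ does not admit such a decomposition: it is $\pi^{-1}$-modular of rank $2$, its Jordan splitting is a single indecomposable block, and no primitive $v\in\cH$ has a rank-one orthogonal complement that sums back to $\cH$ (if $q(v)\in\Oo_{F_0}^\times$ then $\Oo_F v\obot(\Oo_F v)^\perp$ is unimodular, hence a proper sublattice of $\cH$; and there is no $v\in\cH$ with $\mathrm{v}_\pi(q(v))=-1$ at all, since $q(av_1+bv_2)=2(a_1b_0-a_0b_1)\in\Oo_{F_0}$). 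So ``splitting off a primitive hyperbolic vector of valuation $-1$'' is not available, and Proposition \ref{prop: ind for t} cannot be invoked for $\alpha(I_{3,-\epsilon},\cH,X)$. What is needed instead is the observation that every isometric embedding of $\cH$ into $M_k$ is primitive with full $\cH^k$-rank (equivalently, Lemma \ref{lem: cancel beta_2} for $L=\cH$), together with Lemma \ref{lem:cancel} to discard $I_{3,-\epsilon}$ and Lemma \ref{lem:beta(H,L)} to evaluate $\beta(\cH^k,\cH)=1-q^{-2k}$. This yields $\alpha(I_{3,-\epsilon},\cH,X)=1-X$, after which your Leibniz-rule step, $\alpha'=1-q$, and the final solve for $c_\epsilon^{3,1}=\frac{q^2}{1+q}$ go through exactly as you wrote.
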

	\begin{proof}
		First of all, according to  Theorem \ref{thm: A_epsilon}, 	\begin{align}\label{eq: alpha(H3,H3)}
			\alpha(\cH_{3,1}^{\epsilon},\cH_{3,1}^{\epsilon})=2(1-\q^{-2}).
		\end{align}
		By Corollary \ref{cor: L=H^i obot L_2}, we have
		\begin{align*}
			\alpha (I_{3}^{-\epsilon}, \cH_{3,1}^{\epsilon},X)=\alpha(I_{3}^{-\epsilon}, \cH,X)\alpha(I_{3}^{-\epsilon},I_{1}^{\epsilon},\q^2 X).
		\end{align*}
		According to Lemmas   \ref{lem:beta(H,L)} and \ref{lem: cancel beta_2},
		\begin{align*}
			\alpha(I_{3}^{-\epsilon}, \cH,X)= \beta(\cH^k,\cH)=1-X.
		\end{align*}
		Lemma \ref{lem:alpha(I_{a,b})} gives that
		\begin{align*}
			&\alpha(I_{3}^{-\epsilon},I_{1}^{\epsilon},\q^2 X)=1-qX .
		\end{align*}
		Hence
		\begin{align*}
			\alpha (I_{3}^{-\epsilon}, \cH_{3,1}^{\epsilon},X)=(1-X)(1-qX),
		\end{align*}
		and
		\begin{align*}
			\alpha' (I_{3}^{-\epsilon}, \cH_{3,1}^{\epsilon})=1-q.
		\end{align*}
		Combining this with \eqref{eq: alpha(H3,H3)}, we solve \eqref{eq: AC=B} and obtain
		\begin{align*}
			c_{3,1}^\epsilon=\frac{q^2}{1+q}.
		\end{align*}
		Now the lemma follows from (\ref{eq1.10}).
	\end{proof}

	\section{Local density formula when $\mathrm{rank}(T)\le 2$}\label{sec: eg application}
	The main purpose of this section is to give an explicit formula for $\alpha(I, T, X)$ where $I=\diag(I_{m-1}, \nu)$ with $\nu \in \Oo_{F_0}^\times$ and $\mathrm{rank}(T)\le 2$.

	\subsection{The case $T=(t)$.}  In order to apply induction formulas to calculate $\alpha(I, T, X)$ for $T$ with $\hbox{rank}(T) =2$, we need to consider the case $T=(t)$ first.
	Write  $t=t_0(-\pi_0)^{\mathrm{v}(t)}$ for $t_0\in \Oo_{F_0}^\times$,  and
	\begin{equation}
		I_{a, b}=\diag(I,\nu_1(-\pi_0)^a,\nu_2(-\pi_0)^{b}) =\diag(s_1,\dots,s_{m+2})
	\end{equation}
	for integers $0\le a \le b$.	
	\begin{lemma}\label{lem:alpha(I_{a,b})}
		Assume $0\le a\le b\le \mathrm{v}(t)$.
		\begin{enumerate}
			\item If $m$ is odd, then
			\begin{align*}
				\alpha(I_{a,b},(t),X)=1&+\chi(I)\chi(-\nu_1)(\q-1)\sum_{s=a+1}^{b}q^{-ms+a+\frac{m-1}{2}}X^s
				\\
				&  +\chi(I_{a,b})\chi( t_0)q^{-(m+1)\mathrm{v}(t)+a+b-\frac{m+1}{2}}X^{\mathrm{v}(t)+1}.
			\end{align*}
			\item If $m$ is even, then
			\begin{align*}
				\alpha(I_{a,b},(t),X)
				=1&+\chi(I) (\q-1) \sum_{s=1}^{a}\q^{-(m-1)s+\frac{m}{2}-1}X^s\\
				&+\chi(I_{a,b}) q^{a+b}\left((\q-1) \sum_{s=b+1}^{\mathrm{v}(t)}\q^{-(m+1)s+\frac{m}{2}} X^s - \q^{-(m+1)\mathrm{v}(t)-1-\frac{m}{2}}X^{\mathrm{v}(t)+1}\right).
			\end{align*}
		\end{enumerate}
	\end{lemma}
	\begin{proof}
		Direct calculation gives
		\begin{align*}
			\alpha(I_{a,b},(t),X)
			&=\int_{F_0}\, dY \int_{\Oo_F^{2k+m+2}} \psi (\langle Y, \diag(\cH^k,I_{a,b})[\bx]-t \rangle)\, d\bx
			\\ \notag
			&=\int_{F_0} \psi(-tY)\, dY
		  \int_{\Oo_F^{2k} \times \Oo_F^{m+2}}\psi ( Y \sum_{i=1}^{k} \tr(\frac{1}{\pi}x_i\bar{y}_i) +Y \sum_{l=1}^{m+2}s_lz_l\bar{z}_{l})  \prod_{i}dx_i dy_i \prod_{l}dz_l
			\\ \notag
			&=1+\sum_{s=1}^{\infty}\int_{\mathrm{v}(Y)=-s} I_k(Y) I_{I_{a,b}}(Y)\psi(-tY) \, dY.
		\end{align*}
		Here, according to \cite[Lemma 7.6]{Shi2},
		$$
		I_k(Y)=\int_{\Oo_F^{2k}}\psi ( Y  \sum_{i=1}^{k} \tr( \frac{1}{\pi}x_i\bar{y}_i)) \prod dx_i dy_i= q^{-2ks} ,
		$$
		and
		$$
	I_{I_{a,b}}(Y) =\int_{\Oo_F^{m+2}}\psi( Y \sum_{l=1}^{m+2}s_lz_l\bar{z}_{l}) \prod dz_l=\prod_{l=1}^{m+2} J(s_l Y),
		$$
		where
		\begin{equation}
			J(t) =\int_{\Oo_F} \psi(t z \bar z) dz = \begin{cases}
				1&\hbox{ if } \mathrm{v}(t) \ge 0,
				\\
				q^{\mathrm{v}(t)}  \chi(-t_0) g(\chi,\psi_{\frac{1}{\pi_0}})&\hbox{ if }  \mathrm{v}(t) <0,
			\end{cases}
		\end{equation}
		and
		$$
		g(\chi,\psi_{\frac{1}{\pi_0}}) =\sum_{x \in \Oo_{F_0}/\pi_0} \chi(x) \psi(\frac{x}{\pi_0})
		$$
		is the Gauss sum. Write $\psi'=\psi_{\frac{1}{\pi_0}}$. Then
		\begin{align*}
			\alpha(I_{a,b},(t),X)
			&=1+\sum_{s=1}^{a}q^{s}\int_{\Oo_{F_0}^{\times}}q^{-2ks}\cdot q^{-ms} \chi(\nu (-Y)^m)g(\chi,\psi')^{m}\psi(-(-\pi_0)^s Yt) dY\\ \notag
			&+\sum_{s=a+1}^{b}\int_{\Oo_{F_0}^{\times}}q^{-2ks}\cdot q^{-m s+a} \chi(\nu_1\nu (-Y)^{m+1})g(\chi,\psi')^{m+1}\psi(-(-\pi_0)^{-s} Yt)\, dY\\ \notag
			&+\sum_{s=b+1}^{\infty}\int_{\Oo_{F_0}^{\times}}q^{-2ks}\cdot q^{-(m+1)s+a+b} \chi(\nu_1\nu_2\nu (-Y)^{m+2})g(\chi,\psi')^{m+2}\psi(-(-\pi_0)^{-s} Yt)\, dY.
		\end{align*}
		Recall the well-known facts that
		\newcommand{\cha}{\operatorname{Char}}
		\begin{align}
			g(\chi,\psi')^2&=\chi(-1)\cdot \q, \notag
			\\
			\int_{\Oo_{F_0}^\times} \psi((-\pi_0)^{-s} Y t) dY &= \cha(\pi_0^s\Oo_{F_0})(t) - q^{-1}\cha(\pi_0^{s-1}\Oo_{F_0})(t),
			\\
			\int_{\Oo_{F_0}^\times }\chi(Y)\psi((-\pi_0)^{-s} Y t) dY &=\chi(-t_0) q^{-1} g(\chi, \psi')\cha(\pi_0^{s-1}\Oo_{F_0}^\times)(t). \notag
		\end{align}
		When $m$ is odd, we have
		\begin{align*}
			\alpha(I_{a,b}, (t), X)=1&+\chi((-1)^{\frac{m+1}{2}}\nu_1\nu )(\q-1)\sum_{s=a+1}^{b}\q^{-ms+a+\frac{m-1}{2}}X^s
			\\
			& +\chi((-1)^{\frac{m+1}{2}}\nu_1\nu_2\nu t_0)q^{-(m+1)(\mathrm{v}(t)+1)+a+b+\frac{m+1}{2}}X^{\mathrm{v}(t)+1}.
		\end{align*}
		When $m$ is even, we have
		\begin{align*}
			&\alpha(I_{a,b}, (t), X)
			\\&=1+\chi((-1)^{\frac{m}{2}}\nu) (\q-1) \sum_{s=1}^{a}\q^{-(m-1)s+\frac{m}{2}-1}X^s +\chi((-1)^{\frac{m+2}{2}}\nu_1\nu_2\nu)
			\\
			&\quad \cdot \left((\q-1) \sum_{s=b+1}^{\mathrm{v}(t)}\q^{-(m+1)s+a+b+\frac{m}{2}} X^s - \q^{-(m+1)(\mathrm{v}(t)+1)+a+b+\frac{m}{2}}X^{\mathrm{v}(t)+1}\right).
		\end{align*}
		Finally, notice that for $I$ of rank $m$ we have
		$$\chi(I)=\begin{cases}
			\chi((-1)^{\frac{m-1}{2}}\nu)& \text{ if $m$ is odd},\\
			\chi((-1)^{\frac{m}{2}}\nu)& \text{ if $m$ is even}.
		\end{cases}$$
		Now the lemma is clear.
	\end{proof}

	Similarly, we have the following lemma.
	\begin{lemma}\label{lem:alpha(S^{[k]} obot H_a, t)}
		Let $I$ be unimodular with odd rank $m$. Then
		\begin{align*}
			\alpha( I\obot \cH_{i},(t), X)=\begin{cases}
				1+\chi(I)\chi( t_0)\q^{-(\mathrm{v}(t)+1)(m+1)+\frac{m+1}{2}+i} X^{\mathrm{v}(t)+1}& \text{ if $i\le 2\mathrm{v}(t)$},\\
				1+\chi(I)\chi( t_0)\q^{-(\mathrm{v}(t)+1)(m-1)+\frac{m-1}{2}}X^{\mathrm{v}(t)+1}& \text{ if $i> 2\mathrm{v}(t)$}.
			\end{cases}
		\end{align*}
	\end{lemma}

	\subsection{The case  $T=\diag(u_1(-\pi_0)^a,u_2(-\pi_0)^b)$}\label{subsection: S unimodular of even rank>2}
	In this subsection,	we compute $\alpha(I,T,X)$ for $I$ unimodular of rank $m\ge 2$ and $T=\diag(u_1(-\pi_0)^a,u_2(-\pi_0)^b)$ with $0\le a \le b$. Notice that $\alpha(I, T, X)=0$ when $ a<0$.
	\begin{proposition}\label{prop: unimodular m even}
		Assume $T=\diag(u_1(-\pi_0)^a,u_2(-\pi_0)^b)$ and that $I$ is isotropic of even rank $m \ge 2$,  then
		\begin{align*}
			\alpha(I,T,X)=&(1-X)\left(\sum_{i=0}^{a}(\q^{2-m}X)^{i}+\gamma_e(I,T,X)\right)\\
			 &+\q X(\q^{2-m}X)^{a}(1-\chi(I)\q^{-\frac{m}{2}})( 1+ \chi(I)\chi(T)\q^{\frac{m-2}{2}}(\q^{2-m} X)^{b+1})\\
			&+ \left(1-q^{-(m-1)}+(q-1)\chi(I)q^{-\frac{m}{2}} \right) X
			\\
			&\quad
			\cdot \left(\q \sum_{i=0}^{a-1}(\q^{2-m}X)^i+\gamma_e(I,T,X)-\chi(I)\chi(T)\q^{\frac{m}{2}}(\q^{2-m}X)^{a+b+1} \right),
		\end{align*}
		where
		\begin{align*}
			&\gamma_{e}(I,T,X)
			 = \chi(I)q^{\frac{m}{2}}\left(\sum_{d=1}^a(\q^d-1)(\q^{2-m}X)^{d} +\chi(T)\q^a (\q^{2-m}X)^{b+1}\sum_{i=0}^{a}(\q^{1-m}X)^{i}\right).
		\end{align*}
	\end{proposition}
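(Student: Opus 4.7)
The plan is to iteratively reduce the rank of the target $L$ via the rank-$1$ induction formula and then invoke the explicit rank-$1$-target density of Lemma \ref{lem:alpha(S_{a,b})}. First I apply Lemma \ref{lem: ind formula to prim ld} with $n_1 = 1$ to the decomposition $L = L_1 \oplus L_2$, where $L_1 = \langle u_1(-\pi_0)^a\rangle$ and $L_2 = \langle u_2(-\pi_0)^b\rangle$. Since $\beta^{(1)}(S, L', X)$ vanishes unless $L'$ is integral, this yields the finite expansion
\begin{equation*}
\alpha(S,T,X) \;=\; \sum_{j=0}^{a}(q^{2-m}X)^{j}\,\beta\bigl(S,\langle u_1(-\pi_0)^{a-j}\rangle\oplus L_2,\, X\bigr)^{(1)}.
\end{equation*}
Next I apply Proposition \ref{prop: ind for t} (with $n = 2$) to each summand, writing
$\beta(\cdots)^{(1)} = (1-X)\,\alpha(M(1)_j^{\perp},L_2) + q\,\beta_0^{(j)}(X)\,\alpha(M(0)_j^{\perp}, L_2)$,
where $\beta_0^{(j)}(X)$ equals $(1-q^{1-m}+(q-1)\chi(S)q^{-m/2})X$ for $j<a$ and $(1-\chi(S)q^{-m/2})X$ for $j=a$. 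This separation already reproduces the three prefactor groups in the target identity.

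The arithmetic heart is computing $\alpha(M(i)_j^{\perp}, L_2)$. Using the explicit description of $M(i)^{\perp}$ in Proposition \ref{prop: ind for t}, $M(1)_j^{\perp} \cong \cH^{k-1} \obot S \obot \langle -u_1(-\pi_0)^{a-j}\rangle$ and $M(0)_j^{\perp} \cong \cH^k \obot I_{m-2,?} \obot \langle \nu'(-\pi_0)^{a-j}\rangle$ with an explicit sign. Since $m$ is even, in each case I split off one unit rank-one block from the unimodular piece, producing a unimodular base of odd rank ($m-1$ or $m-3$); Lemma \ref{lem:alpha(S_{a,b})}(1) then applies with the two extra rank-one blocks being the separated unit and the non-unimodular block of valuation $a-j$. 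For $M(1)_j^{\perp}$ the $\cH^{k-1}$ factor is absorbed by evaluating at $q^2 X$ in place of $X$, while for $M(0)_j^{\perp}$ it remains $X$. Each $\alpha(M(i)_j^{\perp}, L_2)$ thus becomes an explicit polynomial with a leading $1$, a character-twisted sum $\sum_{s=1}^{a-j}(\cdots)$, and a $\chi(T)$-tail proportional to $X^{b+1}$.

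The final step is substitution and collection. The leading $1$ from each $\alpha(M(i)_j^{\perp},L_2)$, weighted by $(q^{2-m}X)^j$, produces the geometric sums $\sum_{i=0}^{a}(q^{2-m}X)^i$ and $\sum_{i=0}^{a-1}(q^{2-m}X)^i$ that drive the two main terms of the target identity. The character-twisted corrections are shared between the $(1-X)$-line and the $\beta_0$-line because both arise from the odd-base case of Lemma \ref{lem:alpha(S_{a,b})} with the same geometric parameters; after swapping the order of $j$- and $s$-summation in $\sum_{j=0}^{a}(q^{2-m}X)^{j}\sum_{s=1}^{a-j}(\cdots)^s$, they telescope into the $\sum_{d=1}^{a}(q^d-1)(q^{2-m}X)^d$ piece of $\gamma_e(S,T,X)$, while the $\chi(T)$-tails reassemble into the $\chi(T)q^a(q^{2-m}X)^{b+1}\sum_{i=0}^{a}(q^{1-m}X)^i$ piece. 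The isolated $j = a$ contribution supplies the second line of the formula, including the $\bigl(1 + \chi(S)\chi(T)q^{(m-2)/2}(q^{2-m}X)^{b+1}\bigr)$ factor. The main obstacle is the character bookkeeping: one must verify that the $\chi$-signs generated by the two unit splits (one from $S$, one from the unimodular component of $M(0)^{\perp}$) recombine correctly into $\chi(S)$ and $\chi(T)$ as they appear in $\gamma_e$, since any miscount of $\chi(-\nu)$ or $\chi(\nu_1\nu_2)$ would break the symmetric appearance of $\chi(S)q^{m/2}$ and $\chi(S)\chi(T)$ in the closed form.
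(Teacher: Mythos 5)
Your proposal is correct and follows essentially the same route as the paper's proof: the paper iterates Theorem \ref{thm:ind formula reducing valuation} (which is just the one-step version of your invocation of Lemma \ref{lem: ind formula to prim ld}) to obtain the same expansion $\sum_{j=0}^{a}(q^{2-m}X)^j\beta(S,\langle u_1(-\pi_0)^{a-j}\rangle\oplus L_2,X)^{(1)}$, then expands each $\beta^{(1)}$ via Proposition \ref{prop: ind for t}(5), substitutes the explicit rank-one-target densities from Lemma \ref{lem:alpha(S_{a,b})} (reducing the even-rank unimodular piece to the odd case by splitting off a unit, just as you describe), and finally reindexes the double sum $\sum_j\sum_s$ to $\sum_d\sum_s$ to assemble $\gamma_e$. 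You have also correctly singled out the two genuinely delicate points — that the $\cH^{k-1}$ versus $\cH^k$ ambient factor shifts $X\mapsto q^2X$ only in the $M(1)^\perp$ branch, and that the character signs from the two unit splits must recombine into $\chi(S)$ and $\chi(S)\chi(T)$ — which are exactly where the paper's computation must be checked carefully.
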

	
	\begin{proof}
		Since $I$ is of even rank, $u_1\cdot I^{[k]}\approx I^{[k]}$, and we may  assume $T$ is of the form $\diag( (-\pi_0)^a,u(-\pi_0)^b)$  without loss of generality.
		
		According to Theorem \ref{thm:ind formula reducing valuation} and  Proposition \ref{prop: ind for t}, we have
		\begin{align*}
			\alpha(I,T, X)
			&=\beta_1(I, (-\pi_0)^a,X) \alpha(M(1)^{\perp},u(-\pi_0)^b)
			\\
			&\quad +q\beta_0(I, (-\pi_0)^a,X) \alpha(M(0)^{\perp},u(-\pi_0)^b) \\
			&\quad +\q^{2-m}X\alpha(I, \diag((-\pi_0)^{a-1},u(-\pi_0)^b),X)
		\end{align*}
		where $M(1)^{\perp}=\diag(\cH^{k-1},-(-\pi_0)^i,I)$ and
		$$M(0)^{\perp}=\diag(\cH^k,\underbrace{-(-\pi_0)^i,1,\dots,1,-\nu}_{m-1}). $$
		Continuing this process, we obtain
		\begin{align*}	
		&\alpha(I,T,X)\\
		&=\sum_{i=0}^{a}(\q^{2-m}X)^{a-i}
		  \cdot \left(\beta_1(I,(-\pi_0)^i,X)\alpha(M(1)^{\perp},u(-\pi_0)^b)+\q\beta_0(I,(-\pi_0)^i,X)\alpha(M(0)^{\perp},u(-\pi_0)^b)\right). \notag
		\end{align*}
		By the formulas in Proposition \ref{prop: ind for t} and Lemma \ref{lem:alpha(I_{a,b})}, the above is equal to
		\begin{align*}
			&\sum_{i=0}^{a}(\q^{2-m}X)^{a-i}(1-X)
			\\
			&\cdot \bigg(1+(q-1)\chi(I)q^{\frac{m-2}{2}}\sum_{s=-i}^{-1}(q^{(m-1)}(\q^2 X)^{-1})^s+\chi(I)\chi(T)q^{i+\frac{m}{2}}(\q^{2-m} X)^{b+1}\bigg)\\
			&+\q (\q^{2-m}X)^{a}X(1-\q^{-\frac{m}{2}}\chi(I))( 1+ \chi(I)\chi(T)\q^{-(b+1)(m-2)+\frac{m-2}{2}}X^{b+1})\\
			&+ \q\sum_{i=1}^{a}(\q^{2-m}X)^{a-i}X\left((1-q^{-(m-1)})+(q-1)\chi(I)q^{-(m-1)+\frac{m-2}{2}} \right)\\
			&\cdot \left(1+(q-1)\chi(I)q^{\frac{m-4}{2}}\sum_{s=-i}^{-1}(q^{(m-3)}X^{-1})^s+\chi(I)\chi(T)q^{i+\frac{m-2}{2}}(\q^{2-m}X)^{b+1}\right).
		\end{align*}
		Now the transformation
		\begin{align*}
			\sum_{i=0}^a \sum_{s=1}^{i}\q^s (\q^{2-m}X)^{a-i+s}=\sum_{d=1}^a\sum_{s=1}^{d}\q^s (\q^{2-m}X)^d
		\end{align*}
		and some calculation gives us the result we want.
	\end{proof}
	The case that  $I$ is anisotropic (i.e. when $m=2$ and $\chi(I)=-1$) can be computed similarly and is simpler. We omit the detail here. In particular, we may recover the following formula.
	\begin{proposition}\cite[Theorem 6.2(1)]{Shi2}\label{prop:localdensity, n=2}
		Assume $I=\diag(1,\nu)$, then
		\begin{align*}
			&\alpha(I, T, X)
			\\
			&= (1-X) (1+ \chi(I) + \q \chi(I)) \sum_{e=0}^\alpha (\q X)^e
			-\chi(T) \q^{\alpha+1} X^{\beta+1} (1-X) \sum_{e=0}^{\alpha} (\q^{-1}X)^e
			\\
			&\quad -\chi(T) (1+\q) (  X^{\alpha+ \beta +2}+\chi(I)\chi(T))
			+ (1+\chi(I)) \q^{\alpha+1} X^{\alpha+1} (1+\chi(T) X^{\beta-\alpha}).
		\end{align*}
	\end{proposition}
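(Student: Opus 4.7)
The plan is to specialize the argument of Proposition \ref{prop: unimodular m even} to rank $m = 2$, splitting into the isotropic and anisotropic cases of $S$.

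\emph{Isotropic case.} If $\chi(S) = 1$, then $S$ is isotropic and Proposition \ref{prop: unimodular m even} applies. Setting $m = 2$ gives $\q^{2-m} = 1$ and $\q^{m/2} = \q$, so the function $\gamma_e(S,T,X)$ simplifies to
$\q\bigl(\sum_{d=1}^{a}(\q^d-1)X^d + \chi(T)\q^a X^{b+1}\sum_{i=0}^{a}(\q^{-1}X)^i\bigr)$,
where I write $a, b$ for the exponents $\alpha, \beta$ of the proposition to avoid clashing with the local-density notation. Combining $\gamma_e$ with the remaining terms of Proposition \ref{prop: unimodular m even} and regrouping the geometric series, one recovers the claimed expression with $1 + \chi(S) + \q\chi(S) = 2 + \q$.

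\emph{Anisotropic case.} If $\chi(S) = -1$, Proposition \ref{prop: unimodular m even} no longer applies, so I run the same induction directly. Applying Theorem \ref{thm:ind formula reducing valuation} with $L_1 = \langle u_1(-\pi_0)^a\rangle$ and $L_2 = \langle u_2(-\pi_0)^b\rangle$ reduces the computation to $\beta(S,T,X)^{(1)}$ together with $\alpha(S, \tfrac{1}{\pi}L_1 \oplus L_2, X)$. Expanding $\beta(S,T,X)^{(1)}$ via Proposition \ref{prop:ind formula reducing size} and Proposition \ref{prop: ind for t}, the decisive simplification is that by Proposition \ref{prop: ind for t}(3), the anisotropic $S$ admits no primitive vector of valuation $> 0$. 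Indeed, substituting $m = 2$ and $\chi(S) = -1$ into Proposition \ref{prop: ind for t}(5)(c) yields
$\beta_0(S, \langle t\rangle, X) = \bigl(1 - \q^{-1} - (\q-1)\q^{-1}\bigr)X = 0$
whenever $\mathrm{v}(t) > 0$. Iterating the reduction, the induction terminates with rank-1 densities computed by Lemma \ref{lem:alpha(S_{a,b})}, and collecting yields the formula with $1 + \chi(S) + \q\chi(S) = -\q$.

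The primary obstacle is bookkeeping: tracking signs, powers of $\q$, and the various geometric series across both cases and verifying that they assemble into a single closed form depending only on $\chi(S)$, $\chi(T)$, $a$, and $b$. A useful sanity check is to confirm the symmetry $a \leftrightarrow b$ (after interchanging $u_1, u_2$) and to match the coefficients of low powers of $X$ against direct evaluation of \eqref{eq:local density M,L} in small cases.
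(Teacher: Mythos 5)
Your proposal is correct and follows essentially the same route the paper sketches: specialize Proposition \ref{prop: unimodular m even} to $m=2$ in the isotropic case, and for the anisotropic case ($m=2$, $\chi(S)=-1$) run the same induction, where the key simplification is that $\beta_0(S,\langle t\rangle, X)=0$ whenever $\mathrm{v}(t)>0$ so only the $\beta_1$-branch survives. The paper itself omits the anisotropic details with the remark that it is "similar and simpler," which is exactly what your vanishing observation justifies.
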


	Moreover, a similar computation yields the following, and we leave the detail to reader.
	\begin{proposition}Assume that $I$ is unimodular of odd rank $m\ge 3$.  Then
		\begin{align*}
			&\alpha(I,T,X)\\
			&=(1-X)\left(\sum_{i=0}^{a}(\q^{2-m} X)^{i}
			+\gamma_{o,1}(I,T,X)\right)
			  +(1-\q^{-(m-1)})X \left(\q \sum_{i=0}^{a-1}(\q^{2-m} X)^{i}+\gamma_{o,0}(S,T,X)\right)\\
			&\quad +\q X(\q^{2-m} X)^{a} \left(1+\chi(I)\chi(u_1)\q^{-\frac{m-1}{2}}\right)\left(1-\chi(I)\chi(u_1)\q^{(2-m)b-\frac{m-1}{2}}X^{b+1}\right),
		\end{align*}
		where $\gamma_{o,1}(I,T,X)$ equals
		\begin{align*}
			\chi(I)\chi(u_1)q^{\frac{m-1}{2}}\left(\sum_{d=a+1}^{a+b}(\q^{a+b+1-d}-1)(\q^{2-m}X)^d-\sum_{i=b+1}^{a+b+1}(\q^{2-m} X)^{i}\right),
		\end{align*}
		and $\gamma_{o,0}(I,T,X)$ equals
		\begin{align*}
			\chi(I)\chi(u_1)q^{\frac{m-1}{2}}\left(\sum_{d=a+1}^{a+b}(\q^{a+b+1-d}-\q)(\q^{2-m}X)^d-\sum_{i=b+1}^{a+b}(\q^{2-m} X)^{i}\right).
		\end{align*}
	\end{proposition}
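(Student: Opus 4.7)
The plan is to mirror the proof of Proposition \ref{prop: unimodular m even} step by step, replacing every invocation of the even-$m$ formulas in Proposition \ref{prop: ind for t} and Lemma \ref{lem:alpha(S_{a,b})} with their odd-$m$ counterparts. A key structural difference is that for $S$ of odd rank one does not have the normalization $u_1\cdot S_k\approx S_k$ that was used in the even case to reduce to $u_1=1$; consequently the character $\chi(u_1)$ persists in the final answer, which accounts for the $\chi(S)\chi(u_1)$ prefactors appearing in $\gamma_{o,0}$ and $\gamma_{o,1}$.

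First, I would apply Theorem \ref{thm:ind formula reducing valuation} with $n_1=1$ and $L_1=\langle u_1(-\pi_0)^a\rangle$, peeling off one factor of $\pi$ from the first diagonal entry of $T$ at each step. Iterating and using the orbit decomposition of Proposition \ref{prop: ind for t}(1)--(4) yields
\begin{equation*}
\alpha(S,T,X)=\sum_{i=0}^{a}(\q^{2-m}X)^{a-i}\bigl(\beta_1(S,u_1(-\pi_0)^i,X)\,\alpha(M(1)^\perp,u_2(-\pi_0)^b)+\q\,\beta_0(S,u_1(-\pi_0)^i,X)\,\alpha(M(0)^\perp,u_2(-\pi_0)^b)\bigr),
\end{equation*}
where $M(1)^\perp\cong\cH^{k-1}\obot S\obot\langle -u_1(-\pi_0)^i\rangle$ and $M(0)^\perp\cong\cH^k\obot I_{m-2,\epsilon_{m-2}}\obot\langle \nu u_1(-\pi_0)^i\rangle$. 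For the boundary term $i=0$ one must use the $\val(t)=0$ formulas for $\beta_0$, whereas for $i\ge 1$ the $\val(t)>0$ formulas apply.

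Next, I would substitute the odd-$m$ expressions from Proposition \ref{prop: ind for t}: $\beta_1(S,\cdot,X)=1-X$ for all $i$, while $\beta_0$ contributes $(1+\chi(S)\chi(u_1)\q^{-(m-1)/2})X$ at $i=0$ and $(1-\q^{1-m})X$ for $i\ge 1$. The two rank-one densities $\alpha(M(j)^\perp,u_2(-\pi_0)^b)$ are evaluated via Lemma \ref{lem:alpha(S_{a,b})} (together with Lemma \ref{lem:alpha(S_k obot H_a, t)} to absorb the hyperbolic summand in $M(1)^\perp$); both $M(1)^\perp$ and $M(0)^\perp$ have even rank, so the even-rank case of Lemma \ref{lem:alpha(S_{a,b})} is the relevant one. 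The $i=0$ contribution then packages exactly into the prefactor $\q X(\q^{2-m}X)^a\bigl(1+\chi(S)\chi(u_1)\q^{-(m-1)/2}\bigr)\bigl(1-\chi(S)\chi(u_1)\q^{(2-m)b-(m-1)/2}X^{b+1}\bigr)$, while the $i\ge 1$ contributions produce the remaining blocks weighted by $(1-X)$ and $(1-\q^{-(m-1)})X$.

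Finally, the $\gamma$-terms emerge from the character-sensitive inner sums in Lemma \ref{lem:alpha(S_{a,b})}. Interchanging the order of summation via a telescoping of the form $\sum_{i=0}^{a}\sum_{s=a+1}^{i+b}(\cdots)=\sum_{d=a+1}^{a+b}(\cdots)$ collapses the double sum into the single index $d$ appearing in $\gamma_{o,1}$ and $\gamma_{o,0}$, and the separation between ``$\q^{a+b+1-d}-1$'' and ``$\q^{a+b+1-d}-\q$'' is dictated by the difference between $\beta_0$ at $i=0$ (which supplies the $-1$) and $\beta_0$ at $i\ge 1$ (which supplies the $-\q$). The main obstacle is entirely bookkeeping: one must keep weights of shape $\q^{(2-m)d}$ versus $\q^{(1-m)d}$ distinct throughout, isolate the $i=0$ boundary summand cleanly so that it reassembles into the stated quadratic factor, and track $\chi(u_1)$ through every invocation of Lemma \ref{lem:alpha(S_{a,b})} so that all signs combine into a uniform $\chi(S)\chi(u_1)$ factor in the closed form.
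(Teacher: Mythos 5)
Your proposal is correct and follows exactly the route the paper intends: the authors explicitly say the odd-$m$ case is ``a similar computation'' to Proposition~\ref{prop: unimodular m even}, and you mirror that proof step by step, rightly flagging the one structural wrinkle (no $u_1\cdot S_k\approx S_k$ normalization for odd rank) that forces $\chi(u_1)$ to survive into $\gamma_{o,0}$ and $\gamma_{o,1}$, and correctly noting that the parity flip routes all rank-one density evaluations through the even-rank branch of Lemma~\ref{lem:alpha(S_{a,b})}. Two small inaccuracies worth tidying: the $M(0)^\perp$ expression you quote with $\nu u_1(-\pi_0)^i$ is the $\mathrm{v}(t)=0$ formula and so only applies at $i=0$ (for $i\ge1$ Proposition~\ref{prop: ind for t}(3) gives $\cH^k\obot I_{m-2,\epsilon}\obot\langle -u_1(-\pi_0)^i\rangle$), and Lemma~\ref{lem:alpha(S_k obot H_a, t)} is not actually needed to absorb the $\cH^{k-1}$ summand in $M(1)^\perp$ --- that absorption is just the defining $X\mapsto q^2X$ shift in the local density polynomial.
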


	\section{Local density formula when $\mathrm{rank}(T)=3$}\label{sec: local density n=3}
	In this section, we always assume   $\mathrm{rank}(T)=3$  and $S=I_3^{-\chi(T)}$. The aim of this section is to compute $\pden(T)$ explicitly.   We treat the case $\mathrm{v}(T)\le -1$ in the first subsection. In the second subsection, we deal with the case when $T=\diag(1,T_2)$ for $T_2$ diagonal. In the last subsection, instead of $\pden(T)$, we compute $\pden(T)^{(2)}$ for $T$ of the form not covered by previous subsections.
	
	\subsection{$\pden(T)$ for $T$ with $\mathrm{v}(T)\le -1$}
	
 \begin{proposition}\label{prop: v(T)<0}
		If $\mathrm{v}(T)\le -1$, then $\mathrm{Int}(T)=\pden (T)=0$.
	\end{proposition}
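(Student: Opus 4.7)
The two vanishings $\Int(T)=0$ and $\pden(T)=0$ are independent and I would handle them in turn. The geometric side is immediate: the hypothesis $\mathrm{v}(T)\le -1$ means that the Hermitian lattice $L$ with Gram matrix $T$ is not integral ($h(L,L)\not\subset \Oo_F$), and by \cite[Theorem 1.2]{Shi1} this forces $\cZ^{\Kra}(L)=\emptyset$, so ${}^{\bL}\cZ^{\Kra}(L)=0$ in $K_0(\cN^{\Kra})$ and $\Int(T)=0$.

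For the analytic side I would use Lemma~\ref{lem: coeff n=3} to rewrite $\pden(L)=0$ as
\[2\alpha'(I_{3,-\epsilon},L)+\tfrac{q^{2}}{q+1}\alpha(\cH_{\epsilon}^{3,1},L)=0,\]
and then split on the exact value of $\mathrm v(L)$. If $\mathrm v(L)\le -2$, a Jordan block of $L$ has modularity $\le -2$, so some inner product $(x,y)$ in $L$ has $\pi$-valuation $\le -2$; but every lattice $M\obot \cH^{k}$ with $M\in\{I_{3,-\epsilon},\cH_{\epsilon}^{3,1}\}$ has all Jordan invariants $\ge -1$ and so all pairings valued in $\pi^{-1}\Oo_F$. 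This value cannot be matched modulo $\pi_{0}^{d}\Herm_{3}(\Oo_{F_0})^{\vee}$ for any large $d$. Hence $\alpha(M\obot \cH^{k},L)=0$ for every $k\ge 0$, so both polynomials $\alpha(M,L,X)$ vanish identically and the desired identity holds trivially.

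If $\mathrm v(L)=-1$, then since $n=3$ is odd the Jordan decomposition forces $L\cong \cH\obot \langle t\rangle$ for some $t\in \Oo_{F_0}$. Corollary~\ref{cor: L=H^i obot L_2}, together with $\alpha(I_{3,-\epsilon},\cH,X)=1-X$ recalled in the proof of Lemma~\ref{lem: coeff n=3}, yields
\[\alpha(I_{3,-\epsilon},L,X)=(1-X)\,\alpha(I_{3,-\epsilon},\langle t\rangle,q^{2}X),\qquad \alpha'(I_{3,-\epsilon},L)=\alpha(I_{3,-\epsilon},\langle t\rangle,q^{2}),\]
and similarly $\alpha(\cH_{\epsilon}^{3,1},L)=\alpha(\cH_{\epsilon}^{3,1},\cH)\cdot \alpha(\cH_{\epsilon}^{3,1},\langle t\rangle,q^{2})$. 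The vanishing of $\pden(L)$ then reduces to the linear identity
\[2\alpha(I_{3,-\epsilon},\langle t\rangle,q^{2})+\tfrac{q^{2}}{q+1}\alpha(\cH_{\epsilon}^{3,1},\cH)\,\alpha(\cH_{\epsilon}^{3,1},\langle t\rangle,q^{2})=0,\]
which holds tautologically when $\langle t\rangle=I_{1,\epsilon}$: that special case is precisely the condition $\pden(\cH_{\epsilon}^{3,1})=0$ built into the definition of $c_{\epsilon}^{3,1}$ via~\eqref{eq:coeff}. I would extend the identity to arbitrary $t$ by substituting the explicit formulas of Lemma~\ref{lem:alpha(S_{a,b})} for $\alpha(S,(t),X)$ with $S=I_{3,-\epsilon}$ and $S=\cH_{\epsilon}^{3,1}$.

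The main obstacle is this last step: verifying that the $t$-dependence of $\alpha(I_{3,-\epsilon},\langle t\rangle,q^{2})$ and of $\alpha(\cH_{\epsilon}^{3,1},\langle t\rangle,q^{2})$ cancels, so that the identity known for the single lattice $I_{1,\epsilon}$ propagates to arbitrary $t\in \Oo_{F_0}$. Concretely it reduces to an elementary match between two polynomial expressions in $q$, $\chi(t)$, and $\mathrm v(t)$ extracted from Lemma~\ref{lem:alpha(S_{a,b})}; I expect the cancellation to be clean but bookkeeping-intensive across the cases determined by the parity of $\mathrm v(t)$ and the value of $\chi(t)$.
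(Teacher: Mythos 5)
Your plan is correct and takes essentially the same route as the paper. The paper also splits into $\mathrm{v}(T)\le -2$ (where it notes $\pden(T)=0$ because $\mathrm{v}(S_k)\ge -1$, i.e.\ the unimodular matrix $S$ together with any number of copies of $\cH$ never has a pairing of $\pi$-valuation $\le -2$, exactly your argument) and $\mathrm{v}(T)=-1$ (where it writes $T\approx\diag(\cH,u_3(-\pi_0)^c)$, applies Corollary~\ref{cor: L=H^i obot L_2} to peel off the $\cH$ factor, and then computes the two resulting rank-one densities explicitly via Lemmas~\ref{lem:cancel}, \ref{lem:beta(H,L)}, and \ref{lem:alpha(S_{a,b})} to obtain $2(1-q)+2(q-1)=0$). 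The only difference in framing is that you anchor the rank-one computation in the base case $\langle t\rangle\cong I_{1,\epsilon}$, where vanishing is tautological from~\eqref{eq:coeff}, and defer the general-$t$ check; the paper simply performs that check directly. It does in fact come out clean: after applying Lemma~\ref{lem:alpha(S_{a,b})} (with $m=1$, $a=b=0$), the would-be $c$-dependence of $\alpha(I_{3,-\epsilon},\langle u_3(-\pi_0)^c\rangle,q^2)$ collapses to $1+\chi(I_{3,-\epsilon})\chi(u_3)\,q$, which is constant in $c$ once one uses $\chi(\pi_0)=\chi(-1)$ to see $\epsilon=\chi(u_3)$; one small caveat is that to evaluate $\alpha(\cH_\epsilon^{3,1},\langle t\rangle,q^2)$ you cannot invoke Lemma~\ref{lem:alpha(S_{a,b})} directly since $\cH_\epsilon^{3,1}$ is not unimodular, and you will need to first reduce to a rank-one density as the paper does via Lemma~\ref{lem:cancel}. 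Your addition of the geometric vanishing $\Int(T)=0$ from \cite[Theorem 1.2]{Shi1} is also correct; the paper records this fact separately in the proof of Theorem~\ref{thm:mainthmintroduction}.
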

	\begin{proof}
		If $\mathrm{v}(T)< -1$, then $\pden (T)=0$ since $\mathrm{v}(S^{[k]})\ge -1$ where  $S^{[k]}=S\obot \cH^k$. If $\mathrm{v}(T)= -1$, then $T$ is of the form $\diag(\cH,(u_3(-\pi_0)^c))$ with $\chi(T)=\chi(u_3)$. In this case, according to Corollary \ref{cor: L=H^i obot L_2}, Lemmas    \ref{lem:beta(H,L)} and \ref{lem: cancel beta_2}, we have
		\begin{align*}\alpha(S,T,X)&=(1-X)\alpha(S,(u_3(-\pi_0)^c),\q^2 X).
		\end{align*}
		Similarly, we have
		\begin{align*}\alpha(\cH^{3,1}_{\chi(T)},T)&=\beta(\cH^{3,1}_{\chi(T)},\cH)\alpha(u_3,(u_3(-\pi_0)^c))\\
			&=(1-\q^{-2})\alpha(u_3,(u_3(-\pi_0)^c)).\end{align*}
		Hence, 	applying Lemma \ref{lem:alpha(I_{a,b})} to $I_{0,0}=S$ where $I$ is of rank 1, we have
		\begin{align*}
			\pden(T)&=2\alpha(S,(u_3(-\pi_0)^c),\q^2)+\frac{\q^2}{1+\q}(1-\q^{-2})\alpha((u_3),(u_3(-\pi_0)^c))\\
			&=2(1+\chi(S)\chi(u_3)\q)+2(\q-1)\\
		    &=2(1-\q)+2(\q-1)\\
			&=0.
		\end{align*}
Here we are using the fact $\chi(S)\chi(T)=\chi(S)\chi(u_3)=-1$.
	\end{proof}

	\subsection{$\pden(T)$ for $T=\diag(1,T_2)$ with $T_2$ diagonal}\label{subsec: T=diag(1,T_2)}
	In this subsection, we assume $T=\diag(1,T_2)$, where $T_2=\diag(u_1(-\pi_0)^a,u_2(-\pi_0)^b)$ with $0\le a\le b$. Let $u=u_1u_2$. In addition, let $S=\diag(1,1,\nu)$ and $S_2=\diag(1,\nu)$. 	We compare $\pden(T)$ and $\pden(T_2)$ in this subsection.
	
	Recall that
	\begin{align*}
		\partial \Den(T)&=2\frac{\alpha'(S,T)}{\alpha(S,S)}+\frac{\q^2}{1+\q}\frac{\alpha(\cH^{3,1}_{\chi(T)},T)}{\alpha(S,S)}.
	\end{align*}	
	Moreover, according to \cite[Theorem 1.3]{Shi2} and \cite[Theorem 1.1]{HSY}, the analytic side in the case $n=2$ is
	\begin{align*}
		\partial \Den(T_2)&=2\frac{\alpha'(S_2,T_2)}{\alpha(S_2,S_2)}-\frac{2\q^2}{\q^2-1}\frac{\alpha(\cH,T_2)}{\alpha(S_2,S_2)}.
	\end{align*}
	
	\begin{proposition}
		\begin{align*}
			\pden(T)-\pden(T_2)&=\begin{cases}
				1+2\sum_{i=1}^a \q^i & \text{ if $\chi(T)=1$},\\
				1 & \text{ if $\chi(T)=-1$}.
			\end{cases}
		\end{align*}
	\end{proposition}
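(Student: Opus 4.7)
My plan is to use the induction formulas for local densities to peel off the self-dual summand $\langle 1\rangle$ from $T = \langle 1\rangle \obot T_2$, reducing the rank-$3$ density $\pden(T)$ to rank-$2$ data that can be compared term-by-term to the known formula for $\pden(T_2)$ recalled at the start of this subsection.

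Since $\langle 1\rangle$ is self-dual, any strict overlattice $L_1' \subsetneq L_{1,F}$ of $L_1 = \langle 1\rangle$ has $\mathrm{v}(L_1')\le -2$, making $L_1'\oplus L_{T_2}$ non-integral and forcing $\alpha(M, L_1'\oplus L_{T_2}, X)=0$ for any $M$ (the required congruence on $(\phi(x),\phi(y))$ cannot be satisfied once the target of $\phi$ is integral). Thus Theorem \ref{thm:ind formula reducing valuation}, applied with $L_1=\langle 1\rangle$, $n_1=1$, collapses to $\alpha(M,T,X)=\beta(M,T,X)^{(1)}$; in particular by Corollary \ref{cor: ind structure of partial Den(T)},
$$\pden(T) = \pden(T)^{(1)} = \frac{1}{\alpha(S,S)}\Bigl(2\beta'(S,T)^{(1)} + \tfrac{q^2}{1+q}\,\beta(\cH^{3,1}_\epsilon,T)^{(1)}\Bigr).$$

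I then apply Proposition \ref{prop: ind for t}(5) with $L_1=\langle 1\rangle$ (so $t=1$, $\mathrm{v}(t)=0$) to both $M=S$ and $M=\cH^{3,1}_\epsilon$. For $M=S$, noting $\chi(S)\chi(T)=(-\epsilon)(\epsilon)=-1$ and $m=3$,
$$\beta(S,T,X)^{(1)} = (1-X)\,\alpha(\cH^{k-1}\obot S\obot\langle-1\rangle, T_2) + q^2(1-q^{-1})X\cdot\alpha(S_2,T_2,X).$$
For $M=\cH^{3,1}_\epsilon = \cH\obot I_{1,\epsilon}$, the shift identity $\alpha(\cH\obot M',T,X)=\alpha(M',T,q^{-2}X)$ (obtained from $M_k=\cH^{k+1}\obot M'$) reduces the task to computing $\beta(I_{1,\epsilon},T,q^{-2}X)^{(1)}$, whose $\beta_0$-type contribution is present exactly when $\epsilon=+1$, since a primitive norm-$1$ vector inside $I_{1,\epsilon}=\langle u\rangle$ requires $\chi(u)=1$. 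This is precisely the mechanism producing the case distinction in the statement.

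The remaining rank-$4$ term $\alpha(\cH^{k-1}\obot S\obot \langle -1\rangle,T_2) = \alpha(I_{4,\chi(S)}, T_2, q^2 X)$ is handled by Proposition \ref{prop: unimodular m even}: the lattice $S\obot\langle-1\rangle$ is isotropic (it contains $\langle 1\rangle\obot\langle-1\rangle\cong\cH$), so the even-rank isotropic formula applies verbatim. Assembling everything, differentiating at $X=1$ to produce $\beta'(S,T)^{(1)}$, and subtracting the explicit formula for $\pden(T_2)$ from \cite{Shi2} and \cite{HSY}, I expect the bulk of the terms to telescope, leaving the claimed elementary discrepancy. The main obstacle is the careful bookkeeping: each of the four densities involved (both rank-$3$ against $T$, both rank-$2$ against $T_2$) is a polynomial in $X$ with many terms indexed by $(a,b)$ and by several $\chi$-signs, and one must verify that every term depending nontrivially on $b$ cancels in the difference. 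The surviving sum $1+2\sum_{i=1}^a q^i$ in the $\chi(T)=1$ case should be interpreted as counting primitive norm-$1$ representations at successive valuation levels $0,1,\dots,a$ inside a rank-$2$ isotropic unimodular lattice, an interpretation consistent with the pattern observed in Lemma \ref{lem:alpha(S_{a,b})}.
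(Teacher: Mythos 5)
Your overall plan—peel off the self-dual rank-one summand via the rank-one induction formula and compare with the rank-two case—is exactly the route the paper takes (Proposition \ref{prop: ind for t} applied directly to $\alpha(S,T,X)$, which, by the vanishing of the non-integral overlattice contributions you correctly observe, is the same as your $\beta(S,T,X)^{(1)}$). But there is a concrete error in the coefficient you write down, and it is not a typo: it would cause the whole argument to fail in the $\chi(T)=-1$ case.

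You wrote
$$\beta(S,T,X)^{(1)} = (1-X)\,\alpha(\cH^{k-1}\obot S\obot\langle-1\rangle, T_2) + q^2(1-q^{-1})X\cdot\alpha(S_2,T_2,X),$$
having ``noted $\chi(S)\chi(T)=-1$.'' But the symbol $\chi(L)$ in Proposition \ref{prop: ind for t}(5)(b) is the sign of the rank-one lattice $L_1$ (this is a notational collision; it is inherited from Corollary \ref{cor: beta N2}, where the rank-one lattice is called $L$). For $L_1=\langle 1\rangle$ one has $\chi(L_1)=1$, so $\beta_0(S,L_1,X)=(1+\chi(S)q^{-1})X$ and the correct coefficient is $q^2(1+\chi(S)q^{-1})X=q^2(1-\chi(T)q^{-1})X$. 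Your $q^2(1-q^{-1})$ agrees with this only when $\chi(T)=1$. The subsequent ``telescoping'' you anticipate rests precisely on the algebraic identity
$$\frac{q^2(1+\chi(S)q^{-1})}{\alpha(S,S)} = \frac{1}{\alpha(S_2,S_2)},$$
which the paper verifies using $\alpha(S,S)=2q(q^2-1)$ and $\alpha(S_2,S_2)=2(q-\chi(S_2))$ with $\chi(S_2)=\chi(S)$; with your coefficient, the left-hand side becomes $\frac{1}{2(q+1)}$ unconditionally, which does not equal $\frac{1}{2(q-\chi(S))}$ when $\chi(S)=1$. Thus the derivative of the main term in $\pden(T)$ would not reduce to $\pden(T_2)$ plus an explicit, $T_2$-independent correction, and you would be left with an unwanted $\alpha'(S_2,T_2)$-dependent residue in the $\chi(T)=-1$ case. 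Separately, the final computation is only sketched; once the coefficient is fixed, the closing steps follow the paper: evaluate $2\alpha(\diag(-1,S),T_2,q^2)$ via Proposition \ref{prop: unimodular m even}, split off and compute the $\cH^{3,1}_{\chi(T)}$ term by Corollary \ref{cor: L=H^i obot L_2} together with the rank-$2$ data from \cite{Shi2} and \cite{HSY}, and verify that the $b$-dependent terms cancel.
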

	\begin{proof}
		Proposition \ref{prop: ind for t} implies that $\alpha(S,T,X)$ equals
		\begin{align*}
			(1-X)\alpha( \diag(-1,S),T_2,\q^2 X)+q^{2} (1+q^{-1}\chi(S))X\alpha( S_2,T_2,X).
		\end{align*}
		Hence
		\begin{align}
			\alpha'(S,T)
			=\alpha( \diag(-1,S),T_2,\q^2 )+q^{2} (1+q^{-1}\chi(S))\alpha'( S_2,T_2).
		\end{align}
		According to Lemma \ref{lem: cancel beta_0}, one can check that $\alpha(S,S)=\beta(S,S)=2\q(\q^2-1)$, and  $\alpha(S_2,S_2)=2(\q-\chi(S_2))$. Then	
		\begin{align}
			\frac{\alpha'(S,T)}{\alpha(S,S)}-\frac{\alpha'(S_2,T_2)}{\alpha(S_2,S_2)}
			=\frac{\alpha( \diag(-1,S),T_2,\q^2 )}{\alpha(S,S)}.
		\end{align}
		Hence we just need to check that
		\begin{align*}
			&2\frac{\alpha( \diag(-1,S),T_2,\q^2 )}{\alpha(S,S)}+ \frac{\q^2}{1+\q^2}\frac{\alpha( \cH^{3,1}_{\chi(T)},T)}{\alpha(S,S)}+\frac{2\q^2}{\q^2-1}\frac{\alpha(\cH,T_2)}{\alpha(S_2,S_2)}\\
			&=\begin{cases}
				1+2\sum_{i=1}^a \q^i & \text{ if $\chi(T)=1$},\\
				1 & \text{ if $\chi(T)=-1$}.
			\end{cases}
		\end{align*}
		By Proposition \ref{prop: unimodular m even}, we may check that
		\begin{align}\label{eq: (-1,S)}
			&2\alpha(\diag(-1,S),T_2,\q^2)=
			\begin{cases}
				2(2\q^{a+2}-(\q+1)^2)(\q-1) & \text{ if  $\chi(T)=1$},\\
				2(\q-1)(\q^2-1)& \text{ if $\chi(T)=-1$}.
			\end{cases}
		\end{align}
		To compute $\frac{\q^2}{1+\q}\alpha(\cH^{3,1}_{\chi(T)},T)$,  we may choose $\cH^{3,1}_{\chi(T)}=\diag(\cH, 1)$ when $\chi(T)=1$.
		By Corollary \ref{cor: L=H^i obot L_2}, Proposition \ref{prop:localdensity, n=2}, and a direct calculation, we have
		\begin{align*}
			&\frac{\q^2}{1+\q}\frac{\alpha(\cH^{3,1}_{\chi(T)},T)}{\alpha(S,S)} =\frac{1}{2\q(\q^2-1)}\cdot\begin{cases}
				(\q-1)	\alpha(\diag(-1,1),T_2)+\frac{2\q^2}{\q-1} \alpha(\cH,T_2)& \text{ if $\chi(T)=1$}\\
				(\q-1)		\alpha(\diag(-1,-u),T_2)& \text{ if $\chi(T)=-1$}.
			\end{cases}
		\end{align*}
		Combining this with the formulas in \cite[Theorem 6.1]{HSY}, we have
		\begin{align}\label{eq: error comparison}
			&\frac{\q^2}{1+\q^2}\frac{\alpha( \cH^{3,1}_{\chi(T)},T)}{\alpha(S,S)}+\frac{2\q^2}{\q^2-1}\frac{\alpha(\cH,T_2)}{\alpha(S_2,S_2)} =\frac{1}{\q(\q^2-1)}\cdot\begin{cases}
				4\q^{a+2}-\q^2-2\q-1& \text{ if $\chi(T)=1$}\\
				(\q^2-1)& \text{ if $\chi(T)=-1$}.
			\end{cases}
		\end{align}
		Now a direct computation combined with \eqref{eq: (-1,S)} and \eqref{eq: error comparison} proves the proposition.
	\end{proof}

	\begin{corollary}\label{cor: pDen(T)-pDen(T_2)}
		Assume $L$ is a Hermitian lattice with Gram matrix $T$, then
		\begin{align}\label{eq: pDen(T)-pDen(T_2)}
			\partial \Den(T) -\partial \Den(T_2)=|\{\cV^0(L)\}|.
		\end{align}
	\end{corollary}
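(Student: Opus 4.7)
The plan is to combine the preceding proposition with a direct enumeration of self-dual lattices containing $L$. By the preceding proposition,
\[
\pden(T)-\pden(T_2)=
\begin{cases}
1+2\sum_{i=1}^{a}q^{i} & \text{if }\chi(T)=1,\\
1 & \text{if }\chi(T)=-1,
\end{cases}
\]
so it suffices to show that $|\cV^0(L)|$ equals the right-hand side in each case.

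The first step is to exploit the unimodular splitting off of the first coordinate. Write $L=\langle e\rangle\obot L_2$ with $(e,e)=1$, and set $\bV_2=\langle e\rangle^\perp\subset \bV$, so that $\bV_2$ is a Hermitian plane of sign $\chi(\bV_2)=\chi(T_2)=\chi(T)$ and $L_2\subset \bV_2$ has Gram matrix $T_2$. For any type-$0$ vertex lattice $\Lambda$ with $L\subseteq \Lambda=\Lambda^\sharp$, the rank-one sublattice $\langle e\rangle$ is already self-dual, so $\Lambda=\langle e\rangle\obot \Lambda_2$ with $\Lambda_2\subset\bV_2$ self-dual of rank $2$ and $L_2\subseteq \Lambda_2$. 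Conversely any such $\Lambda_2$ extends to a self-dual $\Lambda$ containing $L$. Hence $|\cV^0(L)|$ equals the number of self-dual rank-two $\Oo_F$-lattices in $\bV_2$ containing $L_2$, which in turn equals the number of Lagrangian $\Oo_F$-submodules of $L_2^\sharp/L_2\cong \Oo_F/\pi_0^{a}\oplus \Oo_F/\pi_0^{b}$ under the $\pi^{-1}\Oo_F/\Oo_F$-valued Hermitian form induced from $h$.

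The second step is the actual count, split by $\chi(T)$. When $\chi(T)=-1$, the Hermitian plane $\bV_2$ is anisotropic, so it contains a unique maximal integral lattice, which is self-dual and necessarily contains every integral lattice of $\bV_2$; in particular it contains $L_2$, giving $|\cV^0(L)|=1$. When $\chi(T)=1$, the plane $\bV_2$ is split (isometric to $\cH_0$ up to scaling), and one computes the number of Lagrangian $\Oo_F$-submodules of $L_2^\sharp/L_2$ directly; the answer organizes into the ``base'' Lagrangian (contributing $1$) together with $2a$ families indexed by an integer $0<i\le a$ and a choice of which of the two cyclic summands the Lagrangian projects non-trivially onto, each family of cardinality $q^{i}$, yielding $1+2\sum_{i=1}^{a}q^{i}$.

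The main obstacle is the combinatorial count in the split case, and in particular verifying the ``$q^{i}$ per stratum'' multiplicity. I would carry it out by parameterizing a Lagrangian $\bar\Lambda_2\subset L_2^\sharp/L_2$ via its intersection with $\pi^{j}L_2^\sharp/L_2$ for increasing $j$, using that the induced form on each graded piece is the standard ramified pairing between $\Oo_F/\pi^{2a}$ and $\Oo_F/\pi^{2b}$. The result should match the formulas in \cite{Shi2} and \cite{HSY} that underlie the preceding proposition, ensuring that the two counts agree by construction; alternatively, one can cross-check by enumerating self-dual overlattices of $L_2$ directly on the Bruhat--Tits tree of $\mathrm{SU}(\bV_2)$ in the ramified case.
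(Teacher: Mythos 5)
Your proposal follows the paper's route in its essential step: both split $L = \langle e\rangle \obot L_2$ with $(e,e)=1$ and use the (correct) bijection $\Lambda \leftrightarrow \Lambda\cap\bV_2$ to reduce $|\cV^0(L)|$ to a count of self-dual overlattices of $L_2$ in the rank-$2$ space $\bV_2 = \langle e\rangle^\perp$, with the anisotropic case giving $1$ immediately. The only divergence is in the final count for $\chi(T)=1$: the paper simply cites \cite{HSY} to identify $\cL_{2,1}(L^\flat)$ with a ball of radius $a$ around a type-$0$ lattice and reads off $1 + 2\sum_{i=1}^a q^i$, whereas you propose to count Lagrangian $\Oo_F$-submodules of $L_2^\sharp/L_2$ directly — a legitimate, more elementary alternative — with the tree enumeration as a fallback, which is precisely the paper's method. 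Two caveats. First, the stratification you sketch (``$2a$ families indexed by $0<i\le a$ and which cyclic summand the Lagrangian projects nontrivially onto'') is not quite right as a parameterization: already for $a=b=1$, every nontrivial Lagrangian line $\Oo_F(x_1 e_1 + x_2 e_2)$ forces both $x_1, x_2$ to be units, so all of them project nontrivially onto \emph{both} summands; the grand total $1+2\sum_{i=1}^a q^i$ is nonetheless correct, and a reworked stratification (for instance by distance from the central Lagrangian $\pi^a\Oo_F e_1\oplus\pi^b\Oo_F e_2$, with two ``directions'' at each distance) would produce it. Second, the remark that the two counts ``agree by construction'' because both trace back to \cite{Shi2,HSY} is not accurate: the formula $1+2\sum_{i=1}^a q^i$ in the preceding proposition arises from a local-density computation on the analytic side, so matching it against $|\cV^0(L)|$ is exactly the content of the corollary and requires an independent geometric count.
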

	\begin{proof}
		We can write $L=L^\flat \obot \Oo_F \bx$ where $q(\bx)=1$. If $L^\flat$ is non-split, then $|\{\cV^0(L)\}|=1$.
		
		If $L^\flat$ is split, then $|\{\cV^0(L)\}|= 1+2\sum_{i=1}^a \q^i$ since $\cL_3(L)$ can be identified with $\cL_{2,1}(L^\flat)$, which is a ball in $\cL_{2,1}$ centered at a vertex lattice of type $0$ with radius $a$ (see \cite{HSY} for more detail). Here $\cL_{2,1}$ is the Bruhat-Tits tree associated with $\cN^{\Kra}_{2,1}$ and $\cL_{2,1}(L^\flat)$ is the subtree of $\cL_{2,1}$  associated with $L^\flat$.
	\end{proof}

	\subsection{$\pden(T)^{(2)}$}\label{subsec: pden(T)^(2)}
	In this subsection,  we assume $T=\diag(T_2,u_3(-\pi_0)^c)$ with $\mathrm{v}(T_2)>0$, and compute $\pden(T)^{(2)}$. Recall that  $\pden(T)^{(2)}=\pden(L^\flat \obot \Oo_F \bx)^{(2)}$ where the Gram matrix of $L=L^\flat \obot \Oo_F \bx$ is $T$. We consider two cases separately in Propositions \ref{prop: a le b<c} and \ref{prop: pden T_a}.
	\begin{proposition}\label{prop: a le b<c}
		Let $T=\diag(u_1(-\pi_0)^a,u_2(-\pi_0)^b, u_3(-\pi_0)^c)$ where $0<a\le b \le c$. Then
		\begin{align*}
			\pden(T)^{(2)}=
			1+\chi(-u_2u_3)\q^{a}(\q^a-q^b)-\q^{a+b}.
		\end{align*}
	\end{proposition}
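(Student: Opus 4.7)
The plan is to invoke Corollary \ref{cor: ind structure of partial Den(T)} to express $\pden(T)^{(2)}$ in terms of primitive local densities, and then evaluate these via Proposition \ref{prop: ind rank 2}. Setting $\epsilon=\chi(T)$ and using $c^{3,1}_\epsilon=q^2/(1+q)$ from Lemma \ref{lem: coeff n=3}, one has
\[
\pden(T)^{(2)} = \frac{2\beta'(I_{3,-\epsilon}, L)^{(2)} + \tfrac{q^2}{1+q}\,\beta(\cH^{3,1}_\epsilon, L)^{(2)}}{\alpha(I_{3,-\epsilon}, I_{3,-\epsilon})},
\]
where $L=L_1\oplus L_2$ with $L_1$ represented by $T_2$ and $L_2$ by $\langle u_3(-\pi_0)^c\rangle$. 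Since $\val(L_1)=a>0$, Proposition \ref{prop: ind rank 2} applies.

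The key observation is that the second numerator term vanishes. The identification $\cH^{3,1}_\epsilon\obot\cH^k=I_{1,\epsilon}\obot\cH^{k+1}$ yields the polynomial identity $\beta(\cH^{3,1}_\epsilon, L, X)^{(2)}=\beta(I_{1,\epsilon}, L, q^{-2}X)^{(2)}$. Applying Proposition \ref{prop: ind rank 2} with $M'=I_{1,\epsilon}$ (unimodular of rank $m'=1$), both $\beta_0$ and $\beta_1$ vanish since $1-q^{1-m'}=0$, leaving only $\beta(I_{1,\epsilon}, L, X')^{(2)}=(1-X')(1-q^2 X')\alpha(M'(2)^\perp, L_2, X')$. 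At $X'=q^{-2}$, i.e.\ $X=1$, the factor $1-q^2X'$ vanishes, giving $\beta(\cH^{3,1}_\epsilon, L)^{(2)}=0$. Combining this with $\alpha(I_{3,-\epsilon}, I_{3,-\epsilon})=2q(q^2-1)$ (from Theorem \ref{thm: A_epsilon}) reduces the formula to $\pden(T)^{(2)}=\beta'(I_{3,-\epsilon}, L)^{(2)}/(q(q^2-1))$.

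For the remaining term, Proposition \ref{prop: ind rank 2} applied to $M=I_{3,-\epsilon}$ (odd rank $m=3$) again kills the $\beta_0$-piece, yielding
\[
\beta(I_{3,-\epsilon}, L, X)^{(2)} = (1-X)(1-q^2X)\,\alpha(M(2)^\perp, L_2, X) + (q+1)(q^2-1)X(1-X)\,\alpha(M(1)^\perp, L_2, X),
\]
where $M(i)^\perp=(-L_1)\obot\cH^{k-i}\obot M^{(i)}$ with $M^{(i)}$ unimodular of rank $2i-1$ and sign $\chi(M^{(i)})=\chi(-u_1u_2)$ (read off from $\det M^{(i)}\equiv(-1)^i\det L_1\pmod{N(F^\times)}$). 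Differentiating at $X=1$ retains only the $(1-X)$-derivative contributions, so $\beta'(I_{3,-\epsilon}, L)^{(2)}=(q^2-1)\bigl[(q+1)\alpha(M(1)^\perp, L_2)-\alpha(M(2)^\perp, L_2)\bigr]$. It remains to evaluate each $\alpha(M(i)^\perp, L_2)$ by presenting $M(i)^\perp$ as a lattice of the $S_{a,b}$-form appearing in Lemma \ref{lem:alpha(S_{a,b})} and reading off the result. Substituting into the expression above and simplifying produces the claimed identity $\pden(T)^{(2)}=1+\chi(-u_2u_3)q^a(q^a-q^b)-q^{a+b}$.

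The main obstacle is the sign bookkeeping in Lemma \ref{lem:alpha(S_{a,b})}: the relevant characters $\chi(S)\chi(-\nu_1)=\chi(-u_2)$ and $\chi(S_{a,b})\chi(t_0)=\chi(u_3)$ must combine correctly across the two contributions $(q+1)\alpha(M(1)^\perp, L_2)$ and $\alpha(M(2)^\perp, L_2)$ to produce the single character $\chi(-u_2u_3)$ in the final answer, and one must verify that the powers of $q$ collapse to the compact form $q^a(q^a-q^b)-q^{a+b}$. A secondary technical point is managing the case split between $a<b$ and $a=b$ in Lemma \ref{lem:alpha(S_{a,b})}, although both cases yield the same uniform formula.
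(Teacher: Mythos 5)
Your approach tracks the paper's proof essentially step for step: apply Corollary \ref{cor: ind structure of partial Den(T)}, kill the $\cH^{3,1}_\epsilon$-contribution, expand $\beta(I_{3,-\epsilon},L,X)^{(2)}$ via Proposition \ref{prop: ind rank 2} (the $\beta_0$-term vanishes for $m=3$), differentiate at $X=1$ where the prefactors $(1-X)(1-\q^2X)$ and $X(1-X)$ concentrate the derivative onto the coefficients, and evaluate the two surviving $\alpha$'s via Lemma \ref{lem:alpha(S_{a,b})}. Your explicit argument for $\beta(\cH^{3,1}_\epsilon,L)^{(2)}=0$ --- rewriting $\cH^{3,1}_\epsilon\obot\cH^k=I_{1,\epsilon}\obot\cH^{k+1}$ so that Proposition \ref{prop: ind rank 2} applies at $m'=1$, where $\beta_0=\beta_1=0$ and the factor $1-\q^2X'$ vanishes at $X'=\q^{-2}$ --- is a correct and welcome elaboration of a step the paper states without comment.

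However, the character identities you flag as the ``main obstacle'' are wrong as written. You assert $\chi(M^{(i)})=\chi(-u_1u_2)$, $\chi(S)\chi(-\nu_1)=\chi(-u_2)$, and $\chi(S_{a,b})\chi(t_0)=\chi(u_3)$; none of these simplifications holds. For $M(2)^\perp\approx\diag(S,-T_2)$ with $S=\diag(1,1,\nu)$ one has $\nu_1=-u_1$, so $\chi(S)\chi(-\nu_1)=\chi(-\nu u_1)$, and $\chi(S_{a,b})\chi(t_0)=\chi(\nu u_1u_2u_3)$. The crucial input, made explicit in the paper's proof, is the relation $\chi(\nu u_1u_2u_3)=\chi(S)\chi(T)=-1$, forced by the choice $S=I_{3,-\chi(T)}$. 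That single relation reduces $\chi(S_{a,b})\chi(t_0)$ to $-1$ and $\chi(S)\chi(-\nu_1)$ to $-\chi(-u_2u_3)$, which is precisely what makes the combination $(\q+1)\alpha(M(1)^\perp,L_2)-\alpha(M(2)^\perp,L_2)$ collapse to the stated single-character answer; carrying your claimed signs through the computation would yield a wrong final formula. Two smaller points: the determinant in Proposition \ref{prop: ind rank 2} should read $(-1)^i\det M$ as in Proposition \ref{prop:A perbofHi}, not $\det L$ --- you appear to have propagated that typo as ``$\det L_1$''; and the $a<b$ vs.\ $a=b$ split is a non-issue, since $\sum_{s=a+1}^b$ is simply the empty sum when $a=b$.
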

	\begin{proof}
		Recall that
		$$
		\pden(T)^{(2)}=\frac{1}{2\q(\q^2-1)}\left(2\beta'(S,T)^{(2)}+\frac{\q^2}{1+\q}\beta(\cH^{3,1}_{\chi(T)},T)^{(2)}\right).
		$$
		We compute $\beta'(S,T)^{(2)}$ first. According to Proposition \ref{prop: ind rank 2}, $\beta_0(S,T_2,X)=0$ and
		\begin{align*}
			\beta(S,T,X)^{(2)}&
			=\beta_2(S,T_2,X)\alpha(\diag(S,-T_2),u_3(-\pi_0)^c,\q^4 X)\\
		 &\quad +\q \beta_1(S,T_2,X)\alpha(\diag(-\nu,-T_2),u_3(-\pi_0)^c,\q^2 X)\\
			&=(1-X)(1-\q^2 X)\alpha(\diag(S,-T_2),u_3(-\pi_0)^c,\q^4 X) \\ &\quad+(\q+1)(\q^2-1)X(1-X) \alpha(\diag(-\nu,-T_2),u_3(-\pi_0)^c,\q^2 X).
		\end{align*}
		
		According to Lemma \ref{lem:alpha(I_{a,b})},
		\begin{align*}
			&\alpha(\diag(S, -T_2),u_3(-\pi_0)^c,\q^4 X)
			=1+\chi(S)\chi(u_1)(\q-1)\sum_{s=a+1}^b\q^{a+1}(\q X)^s+\chi(u_1u_2u_3\nu)\q^{a+b+2} X^{c+1},
		\end{align*}
		and
		\begin{align*}
			&\alpha(\diag(-\nu,-T_2),u_3(-\pi_0)^c,\q^2 X)
			=1+\chi(S)\chi(u_1)(\q-1)q^a\sum_{s=a+1}^b(\q X)^s+\chi(u_1u_2u_3\nu)\q^{a+b+1}X^{c+1}.
		\end{align*}
		The relation $\chi(u_1u_2u_3\nu)=\chi(S)\chi(T)=-1$ and a direct calculation show that
		\begin{align*}
			\beta'(S,T_2)^{(2)}&=	1+\chi(-u_2u_3)\q^{a}(\q^a-q^b)-\q^{a+b}.
		\end{align*}
		Finally,  $\beta(\cH^{3,1}_{\chi(T)},T)^{(2)}=0$ by Proposition \ref{prop: ind rank 2}.  The proposition is proved.
	\end{proof}

	\begin{proposition}\label{prop: pden T_a}
	Recall that $\cH_a=\begin{pmatrix}
	    0& \pi^a\\
	    (-\pi)^a &0\end{pmatrix}$.		Let $T= \diag(\cH_a, u_3(-\pi_0)^c)$ where $a$ is a positive odd integer and $c\ge 0$. Then
		\begin{align*}
			\pden(T)^{(2)}= \begin{cases}
				(1-\q^{a})& \text{ if $a\le 2c$},\\
				(1-\q^{2c+1})& \text{ if $a> 2c$}.
			\end{cases}
		\end{align*}
	\end{proposition}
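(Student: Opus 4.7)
The plan mirrors the strategy of Proposition \ref{prop: a le b<c}. By Corollary \ref{cor: ind structure of partial Den(T)} combined with Lemma \ref{lem: coeff n=3},
\begin{align*}
\pden(T)^{(2)} = \frac{1}{\alpha(S,S)}\left(2\beta'(S,T)^{(2)} + \frac{q^2}{q+1}\beta(\cH^{3,1}_{\chi(T)}, T)^{(2)}\right),
\end{align*}
where $S = I_{3,-\epsilon}$ with $\epsilon = \chi(T)$. I would take $L_1 \cong \cH_a$ (rank $2$, with $\val(L_1) = a > 0$) and $L_2 = \langle u_3(-\pi_0)^c\rangle$ (rank $1$), and apply Proposition \ref{prop: ind rank 2} to expand both primitive densities.

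First I would show $\beta(\cH^{3,1}_{\chi(T)}, T)^{(2)} = 0$. Writing $\cH^{3,1}_{\chi(T)} = \cH \obot I_{1,\chi(T)}$ and using the stabilization identity $\alpha(\cH \obot M, L, X) = \alpha(M, L, q^{-2}X)$, this reduces to $\beta(I_{1,\chi(T)}, T, q^{-2})^{(2)}$. For rank $m = 1$ the formulas in Proposition \ref{prop: ind rank 2} give $\beta_0 = \beta_1 = 0$ identically, while $\beta_2 = (1-X)(1-q^2 X)$ vanishes at $X = q^{-2}$.

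For the main term, $m = 3$ odd makes $\beta_0(S, \cH_a, X) = q(1-q^{-2})(1-q^0)X^2 = 0$, so only $i = 1, 2$ survive. Using $-\cH_a \cong \cH_a$ (via the unit $-1$), one gets
\begin{align*}
\beta(S, T, X)^{(2)} = (1-X)(1-q^2X)\,g_2(X) + (q+1)(q^2-1)X(1-X)\,g_1(X),
\end{align*}
where $g_i(X) = \alpha(\cH_a \obot M^{(i)}, \langle u_3(-\pi_0)^c\rangle, q^{2i}X)$. A direct determinant comparison in the decomposition $M_k = L_1 \obot (-L_1) \obot \cH^{k-i}\obot M^{(i)}$, taken modulo $N(F^\times)$, gives $\det M^{(i)} \equiv \nu\pi_0^{-i} \pmod{N(F^\times)}$, whence $\chi(M^{(i)}) = \chi(S) = -\epsilon$ for both $i = 1, 2$. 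On the other hand, the hypothesis $a$ odd forces $\chi(T) = \chi(u_3)$, i.e.\ $\chi(u_3) = \epsilon$. Hence $\chi(M^{(i)})\chi(u_3) = -1$, and Lemma \ref{lem:alpha(S_k obot H_a, t)} (applied to $M^{(i)} \obot \cH_a$ with odd rank $m = 2i-1$, and $\cH_a$-subscript $a$) yields
\begin{align*}
g_i(1) = \begin{cases} 1 - q^{a+i} & \text{if } a \leq 2c,\\ 1 - q^{2c+1+i} & \text{if } a > 2c. \end{cases}
\end{align*}

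Since $(1-X)(1-q^2X)$ and $X(1-X)$ both vanish at $X = 1$, differentiating yields
\begin{align*}
\beta'(S,T)^{(2)} = (q^2-1)\bigl[(q+1)g_1(1) - g_2(1)\bigr].
\end{align*}
A short telescoping then gives $(q+1)g_1(1) - g_2(1) = q(1-q^a)$ when $a \leq 2c$ and $q(1-q^{2c+1})$ when $a > 2c$. Dividing by $\alpha(S,S) = 2q(q^2-1)$ produces the claimed formulas. The chief technical obstacle is the sign accounting: the literal reading $\det M^{(i)} = (-1)^i \det L_1$ from Proposition \ref{prop: ind rank 2} does not on its own yield the cancellation $\chi(M^{(i)})\chi(u_3) = -1$, so one must carefully track determinants modulo norms in the orbit decomposition to confirm that the final answer is indeed $\epsilon$-independent.
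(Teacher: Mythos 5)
Your proof is correct and follows essentially the same route as the paper: apply Proposition \ref{prop: ind rank 2} with $L_1\cong \cH_a$, $L_2=\langle u_3(-\pi_0)^c\rangle$, observe that $\beta_0$ vanishes for $m=3$ odd, invoke Lemma \ref{lem:alpha(S_k obot H_a, t)} to evaluate $\alpha(M^{(i)}\obot\cH_a, u_3(-\pi_0)^c, q^{2i}X)$, and use the sign identity $\chi(S)\chi(u_3)=\chi(S)\chi(T)=-1$ (valid because $a$ odd forces $\chi(T)=\chi(u_3)$). Your elaboration of why $\beta(\cH^{3,1}_{\chi(T)},T)^{(2)}=0$ via the $\cH$-stabilization and the $m=1$ case is a useful clarification of the paper's terse citation of Proposition \ref{prop: ind rank 2}, and your closing caution about the determinant statement in that proposition is sound but is circumvented in the paper (as in your own computation) by directly identifying $M^{(2)}=S$ and $M^{(1)}=\langle-\nu\rangle$.
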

	\begin{proof}
		Recall that $$\pden(T)^{(2)}=\frac{1}{2\q(\q^2-1)}\left(2\beta'(S,T)^{(2)}+\frac{\q^2}{1+\q}\beta(\cH^{3,1}_{\chi(T)},T)^{(2)}\right).$$
		We need to compute $\beta'(S,T)^{(2)}$ and $\beta(\cH^{3,1}_{\chi(T)},T)^{(2)}$.

		According to Proposition \ref{prop: ind rank 2}, $\beta_0(S,T_2,X)=0$ and
		\begin{align*}
			\beta(S,T,X)^{(2)}
			&=\beta_2(S,\cH_a,X)\alpha(\diag(S, \cH_a),u_3(-\pi_0)^c,\q^4 X)
			\\
			&\quad	+\q \beta_1(S,\cH_a,X)  \alpha(\diag( -\nu,\cH_a),u_3(-\pi_0)^c,\q^2 X)\\
			&=(1-X)\Big((1-\q^2 X)\alpha(\diag(S, \cH_a),u_3(-\pi_0)^c,\q^4 X)\\ &\quad\, +(\q+1)(\q^2-1)X \alpha(\diag( -\nu,\cH_a),u_3(-\pi_0)^c,\q^2 X)\Big).
		\end{align*}
		According to Lemma \ref{lem:alpha(S^{[k]} obot H_a, t)},
		\begin{align*}
			\alpha(\diag(S, \cH_a),u_3(-\pi_0)^c,\q^4 X)=
			\begin{cases}
				1+\chi(S)\chi(u_3)\q^{2+a} X^{c+1}  & \text{ if $a\le 2c$},\\
				1+ \chi(S)\chi(u_3)\q^{2c+3} X^{c+1}   & \text{ if $a> 2c$},
			\end{cases}
		\end{align*}
		and
		\begin{align*}
			\alpha(\diag( -\nu,\cH_a),u_3(-\pi_0)^c,\q^2 X)=\begin{cases}
				1+\chi(S)\chi(u_3)\q^{1+a} X^{c+1} & \text{ if $a\le 2c$},\\
				1+ \chi(S)\chi(u_3)\q^{2c+2} X^{c+1}   & \text{ if $a> 2c$}.
			\end{cases}
		\end{align*}
		A short computation shows that
		\begin{align*}
			\beta'(S,T)^{(2)}=\q(\q^2-1) \cdot \begin{cases}
				1+\chi(S)\chi(u_3)\q^{a} & \text{ if $a\le 2c$},\\
				1+\chi(S)\chi(u_3)\q^{2c+1} & \text{ if $a> 2c$}.
			\end{cases}
		\end{align*}
		Notice that $\chi(S)\chi(u_3)=\chi(S)\chi(T)=-1$. Finally,  $\beta(\cH^{3,1}_{\chi(T)},T)^{(2)}=0$ by Proposition \ref{prop: ind rank 2}. The proposition is proved.
	\end{proof}

	\part{Proof of the main theorem}\label{part:III}
	\section{Reduced locus of special cycle}\label{sec:reduced locus n=3}
     As remarked in \S \ref{sec: RZ space}, results of \cite{RTW} extend to the category of strict formal $\Oo_{F_0}$-modules using relative Dieudonn\'e theory.
	\subsection{The Bruhat-Tits building for $n=3$} From now on we assume $n=3$ and $\cL=\cL_3$ as in Section \ref{subsec:bruhat tits in general}.
	\begin{lemma} \label{lem:B-T building}\hfill
		\begin{enumerate}
			\item For every $\Lambda_2\in \cV^2$, $\cN_{\Lambda_2}$ is isomorphic to the projective line $\bP^1$ over $k$. Its $\q+1$ rational points correspond to all $\Lambda_0\in \cV^0$ contained in $\Lambda_2$.
			\item Every $\Lambda_0\in \cV^0$ is contained in $\q+1$ type $2$ lattices. In other words, there are $\q+1$ projective lines in $(\cN_3^\Pap)_{\red}$ passing through the superspecial point  $\cN_{\Lambda_0}(k)$. Moreover
			\begin{equation}\label{eq:lambdasharpintersection}
				\bigcap_{\Lambda_2\in \cV_2,\, \Lambda_0 \subset \Lambda_2} \Lambda_2^\sharp=\pi \Lambda_0.
			\end{equation}
		\end{enumerate}
	\end{lemma}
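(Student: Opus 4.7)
The plan is to reduce both parts to linear algebra over $\F_\q$ via the $k$-point description of Proposition~\ref{prop:k points of N}, specialized to $n=3$.

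For (1), fix $\Lambda_2\in\cV^2$. A self-dual $\Oo_{\breve F}$-lattice $M\subset\Lambda_2\otimes_{\Oo_F}\Oo_{\breve F}$ automatically satisfies $\Lambda_2^\sharp\otimes_{\Oo_F}\Oo_{\breve F}\subset M$ by dualizing, so $M$ is determined by the $k$-subspace $\bar M\subset W:=(\Lambda_2/\Lambda_2^\sharp)\otimes_{\F_\q}k$, a $2$-dimensional $k$-vector space carrying a non-degenerate induced form; the self-duality of $M$ translates to $\bar M$ being a Lagrangian line. The remaining $\tau$-stability conditions from Proposition~\ref{prop:k points of N} are then checked to be automatic in this low-dimensional setting (compatibly with the Deligne-Lusztig description of \cite[Section 6]{RTW} for type-$2$ vertex lattices when $n=3$), identifying $\cN_{\Lambda_2}$ with $\bP(W)\cong\bP^1_k$. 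Its $\F_\q$-rational points are the Frobenius-stable lines, equivalently the $\F_\q$-lines in $\Lambda_2/\Lambda_2^\sharp$; these biject with self-dual lattices $\Lambda_0$ satisfying $\Lambda_2^\sharp\subset\Lambda_0\subset\Lambda_2$, giving the count of $\q+1$ points.

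For (2), fix $\Lambda_0\in\cV^0$. Any $\Lambda_2\in\cV^2$ containing $\Lambda_0$ satisfies the chain $\pi\Lambda_2\subset\Lambda_2^\sharp\subset\Lambda_0=\Lambda_0^\sharp\subset\Lambda_2\subset\pi^{-1}\Lambda_0$, forcing $\dim_{\F_\q}(\Lambda_2/\Lambda_0)=1$. Multiplication by $\pi$ identifies $\Lambda_2/\Lambda_0$ with a line $\ell\subset\Lambda_0/\pi\Lambda_0$. The reduced form on $\Lambda_0/\pi\Lambda_0$ is non-degenerate of rank $3$ and symmetric bilinear (the conjugation in $F/F_0$ reduces to the identity on $\F_\q$ since $F/F_0$ is ramified), and the condition $\pi\Lambda_2\subset\Lambda_2^\sharp$ translates to $\ell$ being isotropic for this form. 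By Chevalley-Warning such a ternary form always represents $0$, so the locus $\{q=0\}\subset\bP^2_{\F_\q}$ is a smooth conic with $\q+1$ rational points, proving the counting statement.

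For the intersection formula \eqref{eq:lambdasharpintersection}, a direct sesquilinearity computation shows that for each $\Lambda_2\supset\Lambda_0$ with associated isotropic line $\ell$, one has $\Lambda_2^\sharp/\pi\Lambda_0=\ell^\perp$ as a $2$-dimensional subspace of $\Lambda_0/\pi\Lambda_0$. As $\ell$ ranges over the $\q+1$ points of the smooth conic, the subspaces $\ell^\perp$ are precisely the tangent hyperplanes to the conic; their intersection is empty since the dual conic is smooth and spans $\bP^2$. Hence $\bigcap_\ell\ell^\perp=0$ in $\Lambda_0/\pi\Lambda_0$, i.e.\ $\bigcap_{\Lambda_2\supset\Lambda_0}\Lambda_2^\sharp=\pi\Lambda_0$. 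The main obstacle is verifying that the $\tau$-stability conditions from Proposition~\ref{prop:k points of N} impose no extra constraints in the identification $\cN_{\Lambda_2}\cong\bP(W)$ in part (1); this can be handled by direct inspection of the Frobenius structure on the $2$-dimensional $k$-space $W$.
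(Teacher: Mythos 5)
Your proof is correct and takes essentially the same route as the paper, reducing to linear algebra in $\Lambda_2/\Lambda_2^\sharp$ for (1) and in $\Lambda_0/\pi\Lambda_0$ for (2) via Proposition~\ref{prop:k points of N}.

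A few remarks on the differences. In (1) the paper case-splits on $M=\tau(M)$ versus $M\neq\tau(M)$ before arriving at the chain $\Lambda_2^\sharp\otimes\Oo_{\breve F}\subset M\subset\Lambda_2\otimes\Oo_{\breve F}$, whereas you go directly by dualizing; your variant is cleaner, but it leaves implicit the key fact that every line in $W$ is Lagrangian. This is true because the induced form $\pi(x,y)\bmod\pi$ on $\Lambda_2/\Lambda_2^\sharp$ is \emph{alternating}: $\pi(y,x)=-\overline{\pi(x,y)}\equiv -\pi(x,y)\pmod\pi$ since conjugation is trivial on the residue field. Without spelling this out, the passage from ``$\bar M$ is Lagrangian'' to ``$\cN_{\Lambda_2}\cong\bP(W)$'' is a gap; had the form been symmetric, the Lagrangian locus would be $0$ or $2$ points rather than all of $\bP^1$. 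Similarly, the verification that $\pi\tau(M)\subset M\subset\pi^{-1}\tau(M)$ and $\dim_k(M+\tau(M))/M\le 1$ are automatic is asserted rather than shown, though it is a two-line check once you note that $\tau$ preserves $\Lambda_2\otimes\Oo_{\breve F}$ and $\Lambda_2^\sharp\otimes\Oo_{\breve F}$.

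Part (2) matches the paper (counting isotropic lines of a non-degenerate rank-$3$ symmetric form over $\F_\q$), except that you invoke the smooth-conic point count where the paper enumerates isotropic lines explicitly. For the intersection formula \eqref{eq:lambdasharpintersection} your argument is genuinely nicer than the paper's ``check directly using this basis'': identifying $\Lambda_2^\sharp/\pi\Lambda_0$ with the tangent line $\ell^\perp$ at the conic point $\ell$ and observing that the rational points of a smooth conic are not collinear (so $\sum_\ell\ell=\Lambda_0/\pi\Lambda_0$, equivalently $\bigcap_\ell\ell^\perp=0$) is a clean conceptual replacement for the explicit coordinate computation. Overall: correct, same strategy, with one small unaddressed step (alternating form) in (1) and a tidier finish in (2).
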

	\begin{proof}
		Suppose $z\in \cN(k)$ and $M\coloneqq M(z)\subset N$ is defined as in Proposition \ref{prop:k points of N}. Since $n=3$, by \cite[Proposition 4.1]{RTW} we have $\Lambda(M)\otimes_{\Oo_F} \Oo_{\breve F}=M+\tau(M)$.
		
		\noindent Proof of (1):
		Suppose $z\in \cN_{\Lambda_2}(k)$, i.e. $M\subset \Lambda_2$.
		
		If $M=\tau(M)$, then $M=\Lambda_0\otimes_{\Oo_F} \Oo_{\breve F}$ for some  $\Lambda_0\in \cV^0$ contained in $\Lambda_2$.
		
		If $M\neq \tau(M)$, then by taking the dual of $M\subset \Lambda_2\otimes_{\Oo_F} \Oo_{\breve F}$ we have the following sequence of inclusions
		\begin{equation}\label{eq:MtauMandLambda_2}
			(\Lambda_2\otimes_{\Oo_F} \Oo_{\breve F})^\sharp  \stackrel{1}{\subset} M \stackrel{1}{\subset} M+\tau(M)= \Lambda_2\otimes_{\Oo_F} \Oo_{\breve F}.
		\end{equation}
		In both cases the class of $M$ in $\Lambda_2\otimes_{\Oo_F} \Oo_{\breve F}/(\Lambda_2\otimes_{\Oo_F} \Oo_{\breve F})^\sharp\cong k^2$ is a line. This finishes the proof of (1).
		
		\noindent
		Proof of (2): For each $\Lambda_0\in \cV^0$ we just need to count the number of lattices $\Lambda_2\in \cV^2$ that contains $\Lambda_0$. We have the following sequence of inclusions
		\[\pi \Lambda_0 \stackrel{2}{\subset} \Lambda_2^\sharp \stackrel{1}{\subset} \Lambda_0\stackrel{1}{\subset}\Lambda_2.\]
		With respect to the quadratic form $(\, ,)\pmod{\pi}$ on $\Lambda_0/\pi \Lambda_0$, the dual lattice $\Lambda_2^\sharp$ corresponds to the $2$-dimensional subspaces $U\coloneqq \Lambda_2^\sharp/\pi \Lambda_0$ in $\Lambda_0/\pi \Lambda_0$ such that $U^\bot \stackrel{1}{\subset} U$. Thus, we just need to count the number of isotropic lines $U^\bot$. Assume that $\{e_1,e_2,e_3\}$ is a basis of $\Lambda_0/\pi\Lambda_0$ whose Gram matrix with respect to the quadratic form $(\, ,)_\bX \pmod \pi$ is
		\[\left(\begin{array}{ccc}
			& 1 &\\
			1 & & \\
			& & \epsilon
		\end{array}\right).\]
		It is easy to see that the isotropic lines are $\mathrm{Span}\{e_1\}$, $\mathrm{Span}\{e_2\}$ and $\mathrm{Span}\{e_1-\frac{\epsilon a^2}{2}e_2+a e_3\}$ ($a\in \F_\q^\times$). Finally,
		equation \eqref{eq:lambdasharpintersection} can be checked directly using this basis.
	\end{proof}
	It is well known that $\cL$ is a tree, see for example \cite[Section 3 of Chapter VI]{brown1989buildings}. More specifically, the vertices of $\cL$ correspond to vertex lattices of type $2$ or $0$. There is an edge between $\Lambda\in \cV^2$ and $\Lambda_0\in \cV^0$ if $\Lambda_0\subset \Lambda$. We give each edge length $\frac{1}{2}$. This defines a metric $d(\, ,)$ on $\cL$. Recall that we have defined $\cL(L)$ in \eqref{eq:cLL}.
	Then the boundary  of $\cL(L)$ is the set
	\begin{equation}\label{eq:cBL}
		\mathcal{B}(L)=\{\Lambda\in \cV^0(L)\mid \exists\, \Lambda_2\in\cV^2\text{ such that }\Lambda\subset\Lambda_2,\Lambda_2\notin\cL(L)\}.
	\end{equation}
	
	Recall we have the isomorphism $b:\bV\rightarrow C$ defined in \eqref{eq: C V isomorphism}.
	Recall from \cite{KR3} or \cite{HSY} that the vertices of $\cL_{2,1}$ correspond to vertex lattices of type $2$, and an edge corresponds to a vertex lattice of type $0$. Each vertex of $\cL_{2,1}$ is contained in $\q+1$ edges and each edge connects exactly two vertices.
	For $\bx\in \bV$ with $\val(\bx)=0$ and $\mathrm{Span}_F\{\bx\}^{\perp}$ split, recall that $\cL_{2,1}$ is the Bruhat-Tits tree of $\cZ^\Pap(\bx)\cong \cN^\Pap_{2,1}$.
	Then $\bx$ determines an embedding $\cL_{2,1}\hookrightarrow \cL$ defined as follows. First we send each vertex of $\cL_{2,1}$ corresponding to a vertex lattice $\Lambda\subset\mathrm{Span}_F\{b(\bx)\}^{\perp}$ of type $2$ to the vertex of $\cL$ corresponding to the type $2$ lattice $\Lambda\obot\spa \{b(\bx)\}$. An edge of $\cL_{2,1}$ corresponding to a type 0 lattice $\Lambda_0\subset \spa_F\{b(\bx)\}^{\perp}$ is broken into two pieces evenly and sent to the union of the two edges in $\cL$ joining the two vertices corresponding to $\Lambda\obot\spa \{b(\bx)\}$ and $\Lambda'\obot\spa \{b(\bx)\}$ where $\Lambda$ and $\Lambda'$ are the two type $2$ lattices containing $\Lambda_0$.

	\subsection{Rank $1$ case}
	\begin{lemma}\label{lem:cNandcV_Lambda}
		A point $z\in \cN_3^\Pap(k)$ is in $\cZ^\Pap(\bx)(k)$ if and only if $b(\bx)\in M(z)$.
		\begin{enumerate}
			\item Assume $\Lambda_0\in \cV^0$, then the superspecial point $\cN_{\Lambda_0}(k)$  is in $\cZ^\Pap(\bx)(k)$ if and only if $b(\bx)\in \Lambda_0$.
			\item Assume $\Lambda_2\in \cV^2$, then
			\begin{align*}\cZ^\Pap(\bx)(k) \cap \cN_{\Lambda_2}(k)=\begin{cases}
					\cN_{\Lambda_2}(k) & \text{ if } b(\bx) \in \Lambda_2^\sharp, \\
					\text{a superspecial point in } \cN_{\Lambda_2}(k)  & \text{ if } b(\bx)\in \Lambda_2 \backslash \Lambda_2^\sharp,\\
					\emptyset & \text{ if } b(\bx)\notin \Lambda_2.
			\end{cases}\end{align*}
		\end{enumerate}
	\end{lemma}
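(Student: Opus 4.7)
The plan is to reduce the main equivalence to Dieudonn\'e theory, and then to derive parts (1) and (2) from Proposition \ref{prop:k points of N} together with the description of $\cN_{\Lambda_2}(k)$ obtained in the proof of Lemma \ref{lem:B-T building}.

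For the main equivalence I would unwind the definition of $\cZ^\Pap(\bx)$: the point $z = (X,\iota,\lambda,\rho)$ lies in $\cZ^\Pap(\bx)(k)$ iff the quasi-homomorphism $\rho^{-1}\circ\bx\circ\rho_\cG:\cG\to X$ extends to a genuine homomorphism, which by Dieudonn\'e theory is equivalent to the induced map on rational Dieudonn\'e modules sending $M(\cG)=\Oo_{\breve F}\cdot e$ into $M(X)$, that is, $b_\bx=\bx(e)\in \rho_*(M(X))=M(z)$. Statement (1) is then immediate: the superspecial point $\cN_{\Lambda_0}(k)$ corresponds to $M(z)=\Lambda_0\otimes_{\Oo_F}\Oo_{\breve F}$, and since $b_\bx\in C=N^\tau$ while $\Lambda_0\otimes\Oo_{\breve F}$ is $\tau$-stable with $\tau$-invariants equal to $\Lambda_0$, the condition $b_\bx\in M(z)$ collapses to $b_\bx\in\Lambda_0$.

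For (2) I would invoke the dichotomy recorded in the proof of Lemma \ref{lem:B-T building}: every $M\in\cN_{\Lambda_2}(k)$ either equals $\Lambda_0\otimes\Oo_{\breve F}$ for a type-$0$ lattice $\Lambda_0\subset\Lambda_2$ (one of the $\q+1$ superspecial points of $\cN_{\Lambda_2}\cong\bP^1$), or else satisfies $\Lambda_2^\sharp\otimes\Oo_{\breve F}\subsetneq M\subsetneq\Lambda_2\otimes\Oo_{\breve F}$ with each inclusion of colength one, in which case $M$ is specified by a $k$-line in $(\Lambda_2/\Lambda_2^\sharp)\otimes_{\F_\q}k$. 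If $b_\bx\in\Lambda_2^\sharp$, every such $M$ (and every superspecial $\Lambda_0\subset\Lambda_2$, which contains $\Lambda_2^\sharp$ by self-duality) automatically contains $b_\bx$, so the intersection equals $\cN_{\Lambda_2}(k)$; if $b_\bx\notin\Lambda_2$, the identity $(\Lambda_2\otimes\Oo_{\breve F})\cap C=\Lambda_2$ (which holds since $\Lambda_2\otimes\Oo_{\breve F}$ is $\tau$-stable) shows $b_\bx\notin M$ for any such $M$, so the intersection is empty. The interesting case is $b_\bx\in\Lambda_2\setminus\Lambda_2^\sharp$: the image $\bar{b_\bx}\in\Lambda_2/\Lambda_2^\sharp\cong\F_\q^2$ is nonzero and $\F_\q$-rational, hence spans a unique $\F_\q$-line, which in turn gives a unique $k$-line in $\bP^1$ containing $\bar{b_\bx}$; this line being $\F_\q$-rational is automatically $\tau$-stable, so the corresponding $M$ is a unique superspecial point, matching Lemma \ref{lem:B-T building}(1). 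I do not expect any genuine obstacle; the only care needed is the observation that the unique $M$ selected by $b_\bx$ is forced to be $\tau$-stable (hence superspecial) precisely because $\bar{b_\bx}$ is defined over $\F_\q$, and so it cannot lie in the generic part of $\cN_{\Lambda_2}$.
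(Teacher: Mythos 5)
Your proof is correct and follows essentially the same route as the paper: reduce to $b_\bx\in M(z)$ via Dieudonn\'e theory, use $M(z)=\Lambda_0\otimes_{\Oo_F}\Oo_{\breve F}$ for (1), and use the dichotomy from \eqref{eq:MtauMandLambda_2} plus the identification of $\cN_{\Lambda_2}(k)$ with $k$-lines in $\Lambda_2\otimes\Oo_{\breve F}/\Lambda_2^\sharp\otimes\Oo_{\breve F}$ for (2). Your handling of the middle case $b_\bx\in\Lambda_2\setminus\Lambda_2^\sharp$ is a bit cleaner than the paper's: rather than first exhibiting the superspecial point $\Lambda_2^\sharp+\spa\{b_\bx\}$ and then separately ruling out the open stratum, you argue uniformly that $\bar{b_\bx}$ is a nonzero $\F_q$-rational vector, so exactly one $k$-line contains it, and $\F_q$-rationality forces $\tau$-stability, hence superspeciality; this also makes the uniqueness of the resulting superspecial point (which the paper leaves implicit) manifest.
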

	\begin{proof}
		By Dieudonn\'e theory, $z\in \cZ^\Pap(\bx)(k)$ if and only if $\bx(M(\bY))\subset M(z)$ if and only if $b(\bx)\in M(z)$ since $e$ is a generator of $M(\bY)$. For $z=\cN_{\Lambda_0}(k)$ where $\Lambda_0\in\cV^0$, we have $M(z)=\Lambda_0\otimes_{\Oo_F} \Oo_{\breve F}$. Hence (1) immediately follows.
		
		Now we proceed to prove (2).
		If $b(\bx)\in \Lambda^\sharp$, then \eqref{eq:MtauMandLambda_2} tells us that $z\in \cZ^\Pap(\bx)(k)$ for any $z\in\cN^\circ_{\Lambda_2}(k)$. The  fact that $\Lambda_2^\sharp\subset\Lambda_0$ for any $\Lambda_0\in \cL^0$ contained in $\Lambda_2$ implies that $\cN_{\Lambda_0}(k)\in\cZ^\Pap(\bx)(k)$. So $\cN_{\Lambda_2}(k)\subset \cZ^\Pap(\bx)(k)$.
		
		If $b(\bx)\in \Lambda_2\backslash \Lambda_2^\sharp$, then $\Lambda_0\coloneqq \Lambda^\sharp+\spa\{b(\bx)\}$ is a type $0$ lattice contained in $\Lambda_2$ and $\cN_{\Lambda_0}(k)\in \cZ^\Pap(\bx)(k)$. On the other hand,  since $\tau(\Lambda_0\otimes_{\Oo_F} \Oo_{\breve F})=\Lambda_0\otimes_{\Oo_F} \Oo_{\breve F}$, equation \eqref{eq:MtauMandLambda_2} tells us that $\cZ^\Pap(\bx)$ does not contain any point in $\cN^\circ_{\Lambda_2}(k)$.

		If $b(\bx)\notin \Lambda_2$, then $b(\bx)\notin M(z)$ for any $z\in \cN_{\Lambda_2}(k)$, hence $\cZ^\Pap(\bx)(k) \cap \cN_{\Lambda_2}(k)=\emptyset$.
	\end{proof}

	\begin{corollary}\label{cor:cV_Lambda contained in Z(pi x)}
		Let $L \subset \bV$. Assume $z\in \cZ^\Pap(L)(k)$ and $z \in \cN_{\Lambda}(k)$ where $\Lambda\in \cV^2$. Then $\cN_{\Lambda}\subset \cZ^\Pap(\pi L).$
	\end{corollary}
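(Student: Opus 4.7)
The plan is to argue that the hypothesis forces $b(L)$ to lie inside the type $2$ lattice $\Lambda$, and then use the defining property of a vertex lattice of type $2$ (namely $\pi\Lambda \subseteq \Lambda^\sharp$) together with Lemma \ref{lem:cNandcV_Lambda}(2) to conclude.

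First I would translate the hypothesis. Since $z\in \cN_\Lambda(k)$, by the description of $\cN_\Lambda(k)$ recalled in Section \ref{subsec:bruhat tits in general} we have $M(z)\subset \Lambda\otimes_{\Oo_F}\Oo_{\breve F}$. By Lemma \ref{lem:cNandcV_Lambda}, the assumption $z\in\cZ^\Pap(L)(k)$ translates to $b_{\bx}\in M(z)$ for every $\bx\in L$. Combining these two facts with $b_{\bx}\in C$ and the obvious equality $(\Lambda\otimes_{\Oo_F}\Oo_{\breve F})\cap C=\Lambda$ yields
\[
b(L) \,\subset\, M(z)\cap C \,\subset\, (\Lambda\otimes_{\Oo_F}\Oo_{\breve F})\cap C \,=\, \Lambda.
\]

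Next, since $\Lambda\in\cV^2$ we have by definition $\pi\Lambda\subseteq \Lambda^\sharp$, hence
\[
b(\pi L) \,=\, \pi\, b(L) \,\subseteq\, \pi\Lambda \,\subseteq\, \Lambda^\sharp.
\]
For an arbitrary $\by\in\pi L$ this gives $b_{\by}\in\Lambda^\sharp$, and by Lemma \ref{lem:cNandcV_Lambda}(2) (the first case) we obtain $\cN_\Lambda\subset \cZ^\Pap(\by)$. Intersecting over all $\by\in\pi L$ yields $\cN_\Lambda\subset \cZ^\Pap(\pi L)$, which is the claim.

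There is essentially no obstacle here; the corollary is a direct consequence of Lemma \ref{lem:cNandcV_Lambda} and the vertex lattice inclusion $\pi\Lambda\subseteq \Lambda^\sharp$. The only small point to double-check is that testing on $k$-points is enough to conclude the scheme-theoretic inclusion $\cN_\Lambda\subset\cZ^\Pap(\pi L)$, which follows because $\cN_\Lambda$ is reduced of finite type over $k$ and $\cZ^\Pap(\pi L)$ is a closed formal subscheme of $\cN_3^\Pap$, so any closed reduced subscheme contained set-theoretically in $\cZ^\Pap(\pi L)$ is contained in it.
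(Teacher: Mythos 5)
Your argument is correct, and it is the one implicitly intended by the paper (the statement is given as an immediate corollary of Lemma \ref{lem:cNandcV_Lambda} without an explicit proof). You correctly chain together: $z\in\cN_\Lambda(k)$ gives $M(z)\subset\Lambda\otimes_{\Oo_F}\Oo_{\breve F}$, Lemma \ref{lem:cNandcV_Lambda} converts $z\in\cZ^\Pap(L)(k)$ into $b(L)\subset M(z)$, descent gives $b(L)\subset\Lambda$, the defining inclusion $\pi\Lambda\subseteq\Lambda^\sharp$ for a vertex lattice puts $b(\pi L)$ in $\Lambda^\sharp$, and Lemma \ref{lem:cNandcV_Lambda}(2) then shows $\cN_\Lambda(k)\subset\cZ^\Pap(\pi L)(k)$; passing to scheme-theoretic inclusion via reducedness of $\cN_\Lambda$ is the right final step.
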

	
	\begin{corollary}\label{cor: cV Lambda subset Z(bx/pi) or not}
		Assume $\bx \in \bV$ and $\mathrm{v}(\bx)>0$. Assume $\cN_{\Lambda}\subset \cZ^\Pap(\bx)_{\red}$ where $\Lambda\in\cV^2$, then either $\cN_{\Lambda}\subset \cZ^\Pap(\frac{1}{\pi}\bx)_{\red}$ or $\cN_{\Lambda}\cap \cZ^\Pap(\frac{1}{\pi}\bx)_{\red}$ is a unique superspecial point.
	\end{corollary}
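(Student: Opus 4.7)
\smallskip

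The plan is to reduce everything to Lemma \ref{lem:cNandcV_Lambda}(2), which describes the intersection $\cN_\Lambda \cap \cZ^\Pap(\by)_{\red}$ purely in terms of whether $b_\by$ lies in $\Lambda^\sharp$, in $\Lambda\setminus\Lambda^\sharp$, or outside $\Lambda$. Under the isomorphism $b : \bV \to C$ we have $b_{\pi^{-1}\bx} = \pi^{-1} b_\bx$, so by hypothesis $b_\bx \in \Lambda^\sharp$ and we only need to determine where $\pi^{-1} b_\bx$ sits relative to the chain $\pi\Lambda \subset \Lambda^\sharp \subset \Lambda \subset \pi^{-1}\Lambda^\sharp$. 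The three cases for $\pi^{-1} b_\bx$ are exactly: (i) in $\Lambda^\sharp$, equivalent to $b_\bx \in \pi\Lambda^\sharp$, giving $\cN_\Lambda \subset \cZ^\Pap(\pi^{-1}\bx)_{\red}$; (ii) in $\Lambda\setminus\Lambda^\sharp$, equivalent to $b_\bx \in \pi\Lambda \setminus \pi\Lambda^\sharp$, giving a unique superspecial point; and (iii) not in $\Lambda$, equivalent to $b_\bx \in \Lambda^\sharp \setminus \pi\Lambda$, giving empty intersection.

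Thus the entire content of the corollary is that the hypothesis $\val(\bx)>0$ rules out case (iii). The main (and only nontrivial) step is to show: if $b_\bx \in \Lambda^\sharp \setminus \pi\Lambda$, then $(b_\bx,b_\bx) \in \Oo_{F_0}^\times$. For this I will use the Jordan decomposition of a type $2$ vertex lattice in dimension $3$: since $\dim_{\F_q}(\Lambda/\Lambda^\sharp)=2$ and $\pi\Lambda \subset \Lambda^\sharp$, we can write $\Lambda = L_0 \obot L_{-1}$ with $L_0 = \langle u \rangle$ unimodular of rank $1$ ($u \in \Oo_{F_0}^\times$) and $L_{-1}$ of rank $2$ satisfying $L_{-1}^\sharp = \pi L_{-1}$; consequently $\Lambda^\sharp = L_0 \obot \pi L_{-1}$ and $\Lambda^\sharp/\pi\Lambda \cong L_0/\pi L_0$ is one--dimensional, generated by $u$.

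Writing $b_\bx = cu + v$ with $c \in \Oo_F$ and $v \in \pi L_{-1}$, the condition $b_\bx \notin \pi\Lambda$ forces $c \in \Oo_F^\times$. The key computation is then
\[
(b_\bx,b_\bx) = c\bar c \, u^2 + (v,v),
\]
where $c\bar c\, u^2 \in \Oo_{F_0}^\times$, while $(v,v) \in (\pi L_{-1},\pi L_{-1}) \subset \pi\bar\pi \cdot \pi^{-1}\Oo_F = \pi\Oo_F$, intersected with $F_0$ gives $(v,v) \in \pi_0 \Oo_{F_0}$. Therefore $(b_\bx,b_\bx) \in \Oo_{F_0}^\times$, contradicting $\val(\bx)>0$. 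This rules out case (iii) and completes the proof.

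The hard part will be nothing more than keeping the Jordan decomposition and the various dualities straight; once the decomposition $\Lambda = \langle u \rangle \obot L_{-1}$ is in place, the valuation estimate is immediate because the obstruction to lowering the valuation of $b_\bx$ sits in the unimodular summand $L_0$, whose contribution is a unit.
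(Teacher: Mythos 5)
The paper states this corollary with no proof, treating it as an immediate consequence of Lemma \ref{lem:cNandcV_Lambda}(2), and your argument supplies exactly the computation the authors left implicit. You correctly identify the single nontrivial point: ruling out $\pi^{-1}b_\bx \notin \Lambda$, equivalently $b_\bx \in \Lambda^\sharp \setminus \pi\Lambda$, and then you show this forces $(b_\bx,b_\bx)$ to be a unit via the Jordan decomposition $\Lambda = L_0 \obot L_{-1}$ with $L_0$ unimodular of rank $1$ and $L_{-1}$ of rank $2$ with $L_{-1}^\sharp = \pi L_{-1}$. The decomposition exists and has these ranks precisely because $n=3$ and $\Lambda$ has type $2$; the valuation estimate on the $\pi L_{-1}$-component is correct, so $(b_\bx,b_\bx)$ is a unit, contradicting $\mathrm{v}(\bx)>0$ (or, in the degenerate case $q(\bx)=0$, contradicting $(b_\bx,b_\bx)=0$). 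Two small notational slips that do not affect the argument: you write $b_\bx = cu + v$ where you mean $b_\bx = c e_0 + v$ with $e_0$ a generator of $L_0$ satisfying $(e_0,e_0)=u$; and correspondingly the contribution should be $c\bar c\, u$, not $c\bar c\, u^2$ (though both are units, so the conclusion is unchanged). As an aside, there is an equivalent route the paper already has on the page in Lemma \ref{lem:B-T building}(2): the radical of the quadratic form $(\,,\,)\bmod\pi$ restricted to $U=\Lambda^\sharp/\pi\Lambda_0$ is exactly $\pi\Lambda/\pi\Lambda_0$, so any $b_\bx\in\Lambda^\sharp\setminus\pi\Lambda$ has nonzero norm mod $\pi$. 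Your Jordan-decomposition argument and that quadratic-form argument are essentially the same computation in different clothing.
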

	
	\begin{lemma}\label{lem:cZisconnected}
		For $L\subset \bV$ a lattice of arbitrary rank, $\cZ^\Pap(L)_{\red}$ is connected.
	\end{lemma}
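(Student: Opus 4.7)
The plan is to reduce the question to a combinatorial statement about the Bruhat-Tits tree $\cL$. Using Lemma \ref{lem:cNandcV_Lambda} together with the stratification $\cN^\Pap_{\red}=\bigsqcup_{\Lambda\in\cV}\cN_\Lambda^\circ$ and the closure relation $\cN_\Lambda=\bigsqcup_{\Lambda'\subseteq\Lambda}\cN_{\Lambda'}^\circ$, one first identifies
\[
\cZ^\Pap(L)_{\red} \;=\; \bigcup_{\Lambda\in\cV(L)}\cN_\Lambda.
\]
The nontrivial inclusion $\subseteq$ uses that a point $z\in\cN_\Lambda^\circ$ lying in every $\cZ^\Pap(\bx)_{\red}$ for $\bx\in L$ forces $\bx\in\Lambda^\sharp$: for $\Lambda\in\cV^0$ this is Lemma \ref{lem:cNandcV_Lambda}(1); for $\Lambda\in\cV^2$ the middle case of Lemma \ref{lem:cNandcV_Lambda}(2) is excluded because $z$ is not superspecial.

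Each stratum $\cN_\Lambda$ is connected (a point or a $\bP^1$), and $\cN_\Lambda\cap\cN_{\Lambda'}\ne\varnothing$ iff one of $\Lambda,\Lambda'$ contains the other, which is precisely the adjacency relation in $\cL$. Therefore the connectedness of $\cZ^\Pap(L)_{\red}$ is equivalent to that of the subgraph $\cL(L)\subset\cL$ spanned by $\cV(L)$. Because $\cL$ is a tree, a nonempty subgraph is connected iff it is \emph{convex} (closed under taking the geodesic between any two of its vertices), and intersections of convex subsets of a tree are convex. Since $\cL(L)=\bigcap_{\bx\in L}\cL(\bx)$, the problem reduces to showing that $\cL(\bx)$ is convex for each $\bx\in\bV$.

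I anticipate this last step to be the main obstacle. It is vacuous when $\bx=0$ (then $\cL(\bx)=\cL$) and when $\mathrm{v}_\pi(h(\bx,\bx))<0$ (then $\cL(\bx)=\varnothing$, as $\bx\in\Lambda^\sharp$ forces $h(\bx,\bx)\in\Oo_F$), so one may assume $\mathrm{v}(\bx)\ge 0$ and $\bx\ne 0$. The plan here is to argue directly on the unique geodesic $\Lambda=\Lambda^{(0)}-\Lambda_0^{(1)}-\Lambda^{(1)}-\cdots-\Lambda^{(k)}=\Lambda'$ in $\cL$ between two given $\Lambda,\Lambda'\in\cV(\bx)$: each intermediate vertex lattice can be expressed via a finite sequence of sum/intersection/dual operations applied to $\Lambda$ and $\Lambda'$, all of which preserve the defining condition $\bx\in(\cdot)^\sharp$, so every vertex on the geodesic also lies in $\cV(\bx)$. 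A cleaner alternative, available after an appropriate rescaling placing $\bx$ in a self-dual line, is to identify $\cL(\bx)$ with the image of the embedding $\cL_{2,1}(\bx)\hookrightarrow\cL$ recalled at the end of Section 7.1, thereby realizing $\cL(\bx)$ as the Bruhat-Tits tree of $\mathrm{U}(\bx^\perp)$, which is manifestly a tree.
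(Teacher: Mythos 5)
Your approach is genuinely different from the paper's: the paper proves connectedness by contradiction, using transitivity of $\mathrm{SU}(\bV)$ on $\cL$ to find an auxiliary $\bx$ with $\cZ^\Pap(\bx)\cong\cN^\Pap_{2,1}$ meeting two hypothetical components, and then invokes the $n=2$ connectedness results of \cite{HSY} (Corollaries 3.13, 3.15, Lemma 3.16). Your route is purely combinatorial and, if completed, would be self-contained. The reduction you set up is sound: the identification $\cZ^\Pap(L)_{\red}=\bigcup_{\Lambda\in\cV(L)}\cN_\Lambda$ does follow from Lemma \ref{lem:cNandcV_Lambda}, and your implicit use of the fact that $\cV(L)$ is ``downward closed'' ($\Lambda_2\in\cV^2(L)$ and $\Lambda_0\subset\Lambda_2$ force $\Lambda_0\in\cV^0(L)$, since $\Lambda_2^\sharp\subseteq\Lambda_0=\Lambda_0^\sharp$) is exactly what makes topological connectedness of the union equivalent to connectedness of the subgraph $\cL(L)$. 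The tree-convexity formalism and the observation $\cL(L)=\bigcap_{\bx\in L}\cL(\bx)$ are also correct.

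However, there is a genuine gap at the step you yourself flag as the main obstacle: convexity of $\cL(\bx)$. Neither of your two proposed routes, as written, closes it. (i) Your primary sketch asserts that all intermediate vertex lattices on the geodesic from $\Lambda$ to $\Lambda'$ are obtained by ``sum/intersection/dual'' operations, each preserving $\bx\in(\cdot)^\sharp$. Sum and intersection do preserve this condition (since $(\Lambda_1+\Lambda_2)^\sharp=\Lambda_1^\sharp\cap\Lambda_2^\sharp$ and $(\Lambda_1\cap\Lambda_2)^\sharp=\Lambda_1^\sharp+\Lambda_2^\sharp$), but dualization does not: $\bx\in\Lambda^\sharp$ does not imply $\bx\in(\Lambda^\sharp)^\sharp=\Lambda$. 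Moreover you have not actually verified that the geodesic is generated by such operations. (ii) Your ``cleaner alternative'' is only valid for $\mathrm{v}(\bx)=0$: one cannot rescale $\bx$ into a self-dual line when $\mathrm{v}(\bx)>0$. For $\mathrm{v}(\bx)>0$, the ball/neighborhood description of $\cL(\bx)$ is Proposition \ref{prop:n=3reducedlocusI}(2),(4), and the paper's proof of those parts explicitly invokes Lemma \ref{lem:cZisconnected} (to rule out isolated $\Lambda_0\in\cV^0(\bx)$), so quoting them here would be circular. What would rescue the argument is a direct verification: place $\Lambda$ and $\Lambda'$ in a common apartment (a line, since $\mathrm{SU}_3$ has split rank one) with a compatible diagonalizing basis, observe that the membership conditions $\bx\in\Lambda_m^\sharp$ become valuation inequalities that are monotone in the apartment parameter $m$, hence cut out an interval, and conclude that $\cV(\bx)$ meets every apartment in a convex segment. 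That computation is not hard, but it is not in your proposal, and without it the argument is incomplete.
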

	\begin{proof}
		Suppose $\cZ^\Pap(L)_{\red}$ has  two different connected components $U_1$ and $U_2$. Since $\mathrm{SU}(\bV)$ acts transitively on $\cL$, we can find a $\bx\in\bV$ such that $\cZ^\Pap(\bx)\cong \cN^\Pap_{2,1}$ (i.e. $ \{\bx\}^\bot$ is split) and $\cZ^\Pap(\bx)_{\red}\cap U_i\neq \emptyset$ for $i=1,2$. Hence the reduced locus of
		\[\cZ^\Pap(L\oplus\spa\{\bx\})\cong \cZ^\Pap_{2,1}(L')\]
		is not connected where $L'$ is the orthogonal projection of $L$ onto $\{\bx\}^\bot$. This contradicts Corollaries 3.13, 3.15 and Lemma 3.16 of \cite{HSY}.
	\end{proof}
	
	Recall that for a lattice $L\subset \bV$ (respectively, $\bx\in \bV$), we have defined $\cV(L)$ and $\cL(L)$ (respectively, $\cV(\bx)$ and $\cL(\bx)$) in Section \ref{subsec:bruhat tits in general}.
	\begin{proposition}\label{prop:n=3reducedlocusI}
		Assume that $\bx\in \bV$ such that $h(\bx,\bx)\neq 0$. Then  we have
		\[\cZ^\Pap(\bx)_{\red}=\bigcup_{\Lambda\in \cV(\bx)} \cN_\Lambda,\]
		where $\cV(\bx)$ is given  as follows.
		\begin{enumerate}
			\item When $\mathrm{v}(\bx)=0$ and $\spa_F\{\bx\}^{\perp}$ is non-split,  there is a unique vertex lattice $\Lambda_\bx\in \cV^0$ containing $b(\bx)$. In this case $\cV(\bx)=\{\Lambda_\bx\}$.
			\item When  $\mathrm{v}(\bx)=d$ and $\spa_F\{\bx\}^{\perp}$ is non-split, we  have
			\[\cV(\bx)=\{\Lambda\in \cV\mid d(\Lambda, \Lambda_{\bx/\pi^d})\le d \}\]
			where $\Lambda_{\bx/\pi^d}$ is as in (1).
			\item When $\mathrm{v}(\bx)=0$ and $\spa_F\{\bx\}^{\perp}$ is split, $\cL(\bx)$ is the tree $\cL_{2,1}$.
			\item When
			$\mathrm{v}(\bx)=d$ and $\spa_F\{\bx\}^{\perp}$ is split, we have
			\[\cV(\bx)=\{\Lambda\in \cV\mid d(\Lambda, \cL(\bx/\pi^d))\le d \}\]
			where $\cL(\bx/\pi^d)$ is as in (3).
			\item When $h(\bx, \bx) \notin \Oo_{F_0}$, $\cV(\bx)$ is empty.
		\end{enumerate}
	\end{proposition}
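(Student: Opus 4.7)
My plan is to first invoke Lemma \ref{lem:cNandcV_Lambda}, which together with the Bruhat-Tits stratification of $\cN^\Pap_{red}$ yields the set-theoretic identity
\[\cZ^\Pap(\bx)_{red}=\bigcup_{\Lambda\in\cV(\bx)}\cN_\Lambda,\qquad \cV(\bx)=\{\Lambda\in\cV\mid b_\bx\in\Lambda^\sharp\},\]
so the whole task reduces to identifying $\cV(\bx)$ in each case. Case (5) is one line: $b_\bx\in\Lambda^\sharp\subset\Lambda$ forces $(b_\bx,b_\bx)\in\Oo_F\cap F_0=\Oo_{F_0}$.

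For Case (1), the unit-norm hypothesis splits $\Oo_F b_\bx$ off as a unimodular summand of any $\Lambda^\sharp$ containing $b_\bx$, giving $\Lambda^\sharp=\Oo_F b_\bx\obot M$ with $M$ integral in the anisotropic plane $\spa_F\{b_\bx\}^\perp$; letting $L^\flat$ be the unique maximal integral lattice there, $M\subset L^\flat$. If $\Lambda\in\cV^0$, then $M$ is self-dual, so $M=L^\flat$ and $\Lambda=\Lambda_\bx\coloneqq\Oo_F b_\bx\obot L^\flat$ is pinned down. If $\Lambda\in\cV^2$, then $M$ is an index-$q$ sublattice of $L^\flat$ corresponding to an $\F_q$-line in $L^\flat/\pi L^\flat$; the vertex-lattice condition $\pi M^\sharp\subset M$ (using that the conjugation on $\Oo_F/\pi=\F_q$ is trivial since $\bar\pi\equiv 0$) forces this line to be isotropic for the reduced symmetric $\F_q$-form, contradicting anisotropy of $L^\flat$ over $F$ (whose Gram matrix $\diag(1,\delta)$ has $-\delta$ a non-square, so the reduction stays anisotropic). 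Case (3) is formal: Proposition \ref{prop:cZxunimodular} provides $\cZ^\Pap(\bx)\cong\cN^\Pap_{2,1}$, and the embedding $\cL_{2,1}(\bx)\hookrightarrow\cL$ recalled in Section \ref{subsec:bruhat tits in general} matches the Bruhat-Tits stratifications via $\Lambda'\mapsto\Lambda'\obot\Oo_F b_\bx$.

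Cases (2) and (4) I handle uniformly by induction on $d=\val(\bx)$, with $d=0$ being the above. Writing $\bx=\pi\by$ so $\val(\by)=d-1$, we have $\cV(\by)\subset\cV(\bx)$ (from $b_\bx=\pi b_\by\in\pi\Lambda^\sharp\subset\Lambda^\sharp$), and by induction $\cV(\by)$ equals the closed metric ball $B_{d-1}(\cL(\bx_0))$, with $\bx_0=\bx/\pi^d$. For $B_d(\cL(\bx_0))\subset\cV(\bx)$, I trace $b_\bx=\pi^d b_{\bx_0}$ along an alternating path from $\cL(\bx_0)$ of metric length $\le d$: each type-0-to-type-2 edge absorbs one factor of $\pi$ via $\pi\Lambda_2\subset\Lambda_2^\sharp$, while each type-2-to-type-0 edge absorbs none via $\Lambda_2^\sharp\subset\Lambda_0$, so a path of metric length $r\le d$ consumes at most $\lceil r\rceil\le d$ factors of $\pi$, landing $b_\bx$ in the terminal $\Lambda^\sharp$. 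For $\cV(\bx)\subset B_d$, given $\Lambda\in\cV(\bx)\setminus\cV(\by)$ I produce a vertex of $\cV(\by)$ within metric distance $1$ of $\Lambda$: if $\Lambda\in\cV^2$, Corollary \ref{cor: cV Lambda subset Z(bx/pi) or not} supplies one directly (the intersection $\cN_\Lambda\cap\cZ^\Pap(\by)_{red}$ is a single superspecial $\cN_{\Lambda'}$ with $\Lambda'\in\cV^0$ adjacent to $\Lambda$ and in $\cV(\by)$), while if $\Lambda\in\cV^0$, then $b_\by\notin\Lambda$ but $b_\bx\in\Lambda$ is primitive (else $b_\by=b_\bx/\pi\in\Lambda$), and $h(b_\bx,b_\bx)\in\pi_0^d\Oo_{F_0}\subset\pi\Oo_F$ makes $\overline{b_\bx}$ isotropic in $\Lambda/\pi\Lambda$; the hyperplane $U=\overline{b_\bx}^\perp$ then satisfies $U^\perp\subset U$, so by the proof of Lemma \ref{lem:B-T building}(2) it defines a type-2 vertex lattice $\Lambda_2\supset\Lambda$ with $b_\bx\in\Lambda_2^\sharp$, placing $\Lambda_2\in\cV(\bx)\cap\cV^2$, and applying Corollary \ref{cor: cV Lambda subset Z(bx/pi) or not} to this $\Lambda_2$ supplies a vertex of $\cV(\by)$ within distance $\tfrac12$ of $\Lambda_2$, hence within distance $1$ of $\Lambda$.

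The main obstacle I anticipate is the $\cV^0$ subcase of the last step: Corollary \ref{cor: cV Lambda subset Z(bx/pi) or not} is only stated for $\cV^2$, so this case requires the above explicit construction of the auxiliary $\Lambda_2$ (from the isotropy of $\overline{b_\bx}$, which critically uses the inductive bound $d\ge 1$) together with the verification that $b_\bx\in\Lambda_2^\sharp$.
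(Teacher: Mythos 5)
Your proof is correct and takes essentially the same overall route as the paper: reduce to computing $\cV(\bx)$ via Lemma \ref{lem:cNandcV_Lambda}, handle the base cases $d=0$ by the structure of $\cN^\Pap_{2,\pm1}$ (or by direct lattice arguments), and induct on $d=\mathrm{v}(\bx)$ using Corollary \ref{cor:cV_Lambda contained in Z(pi x)} for the inclusion $B_d\subset\cV(\bx)$ and Corollary \ref{cor: cV Lambda subset Z(bx/pi) or not} for $\cV^2(\bx)\subset B_d$. Where you genuinely diverge is the $\cV^0$ subcase of the inductive bound. The paper disposes of it in one line via Lemma \ref{lem:cZisconnected}: since $\cZ^\Pap(\bx)_{red}$ is connected and (for $d\geq 1$) contains positive-dimensional components, no $\Lambda_0\in\cV^0(\bx)$ can be isolated, so each is contained in some $\Lambda_2\in\cV^2(\bx)$, to which the $\cV^2$ bound applies. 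You instead construct the required $\Lambda_2$ explicitly: the hypothesis $d\geq 1$ forces $(\bar{b}_\bx,\bar{b}_\bx)=0$ in $\Lambda_0/\pi\Lambda_0$, and the resulting isotropic line yields, via the analysis in the proof of Lemma \ref{lem:B-T building}(2), a type-$2$ lattice $\Lambda_2\supset\Lambda_0$ with $b_\bx\in\Lambda_2^\sharp$. This is more work but avoids invoking Lemma \ref{lem:cZisconnected}, which itself ultimately cites structural results on $\cN^\Pap_{2,\pm1}$ from \cite{HSY}; your route is correspondingly more self-contained and gives finer information (an explicit neighbor in $\cV^2(\bx)$ rather than mere existence). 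Likewise in Case (1), the paper's primary argument is that $\cN^\Pap_{2,-1}$ has a single reduced point, with the lattice-theoretic description offered only as an aside; you flesh out that aside, including ruling out $\cV^2$ lattices by the anisotropy of the residue form on the complement, which is correct. One small point worth flagging: your argument leans on Corollary \ref{cor: cV Lambda subset Z(bx/pi) or not} giving a \emph{nonempty} intersection when $\cN_\Lambda\not\subset\cZ^\Pap(\bx/\pi)_{red}$; this does hold, but only because the hypothesis $\mathrm{v}(\bx)>0$ forces $b_\bx/\pi\in\Lambda$, which is not entirely obvious from the corollary's statement and is worth keeping in mind.
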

	\begin{proof}
		Proof of (1): This is a direct consequence of Proposition \ref{prop:cZxunimodular} and the fact that $\cN_{2,-1}^\Pap$ has only one reduced point, see \cite[Section 2]{Shi2} or \cite[Section 8]{RSZ}. Alternatively since $\spa_F\{b(\bx)\}^{\perp}$ is non-split of dimension $2$, it contains a unique self-dual lattice $\Lambda'$, then $\Lambda_\bx\coloneqq \spa\{b(\bx)\}\oplus \Lambda'$ is the unique type $0$ lattice containing $b(\bx)$.
		
		Proof of (3): Applying Proposition \ref{prop:cZxunimodular},we see that  $\cZ^\Pap(\bx)\cong\cN_{2,1}^\Pap$ is the Drinfeld $p$-adic half space, see \cite{KR3} and \cite{HSY}. The required properties of $\cL(\bx)$ and $\cV(\bx)$ follow.
		
		Proof of (2): We prove this by induction. The case $d=0$ is just (1). Now we assume $d>0$ and that the statement holds for $d-1$, i.e.
		\[\cV(\bx/\pi)=\{\Lambda\in \cV\mid d(\Lambda, \Lambda_{\bx/\pi^d})\le d-1 \}.\]
		Then applying   Corollary \ref{cor:cV_Lambda contained in Z(pi x)} to the lattice $L=\spa\{\bx/\pi\}$ we have
		$$
		\bigcup_{\Lambda\in \cV^2,\ d(\Lambda, \Lambda_{\bx/\pi^d})\le d} \cN_{\Lambda}\subset \cZ^\Pap(\bx)_{\red}.
		$$
		Corollary \ref{cor: cV Lambda subset Z(bx/pi) or not} and the induction hypothesis imply that every $\Lambda_2\in \cV^2(\bx)$ satisfies $d(\Lambda, \Lambda_{\bx/\pi^d})\le d$. By Lemma \ref{lem:cZisconnected} there is no isolated $\Lambda_0\in \cV^0(\bx)$, i.e. every $\Lambda_0\in \cV^0(\bx)$ is contained in some $\Lambda_2\in \cV^2(\bx)$  if $\mathrm{v}(\bx)>0$. This finishes the proof of (2).
		
		Similarly we can prove (4) by an induction on $d$, the case $d=0$ is just (3).
		
		(5) follows directly from Lemma \ref{lem:cNandcV_Lambda}.
	\end{proof}
	
	\subsection{Rank $2$ case}
	\begin{proposition}\label{prop:n=3reducedlocusII}
		Assume that $L^\flat=\spa\{\bx_1,\bx_2\}\subset \bV$ is integral of rank $2$. Then
		\[\cZ^\Pap(L^\flat)_{\red}=\bigcup_{\Lambda\in \cV(L^\flat)} \cN_\Lambda\]
		is a finite union, where  $\cV(L^\flat)$ is the set of vertices of the tree $\cL(L^\flat)$ described as follows.
		\begin{enumerate}
			\item Assume $L^\flat\approx \cH_{2a+1}$ for some $a\in \Z_{\ge 0}$. Then
			$\cL(L^\flat)$ is a ball centered at a vertex lattice of type $2$ with radius $\frac{2a+1}{2}$.
			\item Assume $L^\flat=\spa\{\pi^a\bx_1,\pi^a\bx_2\}$ where $\mathrm{v}(\bx_1)=0$, $\mathrm{v}(\bx_2)\geq 0$ and $\spa_F\{\bx_1\}^\bot$ is non-split. Then $\cL(L^\flat)$ is a ball centered at a vertex lattice of type $0$ with radius $a$.
			\item Assume $L^\flat=\spa\{\pi^a\bx_1,\pi^{a+r}\bx_2\}$ where $\bx_1\bot\bx_2$, $\mathrm{v}(\bx_1)=\mathrm{v}(\bx_2)=0$, $r\geq 0$ and $\spa_F\{\bx_1\}^\bot$ is split. Then
			\[\cL(L^\flat)=\{\Lambda\in \cV\mid  d(\Lambda, \cL(\pi^{-a}L^\flat))\le a\},\]
			where
			\[\cL(\pi^{-a}L^\flat)=\{\Lambda\in \cL(\bx_1)\mid  d(\Lambda, \Lambda_0)\le r\},\]
			$\cL(\bx_1)$ is described in (3) of Proposition \ref{prop:n=3reducedlocusI} and $\Lambda_0$ is the unique type $0$ vertex lattice containing $\{\bx_1,\bx_2\}$.
		\end{enumerate}
	\end{proposition}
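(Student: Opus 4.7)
The approach is to exploit the identity
\[
\cV(L^\flat)=\{\Lambda\in\cV\mid L^\flat\subset\Lambda^\sharp\}=\cV(\bx_1)\cap\cV(\bx_2),
\]
valid for any $\Oo_F$-basis $\{\bx_1,\bx_2\}$ of $L^\flat$, reducing the rank-two problem to two applications of Proposition~\ref{prop:n=3reducedlocusI}. The equality $\cZ^\Pap(L^\flat)_{red}=\bigcup_{\Lambda\in\cV(L^\flat)}\cN_\Lambda$ then follows stratum by stratum from Lemma~\ref{lem:cNandcV_Lambda}, since the strata $\cN^\circ_\Lambda$ partition $\cN^\Pap_{red}$ and $\cN^\circ_\Lambda$ meets $\cZ^\Pap(L^\flat)$ exactly when $L^\flat\subset\Lambda^\sharp$. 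Each $\cV(\bx_i)$ is a convex subtree of the Bruhat--Tits tree $\cL$ by Proposition~\ref{prop:n=3reducedlocusI}, so the intersection is convex as well; Lemma~\ref{lem:cZisconnected} ensures it is nonempty and connected. It then remains only to identify its centre and radius.

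For cases (2) and (3), the Jordan decomposition of $L^\flat$ supplies an orthogonal $\Oo_F$-basis for which both basis vectors fall under the hypotheses of Proposition~\ref{prop:n=3reducedlocusI}. In case (2), parts (1)--(2) of that proposition exhibit $\cV(\pi^a\bx_1)$ and $\cV(\pi^a\bx_2)$ as concentric balls about the unique type-$0$ lattice $\Lambda_{\bx_1}\in\cV^0$ containing $\bx_1$ and $\bx_2$ (both of which live in the unique self-dual lattice of the nonsplit plane $\spa_F\{\bx_1\}^\perp$); the intersection is the smaller ball, of radius $a$. In case (3), part (4) of the proposition gives $\cV(\pi^a\bx_1)$ as the tubular neighbourhood of $\cL_{2,1}(\bx_1)$ of thickness $a$, while $\cV(\pi^{a+r}\bx_2)$ is a ball of radius $a+r$ about $\Lambda_0$; a short tree-theoretic computation identifies the intersection with the tubular neighbourhood of radius $a$ around the finite ball $\{d(\cdot,\Lambda_0)\le r\}\subset\cL_{2,1}(\bx_1)$, as claimed.

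Case (1) is the main obstacle because the natural hyperbolic $\Oo_F$-basis $\{e_1,e_2\}$ of $\cH_{2a+1}$ consists of isotropic vectors, which lie outside the hypotheses of Proposition~\ref{prop:n=3reducedlocusI}. The plan is to replace it by the anisotropic $\Oo_F$-basis $\{f_1,f_2\}=\{e_1+\pi e_2,\,\pi e_1+e_2\}$; the transition determinant $1-\pi^2$ is a unit in $\Oo_F$ (since $p$ is odd), and a direct sesquilinear computation yields $h(f_1,f_1)=-2\pi^{2a+2}$, $h(f_2,f_2)=2\pi^{2a+2}$, so $\val(f_i)=a+1$. Proposition~\ref{prop:n=3reducedlocusI} applied to each $f_i$ then produces two convex subtrees of radius $a+1$, whose centres are two type-$0$ vertex lattices sharing a single common type-$2$ neighbour $\Lambda^\star\in\cV^2$, so sitting at metric distance $1$ in $\cL$. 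The intersection of two balls of radius $a+1$ whose centres are unit distance apart in a tree is always the ball of radius $a+\tfrac{1}{2}$ about the midpoint $\Lambda^\star$, matching the claim. A final consistency check using Corollary~\ref{cor: cV Lambda subset Z(bx/pi) or not} and Lemma~\ref{lem:cZisconnected} rules out spurious vertices at the outer boundary.
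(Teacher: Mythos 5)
Your reduction to $\cV(L^\flat)=\cV(\bx_1)\cap\cV(\bx_2)$ is the same starting point as the paper's, and your tree-intersection argument for general $a$ (the tubular-neighbourhood step) is essentially the geodesic argument the paper uses; cases (2) and (3) for $a=0$ are also in the right spirit. However, your treatment of case (1) has a genuine gap. With $f_1=e_1+\pi e_2$, $f_2=\pi e_1+e_2$ one computes $h(f_1,f_1)=-2\pi^{2a+2}=-2\pi_0^{a+1}$ and $h(f_2,f_2)=2\pi^{2a+2}=2\pi_0^{a+1}$, so $\chi(h(f_1,f_1))$ and $\chi(h(f_2,f_2))$ differ by $\chi(-1)$. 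Since $-\pi_0=N(\pi)$ is a norm, $\chi(-1)=\chi(\pi_0)$ and this equals $-1$ precisely when $q\equiv3\pmod 4$. In that case exactly one of $\spa_F\{f_i\}^\perp$ is split, so one of $\cV(f_1),\cV(f_2)$ is a ball centred at a type-$0$ vertex (Proposition~\ref{prop:n=3reducedlocusI}(1)--(2)) while the other is a tubular neighbourhood of a Bruhat--Tits tree $\cL_{2,1}(\cdot)$ (Proposition~\ref{prop:n=3reducedlocusI}(3)--(4)), and the "two balls whose centres are at distance $1$" picture simply does not apply. Even in the case $\chi(-1)=1$, the assertion that the two type-$0$ centres $\Lambda_{f_1/\pi^{a+1}}$ and $\Lambda_{f_2/\pi^{a+1}}$ are distinct and share a common type-$2$ neighbour is the entire content of the proposition for $a=0$, and you assert it without any justification.

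The paper avoids this problem by handling case (1) for $a=0$ directly: for $\Lambda\in\cV^2(L^\flat)$ one expands $b_{\bx_1},b_{\bx_2}$ in a basis of $\Lambda^\sharp$ with Gram matrix $\cH_1\obot\langle-\epsilon\rangle$, observes that the third coordinates are $\equiv0\pmod\pi$ and the $2\times2$ block of remaining coordinates is invertible, and concludes that $\Lambda^\sharp$ (hence $\Lambda$) is uniquely determined. This pins down the centre $\Lambda^\star$ without ever choosing an anisotropic basis, and all the sign issues disappear. The bootstrap from $a=0$ to general $a$ by the geodesic/ball argument you sketch is then applied uniformly to all three cases at once, using the key fact $\cL(L^\flat)=\cL(\pi^a\bx_1)\cap\cL(\pi^a\bx_2)$ for any basis, not requiring the basis vectors themselves to satisfy the hypotheses of Proposition~\ref{prop:n=3reducedlocusI}. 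If you want to keep the anisotropic-basis route, you would need to (i) choose a second vector $f_2=\pi e_1+c\,e_2$ with $\chi(\mathrm{tr}(c))=\chi(-2)$ so that both complements have the same sign, and (ii) actually prove that $\Lambda_{f_1/\pi^{a+1}}\ne\Lambda_{f_2/\pi^{a+1}}$ and that they are adjacent; neither step is in your write-up.
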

	\begin{proof}
		As in the proof of Proposition \ref{lem:cNandcV_Lambda}, for a $\Lambda \in \cV$, $\cN_\Lambda \subset \cZ^\Pap(L^\flat)_{\red} $ if and only if $\Lambda^\sharp$ contains $b(\bx_1),b(\bx_2)$.
		
		We first prove (1) when $a=0$. Suppose  $\Lambda\in \cV^2(L^\flat)$. Extend $\{b(\bx_1),b(\bx_2)\}$ to a basis $\{b(\bx_1),b(\bx_2),b_3\}$ of $\bV$ with Gram matrix $\cH_1\oplus \{-\epsilon\}$. Choose a basis $\{v_1,v_2,v_3\}$ of $\Lambda^\sharp$ with the same Gram matrix $\cH_1\oplus \{-\epsilon\}$. Then $b(\bx_i) \in  \Lambda^\sharp$ ($i=1,2$) by Lemma \ref{lem:cNandcV_Lambda} and
		\[b(\bx_i)=a_{i1} v_1+a_{i2} v_2+a_{i3} v_3\]
		where $a_{ij} \in \Oo_F$ ($j=1,2,3$). The fact that $(b(\bx_i),b(\bx_j))_{1\leq i,j \leq 2}=T$ implies $a_{i3}\in \pi \Oo_F$ for $i=1,2$ and $(a_{ij})_{1\leq i,j \leq 2}$ is in $\mathrm{GL}_2(\Oo_F)$. This guarantees that $L^\flat$ is a direct summand of $\Lambda^\sharp$ by Gram-Schmidt process.
		Hence $\Lambda^\sharp$ is, in fact, the lattice $\spa_{\Oo_F}\{b(\bx_1),b(\bx_2),b_3\}$. The fact that all $\Lambda_0\in \cV^0(L^\flat)$ are in $\Lambda$ follows from Lemma \ref{lem:cZisconnected}.

		When $a=0$, (2) follows from the fact that $\cZ^\Pap(\bx_1)=\cN^\Pap_{2,-1}$ (by Proposition \ref{prop:cZxunimodular}) and $\cZ^\Pap(L^\flat)_{\red}=\cZ^\Pap(\bx_1)_{\red}$ is a unique superspecial point.
		Similarly when $a=0$, (3) follows from the fact that $\cZ^\Pap(\bx_1)=\cN^\Pap_{2,1}$ and \cite[Corollary 3.13]{HSY}.

		Now we prove (1), (2) and (3) for general $a$. First of all, $\cL(L^\flat)=\cL(\pi^a\bx_1)\cap \cL(\pi^a\bx_2)$ by definition. By Corollary \ref{cor:cV_Lambda contained in Z(pi x)} we have
		\[\{\Lambda\in \cV \mid d(\Lambda,\cL(\pi^{-a}L^\flat))\le a\}\subset \cL(\pi^a \bx_1)\cap \cL(\pi^a \bx_2).\]
		
		Notice that for a sub-tree $\cL'$ of a tree $\cL$ and a vertex $x\in \cL\setminus \cL'$, there is a unique geodesic segment joining $x$ with $\cL'$. Given $\Lambda\in \cL(L^\flat)=\cL(\pi^a \bx_1)\cap \cL(\pi^a \bx_2)$, let $\gamma$ be the unique geodesic segment joining $\Lambda$ with $\cL(\pi^{-a} L^\flat)$. Assume that $\gamma$ intersects $\cL(\pi^{-a}L^\flat)$ at $\Lambda(L^\flat)$. Since $\cL(\pi^{-a} L^\flat)=\cL(\bx_1)\cap \cL(\bx_2)$, $\gamma$ necessarily intersects both $\cL(\bx_1)$ and $\cL(\bx_2)$. Without loss of generality we assume that $\gamma$ intersects $\cL(\bx_1)$ at $\Lambda(\bx_1)$ first. Hence the intersection of $\gamma$ with $\cL(\bx_2)$ is $\Lambda(L^\flat)$ and
		\[d(\Lambda,\Lambda(\bx_1))=d(\Lambda,\cL(\bx_1))\leq d(\Lambda,\Lambda(L^\flat))=d(\Lambda,\cL(\bx_2)).\]
		Now by Proposition \ref{prop:n=3reducedlocusI}, we have
		\[d(\Lambda,\cL(\bx_1))\leq a, \ d(\Lambda,\cL(\bx_2))\leq a.\]
		Hence $ d(\Lambda,\cL(\pi^{-a}L^\flat))\leq a$. This shows that
		\[\{\Lambda\in \cV\mid d(\Lambda,\cL(\pi^{-a}L^\flat))\le a\}= \cL(\pi^a \bx_1)\cap \cL(\pi^a \bx_2).\]
		The general case of  (1), (2) and (3) follows from the above equation and the case $a=0$.

		Notice that (1), (2) and (3) have covered all possibilities of $L^\flat$ due to the classification of Hermitian lattices. Notice that in every case $\cV(L^\flat)$ is finite. This finishes the proof of the proposition.
	\end{proof}

	\begin{definition}\label{def:skeleton}
		Assume that $L^\flat$ is an integral lattice of rank $2$ in $\bV$. Define  $\mathcal{S}(L^\flat)$, the skeleton of $\cL(L^\flat)$, as follows.
		If the fundamental invariant of $L^\flat$ is $(2a,b)$ ($b\geq 2a$), define $\mathcal{S}(L^\flat)\coloneqq \cL(\pi^{-a}L^\flat)$. If the fundamental invariant of $L^\flat$ is $(2a+1,2a+1)$, define $\mathcal{S}(L^\flat)\coloneqq \emptyset$.
	\end{definition}
	\begin{remark}
		The skeleton $\mathcal{S}(L^\flat)$ is isomorphic to a ball in the Bruhat-Tits tree of $\cN^\Pap_{2,\pm 1}$.
	\end{remark}
	\begin{corollary}\label{cor:inductiononbruhattits}
		For each $\Lambda_2\in \cV^2(L^\flat)$ not on the skeleton $\mathcal{S}(L^\flat)$, one can find $\Lambda_0\in\cV^0(L^\flat)$ such that $\Lambda_2$ has the largest distance to the boundary $\mathcal{B}(L^\flat)$ of $\cL(L^\flat)$ among all type $2$ lattices in $\cV^2(L^\flat)$ containing $\Lambda_0$.
	\end{corollary}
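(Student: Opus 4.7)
The plan is to realize $\cL(L^\flat)$ uniformly as a tubular neighborhood of a convex subcomplex of the tree $\cL$ and to pick $\Lambda_0$ as any type $0$ neighbor of $\Lambda_2$ pointing away from this subcomplex.

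First, using Proposition \ref{prop:n=3reducedlocusII}, I realize $\cL(L^\flat)$ as the closed $r$-neighborhood of a convex subcomplex $\mathcal{S}' \subset \cL$: take $\mathcal{S}' = \{\Lambda_2^*\}$ with $r = \tfrac{2a+1}{2}$ in case (1), $\mathcal{S}' = \mathcal{S}(L^\flat) = \{\Lambda_0^*\}$ with $r = a$ in case (2), and $\mathcal{S}' = \mathcal{S}(L^\flat) = \cL(\pi^{-a} L^\flat)$ with $r = a$ in case (3). Convexity of $\mathcal{S}'$ in the tree $\cL$ yields a nearest-point projection $p : \cL(L^\flat) \rightarrow \mathcal{S}'$, identifies $\mathcal{B}(L^\flat)$ from \eqref{eq:cBL} with the set of type $0$ vertices at distance exactly $r$ from $\mathcal{S}'$, and gives the key distance formula
\begin{equation*}
d(\Lambda_2, \mathcal{B}(L^\flat)) = r - d(\Lambda_2, \mathcal{S}'), \qquad \Lambda_2 \in \cV^2(L^\flat).
\end{equation*}

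Next, observe that every type $0$ lattice $\Lambda_0$ adjacent to $\Lambda_2$ in $\cL$ lies in $\cV^0(L^\flat)$ automatically: $\Lambda_0 \subset \Lambda_2$ gives $L^\flat \subset \Lambda_2^\sharp \subset \Lambda_0^\sharp$. Set $\ell = d(\Lambda_2, \mathcal{S}')$. When $\ell > 0$, the unique type $0$ neighbor of $\Lambda_2$ on the geodesic to $\mathcal{S}'$ lies at distance $\ell - 1/2$ from $\mathcal{S}'$, while the remaining $q$ \emph{outward} neighbors lie at distance $\ell + 1/2$; when $\ell = 0$---only possible in case (1) at $\Lambda_2 = \Lambda_2^*$---all $q+1$ neighbors are outward. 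Since $\Lambda_2 \notin \mathcal{S}(L^\flat)$ by hypothesis, outward neighbors always exist, and I take $\Lambda_0$ to be any one of them. The type $2$ lattices in $\cV^2(L^\flat)$ containing $\Lambda_0$ are $\Lambda_2$ together with at most $q$ others, all at distance $\ell + 1$ from $\mathcal{S}'$; the distance formula then gives
\begin{equation*}
d(\Lambda_2', \mathcal{B}(L^\flat)) = r - \ell - 1 < r - \ell = d(\Lambda_2, \mathcal{B}(L^\flat))
\end{equation*}
for each such $\Lambda_2' \neq \Lambda_2$, and $\Lambda_2$ uniquely maximizes the distance to $\mathcal{B}(L^\flat)$.

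The main technical step is the distance formula in case (3), where $\mathcal{S}'$ is a genuine sub-tree. This will follow from a standard tree computation: extending the geodesic from $p(\Lambda_2)$ through $\Lambda_2$ outward by another $r - \ell$ steps lands at a type $0$ boundary vertex realizing the minimum distance, while any boundary vertex with a different projection forces the geodesic to traverse both $\Lambda_2 \rightarrow p(\Lambda_2)$ and $p(\Lambda_2) \rightarrow$ the new projection, and is hence strictly farther. Cases (1) and (2), where $\mathcal{S}'$ is a single vertex, reduce to the same argument without the projection bookkeeping.
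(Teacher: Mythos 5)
Your proof is correct and takes essentially the same approach as the paper's, which selects $\Lambda_0$ as a boundary vertex of the shell $\cL(\pi^{c}M^\flat)$ at the level $c$ where $\Lambda_2$ first appears (equivalently, an ``outward'' type-0 neighbor of $\Lambda_2$) and invokes Proposition \ref{prop:n=3reducedlocusII}. Your version makes explicit the tree-metric mechanics that the paper leaves tacit: the realization of $\cL(L^\flat)$ as a uniform tubular neighborhood, the observation that every type-0 neighbor of $\Lambda_2$ lies in $\cV^0(L^\flat)$ via $L^\flat\subset\Lambda_2^\sharp\subset\Lambda_0^\sharp$, and the clean distance formula $d(\Lambda_2,\mathcal{B}(L^\flat))=r-d(\Lambda_2,\mathcal{S}')$ (whose lower bound is just the triangle inequality $|d(\Lambda_2,\mathcal{S}')-d(\Lambda_0,\mathcal{S}')|\le d(\Lambda_2,\Lambda_0)$, and whose upper bound comes from extending the outward geodesic and matching parity of types). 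You also correctly separate out the degenerate cases: $\ell=0$ only arises in case (1), where $\mathcal{S}(L^\flat)=\emptyset$ so the hypothesis is vacuous while $\mathcal{S}'=\{\Lambda_2^*\}$ is not, and the ``at most $q$ others'' phrasing correctly allows the set of competing $\Lambda_2'$ to be empty when $\ell+1>r$.
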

	\begin{proof}
		Assume the fundamental invariant of $L^\flat$ is $(2a,b)$ or $(2a+1,2a+1)$.
		Define $M^\flat\coloneqq \pi^{-a}L^\flat$. Let $b$ be the unique integer such that $\Lambda_2\in \cL(\pi^b M^\flat)\setminus \cL(\pi^{b-1} M^\flat)$. Choose any $\Lambda_0\in \mathcal{B}(\pi^b M^\flat)$ such that $\Lambda_0\subset \Lambda$. Then by Proposition \ref{prop:n=3reducedlocusII}, $\Lambda_0$ satisfies the assumption of the corollary.
	\end{proof}

	\subsection{The Kr\"amer model}
	For $\Lambda\in \cV^2$, let $\tN_\Lambda$ be the strict transform of $\cN_\Lambda$ under the blow-up $\cN^\Kra\rightarrow \cN^\Pap$. Since the strict transform of a regular curve along any of its closed point is an isomorphism, we know $\tN_\Lambda\cong \bP^1$.
	\begin{lemma}\label{lem:tVdonotintersect}
		For $\Lambda \ne \Lambda'\in \cV^2$,  $\tN_\Lambda$ and $\tN_{\Lambda'}$ do not intersect.
	\end{lemma}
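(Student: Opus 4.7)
The plan is to split the analysis based on how $\cN_\Lambda$ and $\cN_{\Lambda'}$ interact in the Pappas model $\cN^\Pap$ before blow-up. By the intersection formula $\cN_\Lambda \cap \cN_{\Lambda'}=\cN_{\Lambda\cap\Lambda'}$ when $\Lambda\cap\Lambda'\in\cL$ and $\emptyset$ otherwise (recorded in Section \ref{subsec:bruhat tits in general}), together with the fact that for distinct $\Lambda,\Lambda'\in\cV^2$ the intersection $\Lambda\cap\Lambda'$ cannot itself be a type-$2$ lattice, only two possibilities arise: either $\cN_\Lambda\cap\cN_{\Lambda'}=\emptyset$ in $\cN^\Pap$, or $\Lambda_0\coloneqq\Lambda\cap\Lambda'\in\cV^0$ and $\cN_\Lambda\cap\cN_{\Lambda'}=\cN_{\Lambda_0}(k)$, a single superspecial point. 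In the first case, the strict transforms under the blow-up $\cN^\Kra\to\cN^\Pap$ remain disjoint and there is nothing to do.

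For the remaining case, I would use that the strict transform of a smooth curve through an isolated blown-up point meets the exceptional divisor in exactly one point, given by its tangent direction. Thus $\tN_\Lambda$ and $\tN_{\Lambda'}$ each meet $\Exc_{\Lambda_0}\cong\bP^{n-1}_k$ in a single point, and it suffices to show that the two tangent directions at $\cN_{\Lambda_0}$ are distinct. I would identify $\Exc_{\Lambda_0}$ with $\bP(\Lambda_0/\pi\Lambda_0)$ via the Kr\"amer interpretation, namely via the filtration $\cF\subset\Lie X$ at the superspecial point, where the tangent space is canonically $\Lambda_0/\pi\Lambda_0$ as an $\F_q$-vector space.

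The main step is then to pin down, for each $\Lambda_2\in\cV^2$ with $\Lambda_0\subset\Lambda_2$, the tangent direction of $\cN_{\Lambda_2}$ at $\cN_{\Lambda_0}$ inside $\bP(\Lambda_0/\pi\Lambda_0)$. Using the deformation description in \eqref{eq:MtauMandLambda_2}, a one-parameter family in $\cN^\circ_{\Lambda_2}$ degenerating to the superspecial point $M=\Lambda_0\otimes\Oo_{\breve F}$ produces lattices $M$ with $(\Lambda_2\otimes\Oo_{\breve F})^\sharp\subset M\subset\Lambda_2\otimes\Oo_{\breve F}$, and a direct reading of the Hodge filtration shows the associated tangent line in $\Lambda_0/\pi\Lambda_0$ is precisely the isotropic line $\Lambda_2^\sharp/\pi\Lambda_0$. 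The bijection in the proof of Lemma \ref{lem:B-T building}(2) between type-$2$ lattices containing $\Lambda_0$ and isotropic lines in $\Lambda_0/\pi\Lambda_0$ is injective, so distinct $\Lambda,\Lambda'$ yield distinct tangent directions, and hence $\tN_\Lambda\cap\tN_{\Lambda'}=\emptyset$ in $\cN^\Kra$.

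I expect the bookkeeping of the tangent-direction computation on the Kr\"amer model to be the only nontrivial obstacle; everything else is combinatorics of the Bruhat--Tits simplicial complex and standard properties of strict transforms. If writing out the Grothendieck--Messing computation becomes cumbersome, an alternative is to reduce to the case $n=2$ via a self-dual orthogonal complement as in Proposition \ref{prop:cZxunimodular}, where the analogous statement for $\cN^\Kra_{2,\pm 1}$ has already been established in \cite{Shi2} and \cite{HSY}.
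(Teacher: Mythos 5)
Your main route is genuinely different from the paper's. After reducing to the case $\Lambda\cap\Lambda'=\Lambda_0\in\cV^0$, the paper normalizes $\Lambda=\spa\{e_1,e_2,e_3\}$, $\Lambda'=\spa\{\pi^{-1}e_1,\pi e_2,e_3\}$ with Gram matrix $\diag(\cH,\epsilon)$, takes $\bx_0=e_3$, and observes via Proposition \ref{prop:n=3reducedlocusII} that both $\tN_\Lambda$ and $\tN_{\Lambda'}$ lie inside $\tZ(\bx_0)\cong\cN^\Kra_{2,1}$, so disjointness is inherited from \cite[Lemma 5.3]{HSY}---which is exactly the fallback you sketch in your final paragraph. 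Your primary plan---identifying $\tN_\Lambda\cap\Exc_{\Lambda_0}$ with the isotropic line $\Lambda^\sharp/\pi\Lambda_0\subset\bP(\Lambda_0/\pi\Lambda_0)$ and then invoking the injectivity established in Lemma \ref{lem:B-T building}(2)---would also succeed and is more informative, since it locates the intersection point explicitly rather than just proving emptiness. But be aware that the crucial claim (the limiting Hodge filtration along $\cN_{\Lambda_2}$ approaching the superspecial point is $\Lambda_2^\sharp/\pi\Lambda_0$) is precisely where all the content sits, and your ``direct reading'' of \eqref{eq:MtauMandLambda_2} is not yet a proof: \eqref{eq:MtauMandLambda_2} describes Dieudonn\'e lattices, not the limiting filtration $\cF$, so a genuine Grothendieck--Messing computation is needed. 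An $\rU(\Lambda_0/\pi\Lambda_0)$-equivariance argument at least pins the candidate down to the unique fixed point of the parabolic stabilizing $\Lambda_2$, but you would still need the local calculation to see that $\tN_{\Lambda_2}$ meets $\Exc_{\Lambda_0}$ in a single reduced point. The paper's reduction sidesteps all of this at the cost of quoting the two-dimensional result.
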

	\begin{proof}
		If $\cN_{\Lambda}$ and $\cN_{\Lambda'}$ do not intersect in $\cN^\Pap$, then obviously $\tN_\Lambda$ and $\tN_{\Lambda'}$ do not intersect. Without loss of generality we can assume $\Lambda=\spa\{e_1,e_2,e_3\}$ and $\Lambda'=\spa\{\pi^{-1}e_1,\pi e_2,e_3\}$ where the Gram matrix of $\{e_1,e_2,e_3\}$ is $\diag(\cH, \epsilon)$.
		Take $\bx_0=e_3$. Then by Proposition \ref{prop:n=3reducedlocusII}, both $\tN_{\Lambda}$ and $\tN_{\Lambda'}$ are in $\tZ(\bx_0)\cong \cN^\Kra_{2,1}$. Now by \cite[Lemma 5.3]{HSY}, $\tN_{\Lambda}$ and $\tN_{\Lambda'}$ do not intersect.
	\end{proof}
	
	\begin{lemma}\label{lem:tVintersectExc} Let  $\Lambda\in \cV^2$ and $\Lambda_0\in \cV^0$. When $\Lambda_0 \subset \Lambda$,   $\tN_\Lambda$ intersects properly with $\Exc_{\Lambda_0}$ and 	
		\begin{equation}
			\chi(\cN^\Kra,\Oo_{\tN_\Lambda}\otimes \Oo_{\Exc_{\Lambda_0}})=1.
		\end{equation}
		When  $\Lambda_0$ is not contained in $\Lambda$,  $\tN_\Lambda$  does not intersect with $\Exc_{\Lambda_0}$.
	\end{lemma}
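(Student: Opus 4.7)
The plan is to dispatch the two assertions separately, reducing the nontrivial one to a computation already known inside the two-dimensional Kr\"amer model. For the case $\Lambda_0 \not\subset \Lambda$, Lemma \ref{lem:B-T building}(1) shows that the projective line $\cN_\Lambda$ passes only through the superspecial points corresponding to type $0$ lattices contained in $\Lambda$, and so avoids $\cN_{\Lambda_0}(k)$. Since the blow-up $\cN^\Kra \to \cN^\Pap$ is an isomorphism outside the superspecial locus, the strict transform $\tN_\Lambda$ cannot meet the exceptional fiber $\Exc_{\Lambda_0}$, which proves the second claim.

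For the case $\Lambda_0 \subset \Lambda$, I would pick a Jordan decomposition $\Lambda = L_0 \obot L_1$ with $L_0 = \Oo_F b$ rank one unimodular and $L_1$ rank two, chosen so that $\chi(h(b,b)) = \epsilon$; such a decomposition exists because $\Lambda^\sharp$ spans $\bV$ and necessarily contains primitive vectors of the requisite sign. Let $\bx_0 \in \bV$ be the vector with $b_{\bx_0} = b$. Proposition \ref{prop:cZxunimodular} then gives $\tZ(\bx_0) \cong \cN^\Kra_{2,1}$. Since $b \in L_0 \subset \Lambda^\sharp$, Lemma \ref{lem:cNandcV_Lambda} implies $\cN_\Lambda \subset \cZ^\Pap(\bx_0)$, and the containment $\Lambda^\sharp \subset \Lambda_0$ shows $b \in \Lambda_0$, so $\Exc_{\Lambda_0} \subset \cZ^\Kra(\bx_0)$ by Proposition \ref{prop:multiplicityofExc}. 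Passing to strict transforms inside the Kr\"amer blow-up, one identifies $\tN_\Lambda$ with $\tN_{L_1}$ and $\tZ(\bx_0) \cap \Exc_{\Lambda_0}$ with the exceptional divisor $\Exc_{\Lambda_0'}$ of $\cN^\Kra_{2,1}$ at the type $0$ lattice $\Lambda_0' = \Lambda_0 \cap \{b\}^\perp$, noting that $\Lambda_0' \subset L_1$ since $\Lambda_0 \subset \Lambda$.

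Proper intersection in $\cN^\Kra$ then follows from the corresponding statement in the regular surface $\cN^\Kra_{2,1}$, and by the distribution law (Lemma \ref{lem: distribution law of derived tensor}),
\[
\chi(\cN^\Kra,\Oo_{\tN_\Lambda}\otimes^\bL\Oo_{\Exc_{\Lambda_0}}) = \chi\bigl(\cN^\Kra_{2,1},\Oo_{\tN_{L_1}}\otimes^\bL_{\cN^\Kra_{2,1}} \Oo_{\Exc_{\Lambda_0'}}\bigr) = 1,
\]
the last equality being \cite[Lemma 5.2]{HSY} (a reduction to $\cN^\Kra_{2,-1}$ via \cite[Proposition 3.11]{Shi2} would handle the alternative sign, should the choice of $b$ land us there instead). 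The main technical obstacle is the identification of $\tZ(\bx_0) \cap \Exc_{\Lambda_0}$ with $\Exc_{\Lambda_0'}$ inside $\cN^\Kra_{2,1}$; this should follow from the compatibility of the Kr\"amer blow-up with the closed embedding $\cZ^\Pap(\bx_0) \hookrightarrow \cN^\Pap$ and the resulting bijection between superspecial points of $\cZ^\Pap(\bx_0)$ and those of $\cN^\Pap_{2,1}$ induced by Proposition \ref{prop:cZxunimodular}.
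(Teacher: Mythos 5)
Your proof is correct and takes essentially the same route as the paper's. Both arguments pick a vector $\bx_0=b$ spanning the unimodular rank-one summand of the Jordan decomposition of $\Lambda$, pass to $\tZ(\bx_0)\cong\cN^\Kra_{2,1}$ via Proposition \ref{prop:cZxunimodular}, identify $\tN_\Lambda$ and $\Exc_{\Lambda_0}\cap\tZ(\bx_0)$ with the corresponding $\bP^1$ and exceptional divisor of $\cN^\Kra_{2,1}$, and invoke the distribution law together with \cite[Lemma 5.2]{HSY}; the paper simply borrows the choice of $\bx_0=e_3$ from the proof of Lemma \ref{lem:tVdonotintersect} rather than spelling out the Jordan decomposition.

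One small point worth correcting: the phrase ``chosen so that $\chi(h(b,b))=\epsilon$'' suggests the sign is at your disposal, but the sign of the unimodular part of the Jordan decomposition of $\Lambda$ is determined by $\Lambda$ (and $\bV$). What saves you is that the orthogonal complement of $b$ inside $\Lambda$ is the $\pi^{-1}$-modular part $L_1\cong\cH$, which is split; so $\{b\}^\perp$ in $\bV$ is split and $\tZ(\bx_0)\cong\cN^\Kra_{2,1}$ always. Consequently the parenthetical hedge about possibly landing in $\cN^\Kra_{2,-1}$ and needing \cite[Proposition 3.11]{Shi2} is superfluous --- that case never occurs for a type-$2$ vertex lattice in a rank-$3$ space.
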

	\begin{proof}
		First assume $\Lambda_0\subset \Lambda$. Since $\tN_\Lambda$ is a strict transformation of a curve, it intersects the exceptional divisor properly.
		Let $\bx_0$ be as in the proof of Lemma \ref{lem:tVdonotintersect}. Then $\tN_{\Lambda}$ is in $\tZ(\bx_0)\cong\cN^\Kra_{2,1}$.
		\begin{align*}
			\chi(\cN^\Kra,\Oo_{\tN_\Lambda}\otimes \Oo_{\Exc_{\Lambda_0}})
			&=\chi(\cN^\Kra,\Oo_{\tN_\Lambda}\otimes_{\Oo_{\tZ(\bx_0)}} \Oo_{\tZ(\bx_0)}\otimes\Oo_{\Exc_{\Lambda_0}})\\
			&=\chi(\tZ(\bx_0),\Oo_{\tN_\Lambda}\otimes_{\Oo_{\tZ(\bx_0)}} \Oo_{\Exc'}).
		\end{align*}
		Here $\Exc'\cong \bP^1_k$ is the exceptional divisor on $\tZ(\bx_0)$ corresponding to the rank $2$ self-dual lattice
		\[\Lambda'=\{v\in \Lambda_0\mid v\bot \bx_0\}.\]
		By \cite[Lemma 5.2(a)]{HSY}, we know $\chi(\tZ(\bx_0),\Oo_{\tN_\Lambda}\otimes_{\Oo_{\tZ(\bx_0)}} \Oo_{\Exc'})=1$.  When $\Lambda_0$ is not contained in $\Lambda$, the superspecial point $\cN_{\Lambda_0}(k)$ is not contained in $\cN_\Lambda$, hence  $\tN_\Lambda$  does not intersect with $\Exc_{\Lambda_0}$.
	\end{proof}

	\section{Intersection of vertical components and special divisors}\label{sec:intersection of vertical and special}
	In this section we study the intersection of $\tN_{\Lambda}$ and special divisors. The main result is Theorem \ref{thm: Int Lambda}. To proceed we first study the decomposition of ${}^\bL \cZ^\Kra(L^\flat)$ when $\mathrm{v}(L^\flat)=0$. Since $n$ is odd, we can without loss of generality assume that $\chi(\bV)=\chi(C)=1$. In the rest of the paper, we identify $\bV$ with $C$ by the isomorphism $b$ defined in \eqref{eq: C V isomorphism}.
	\subsection{Decomposition of $\LZ(L^\flat)$}
	Let $L^\flat=\spa\{\bx_1,\bx_2\}$ where $\bx_1,\bx_2\in \bV$ are linearly independent and the Hermitian form restricted to $L$ is non-degenerate.
	\begin{lemma}\label{lem:cZisinF^2}
	We have that	${}^\bL \cZ^\Kra(L^\flat)=[\Oo_{\cZ^\Kra(\bx_1)}\otimes^\bL \Oo_{\cZ^\Kra(\bx_2)}]\in K_0(\cN^\Kra)$ is in fact in $\rF^{2} K_0(\cN^\Kra)$. Moreover we have the decomposition in $\Gr^2 K_0^{\cZ^\Kra(L^\flat)}(\cN^\Kra)$
		\begin{equation}
			{}^\bL \cZ^\Kra(L^\flat)=\cZ^\Kra(L^\flat)_h+{}^\bL \cZ^\Kra(L^\flat)_v.
		\end{equation}
		where $\cZ^\Kra(L^\flat)_h$ is described in Theorem \ref{thm:horizontalpart} and ${}^\bL \cZ^\Kra(L)_v\in \Gr^2 K_0^{\cZ^\Kra(L^\flat)_v}(\cN^\Kra)$.
	\end{lemma}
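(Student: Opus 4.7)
The plan is to represent the derived product by a Koszul complex and reduce the codimension-filtration claim to a standard fact about $K_0$ of regular local rings.

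First I would record that each $\cZ^\Kra(\bx_i)$ is a Cartier divisor cut out by a section $s_i$ of a line bundle $L_i = \Oo_{\cN^\Kra}(\cZ^\Kra(\bx_i))$, so $[\Oo_{\cZ^\Kra(\bx_i)}] = 1-[L_i^{-1}]$ lies in $\rF^1 K_0(\cN^\Kra)$ (the generic rank is zero). The derived tensor product is then represented by the Koszul complex
\[
\bigl[\,L_1^{-1}\otimes L_2^{-1} \xrightarrow{(-s_2,\,s_1)} L_1^{-1}\oplus L_2^{-1} \xrightarrow{(s_1,\,s_2)} \Oo_{\cN^\Kra}\,\bigr],
\]
whose class in $K_0(\cN^\Kra)$ is the product $(1-[L_1^{-1}])(1-[L_2^{-1}])$. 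The claim $\LZ(L^\flat) \in \rF^2$ thus reduces to the inclusion $\rF^1\cdot\rF^1 \subset \rF^2$ for this specific pair of Cartier-divisor classes, which the excerpt flags (after equation (2.10)) as needing direct verification on a formal scheme. I would establish it by localizing at each closed point $z\in\cN^\Kra$: the stalk $\Oo_{\cN^\Kra,z}$ is a three-dimensional regular Noetherian local ring, the divisors become quotients by a single element, and the compatibility of the codimension filtration on $K_0$ with products is classical via Serre's intersection theory (equivalently, the $\gamma$-filtration). Gluing over an affine cover of $\cN^\Kra$ yields the global statement.

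For the decomposition, Theorem \ref{thm:horizontalpart} exhibits $\cZ^\Kra(L^\flat)_h$ as a finite union of horizontal curves of pure dimension one in the three-dimensional regular formal scheme $\cN^\Kra$, so $[\Oo_{\cZ^\Kra(L^\flat)_h}]$ lies tautologically in $\rF^2 K_0(\cN^\Kra)$. Setting $\LZ(L^\flat)_v \coloneqq \LZ(L^\flat) - [\Oo_{\cZ^\Kra(L^\flat)_h}]$ then automatically forces $\LZ(L^\flat)_v \in \rF^2$ by the first claim.

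The main obstacle is controlling the $\mathrm{Tor}_1$ contribution to the Koszul complex along shared exceptional divisors: by Proposition \ref{prop:multiplicityofExc}, $\cZ^\Kra(\bx_1)$ and $\cZ^\Kra(\bx_2)$ genuinely share $\Exc_\Lambda$ whenever $\bx_1,\bx_2\in\Lambda\in\cV^0$, so the Koszul complex has a nontrivial $\mathrm{Tor}_1$ sheaf supported along codimension-one components. The point of the local reduction is that, in $K_0$, $H^0$ and $H^{-1}$ conspire so that the sum lies in $\rF^2$; morally this is the identity $[\Oo_D\otimes^{\bL}\Oo_D] = c_1(\Oo(D))\cap[\Oo_D] \in \rF^2$ for a Cartier divisor $D$, which is exactly the cancellation mechanism already used in the proof of Lemma \ref{lem:Excselfintersect} in the excerpt.
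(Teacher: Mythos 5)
Your proposal is circular at the crucial step. You reduce the claim $\LZ(L^\flat) \in \rF^2 K_0(\cN^\Kra)$ to the inclusion $\rF^1 \cdot \rF^1 \subset \rF^2$, then assert this follows from Serre's intersection theory and the $\gamma$-filtration after localizing at closed points and gluing. But the remark immediately after equation \eqref{eq:cupproductfiltration} in the paper flags precisely this compatibility as open for formal schemes: it is known for schemes via \cite[Section I.3, Theorem 1.3]{soule1994lectures}, only \emph{expected} for formal schemes, and the point of Lemmas \ref{lem:Excselfintersect} and \ref{lem:cZisinF^2} is to verify particular instances directly. Appealing to the general compatibility is therefore assuming what the lemma is meant to establish. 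Moreover, the localize-and-glue step is not a proof: the filtration $\rF^i K_0(X)$ is defined by the codimension of the support of the total cohomology of a global representative, and you do not explain how local verifications at stalks assemble into a global complex with cohomology in codimension $\ge 2$. Your remark that ``$H^0$ and $H^{-1}$ conspire'' along the shared exceptional divisors is a correct description of what must happen, not a demonstration that it does.

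The paper's proof avoids this by producing an explicit global representative in $\rF^2$. It writes each $\cZ^\Kra(\bx_i)$ as $\tZ(\bx_i) + \sum_{\Lambda_0}(m_{\Lambda_0}(\bx_i)+1)\Exc_{\Lambda_0}$ via Proposition \ref{prop:multiplicityofExc}, expands the derived tensor product multilinearly, and computes every cross term involving an exceptional divisor by restricting to $\Exc_{\Lambda_0} \cong \bP^{n-1}_k$, a genuine (projective) scheme where the classical filtration theory applies; this identifies those contributions with explicit multiples of $H_{\Lambda_0}$, which is tautologically in $\rF^2$. The remaining term $[\Oo_{\tZ(\bx_1)} \otimes^\bL \Oo_{\tZ(\bx_2)}]$ is handled by observing that $\tZ(\bx_1)$ and $\tZ(\bx_2)$ intersect properly, so that the derived class equals the class of the scheme-theoretic intersection, which has support of codimension $2$. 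Your observation that this is morally the mechanism in Lemma \ref{lem:Excselfintersect} is right, but the argument has to be carried out, and when it is, the proper-intersection step replaces your appeal to the filtration compatibility rather than invoking it.
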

	\begin{proof}
		By Lemma \ref{lem:cZNoetherian}, $\cZ^\Kra(L^\flat)$ is Noetherian and has a decomposition
		\[\cZ^\Kra(L^\flat)=\cZ^\Kra(L^\flat)_h\cup \cZ^\Kra(L^\flat)_v.\]
		Expressing $\cZ(\bx_i)$ ($i=1,2$) as in \eqref{eq:m_Lambda} and applying
		Propositions \ref{prop:multiplicityofExc}, \ref{prop:tZintersectExc} and Lemma \ref{lem:Excselfintersect}, ${}^\bL \cZ^\Kra(L^\flat)$ equals
		\[[\Oo_{\tZ(\bx_1)}\otimes^\bL\Oo_{\tZ(\bx_2)}]+\sum_{\Lambda_0\in \cV^0(L^\flat)} (2m_{\Lambda_0}(\bx_1)m_{\Lambda_0}(\bx_2)+m_{\Lambda_0}(\bx_1)+m_{\Lambda_0}(\bx_2)) H_{\Lambda_0}.\]
		$\cZ^\Kra(L^\flat)_h$ is contained in $\tZ(\bx_1)\cap\tZ(\bx_2)$ and has dimension $1$ by Theorem \ref{thm:horizontalpart}. $\tZ(\bx_1)\cap\tZ(\bx_2)_v$ also has dimension $1$ as it is supported on the reduced locus of $\cN^\Kra$ by Lemma \ref{lem:cZ_vsupportedoncN_red} and does not contain any exceptional divisor $\Exc_{\Lambda_0}$. Hence
		\begin{equation}\label{eq:tZ1intersecttZ2}
			[\Oo_{\tZ(\bx_1)}\otimes^\bL\Oo_{\tZ(\bx_2)}]=[\Oo_{\tZ(\bx_1)\cap\tZ(\bx_2)}]\in \rF^{2} K_0(\cN^\Kra),
		\end{equation}
		see for example \cite[Lemma B.2]{zhang2021AFL}.
		Hence, we know that ${}^\bL\cZ^\Kra(L^\flat)\in \rF^{2} K_0(\cN^\Kra)$. The desired decomposition then follows from Theorem \ref{thm:horizontalpart}.
	\end{proof}
	
	By Lemma \ref{lem:cZisinF^2} and \eqref{eq:K^Y and K'(Y)} we know that ${}^\bL \cZ^\Kra(L^\flat)_v\in K'_0(Y)$ where we can take $Y$ to be the reduced locus of $\cN^\Kra$. By the Bruhat-Tits stratification of $\cN^\Kra$ and the fact that $\Gr^1 K_0^{\Exc_{\Lambda_0}}(\cN^\Kra)\cong\mathrm{CH}^1(\Exc_{\Lambda_0})$ is generated by $H_{\Lambda_0}$, we have the following decomposition in $\Gr^2 K_0(\cN^\Kra)$:
	\begin{equation}\label{eq: bL ZKra Lflat v}
		{}^\bL \cZ^\Kra(L^\flat)_v=\sum_{\Lambda_2\in \cV^2(L^\flat)} m(\Lambda_2,L^\flat)[\Oo_{\tN_{\Lambda_2}}]+\sum_{\Lambda_0\in \cV^0(L^\flat)} m(\Lambda_0,L^\flat)H_{\Lambda_0}.
	\end{equation}
	We will determine the multiplicities $m(\Lambda_2,L^\flat)$ and $m(\Lambda_0,L^\flat)$ when $\val(L^\flat)=0$ in this section and deal with the general case in Section \ref{sec:proof when n=3}.
	
	Now assume  $L^\flat=\spa\{\bx_1,\bx_2\}$  with  Gram matrix  $ \diag(u_1,u_2 (-\pi_0)^n)$ with $u_1,u_2\in \Oo_{F_0}^\times$. Applying  Proposition \ref{prop:multiplicityofExc} to $\cZ^\Kra(\bx_1)$, we find
	\[{}^\bL\cZ^\Kra(L^\flat)=[\Oo_{\tZ(\bx_1)}\otimes^\bL\Oo_{\cZ^\Kra(\bx_2)}]+\sum_{\Lambda_0\in \cV^0(\bx_1)} [\Oo_{\Exc_{\Lambda_0}}\otimes^\bL\Oo_{\cZ^\Kra(\bx_2)}].\]
	By Proposition \ref{prop:cZxunimodular}, we know the intersection $\tZ(\bx_1)\cap\cZ^\Kra(\bx_2)$ is proper and is isomorphic to $\cZ^\Kra_{2,\chi(u_1)}(\bx_2)$. Combining this with Corollary \ref{cor:cZintersectExc} we obtain
	\begin{equation}\label{eq:cZLwhenx_1isunit}
		{}^\bL\cZ^\Kra(L^\flat)=i_*({}^\bL\cZ^\Kra_{2,\chi(u_1)}(\bx_2))-\sum_{\Lambda_0\in \cV^0(L^\flat)} H_{\Lambda_0}.
	\end{equation}
	where $i_*$ is the map $\Gr^1 K_0(\cN^\Kra_{2,\chi(u_1)})\rightarrow \Gr^2 K_0(\cN^\Kra_{3,1})$ induced by the closed immersion $i:\cN^\Kra_{2,\chi(u_1)}\rightarrow \cN^\Kra_{3,1}$.
	Equation \eqref{eq:cZLwhenx_1isunit} reduces the problem of decomposing $\LZ(L^\flat)$ in this case to \cite[Theorem 4.5]{Shi2} and \cite[Theorem 4.1]{HSY}. We do not make the effort to write the complete result down, but instead look at two basic examples.

	Let us begin by the case when $L^\flat$ is unimodular.
	By \eqref{eq:cZLwhenx_1isunit} and either \cite[Theorem 4.5]{Shi2} or \cite[Theorem 4.1]{HSY}, we have
	\begin{equation}
		{}^\bL\cZ^\Kra(L^\flat)=[\Oo_{\tZ(L^\flat)^\circ}]
	\end{equation}
	in the notation of Theorem \ref{thm:horizontalpart}.

	Next consider $L^\flat=\spa\{\bx_1,\bx_2\}$ with Gram matrix $
	\diag(1,-u\pi_0)$ where $u\in \Oo_{F_0}^\times$. Then  $\spa\{\bx_1\}^{\perp}$ is split and $\tZ(\bx_1)\cong \cN^\Kra_{2,1}$. By Proposition \ref{prop:n=3reducedlocusII} (3), $\cV^2(L^\flat)$ consists of two adjacent lattices $\Lambda$ and $\Lambda'$. Moreover by \cite[Theorem 4.1]{HSY} and \eqref{eq:cZLwhenx_1isunit}, we have
	\begin{equation}\label{eq:almostminusculedecomposition}
		{}^\bL\cZ^\Kra(L^\flat)_v=[\Oo_{\tN_{\Lambda}}]+[\Oo_{\tN_{\Lambda'}}]+H_{\Lambda\cap
			\Lambda'}.
	\end{equation}

	\subsection{The intersection number}
	Assume $\Lambda\in \cV^2$. For $\bx\in \bV\backslash\{0\}$, define
	\begin{equation}
		\Int_\Lambda(\bx)\coloneqq \chi(\cN^\Kra,\Oo_{\tN_\Lambda}\otimes^\bL \Oo_{\cZ^\Kra(\bx)}).
	\end{equation}
	In this subsection we prove the following theorem.
	\begin{theorem}\label{thm: Int Lambda} Let $\Lambda\in \cV^2$ and $\bx\in \bV\backslash\{0\}$. Then
		\[\Int_\Lambda(\bx)=1_{\Lambda}(\bx)\]
		where $1_{\Lambda}$ is the characteristic function of $\Lambda\subset \bV$.
	\end{theorem}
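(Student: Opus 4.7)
The proof splits on whether $\bx \in \Lambda$. When $\bx \notin \Lambda$, Lemma \ref{lem:cNandcV_Lambda}(2) forces $\cN_\Lambda \cap \cZ^\Pap(\bx)(k) = \emptyset$, so $\tN_\Lambda$ is disjoint from $\tZ(\bx)$; moreover, any $\Lambda_0 \in \cV^0$ containing $\bx$ cannot be contained in $\Lambda$, so by Lemma \ref{lem:tVintersectExc} we have $\tN_\Lambda \cap \Exc_{\Lambda_0} = \emptyset$. Combined with the Cartier divisor decomposition of Proposition \ref{prop:multiplicityofExc}, this gives $\Int_\Lambda(\bx) = 0$.

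When $\bx \in \Lambda$, the plan is to restrict the intersection to a divisor $\tZ(v) \cong \cN^\Kra_{2,\chi(v)}$ and invoke the $n=2$ case from \cite{HSY} and \cite{Shi2}. Pick a Witt decomposition $\Lambda = \spa\{v_0\} \obot L_{-1}$ with $L_{-1} \cong \cH$, so that $\Lambda^\sharp = \spa\{v_0\} \obot \pi L_{-1}$. For any nonzero isotropic $b \in \pi L_{-1}$, the perturbed vector $v \coloneqq v_0 + b \in \Lambda^\sharp$ is still unit-norm and primitive; by choosing $b$ appropriately we may further ensure $v$ is not parallel to $\bx$. In the resulting decomposition $\Lambda = \spa\{v\} \obot \cH$, write $\bx = cv + \bx_1$ with $c \in \Oo_F$ and $0 \ne \bx_1 \in \cH$. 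Since $v \in \Lambda^\sharp$, Lemma \ref{lem:cNandcV_Lambda} gives $\cN_\Lambda \subset \cZ^\Pap(v)$, and passing to strict transforms yields $\tN_\Lambda \subset \tZ(v) \cong \cN^\Kra_{2,\chi(v)}$ via Proposition \ref{prop:cZxunimodular}.

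Let $i \colon \tZ(v) \hookrightarrow \cN^\Kra$. Because $\bx$ is not a scalar multiple of $v$, the Cartier divisors $\tZ(v)$ and $\cZ^\Kra(\bx)$ share no common component and so intersect properly. The projection formula combined with Lemma \ref{lem: distribution law of derived tensor} then gives
\[
\Int_\Lambda(\bx) \;=\; \chi\!\left(\tZ(v),\ \Oo_{\tN_\Lambda} \otimes^\bL_{\tZ(v)}\, i^* \Oo_{\cZ^\Kra(\bx)}\right).
\]
On $\tZ(v)$ the vector $v$ extends tautologically, so $\bx$ extends iff $\bx_1 = \bx - cv$ does; by properness, $i^* \Oo_{\cZ^\Kra(\bx)}$ is concentrated in degree zero and equals $\Oo_{\cZ^{\tZ(v)}(\bx_1)}$. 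The right-hand side is then the analogous intersection number on $\cN^\Kra_{2,\chi(v)}$, which by the $n=2$ analogue of the theorem (from \cite{HSY} for the split case and \cite{Shi2} for the non-split case) evaluates to $1_\cH(\bx_1) = 1$, as desired.

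The principal obstacle is to make the reduction both geometric and derived-clean: we must simultaneously arrange $v \in \Lambda^\sharp$ (which forces $\tN_\Lambda \subset \tZ(v)$) and $\bx \notin \spa\{v\}$ (which ensures proper intersection so no higher derived terms arise). The existence of nonzero isotropic vectors in $\pi L_{-1}$ together with the flexibility of the Witt decomposition of $\Lambda$ achieves both conditions whenever $\bx \ne 0$, thereby reducing the $n=3$ calculation to the known $n=2$ one.
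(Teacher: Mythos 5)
Your reduction is a genuinely different route from the paper's, and the geometric setup is done carefully. The paper instead fixes a rank-$2$ sublattice $L^\flat$ whose cycle $\LZ(L^\flat)$ decomposes as $[\Oo_{\tN_\Lambda}]+[\Oo_{\tN_{\Lambda'}}]+H_{\Lambda\cap\Lambda'}+[\Oo_{\tZ(M^\flat)}]$, uses the $\theta$-symmetry to equate $\Int_\Lambda(\bx)$ and $\Int_{\Lambda'}(\bx)$, and then evaluates a full triple intersection $\Int(L)$ in two ways, the second reducing via $\tZ(\by_0)$ to the rank-$2$ Kudla--Rapoport theorem $\Int(T)=\pden(T)$ of \cite{HSY} and \cite{Shi2}. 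Your route is more direct, but it trades the paper's well-documented rank-$2$ input for a different and more specific one, and that is where the gap lies.

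The crux is your final sentence: you invoke ``the $n=2$ analogue of the theorem'' on $\cN^\Kra_{2,1}$, namely that $\chi\bigl(\cN^\Kra_{2,1},\,\Oo_{\tN_{\Lambda'}}\otimes^\bL\Oo_{\cZ^\Kra(\bx_1)}\bigr)=1_{\Lambda'}(\bx_1)$, citing \cite{HSY} and \cite{Shi2}. This is not the main theorem of either of those papers, and you do not check that it is stated there. It is also not a triviality: to assemble it from the decomposition results one needs (i) the vertical multiplicities of $\tN_{\Lambda_2}$ and of $\Exc_{\Lambda_0}$ in $\cZ^\Kra_{2,1}(\bx_1)$, (ii) the pairwise numbers $\tN_{\Lambda'}\cdot\tN_{\Lambda_2}$, $\tN_{\Lambda'}\cdot\Exc_{\Lambda_0}$ and the self-intersection $\tN_{\Lambda'}^2$, and (iii) the intersection of $\tN_{\Lambda'}$ with the horizontal (quasi-canonical lifting) part. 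Moreover, the rank-$2$ claim must be proved in the \emph{improper} case $\bx_1\in\Lambda'^\sharp=\pi\Lambda'$: when the original $\bx$ is parallel to $v_0$, the perturbed vector $v=v_0+b$ forces $\bx_1=\bx-cv=-\pi^n\alpha\, b$, which is isotropic and lies in $\pi\Lambda'$, so $\tN_{\Lambda'}\subset\cZ^\Kra(\bx_1)$ and there is no proper-intersection shortcut. Until the rank-$2$ statement is located or proved, the argument is incomplete.

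Two smaller remarks. First, the ``non-split case from \cite{Shi2}'' cannot arise in your reduction: with $\chi(\bV)=\epsilon$ one has $\chi(q(v_0))=\chi(\Lambda)=\epsilon$ (since $\chi(\cH)=1$), hence $\chi(\spa_F\{v\}^\perp)=\epsilon\cdot\chi(q(v))=1$, so $\tZ(v)\cong\cN^\Kra_{2,1}$ always; $\cN^\Kra_{2,-1}$ never appears. Second, your passage from $i^*\Oo_{\cZ^\Kra(\bx)}$ to $\Oo_{\cZ^\Kra_{2,1}(\bx_1)}$ is correct but uses more than ``$v$ extends tautologically'': one needs $(\bx,v)\in\Oo_F$ so that $c\in\Oo_F$, which holds because $\bx\in\Lambda$ and $v\in\Lambda^\sharp$, together with Corollary \ref{cor:reducedimensionofcZbyone} to identify $\tZ(v)\cap\cZ^\Kra(\bx_1)$ with $\cZ^\Kra_{2,1}(\bx_1)$; this should be spelled out.
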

	\begin{corollary}\label{cor:tVintersecttZy_0}
		Assume that $\Lambda_0\in \cL_0$ and $\Lambda\in \cL_2$ such that $\Lambda_0\subset \Lambda$. Then for any $y_0\in \Lambda_0\setminus \pi \Lambda_0$ such that $y_0^\bot$ is non-split, we have
		\[\chi(\cN^\Kra,\Oo_{\tN_{\Lambda}}\otimes^\bL\Oo_{\tZ(y_0)})=0.\]
	\end{corollary}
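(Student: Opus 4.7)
The plan is to reduce via linearity to a combinatorial identity using Theorem \ref{thm: Int Lambda} and Lemma \ref{lem:tVintersectExc}. Starting from the decomposition of Cartier divisors
\[\cZ^\Kra(y_0) = \tZ(y_0) + \sum_{\substack{\Lambda'\in\cV^0 \\ y_0\in\Lambda'}} (m_{\Lambda'}(y_0)+1)\,\Exc_{\Lambda'}\]
given by Proposition \ref{prop:multiplicityofExc}, and applying $\chi(\cN^\Kra,\Oo_{\tN_\Lambda}\otimes^\bL -)$, one obtains
\[\chi(\cN^\Kra,\Oo_{\tN_\Lambda}\otimes^\bL\Oo_{\tZ(y_0)}) = \Int_\Lambda(y_0) - \sum_{\substack{\Lambda'\subset\Lambda \\ y_0\in\Lambda'}} (m_{\Lambda'}(y_0)+1)\,\chi(\cN^\Kra,\Oo_{\tN_\Lambda}\otimes^\bL\Oo_{\Exc_{\Lambda'}}),\]
where by Lemma \ref{lem:tVintersectExc} the exceptional intersection equals $1$ precisely when $\Lambda'\subset\Lambda$ and vanishes otherwise. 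Since $y_0\in\Lambda_0\subset\Lambda$, Theorem \ref{thm: Int Lambda} gives $\Int_\Lambda(y_0)=1_\Lambda(y_0)=1$.

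Thus the corollary reduces to verifying the combinatorial identity
\[\sum_{\substack{\Lambda'\subset\Lambda \\ y_0\in\Lambda'}}(m_{\Lambda'}(y_0)+1)=1.\]
To establish this, I would invoke Proposition \ref{prop:n=3reducedlocusI}: the nonsplit hypothesis on $y_0^{\bot}$ identifies $\cV^0(y_0)$ with a finite ball of radius $d\coloneqq \val(y_0)$ around the unique type $0$ lattice $\Lambda_c\coloneqq \Lambda_{y_0/\pi^d}$ in the Bruhat--Tits tree, and for $\Lambda'\in\cV^0(y_0)$ the multiplicity is computed as $m_{\Lambda'}(y_0)=d-\mathrm{dist}(\Lambda',\Lambda_c)$. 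Among the $\q+1$ type $0$ neighbors of $\Lambda$ (corresponding to the $\q+1$ lines in the $\F_\q$-plane $\Lambda/\Lambda^{\sharp}$), one verifies that only $\Lambda_0$ itself lies in $\cV^0(y_0)$, and that $m_{\Lambda_0}(y_0)=0$ (since $\pi^{-1}y_0$ must fail to land in $\Lambda_0$ whenever $y_0\notin\Lambda^{\sharp}$), which gives the sum $=1$ as required.

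The main obstacle is the last verification, which rests on the geometric claim that $\Lambda$ lies on the outer boundary of $\cV(y_0)$; equivalently, $y_0\notin\Lambda^{\sharp}$. I would prove this via the norm/character structure on the Hermitian form: assuming $y_0\in\Lambda^{\sharp}$ and using the Jordan decomposition of $\Lambda^{\sharp}$ (a $\pi$-modular rank $2$ piece orthogonal to a unimodular rank $1$ piece) together with the Galois action of $F/F_0$, a direct calculation of $h(y_0,y_0) \bmod \pi_0$ would show that it lies in the image of the norm map on $\Oo_F^{\times}$ times $\Oo_{F_0}^{\times 2}$. Hence $\chi(y_0)=1$, forcing $y_0^{\bot}$ to be split and contradicting the hypothesis. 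This contradiction confines $\Lambda$ to the boundary stratum $\mathrm{dist}(\Lambda,\Lambda_c)=d+1/2$, where the geodesic neighbor towards $\Lambda_c$ is forced to coincide with $\Lambda_0$, completing the proof.
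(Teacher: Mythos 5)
Your overall strategy coincides with the paper's: expand $\cZ^{\Kra}(y_0)$ via Proposition \ref{prop:multiplicityofExc}, apply $\chi(\cN^{\Kra},\Oo_{\tN_\Lambda}\otimes^{\bL}-)$, and use Theorem \ref{thm: Int Lambda} together with Lemma \ref{lem:tVintersectExc} to reduce to the combinatorial identity $\sum_{\Lambda'\subset\Lambda,\,y_0\in\Lambda'}(m_{\Lambda'}(y_0)+1)=1$. The paper's one-line proof simply asserts $\cZ^{\Kra}(y_0)=\tZ(y_0)+\Exc_{\Lambda_0}$ without justifying it; you have correctly isolated the one fact that this assertion rests on, namely $y_0\notin\Lambda^{\sharp}$ (equivalently, $\Lambda_0$ is the \emph{only} type-$0$ neighbor of $\Lambda$ containing $y_0$, and $m_{\Lambda_0}(y_0)=0$).

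The gap is in your verification that $y_0\notin\Lambda^{\sharp}$. Write $y_0=ae_1+be_2+ce_3$ in a basis for which $\Lambda^{\sharp}$ has Gram matrix $\cH_1\obot\langle\epsilon\rangle$. Then $h(y_0,y_0)\equiv c\bar c\,\epsilon\pmod{\pi_0}$, and since $\chi(\epsilon)=\chi(\Lambda^{\sharp})=\chi(\bV)=1$, your argument forces $\chi(h(y_0,y_0))=1$ (contradicting nonsplitness of $y_0^\bot$) \emph{only when $c\in\Oo_F^{\times}$}, i.e.\ only when $\val(y_0)=0$. When $c\in\pi\Oo_F$ the reduction mod $\pi_0$ vanishes and gives no character information. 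Concretely, take $y_0=u\pi e_1+e_2\in\Lambda^{\sharp}$ with $u\in\Oo_{F_0}^{\times}$ chosen so that $\chi(-2u)=-1$: then $h(y_0,y_0)=2u\pi_0$, so $y_0^\bot$ is nonsplit, $y_0$ is primitive in $\Lambda_0$, but $\val(y_0)=1$ and $y_0\in\Lambda^{\sharp}$. For this $y_0$ every type-$0$ neighbor $\Lambda'$ of $\Lambda$ contains $y_0$, exactly one of them (the unique $\Lambda'$ with $y_0/\pi\in\Lambda'$) contributes $m_{\Lambda'}(y_0)=1$, and the sum is $\q+2$ rather than $1$, so the asserted vanishing fails. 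The statement of the corollary should therefore carry the extra hypothesis $\val(y_0)=0$ (or, equivalently given $\Lambda_0\subset\Lambda$, the condition $y_0\in\Lambda_0\setminus\Lambda^{\sharp}$); under that hypothesis your norm argument closes the gap correctly. This strengthening is harmless in the only place the corollary is invoked --- the proof of Theorem \ref{thm:decomposition of cD intro} --- since there $y_0$ is chosen freely and can be taken with $\val(y_0)=0$.
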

	\begin{proof}
		By Proposition \ref{prop:multiplicityofExc}, we know
		\[\cZ^\Kra(y_0)=\tZ(y_0)+\Exc_{\Lambda_0}.\]
		Now the corollary follows immediately from Theorem \ref{thm: Int Lambda} and Lemma \ref{lem:tVintersectExc}.
	\end{proof}
	
	\noindent\textit{Proof of Theorem \ref{thm: Int Lambda}:}
	We consider three different cases.
	First if $x\notin\Lambda$ or $\mathrm{v}(\bx)<0$, then  by Lemma \ref{lem:cNandcV_Lambda}, $\cZ^\Kra(\bx)\cap \tN_\Lambda=\emptyset$ hence $\Int_\Lambda(\bx)=0$. From now on we assume $x\in\Lambda$ and $\mathrm{v}(\bx)\geq 0$.
	Write $\bx= \bx_0  \pi^{n}$ with $\bx_0 \in \Lambda\backslash  \pi \Lambda$ and $n  \ge 0$.

	{\bf Case 1}:   First we assume  $\bx_0\in \Lambda\backslash \Lambda^\sharp$.  	Choose a   basis $\{e'_1,e'_2,e'_3\}$ of $\Lambda$ with Gram matrix $\cH_{3,1}^1$ such that
	\[\bx_0=x e'_1+y e'_2+z e'_3.\]
	Then one of $x$ and $y$ is in $\Oo_{F}^\times$ as $\Lambda^\sharp=\spa\{\pi e'_1, \pi e'_2, e_3\}$.
	Apparently the equation
	\[2u-v\bar{v}=h(\bx_0,\bx_0)\]
	has a solution $(u,v)\in \Oo_{F_0}^2$ with $u\in \Oo_{F_0}^\times$. Now according to Lemma \ref{lem:primitive in H, vector}, we can find a matrix $g\in \rU(\cH_{3,1}^1)(\Oo_{F_0})$ such that
	\[g\left(\begin{array}{c}
		x  \\
		y  \\
		z
	\end{array}\right)=\left(\begin{array}{c}
		\pi u  \\
		1  \\
		v
	\end{array}\right).\]
	Now replace the basis $\{e'_1,e'_2,e'_3\}$ by $\{e_1,e_2,e_3\}=\{e'_1,e'_2,e'_3\}g^{-1}$, we have
	\[\bx_0=\pi u e_1+e_2+v e_3\]
	where $u\in \Oo_{F_0}^\times$, $v\in \Oo_F$.

	Define
	\begin{equation}
		f_1=\frac{1}{\pi} u^{-1} e_2,\ f_2=\pi u e_1,\ f_3=e_3.
	\end{equation}
	Then $\{f_1,f_2,f_3\}$  has also Gram matrix $\cH_{3,1}^1$ and $\Lambda'\coloneqq \spa\{f_1,f_2,f_3\}$ is a type $2$ lattice adjacent to $\Lambda$ with $\Lambda_c=\Lambda\cap\Lambda'=\spa\{\pi e_1,e_2,e_3\}$ is a type $0$ lattice. Now in terms of the basis $\{f_1,f_2,f_3\}$ we have
	\[\bx=\pi^n(\pi u f_1+f_2+v f_3).\]
	Define $\theta\in \rU(\bV)$ by taking the basis $\{e_1,e_2,e_3\}$ to $\{f_1,f_2,f_3\}$. Then
	\[\theta(\bx)=\bx, \quad  \theta(\Lambda)=\Lambda'.\]
	In particular $\theta(\cZ^\Kra(\bx))=\cZ^\Kra(\bx)$ and
	\begin{equation} \label{eq:Int Lambda=Int Lambda'}
		\Int_{\Lambda'}(\bx)=\Int_\Lambda(\bx).
	\end{equation}

	Now let
	\[\by_0=e_3,\quad \by_1=\pi(-\pi u e_1+e_2),\]
	$L^\flat=\spa\{\by_0,\by_1\}$, and  $L=\spa\{\by_0,\by_1,\bx\}$. Then by \eqref{eq:almostminusculedecomposition} and Theorem \ref{thm:horizontalpart}  we have
	\begin{equation}
		{}^\bL\cZ^\Kra(L^\flat)=[\Oo_{\tN_{\Lambda}}]+[\Oo_{\tN_{\Lambda'}}]+H_{\Lambda_c}+[\Oo_{\tZ(M^\flat)}].
	\end{equation}
	where $\tZ(M^\flat)$ is the quasi-canonical lifting cycle of the lattice
	\[M^\flat\coloneqq \spa\{e_3,-\pi  u e_1+e_2\}.\]
	Combining with \eqref{eq:Int Lambda=Int Lambda'}, we have
	\begin{equation}\label{eq:10.10}
		\Int(L) =2 \cdot \Int_\Lambda(\bx) + \chi(\cN^{\Kra},{}^\bL\cZ^\Kra(\bx) \cdot H_{\Lambda_c})+\chi(\cN^{\Kra},{}^\bL\cZ^\Kra(\bx) \cdot [\Oo_{\tZ(M^\flat)}]).
	\end{equation}
	Let $\bx'=\pi^n(\pi u e_1+e_2)= \bx - \pi^n v e_3$. Then we have
	\begin{align*}
		\Int(L)
		&=\chi(\cN^\Kra, {}^\bL\cZ^\Kra(\by_0)\cdot {}^\bL\cZ^\Kra(\bx)\cdot {}^\bL\cZ^\Kra(\by_1))\\
		&=\chi(\cN^\Kra, {}^\bL\cZ^\Kra(\by_0)\cdot{}^\bL\cZ^\Kra(\bx')\cdot {}^\bL\cZ^\Kra(\by_1))\\
		&=\chi(\cN^\Kra, \Oo_{\tZ(\by_0)}\otimes^\bL\Oo_{\cZ^\Kra(\bx')}\otimes^\bL\Oo_{\cZ^\Kra(\by_1)})
		\\
		&\quad +\sum_{\Lambda_0\in \cV^0(L)} \chi(\cN^\Kra, \Oo_{\Exc_{\Lambda_0}}\otimes^\bL\Oo_{\cZ^\Kra(\bx')}\otimes^\bL\Oo_{\cZ^\Kra(\by_1)})
	\end{align*}
	where we have used linear invariance (\cite[Corollary D]{Ho2}) and Proposition \ref{prop:multiplicityofExc}.
	Notice that the Gram matrix of $\{\bx',\by_1\}$ is $\diag(2u(-\pi_0)^n,-2u\pi_0)$. By Proposition \ref{prop:cZxunimodular} and \cite[Theorem 1.1]{HSY},
	\[\chi(\cN^\Kra, \Oo_{\tZ(\by_0)}\otimes^\bL\Oo_{\cZ^\Kra(\bx')}\otimes^\bL\Oo_{\cZ^\Kra(\by_1)})=\begin{cases}
		1 & \text{ if } n=0,\\
		1+n-2\q & \text{ if } n\geq 1.
	\end{cases}\]
	By Corollary \ref{cor:multiplecZintersectExc} and \cite[Lemma 3.15]{HSY}, we know that
	\begin{align*}
		&\sum_{\Lambda_0\in \cV^0(L)} \chi(\cN^\Kra, \Oo_{\Exc_{\Lambda_0}}\otimes^\bL\Oo_{\cZ^\Kra(\bx')}\otimes^\bL\Oo_{\cZ^\Kra(\by_1)})=|\cV^0(L)|
		=\begin{cases}
			1 & \text{ if } n=0,\\
			2\q+1 & \text{ if } n\geq 1.\end{cases}
	\end{align*}

	Combining the above two equations we know that
	\[\chi(\cN^\Kra, {}^\bL\cZ^\Kra(\bx)\cdot {}^\bL\cZ^\Kra(L^\flat))=n+2.\]
	On the other hand, by Corollary \ref{cor: P1 dot cZ},
	\[\chi(\cN^\Kra, H_{\Lambda_0}\cdot {}^\bL\cZ^\Kra(\bx))=-1.\]
	By \cite[Proposition 3.3]{G}
	\[\chi(\cN^\Kra, \Oo_{\tZ(M^\flat)}\otimes^\bL\Oo_{\cZ^\Kra(\bx)})=n+1.\]
	Hence we obtain  by (\ref{eq:10.10})
	\begin{equation}
		\Int_\Lambda(\bx)=1.
	\end{equation}
	
	{\bf Case 2:} Now we Assume  $\bx_0\in \Lambda^\sharp\backslash\pi\Lambda$. As in the proof of the previous case,  we can find a basis $\{e_1,e_2,e_3\}$ of $\Lambda$ with Gram matrix $\cH_{3,1}^1$ by Lemma \ref{lem:vector primitive in S} such that
	\[\bx=\pi^n(ue_3+\pi e_1).\]
	where $u\in \Oo_{F_0}^\times$. Define
	\[\Lambda'=\spa\{\pi e_1,\frac{1}{\pi}e_2,e_3\},\ \Lambda_c=\Lambda\cap\Lambda',\]
	then $\bx_0\in\Lambda'\backslash\Lambda'^\sharp$. Also define
	\[\by_0=e_3,\ \by_1=\pi(-\pi  e_1+e_2),\]
	and $L^\flat\coloneqq \spa\{\by_0,\by_1\}$. Then by Theorem \ref{thm:horizontalpart} and \eqref{eq:almostminusculedecomposition} we have
	\begin{equation} \label{eq:10.13}
		{}^\bL\cZ^\Kra(L^\flat)=[\Oo_{\tN_{\Lambda}}]+[\Oo_{\tN_{\Lambda'}}]+H_{\Lambda_c}+[\Oo_{\tZ(M^\flat)}],
	\end{equation}
	where $\tZ(M^\flat)$ is the quasi-canonical lifting cycle of the lattice
	\[M^\flat\coloneqq \spa\{e_3,-\pi  e_1+e_2\}.\]
	Let $\bx'\coloneqq \pi^{n+1}  e_1=\bx -\pi^n u e_3$, then we have
	\begin{align*}
		\chi(\cN^\Kra, {}^\bL\cZ^\Kra(\bx)\cdot{}^\bL\cZ^\Kra(L^\flat))
		=&\chi(\cN^\Kra, \Oo_{\tZ(\by_0)}\otimes^\bL\Oo_{\cZ^\Kra(\bx')}\otimes^\bL\Oo_{\cZ^\Kra(\by_1)})\\
	 &+\sum_{\Lambda_0\in \cV^0(L)} \chi(\cN^\Kra, \Oo_{\Exc_{\Lambda_0}}\otimes^\bL\Oo_{\cZ^\Kra(\bx')}\otimes^\bL\Oo_{\cZ^\Kra(\by_1)}).
	\end{align*}
	Notice that the Gram matrix of $\{\bx',\by_1\}$ is equivalent to $\cH_1$ when $n=0$, and to $\diag(u_1\pi_0^n,u_2\pi_0)$  for some $u_1,u_2\in \Oo_{F_0}^\times$ when $n\geq 1$. Hence by Proposition \ref{prop:cZxunimodular} and \cite[Theorem 1.1]{HSY}, we know that
	\[\chi(\cN^\Kra, \Oo_{\tZ(\by_0)}\otimes^\bL\Oo_{\cZ^\Kra(\bx')}\otimes^\bL\Oo_{\cZ^\Kra(\by_1)})=\begin{cases}
		-(\q-1) & \text{ if } n=0,\\
		1+n-2\q & \text{ if } n\geq 1.
	\end{cases}\]
	By Corollary \ref{cor:multiplecZintersectExc} and Lemmas 3.15 and 3.16 of \cite{HSY}, we know that
	\begin{align*}
		& \sum_{\Lambda_0\in \cV^0(L)} \chi(\cN^\Kra, \Oo_{\Exc_{\Lambda_0}}\otimes^\bL\Oo_{\cZ^\Kra(\bx')}\otimes^\bL\Oo_{\cZ^\Kra(\by_1)}) =|\cV^0(L)|=\begin{cases}
			\q+1 & \text{ if } n=0,\\
			2\q+1 & \text{ if } n\geq 1.
	\end{cases}  \end{align*}
	Hence we know that
	\[\chi(\cN^\Kra, {}^\bL\cZ^\Kra(\bx)\cdot{}^\bL\cZ^\Kra(L^\flat))=n+2.\]
	On the other hand, by Corollary \ref{cor: P1 dot cZ},
	\[\chi(\cN^\Kra,H_{\Lambda_0}\cdot {}^\bL\cZ^\Kra(\bx))=-1.\]
	By \cite[Proposition 3.3]{G}
	\[\chi(\cN^\Kra,\Oo_{\tZ(M^\flat)}\otimes^\bL \Oo_{\cZ^\Kra(\bx)})=n+1.\]
	Since $\bx\in \Lambda'\backslash \Lambda'^\sharp$, by the previous case we also  have
	\[\Int_{\Lambda'}(\bx)=1.\]
	Combining all above, we have by (\ref{eq:10.13})
	\begin{equation}
		\Int_\Lambda(\bx)=1.
	\end{equation}
	This finishes the proof of Theorem \ref{thm: Int Lambda}. \qedsymbol

	\section{Proof of the modified Kudla-Rapoport conjecture: three-dimension case}\label{sec:proof when n=3}
	In  this section, we will prove Theorem \ref{thm:mainthmintroduction}. We need some preparation.

	\begin{proposition}\label{prop: v(L)=0 case}
		Assume that  $L \subset \bV$ has a Gram matrix
				\noindent $T=\diag(u_1, u_2 (-\pi_0)^{b}, u_3 (-\pi_0)^c)$ with $u_i \in \Oo_{F_0}^\times$ and $0 \le b \le c$. Then
		\begin{align*}
			\Int(L)=\pden(L).
		\end{align*}
		Moreover, for every decomposition $L =L^\flat \oplus \spa\{\bx\}$, we have
		$$
		\Int(L)^{(2)}=\pden(L)^{(2)}.
		$$
	\end{proposition}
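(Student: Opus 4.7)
The plan is to exploit the unit diagonal entry $u_1$ of $T$ to split $L = \Oo_F \bx_1 \obot L^\flat$ with $q(\bx_1) = u_1 \in \Oo_{F_0}^\times$ and $L^\flat = \spa\{\bx_2, \bx_3\}$, and then reduce the computation to the $n=2$ Kudla--Rapoport conjecture established in \cite{Shi2, HSY}. By Proposition \ref{prop:cZxunimodular}, $\tZ(\bx_1) \cong \cN^\Kra_{2, \chi(u_1)}$, and by Proposition \ref{prop:multiplicityofExc}, since $\bx_1$ is primitive,
$$\cZ^\Kra(\bx_1) = \tZ(\bx_1) + \sum_{\Lambda_0 \in \cV^0,\, \bx_1 \in \Lambda_0} \Exc_{\Lambda_0}.$$
I would intersect each side of this decomposition with ${}^\bL\cZ^\Kra(L^\flat)$ and evaluate the two resulting contributions separately.

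The main term, via Lemma \ref{lem: distribution law of derived tensor} and Corollary \ref{cor:reducedimensionofcZbyone}, becomes the $n=2$ intersection number $\chi(\cN^\Kra_{2, \chi(u_1)},\ {}^\bL \cZ^\Kra_{2, \chi(u_1)}(L^\flat))$; by \cite[Theorem 1.3]{Shi2} and \cite[Theorem 1.1]{HSY} this equals the $n=2$ analytic expression, which I will denote $\pden(L^\flat)$. For each $\Exc_{\Lambda_0}$ with $\bx_1 \in \Lambda_0$, Corollary \ref{cor:cZintersectExc} gives $[\Oo_{\cZ^\Kra(\bx_i)} \otimes^\bL \Oo_{\Exc_{\Lambda_0}}] = -H_{\Lambda_0}$ whenever $\bx_i \in \Lambda_0$ and $0$ otherwise; combined with $H_{\Lambda_0}^2 = 1$ on $\Exc_{\Lambda_0} \cong \bP^2$, the contribution of $\Exc_{\Lambda_0}$ is $1_{L^\flat \subset \Lambda_0}$. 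Summing over such $\Lambda_0$ yields an exceptional total equal to $|\cV^0(L)|$. Matching against Corollary \ref{cor: pDen(T)-pDen(T_2)}, which supplies the analytic identity $\pden(L) = \pden(L^\flat) + |\cV^0(L)|$ in exactly the same combinatorial form, delivers $\Int(L) = \pden(L)$.

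For the primitive identity $\Int(L)^{(2)} = \pden(L)^{(2)}$ for an arbitrary decomposition $L = L^\flat \oplus \spa\{\bx\}$ (not assumed orthogonal), I would apply Theorem \ref{thm: equivalent form of modified KR}(2): it suffices to verify Conjecture \ref{conj:main} for every integral lattice of the form $L_1' \oplus \spa\{\bx\}$ with $L^\flat \subset L_1' \subset L^\flat_F$. Any such lattice contains $L$, hence contains a unit-norm vector, so its $\val$ is non-positive. If $\val < 0$, Proposition \ref{prop: v(T)<0} forces both sides to vanish; if $\val = 0$, a change of basis brings the Gram matrix to the diagonal form treated above, and the first part of the proof applies verbatim.

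The main obstacle is the bookkeeping of the exceptional-divisor contributions: one must carefully apply the distribution law to convert derived tensor products on $\cN^\Kra$ into Euler characteristics on $\Exc_{\Lambda_0} \cong \bP^2$, confirm that Corollary \ref{cor:cZintersectExc} is applicable factor-by-factor, and check that the proper-intersection hypotheses required by the lemmas of Section \ref{sec: Exc} are in force. Once these formalities are settled, the matching of geometric and analytic sides is immediate from the common count $|\cV^0(L)|$.
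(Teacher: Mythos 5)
Your argument for the first identity $\Int(L)=\pden(L)$ is essentially the paper's own proof: split off the unit-norm vector $\bx_1$, use Proposition \ref{prop:multiplicityofExc} to decompose $\cZ^\Kra(\bx_1) = \tZ(\bx_1) + \sum_{\Lambda_0\ni\bx_1}\Exc_{\Lambda_0}$, identify the $\tZ(\bx_1)$-term with the rank-$2$ intersection number via Proposition \ref{prop:cZxunimodular} and Corollary \ref{cor:reducedimensionofcZbyone}, and match the exceptional total $|\cV^0(L)|$ against Corollary \ref{cor: pDen(T)-pDen(T_2)}. Your computation of the per-divisor contribution by multiplying two copies of $-H_{\Lambda_0}$ and evaluating $\chi(\Exc_{\Lambda_0}, H_{\Lambda_0}^2)=1$ is a small variant of the paper's appeal to Corollary \ref{cor:multiplecZintersectExc}, and both give the correct answer $1$. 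You also correctly observe that $\bx_1$ is automatically primitive in any $\Lambda_0\in\cV^0$ because $q(\bx_1)\in\Oo_{F_0}^\times$ while $\Lambda_0$ is self-dual, so the multiplicity of each $\Exc_{\Lambda_0}$ is exactly $1$. That part is fine.

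The proof of the primitive identity $\Int(L)^{(2)}=\pden(L)^{(2)}$ has a genuine gap. You reduce via Theorem \ref{thm: equivalent form of modified KR}(2) to verifying Conjecture \ref{conj:main} for every lattice $L' = L_1'\oplus\spa\{\bx\}$ with $L^\flat\subset L_1'\subset L^\flat_F$, and you claim that every such $L'$ with $\val(L')=0$ ``can be brought to diagonal form.'' That is false. A rank-$3$ lattice with $\val=0$ splits off a unit-norm rank-$1$ orthogonal summand, but its rank-$2$ orthogonal complement $M'$ need not be diagonalizable: over a ramified local ring, the lattices $\cH_{2a+1}$ (with $a\ge 0$) have fundamental invariant $(2a+1,2a+1)$, which is entirely odd, whereas any diagonal rank-$2$ lattice has fundamental invariant $(2b,2c)$, entirely even; so $\cH_{2a+1}$ is not isometric to any diagonal lattice. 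Concretely, if $L^\flat$ happens to lie in a split rank-$2$ Hermitian space, the poset of integral lattices $L_1'$ between $L^\flat$ and $L^\flat_F$ contains copies of $\cH_{2a+1}$, and then $L' = L_1'\oplus\spa\{\bx\}$ is isometric to $\langle u'\rangle\obot\cH_{2a+1}$, which falls outside the family covered by part (1). For such $L'$ your ``apply part (1) verbatim'' step breaks; these cases require the separate analysis carried out in Proposition \ref{prop11.3} (the case $T=\diag(\cH_a, u_3(-\pi_0)^c)$). So to close the argument you must either invoke Proposition \ref{prop11.3} explicitly for these non-diagonalizable intermediates, or observe that the primitive identity for them is established independently there; asserting diagonalizability alone does not do it.
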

	\begin{proof}
		Fix a basis $\{\bx_1,\bx_2,\bx_3\}$ of $L$ such that the Gram matrix of $\{\bx_1,\bx_2,\bx_3\}$ is $T=\diag(u_1, T_2)$ where $u_1\in \Oo_{F_0}^\times$ and $T_2\in \Herm_2(\Oo_{F})$. Let $u_1^{-1}\cdot L$ be a lattice represented by $u_1^{-1}\cdot T$.
		Since  $\Int(u_1^{-1}\cdot L)=\Int(L)$ and $\pden(u_1^{-1}\cdot L)=\pden(L)$, we may assume $u_1=1$.
		Let $L^{\flat}=\spa\{\bx_2,\bx_3\}$. According to Propositions \ref{prop:cZxunimodular}, \ref{prop:multiplicityofExc}, and Corollary \ref{cor:multiplecZintersectExc}, we have
		\begin{align}\label{eq: int(T)-int(T_2)}
			\Int(L)-\Int(L^\flat) &
			=\chi(\cN^\Kra,{}^\bL\cZ^\Kra(\bx_1)\cdot \LZ(L^\flat))-\chi(\cN^\Kra,{}^\bL\tZ(\bx_1)\cdot \LZ(L^\flat))\\ \nonumber
			&=\sum_{\Lambda_0\in \cV^0(L)} \chi(\cN^\Kra,[\Oo_{\Exc_{\Lambda_0}}]\cdot \LZ(L^\flat))\\ \nonumber
			&=|\{\cV^0(L)\}|.
		\end{align}
		Now the result we want follows by comparing \eqref{eq: int(T)-int(T_2)} with \eqref{eq: pDen(T)-pDen(T_2)}, and the identity $\Int(L^\flat)=\pden(L^\flat)$ proved in \cite[Theorem 1.3]{Shi2} and \cite[Theorem 1.3]{HSY}.
		(2) follows from (1) and Theorem  \ref{thm: equivalent form of modified KR} (2).
	\end{proof}

	\noindent{\bf Proof of Theorem \ref{thm:decomposition of cD intro}:}
	Under the assumption $\mathrm{v}(L^\flat)>0$, we can decompose $\cD(L^\flat)$ in $\Gr^2 K_0(\cN^\Kra)$ as
	\begin{align}\label{eq: decom of D(L flat)}
		\cD(L^\flat)=\sum_{\Lambda_2\in \cV(L^\flat)}m(\cD(L^\flat),\Lambda_2)[\Oo_{\tN_{\Lambda_2}}]+\sum_{\Lambda_0\in \cV(L^\flat)}m(\cD(L^\flat),\Lambda_0)H_{\Lambda_0},
	\end{align}
	by \eqref{eq: bL ZKra Lflat v} and Proposition \ref{prop:horizontalpartofDL}.
	
	{\bf Claim 1}: $m(\cD(L^\flat),\Lambda_0)=0$ unless $L^\flat \subset \Lambda_0$. In such a case,
	\begin{align}
		m(\cD(L^\flat),\Lambda_0)=\begin{cases}
			\q+1 & \text{ if $\Lambda_0\in \cV(L^\flat)\setminus \mathcal{B}(L^\flat)$},\\
			1 & \text{ if $\Lambda_0\in \mathcal{B}(L^\flat)$}.
		\end{cases}
	\end{align}
	Indeed,  since  $\Lambda_0$  is of type $0$, we may choose a $y_0\in \bV\setminus L^\flat_F$ such that $\spa\{y_0\}^\bot$ is non-split and  $y_0 \in \Lambda_0\setminus \pi \Lambda_0$. In this case, Proposition \ref{prop:multiplicityofExc}, Corollaries \ref{cor: P1 dot Exc} and \ref{cor: P1 dot cZ} imply that
	\begin{equation*}
		\chi(\cN^\Kra,H_{\Lambda_0} \cdot [\Oo_{\tilde{\cZ}(y_0)}])=1.
	\end{equation*}
	Thus, by \eqref{eq: decom of D(L flat)},  Corollaries \ref{cor:tVintersecttZy_0} and \ref{cor:reducedimensionofcZbyone}, we have
	\begin{align*}
		m(\cD(L^\flat),\Lambda_0) =\chi(\cN^\Kra,\cD(L^\flat) \cdot [\Oo_{\tilde{\cZ}(y_0)}]).
	\end{align*}
	
	Let $(2a,2b)$ ($b>a$) be the fundamental invariant of the projection of $L^\flat$ onto $\spa\{y_0\}^\bot$.
	Let $\varphi$ be the natural quotient map $\Lambda_0\rightarrow \Lambda_0/\pi\Lambda_0$ and define
	\[m\coloneqq \mathrm{dim}_{\F_\q} \varphi(L^\flat) \le 2.\]
	Equation \eqref{eq:lambdasharpintersection} implies that $m=0$ if and only if $\Lambda_0\in \cL(L^\flat)\setminus \mathcal{B}(L^\flat)$.
	First assume $m=0$, in other words, $L^\flat\subset \pi \Lambda_0$ so $b\geq a \geq 1$. By the definition of $\cD(L^\flat)$ and \cite[Theorem 1.2]{Shi2}, we have
	\[m(\cD(L^\flat),\Lambda_0)=\mu(a,b)-\q\mu(a-1,b)-\mu(a,b-1)+\q \mu(a-1,b-1)=\q+1,\]
	as claimed where
	\begin{align}\label{eq:mu(a,b)}
		&\mu(a,b)=\chi(\cN^\Kra,{}^\bL\cZ^\Kra(L^\flat) \cdot [\Oo_{\tilde{\cZ}(y_0)}])=\begin{cases}
			2\sum_{s=0}^{a}\q^s(a+b+1-2s)-a-b-2 & \text{ if } a\geq 0\\
			0 & \text{ if } a<0.
		\end{cases}
	\end{align}

	Now assume $m=1$, then $\varphi(L^\flat)$ is a line $\ell$ and $b\geq 1$. By the assumption that $y_0\notin L^\flat_F$, we know $\ell$ is not in $ \mathrm{Span}\{\varphi(y_0)\}$, hence the projection of $\ell$ onto $\varphi(y_0)^\bot$ is non-zero. Since $\varphi(y_0)^\bot$ is non-split, we must have $a=0$.
	Hence by the definition of $\cD(L^\flat)$ and \eqref{eq:mu(a,b)}, we have
	\begin{align*}
		m(\cD(L^\flat),\Lambda_0)
		&=\mu(0,b)-\q\mu(-1,b)-\mu(0,b-1)+\q \mu(-1,b-1)=1
	\end{align*}
	as claimed.
	Finally,  $m=2$ is impossible since  $v(L^\flat) >0$. This finshes the proof of Claim 1.
	
	{\bf Claim 2:} $m(\cD(L^\flat),\Lambda_2)=2$ for any $\Lambda_2 \in \cV^2(L^\flat)$.
	
	Indeed, according to Lemma \ref{lem: Z(L hecke) dot Exc}, we have $	\chi(\cN^\Kra,\cD(L^\flat) \cdot [\Oo_{\Exc_{\Lambda_0}}])=0$. On the other hand, Corollary \ref{cor: P1 dot Exc} and Lemma \ref{lem:tVintersectExc} imply that
	\begin{align}
		\chi(\cN^\Kra,\cD(L^\flat) \cdot [\Oo_{\Exc_{\Lambda_0}}])=\sum_{\Lambda_0\subset \Lambda_2 }m(\cD(L^\flat),\Lambda_2)-2m(\cD(L^\flat),\Lambda_0).
	\end{align}
	Combining  the above with Claim 1, we have
	\begin{align}\label{eq: relation btw m(Lambda2) m(Lambda0)}
		0&=\chi(\cN^\Kra,\cD(L^\flat) \cdot [\Oo_{\Exc_{\Lambda_0}}])=\begin{cases}
			\sum_{\Lambda_0\subset \Lambda_2 }m(\cD(L^\flat),\Lambda_2)-2(\q+1) & \text{ if  $\Lambda_0 \in \cL(L^\flat)\setminus \mathcal{B}(L^\flat)$},\\
			\sum_{\Lambda_0\subset \Lambda_2 }m(\cD(L^\flat),\Lambda_2)-2 & \text{ if  $\Lambda_0 \in \mathcal{B}(L^\flat)$}.\end{cases}
	\end{align}
	Recall  $\mathcal{S}(L^\flat)$ in Definition \ref{def:skeleton}.
	First assume $\Lambda_2\in \cL(L^\flat)\setminus\mathcal{S}(L^\flat)$. If $d(\Lambda_2,\mathcal{B}(L^\flat))$ is equal to $\frac{1}{2}$, choose $\Lambda_0\in \mathcal{B}(L^\flat)$ such that $\Lambda_0\subset \Lambda_2$, then $\Lambda_2$ is the unique lattice in $\cV^2(L^\flat)$ that contains $\Lambda_0$, hence \eqref{eq: relation btw m(Lambda2) m(Lambda0)} implies that $m(\cD(L^\flat),\Lambda_2)=2$ in this case. Now Corollary \ref{cor:inductiononbruhattits} allows us to show $m(\cD(L^\flat),\Lambda_2)=2$ by induction on the distance $d(\Lambda_2,\mathcal{B}(L^\flat))$ for any $\Lambda_2\in \cL(L^\flat)\setminus\mathcal{S}(L^\flat)$.
	
	Similarly for $\Lambda_2\in\mathcal{S}(L^\flat)$, we can show $m(\cD(L^\flat),\Lambda_2)=2$ by induction on its distance to $\mathcal{S}(L^\flat)\cap \mathcal{B}(L^\flat)$. This finishes the proof of Claim 2.
	
	Notice that for $\Lambda_0 \in  \cV(L^\flat)$
	$$
	\sum_{\Lambda_2 \in \cV^2(L^\flat)} \sum_{\Lambda_0 \subset \Lambda_2} 1
	= \begin{cases}
		q+1 &\hbox{if  }\Lambda_0 \in \cV(L^\flat)\setminus \mathcal{B}(L^\flat),
		\\
		1 &\hbox{if  } \Lambda_0  \in \mathcal{B}(L^\flat).
	\end{cases}
	$$
	This finishes the proof of Theorem \ref{thm:decomposition of cD intro}. \qedsymbol
	
	In the following discussion we freely use Theorem \ref{thm: Int Lambda} and Corollary \ref{cor: P1 dot cZ} without explicitly referring to them.
	\begin{proposition} \label{prop11.3}
		Assume $L=L^\flat \obot \spa\{\bx\}$ with Gram matrix
		\[T= \diag(\cH_a, u_3(-\pi_0)^c)\]
		where $a$ is a positive odd integer, and $c\ge 0$. Then
		\begin{align}\label{eq: pden(T_hecke), H_i}
			\Int(L)^{(2)}=\pden (L)^{(2)}= \begin{cases}
				1-\q^{a} & \text{ if $a\le 2c$},\\
				1-\q^{2c+1} & \text{ if $a> 2c$}.
			\end{cases}
		\end{align}
	\end{proposition}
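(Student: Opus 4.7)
\emph{Plan.} The analytic side $\pden(L)^{(2)}$ is already computed in Proposition \ref{prop: pden T_a}; it suffices to compute the geometric side $\Int(L)^{(2)}$. By Definition \ref{def:prim int},
\[
\Int(L)^{(2)} = \chi\bigl(\cN^\Kra,\; \cD(L^\flat) \cdot {}^\bL\cZ^\Kra(\bx)\bigr).
\]
Since $\val(L^\flat)=a>0$, Theorem \ref{thm:decomposition of cD intro} applies and gives
\[
\cD(L^\flat) = \sum_{\Lambda_2 \in \cV^2(L^\flat)} \Bigl( 2[\Oo_{\tN_{\Lambda_2}}] + \sum_{\Lambda_0 \subset \Lambda_2} H_{\Lambda_0}\Bigr) \in \Gr^2 K_0(\cN^\Kra_3).
\]
Pairing each term with $\cZ^\Kra(\bx)$ via Theorem \ref{thm: Int Lambda} (which gives $\chi([\Oo_{\tN_{\Lambda_2}}] \cdot {}^\bL\cZ^\Kra(\bx)) = 1_{\Lambda_2}(\bx)$) and Corollary \ref{cor: P1 dot cZ} (which gives $\chi(H_{\Lambda_0} \cdot {}^\bL\cZ^\Kra(\bx)) = -1_{\Lambda_0}(\bx)$), one obtains
\[
\Int(L)^{(2)} = \sum_{\Lambda_2 \in \cV^2(L^\flat)} \Bigl( 2\cdot 1_{\Lambda_2}(\bx) - \sum_{\Lambda_0 \subset \Lambda_2} 1_{\Lambda_0}(\bx)\Bigr).
\]

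For fixed $\Lambda_2$, the inner sum counts type-$0$ sublattices $\Lambda_0 \subset \Lambda_2$ containing $\bx$. By Lemma \ref{lem:B-T building}(1), these $q+1$ lattices correspond to lines in $\Lambda_2/\Lambda_2^\sharp \cong k^2$, and $\bx \in \Lambda_0$ is equivalent to the image $\bar{\bx}$ of $\bx$ in $\Lambda_2/\Lambda_2^\sharp$ lying in the corresponding line. Hence the inner sum equals $q+1$ when $\bx \in \Lambda_2^\sharp$, equals $1$ when $\bx \in \Lambda_2 \setminus \Lambda_2^\sharp$, and equals $0$ when $\bx \notin \Lambda_2$. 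Writing $A := |\{\Lambda_2 \in \cV^2(L^\flat) : \bx \in \Lambda_2\}|$ and $N_2 := |\cV^2(L^\flat) \cap \cV^2(\bx)|$, collecting contributions telescopes the formula to
\[
\Int(L)^{(2)} = A - q\, N_2.
\]

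The remaining work is a combinatorial counting on the Bruhat-Tits tree $\cL_3$. By Proposition \ref{prop:n=3reducedlocusII}(1), $\cV^2(L^\flat)$ consists of the type-$2$ vertices in a ball of radius $a/2$ around a distinguished type-$2$ vertex $\Lambda_2^*$; a branching-factor count (each type-$2$ and type-$0$ vertex has $q+1$ neighbors in the tree) yields $|\cV^2(L^\flat)| = (q^a-1)/(q-1)$. Choosing $\Lambda_2^*$ so that $\bx \in \Lambda_2^*$---possible using $\bx \perp L^\flat$---and propagating along paths shows $\bx \in \Lambda_2$ for every $\Lambda_2$ in the ball, so $A = (q^a-1)/(q-1)$. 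For $N_2$, Proposition \ref{prop:n=3reducedlocusI}(4) describes $\cV^2(\bx)$ as the thickening of the two-dimensional sub-tree $\cL_{2,1}(\bx) \hookrightarrow \cL_3$ by a neighborhood of radius $c$; intersecting with the ball around $\Lambda_2^*$ yields
\[
N_2 = \begin{cases} (q^a - 1)/(q-1), & a \le 2c,\\ (q^{a-1}-1)/(q-1) + q^{2c}, & a > 2c. \end{cases}
\]

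Substituting into $\Int(L)^{(2)} = A - qN_2$ gives $1 - q^a$ when $a \le 2c$ and $1 - q^{2c+1}$ when $a > 2c$, matching Proposition \ref{prop: pden T_a} and completing the proof. The main technical obstacle is the count of $N_2$ in the regime $a > 2c$: the first summand $(q^{a-1}-1)/(q-1)$ accounts for the type-$2$ vertices on the sub-tree $\cL_{2,1}(\bx)$ that lie inside the ball, while the extra $q^{2c}$ term comes from type-$2$ lattices at distance exactly $c$ from $\cL_{2,1}(\bx)$ in transverse directions, and a careful bookkeeping of the three subcomplexes $\cV^2(L^\flat)$, $\cV^2(\bx)$, and $\{\Lambda_2 : \bx \in \Lambda_2\}$ in $\cL_3$ is required.
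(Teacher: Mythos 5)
Your overall strategy matches the paper's: use Theorem~\ref{thm:decomposition of cD intro} to expand $\cD(L^\flat)$, then pair with $\cZ^\Kra(\bx)$ via Theorem~\ref{thm: Int Lambda} and Corollary~\ref{cor: P1 dot cZ}, reducing to a combinatorial count on $\cL_3$. The reduction
\[
\Int(L)^{(2)} = A - qN_2, \qquad A = \#\{\Lambda_2 \in \cV^2(L^\flat) : \bx \in \Lambda_2\}, \quad N_2 = \#\{\Lambda_2 \in \cV^2(L^\flat) : \bx \in \Lambda_2^\sharp\},
\]
is correct, and so is the formula $|\cV^2(L^\flat)| = (q^a-1)/(q-1)$. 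However, the combinatorial evaluation of $A$ and $N_2$ in the regime $a > 2c$ is not.

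The key error is the claim that ``propagating along paths shows $\bx \in \Lambda_2$ for every $\Lambda_2$ in the ball,'' hence $A = (q^a-1)/(q-1)$. This fails already for $(a,c)=(5,0)$. The propagation does not persist: once one reaches a $\Lambda_2^{(1)}$ at distance $1$ with $\bx \in \Lambda_2^{(1)}\setminus(\Lambda_2^{(1)})^\sharp$ (there are $(q+1)(q-1)$ of these, since only $q+1$ of the $(q+1)q$ distance-$1$ lattices satisfy $\bx \in \Lambda_2^\sharp$), then $\bar\bx \ne 0$ in $\Lambda_2^{(1)}/(\Lambda_2^{(1)})^\sharp$, so $\bx$ lies in exactly one of the $q+1$ type-$0$ sublattices of $\Lambda_2^{(1)}$ --- namely the parent $\Lambda_0 = \Lambda_2^{(1)}\cap\Lambda_2^*$. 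For any child $\Lambda_0''\subset\Lambda_2^{(1)}$ at distance $3/2$ (so $\bx \notin \Lambda_0''$) and any $\Lambda_2''\supset\Lambda_0''$ with $\Lambda_2''\ne\Lambda_2^{(1)}$, we have $\Lambda_2''\cap\Lambda_2^{(1)}=\Lambda_0''\not\ni\bx$ while $\bx\in\Lambda_2^{(1)}$, forcing $\bx\notin\Lambda_2''$; yet $\Lambda_2''$ lies at distance $2\le a/2$, so it belongs to $\cV^2(L^\flat)$. Thus for $(a,c)=(5,0)$ the correct value is $A = q^3+2q^2+q+1$ (which is what the paper's count $A = 1+q|X_1|$ gives), not $q^4+q^3+q^2+q+1$. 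Your $N_2$ formula is off in the same regime: for $(a,c)=(5,0)$ the vertices of $\cL_{2,1}(\bx)$ within distance $2$ of $\Lambda_2^*$ number $1+(q+1)+(q+1)q = q^2+2q+2$, not $q^3+q^2+q+2$.

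What makes this subtle is that the two errors are proportional (they differ by a factor of $q$), so $A - qN_2$ still evaluates to $1-q^{2c+1}$; you get the right answer from two wrong counts. But a proof cannot rest on that. The paper handles $a > 2c$ differently: it computes the $c=0$ case directly by counting type-$0$ lattices (with $A = 1 + q|X_1|$ where $X_1$ denotes interior type-$0$ vertices of $\cL(L^\flat)$ containing $e_3$), and then performs an induction on $c$ by evaluating the increments $\chi(\cD(L^\flat)\cdot(\cZ^\Kra(\pi^i e_3) - \cZ^\Kra(\pi^{i-1}e_3))) = q^{2i-1}-q^{2i+1}$ for $2i < a$. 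If you want a closed-form count for general $(a,c)$ in place of this induction, you need to track separately, at each distance $k$ from $\Lambda_2^*$, the three classes $\bx\in\Lambda_2^\sharp$, $\bx\in\Lambda_2\setminus\Lambda_2^\sharp$, and $\bx\notin\Lambda_2$, with the recursion forced by the observation above that a $\Lambda_2$ in the middle class produces no children containing $\bx$.
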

	\begin{proof}  By Proposition \ref{prop: pden T_a}, it suffices to prove the identity for $\Int(L)^{(2)}$.
		
		Now we compute $\Int(L)^{(2)}$. We may take $L^\flat=\spa\{\pi^\frac{a+1}2 e_1,\pi^\frac{a+1}2 e_2\}$, where the Gram matrix of $\{e_1,e_2\}$ is $\cH$. Let $e_3=\pi^{-c} \bx $. Then  $\cL(L^\flat)$ is centered at $\spa\{e_1,e_2,e_3\}$ of radius $\frac{a}{2}$ by Proposition \ref{prop:n=3reducedlocusII}.
		
		Assume $a\le 2c$ first. In this case, $\cL(L^\flat)\subset \cL(\bx)$. As a result, we have $\Int_{\Lambda_2}(\bx)=1$ and $\Int_{\Lambda_0}(\bx)=-1$ for any $\Lambda_2\in\cV^2(L^\flat)$ and $\Lambda_0\in\cV^0(L^\flat)$.   Hence
		by Theorem \ref{thm:decomposition of cD intro}, we have
		\begin{align}\label{eq: Int when cT(L flat) is ball, c large}
			\Int(L)^{(2)}
			&=\sum_{\Lambda_2\in \cL(L^{\flat})}\chi(\cN^\Kra, (2[\Oo_{\tN_{\Lambda_2}}]+\sum_{\Lambda_0\subset \Lambda_2}H_{\Lambda_0})\cdot \cZ^{\Kra}(\bx))\\ \notag
			&=(1-\q)|\{\Lambda_2\mid \Lambda_2\in \cL(L^\flat)\}| \\ \notag
			&=(1-\q)(1+(1+\q)\q+(1+\q)\q^3+\cdots+(1+\q)\q^{a-2})\\ \notag
			&=1-\q^a,
		\end{align}
		as claimed.

		Now we assume $a>2c$. We consider the case $c=0$ first. Recall that $\tilde{\cZ}(e_3)\approx \cN^{\Kra}_{2,1}$, hence $\cL(L^\flat)\cap \cL(e_3)$ is a ball of radius $\frac{a}{2}$ in the Bruhat-Tits tree $\cL_{2,1}$ of $\cN^{\Pap}_{2,1}$ centered at the vertex lattice corresponding to $\pi^{-\frac{a+1}{2}}\cdot L^\flat$, within which a vertex lattice $\Lambda_0$ of type $0$ is contained in two vertex lattices of type $2$, and a vertex lattice $\Lambda_2$ of type $2$ contains $\q+1$ vertex lattice of type $0$. Hence
		\begin{align*}
			|\{\Lambda_0\mid \Lambda_0\in (\cL(L^\flat)\setminus \mathcal{B}(L^\flat))\cap \cL(e_3)\}|=1+\q+(1+\q)\q+\cdots+(1+\q)\q^{\frac{a-3}{2}},
		\end{align*}
		and
		\begin{align*}
			|\{\Lambda_0\mid \Lambda_0\in \mathcal{B}(L^\flat)\cap \cL(e_3)\}|=(1+\q)\q^{\frac{a-1}{2}}.
		\end{align*}
		Moreover, notice that if $e_3\in \Lambda_0$, then $\Int_{\Lambda_2}(e_3)=1$ for any $\Lambda_2$ such that $\Lambda_0\subset \Lambda_2$.  As a result,
		\begin{align*}
			\chi(\cN^\Kra,\cD(L^\flat)\cdot \cZ^{\Kra}(e_3))
			&=2(1+ \q\cdot|\{\Lambda_0\mid \Lambda_0\in (\cL(L^\flat)\setminus \mathcal{B}(L^\flat))\cap \cL(e_3)\}|)\\
			&\quad -(\q+1) |\{\Lambda_0\mid \Lambda_0\in (\cL(L^\flat)\setminus \mathcal{B}(L^\flat))\cap \cL(e_3)\}|\\
			&\quad-|\{\Lambda_0\mid \Lambda_0\in \mathcal{B}(L^\flat)\cap \cL(e_3)\}|\\
			&=2+(\q-1)(1+\q+(1+\q)\q+\cdots+(1+\q)\q^{\frac{a-3}{2}})-(1+\q)\q^{\frac{a-1}{2}}\\
			&=1-\q,
		\end{align*}
		which is compatible with \eqref{eq: pden(T_hecke), H_i}.
		
		Next we show
		\[\chi(\cN^\Kra,\cD(L^\flat)\cdot (\cZ^{\Kra}(\pi e_3)-\cZ^{\Kra}(e_3)))=\q-\q^{3}.\]
		According to Proposition \ref{prop:n=3reducedlocusI}, $\cV(\pi e_3)=\{\Lambda \mid d(\Lambda,\cL(e_3))\le 1\}$. Hence, around each $\Lambda_0\in (\cL(L^\flat)\setminus \mathcal{B}(L^\flat))\cap\cL(e_3)$, there will be $\q(\q-1)$ many new vertex lattices of type $0$  in $\cL(L^\flat)\cap\cL(\pi e_3)\setminus \cL(L^\flat)\cap\cL(e_3)$. Hence,
		\begin{align*}
			&\chi(\cN^\Kra,\cD(L^\flat)\cdot (\cZ^{\Kra}(\pi e_3)-\cZ^{\Kra}(e_3)))\\
			&=2\q\cdot \q(\q-1)(1+\q+(1+\q)\q+(1+\q)\q^2+\cdots+(1+\q)\q^{\frac{a-1}{2}-2})\\
			&\quad -\q(\q-1)(\q+1)(1+\q+(1+\q)\q+(1+\q)\q^2+\cdots+(1+\q)\q^{\frac{a-1}{2}-2})\\
			&\quad -\q(\q-1) (1+\q)\q^{\frac{a-1}{2}-1}\\
			&=\q-\q^{3}.
		\end{align*}
		Continuing in this way, we can show
		\begin{align*}
			&\chi(\cN^\Kra,\cD(L^\flat)\cdot (\cZ^{\Kra}(\pi^i e_3)-\cZ^{\Kra}(\pi^{i-1}e_3)))
			=\q^{2i-1}-\q^{2i+1}
		\end{align*}
		for $2i<a$. Thus,
		$$
		\Int(L)^{(2)}=\cD(L^\flat)\cdot \cZ^{\Kra}( \pi^c e_3) =1- q^{2c+1} =\partial \hbox{Den}^{(2)}(L)
		$$
		as claimed.
	\end{proof}

	\begin{proposition}\label{prop: main thm, T_2 diag}
		Assume $L=L^\flat \obot \spa\{\bx\}$ with Gram matrix $$T=\diag(u_1(-\pi_0)^a,u_2(-\pi_0)^b, u_3(-\pi_0)^c)$$ where  $0<a\le b \le c$, then
		$$
		\Int(L)^{(2)}=\pden (L)^{(2)}=1+\chi(-u_2 u_3)\q^{a}(q^a-q^b) -\q^{a+b}.
		$$
	\end{proposition}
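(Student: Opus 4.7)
The analytic formula $\pden(L)^{(2)} = 1 + \chi(-u_2u_3)q^a(q^a-q^b) - q^{a+b}$ is already established in Proposition \ref{prop: a le b<c}, so the task is to compute $\Int(L)^{(2)}$ and show it agrees. The plan is to follow the template of Proposition \ref{prop11.3}: since $\val(L^\flat) = a > 0$, I will apply Theorem \ref{thm:decomposition of cD intro} to decompose
\[
\cD(L^\flat) = \sum_{\Lambda_2 \in \cV^2(L^\flat)} \Bigl( 2[\Oo_{\tN_{\Lambda_2}}] + \sum_{\Lambda_0 \subset \Lambda_2} H_{\Lambda_0} \Bigr)
\]
in $\Gr^2 K_0(\cN^\Kra_3)$. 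Pairing against $\cZ^\Kra(\bx)$ and using Theorem \ref{thm: Int Lambda} for the $\tN_{\Lambda_2}$ contributions and Corollary \ref{cor: P1 dot cZ} for the $H_{\Lambda_0}$ contributions reduces the calculation to the purely combinatorial expression
\[
\Int(L)^{(2)} = 2\,|\{\Lambda_2 \in \cV^2(L^\flat) : \bx \in \Lambda_2\}| \;-\; |\{(\Lambda_0,\Lambda_2) : \Lambda_0 \subset \Lambda_2 \in \cV^2(L^\flat),\ \bx \in \Lambda_0\}|.
\]

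Next I will describe the relevant subtrees of $\cL$. By Proposition \ref{prop:n=3reducedlocusII}, the lattice $L^\flat$ (of fundamental invariant $(2a,2b)$) falls in either case (2) or case (3) of that proposition, according as the rank-$2$ orthogonal complement $\spa_F\{\bx_2,\bx\}$ is non-split or split; this dichotomy is tracked by the sign $\chi(-u_2u_3)$. Correspondingly, $\cL(L^\flat)$ is either a ball of radius $a$ around a central type-$0$ vertex lattice, or an $a$-thickening of a segment of length $b-a$ inside the Bruhat--Tits tree of a split rank-$2$ space. Meanwhile, by Proposition \ref{prop:n=3reducedlocusI}, $\cL(\bx)$ is a ball of radius $c$ around $\cL(\bx/\pi^c)$; since $\bx\perp L^\flat$ and $c \geq b \geq a$, the central vertex (respectively the central segment) of $\cL(L^\flat)$ sits well inside $\cL(\bx)$, so the intersection $\cL(L^\flat) \cap \cL(\bx)$ can be read off shell by shell.

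With these descriptions, I will then enumerate contributing lattices shell by shell. Using the standard local incidences in the rank-two Bruhat--Tits building ($q+1$ type-$2$ neighbors at each type-$0$ vertex; two type-$0$ vertices on the $\bP^1$ corresponding to each type-$2$ lattice), both counts above will become geometric series in $q$ that telescope, yielding in each case an expression matching $\pden(L)^{(2)}$. I expect the main obstacle to be the bookkeeping at the outermost shell of $\cL(L^\flat)$: type-$2$ vertices on the boundary are incident to only one type-$0$ vertex inside $\cL(L^\flat)$ rather than the full $q+1$, which breaks the balance between the two sums and introduces a correction dependent on whether we are in case (2) or case (3). Tracking this correction in each case is what should produce the $\chi(-u_2u_3)q^a(q^a-q^b)$ term. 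As an alternative route, one may telescope over the chain $\bx/\pi^c, \pi\bx/\pi^c, \ldots, \bx$ as in Proposition \ref{prop11.3} and reduce each increment to intersection numbers on $\cN^\Kra_{2,\pm 1}$ already computed in \cite{Shi2} and \cite{HSY} via Proposition \ref{prop:cZxunimodular}, which may be cleaner for the boundary analysis.
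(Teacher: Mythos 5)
Your framework is essentially the paper's: decompose $\cD(L^\flat)$ via Theorem \ref{thm:decomposition of cD intro}, pair against $\cZ^\Kra(\bx)$ using Theorem \ref{thm: Int Lambda} and Corollary \ref{cor: P1 dot cZ}, and split by the sign $\chi(-u_2u_3)$ determining whether $\spa_F\{\bx_1\}^\bot$ is split or nonsplit; your displayed combinatorial formula for $\Int(L)^{(2)}$ is correct. The central simplification you have not noticed, however, is that since $a \le b\le c$ and $\bx\perp L^\flat$, we have $\cL(L^\flat) \subset \cL(\bx)$: the central type-$0$ vertex $\Lambda_0^c$ of $\cL(L^\flat)$ (respectively of $\cL(\pi^{-a}L^\flat)$ in the split case) contains $\pi^{-c}\bx$, hence lies in $\cL(\pi^{-c}\bx)$, and $\cL(L^\flat)$ is contained in the ball of radius $a+(b-a)=b\le c$ around $\Lambda_0^c$. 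Thus every indicator in your formula is identically $1$, and since each $\Lambda_2\in\cV^2(L^\flat)$ contains exactly $q+1$ type-$0$ lattices, all automatically lying in $\cV^0(L^\flat)$ (because $L^\flat\subset\Lambda_2^\sharp\subset\Lambda_0$), the formula collapses to $\Int(L)^{(2)}=(1-q)\,|\cV^2(L^\flat)|$. There is no nontrivial intersection with $\cL(\bx)$ to compute shell by shell; the remaining work is a single count of type-$2$ lattices in $\cL(L^\flat)$, and the sign $\chi(-u_2u_3)$ enters simply because $\cL(L^\flat)$ has two different shapes, hence two different sizes.

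Two further corrections. Your anticipated ``boundary obstacle'' conflates types: the boundary $\mathcal{B}(L^\flat)$ consists of type-$0$ vertices (not type-$2$), each contained in only one $\Lambda_2\in\cV^2(L^\flat)$ versus $q+1$ for interior type-$0$ vertices --- this is the double-count identity concluding the proof of Theorem \ref{thm:decomposition of cD intro}, and it plays no role in your formula once you sum over $\Lambda_2$ first. Also, ``$a$-thickening of a segment of length $b-a$'' misdescribes the split-case geometry: by Proposition \ref{prop:n=3reducedlocusII}(3), $\cL(\pi^{-a}L^\flat)$ is a ball of radius $b-a$ in the $(q+1)$-regular tree $\cL_{2,1}$, which grows exponentially in $b-a$; counting a segment would give wrong powers of $q$. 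For comparison, the paper handles the split case by verifying $a=1$ directly and then inducting on $a$ via $|\mathcal{B}(\pi L^\flat)|=q^2|\mathcal{B}(L^\flat)|$, matching a parallel first-order difference relation for $\pden(L)^{(2)}$ in $a$ --- this is the shell-by-shell telescoping you envision, organized as an induction.
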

	\begin{proof}
		By Proposition \ref{prop: a le b<c}, it suffices to show
		\begin{align}\label{eq: pden(T_hecke), a le b<c}
			\Int(L)^{(2)}=1+\chi(-u_2u_3)\q^{a}(q^a-q^b) -\q^{a+b}.
		\end{align}
	Notice that since $a\le b\le c$, we have $\cL(L^\flat)\subset \cL(\bx)$ by Propositions \ref{prop:n=3reducedlocusI} and \ref{prop:n=3reducedlocusII}.

	First, we assume $\chi(-u_2u_3)=-1$, then  (\ref{eq: pden(T_hecke), a le b<c})  specializes to
		\begin{align*}
			\pden (T)^{(2)}= 1-\q^{2a}.
		\end{align*}
		On the other hand,   $\cL(L^\flat)$ is a ball centered at a vertex lattice of type $0$ with radius $a$ in this case. One can show $\Int(L)^{(2)}=1-\q^{2a}$ exactly as in \eqref{eq: Int when cT(L flat) is ball, c large}.
		
		Now we assume $\chi(-u_2u_3)=1$. In this case, \eqref{eq: pden(T_hecke), a le b<c} specializes to
		\begin{align*}
			\pden (L)^{(2)}= 1+\q^{2a}-2\q^{a+b}.
		\end{align*}
		Let $r=b-a$, and $L^\flat=\spa\{x_1,x_2\}$. Then $\cL(\pi^{-a}L^\flat)$ is a ball centered at a vertex lattice of type $0$ with radius $r$ in the Bruhat-Tits tree $\cL_{2,1}$. Hence,
		\[\cL(L^\flat)=\{\Lambda\mid \Lambda\in \cL_3,\ d(\Lambda,\cL(\pi^{-a}L^\flat))\le a\}.\]
		
		When $a=1$, $ \cV^0(\pi^{-1}L^\flat)=\cV^0(L^\flat)\setminus \mathcal{B}(L^\flat)$. Then combining with Theorem \ref{thm:decomposition of cD intro}, it is not hard to see
		\begin{align*}
			\Int(L)^{(2)}
			&=2(\q+1+\q\cdot 2(\q+\q^2+\cdots+\q^r))
			-(\q+1)|\cV^0(L^\flat)\setminus \mathcal{B}(L^\flat)|-|\mathcal{B}(L^\flat)|\\
			&=1+\q^2-2\q^{b+1},
		\end{align*}
		where we use the fact
		\[|\cV^0(\pi^{-1}L^\flat)|=1+2(\q+\q^2+\cdots+\q^r),\]
		and
		\[|\mathcal{B}(L^\flat)|=(\q-1)\q(1+2(\q+\q^2+\cdots+\q^{r-1}))+2\q^{r+2}.\]

		Now assume $a> 1$. Let $T$ be the Hermitian matrix associated with $L^\flat\obot \spa\{\bx\}$, then
		\begin{align*}
			&\pden (\pi L^\flat\obot\spa\{\bx\})^{(2)}- \pden (L^\flat\obot \spa\{\bx\})^{(2)}\\
			&= 1+\q^{2a+2}-2\q^{r+2a+2}-( 1+\q^{2a}-2\q^{r+2a})\\
			&=\q^{2a}(\q^2-1)(1-\q^{2r})\\
			&=\q^2\left(\pden (L^\flat\obot\spa\{\bx\})^{(2)}- \pden (\pi^{-1}L^\flat\obot\spa\{\bx\})^{(2)}\right),
		\end{align*}
		and
		\begin{align*}
			&\Int (\pi L^\flat\obot\spa\{\bx\})^{(2)}- \Int (L^\flat\obot\spa\{\bx\})^{(2)}\\
			&=2\q|\mathcal{B}(L^\flat)|-\q|\mathcal{B}(L^\flat)|-|\mathcal{B}(\pi L^\flat)|\\
			&=(2\q-\q-\q^2)|\mathcal{B}(L^\flat)|\\
			&=\q^2\left(\Int (L^\flat\obot\spa\{\bx\})^{(2)}- \Int (\pi^{-1} L^\flat\obot\spa\{\bx\})^{(2)}\right),
		\end{align*}
		where we use the fact $|\mathcal{B}(\pi L^\flat\obot\spa\{\bx\})|=\q^2|\mathcal{B}(L^\flat)|$. Since $r$ is arbitrary, an induction on $a$ gives the result we want.
	\end{proof}

	\noindent
	{\bf Proof of Theorem \ref{thm:mainthmintroduction}}:
	The case $\mathrm{v}(L)< 0$ follows from  Proposition \ref{prop: v(T)<0} and the  fact that $\Int(L)=0$ under this condition.  Assume $v(L) \ge 0$. There are three cases.
	
	{\bf Case 1}: When  $L$ has a Gram matrix $\diag(u_1, u_2(-\pi_0)^b, u_3 (-\pi_0)^c)$ as  in Proposition \ref{prop: v(L)=0 case},  it is proved by  Proposition \ref{prop: v(L)=0 case}.

	{\bf Case 2}: When $L$ has a basis $\{ \bx_1, \bx_2, \bx_3\}$ whose Gram matrix is $T=\diag( \cH_a, u_3 (-\pi_0)^c)$, take $L^\flat= \hbox{Span}(\bx_1, \bx_2)$ and $\bx=\bx_3$.  By Propositions \ref{prop: v(L)=0 case}, \ref{prop11.3}, and \ref{prop: main thm, T_2 diag}, we have
	$$
	\Int(L^{\flat, \prime} \oplus \spa\{\bx\})^{(2)} = \pden(L^{\flat, \prime} \oplus \spa\{\bx\})^{(2)}
	$$
	for any $L^\flat \subset L^{\flat, \prime} \subset L^\flat_F$
	(direct sums in the above identity are actually orthogonal direct sums). Thus by Theorem \ref{thm: equivalent form of modified KR} (1), we have
	$$
	\Int(L)=\pden(L).
	$$
	
	{\bf Case 3}: When $L$ has a Gram matrix $\diag(u_1 (-\pi_0)^a, u_2(-\pi_0)^b, u_3 (-\pi_0)^c)$ with $0\le a \le b \le c$, the same argument as Case 2  gives
	$\Int(L)=\pden(L)$.  This finishes the proof of the theorem. \qedsymbol

	Theorem \ref{thm:mainthmintroduction} and Theorem  \ref{thm: equivalent form of modified KR} imply the following corollary.

	\begin{corollary} \label{theo:Diff} For any lattice  $L=L^\flat  \oplus  \Oo_F\bx \subset \bV$ of rank $3$, we have
		$$
		\Int(L)^{(2)}=\pden (L)^{(2)}.
		$$
	\end{corollary}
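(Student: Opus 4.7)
The plan is to deduce this corollary directly by combining Theorem \ref{thm:mainthmintroduction} with part (2) of Theorem \ref{thm: equivalent form of modified KR}. The corollary is essentially a formal consequence, since Theorem \ref{thm:mainthmintroduction} establishes Conjecture \ref{conj:main} in full for $n=3$, and Theorem \ref{thm: equivalent form of modified KR} provides the bridge between the conjecture and its primitive refinement.

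More concretely, given the decomposition $L = L^\flat \oplus \Oo_F\bx$ with $\mathrm{rank}(L^\flat)=2$, I would set $L_1 = L^\flat$ and $L_2 = \Oo_F\bx$ in the notation of Theorem \ref{thm: equivalent form of modified KR}. For any $L^{\flat,\prime}$ with $L^\flat \subset L^{\flat,\prime} \subset L^\flat_F$, the lattice $L^{\flat,\prime}\oplus \Oo_F\bx$ is a rank $3$ lattice in $\bV$, and Theorem \ref{thm:mainthmintroduction} tells us that Conjecture \ref{conj:main} holds for it:
\begin{equation*}
\Int(L^{\flat,\prime}\oplus \Oo_F\bx) = \pden(L^{\flat,\prime}\oplus \Oo_F\bx).
\end{equation*}
Since this identity holds for every such $L^{\flat,\prime}$, the hypothesis of Theorem \ref{thm: equivalent form of modified KR} (2) with $n_1=2$ is satisfied.

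Applying that part of the theorem then yields
\begin{equation*}
\Int(L^\flat \oplus \Oo_F\bx)^{(2)} = \pden(L^\flat \oplus \Oo_F\bx)^{(2)},
\end{equation*}
which is the desired statement. There is no genuine obstacle here: the entire content has been absorbed into Theorem \ref{thm:mainthmintroduction} (whose proof is the hard work carried out in this section via the decomposition in Theorem \ref{thm:decomposition of cD intro} and the case analysis based on $\mathrm{v}(L)$) together with the formal equivalence established in Theorem \ref{thm: equivalent form of modified KR}, which itself rested on Lemma \ref{lem: decomposition of Z(L) as sums of D(L)} and Corollary \ref{cor: decomp of pden(L)}. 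Thus the proof is a one-line invocation of the two theorems.
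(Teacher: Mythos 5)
Your proof is correct and matches the paper's intended argument: the paper itself introduces the corollary with the single sentence ``Theorem \ref{thm:mainthmintroduction} and Theorem \ref{thm: equivalent form of modified KR} imply the following corollary.'' Setting $L_1 = L^\flat$, $L_2 = \Oo_F\bx$, $n_1 = 2$ and invoking Theorem \ref{thm:mainthmintroduction} to verify the hypothesis of Theorem \ref{thm: equivalent form of modified KR}(2) (or equivalently appealing to part (3)) is exactly the intended one-line deduction.
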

	
\section{Global Applications}\label{sec:global application}
    In this section we assume that $F$ is an imaginary quadratic field with discriminant $d_F$. Denote by $a\mapsto \bar a$ the complex conjugation on $F$.
    The result in this section can be easily extended to CM number fields and more general level structures at split places, see \cite{LZ} and \cite{HLSY}. We restrict to the imaginary quadratic fields to make the exposition  as simple as possible.

     \subsection{Unitary Shimura varieties and special cycles}\label{subsec:unitary shimura}
  In this subsection, we briefly review the definition of an integral model of Shimura variety defined  in \cite{BHKRY1} over $\Spec \Oo_F$ . Let
\[\cM^{\Kra}_{(1,n-1)} \rightarrow \Spec \Oo_F\]
be the algebraic stack which assigns to each $\Oo_F$-scheme $S$ the groupoid of isomorphism classes of quadruples $(A,\iota,\lambda,\cF_A)$ where
\begin{enumerate}
    \item $A\rightarrow S$ is an abelian scheme of relative dimension $n$;
    \item $\iota:\Oo_F\rightarrow \End(A)$ is an action satisfying the following determinant condition (the Kottwitz condition of signature $(1,n-1)$)
    \[\mathrm{char}(T-\iota(\alpha)\mid \Lie A)=(T-s(\alpha)) (T-s(\bar{\alpha}))^{n-1} \in \Oo_S[T],\]
    for all $\alpha\in \Oo_F$ where $s:\Oo_F\rightarrow \Oo_S$ is the structure morphism;
    \item $\lambda:A\rightarrow A^\vee $ is a principal polarization  whose Rosati involution satisfies $\iota(\alpha)^*=\iota(\bar{\alpha})$ for all $\alpha \in \Oo_F$;
    \item $\cF_A \subset \Lie A$ is an $\Oo_F$-stable $\Oo_S$-module local direct summand of rank $n-1$ satisfying the Kr\"amer condition: $\Oo_F$ acts on $\Lie A/\cF_A$ by the structure map $s:\Oo_F \rightarrow \Oo_S$ and acts on $\cF_A$  by the complex conjugate of the structure map.
\end{enumerate}
Two objects $(A,\iota,\lambda,\cF_A)$ and $(A',\iota',\lambda',\cF_{A'})$ in $\cM^{\Kra}_{(1,n-1)}(S)$ are isomorphic if there is an $\Oo_F$-linear isomorphism $f:A\rightarrow A'$ of abelian schemes such that $f^*(\lambda')=\lambda$ and $f_*(\cF_A)=\cF_{A'}$. The stack
$\cM^{\Kra}_{(1,n-1)}$ is flat of dimension $n-1$ over $\Spec \Oo_F$. It is smooth over $\Spec \Oo_F[\frac{1}{{d_F}}]$ and has semi-stable reduction over primes of $F$ dividing ${d_F}$. This is indicated by the corresponding behaviour of its local model studied in \cite{Kr}. Analogously one can define $\cM_{(0,1)}\rightarrow \Oo_F$
be the algebraic stack which assigns to each $\Oo_F$-scheme $S$ the groupoid of isomorphism classes of triples $(E,\iota_0,\lambda_0)$ where
\begin{enumerate}
    \item $E\rightarrow S$ is an abelian scheme of relative dimension $1$;
    \item $\iota_0:\Oo_F\rightarrow \End(E)$ is an action such that its induced action on $\Lie E$ agrees with the complex conjugate of the structural map $s:\Oo_F\rightarrow \Oo_S$;
    \item $\lambda_0:E\rightarrow E^\vee $ is a principal polarization
    whose Rosati involution satisfies $\iota_0(\alpha)^*=\iota_0(\bar{\alpha})$ for all $\alpha \in \Oo_F$.
\end{enumerate}
The stack $\cM_{(0,1)}$ is smooth of relative dimension $0$ over $\Spec \Oo_F$, see for example \cite[Proposition 2.1.2]{HowardCMII}.

Assume that $\F$ is an algebraically closed field of characteristic $p$ over $\Oo_F$. Let
\[(E_0,\iota_0,\lambda_0,A,\iota,\lambda,\cF_A)\in (\cM_{(0,1)}\times \cM^{\Kra}_{(1,n-1)})(\Spec \F).\]
For any prime number $\ell\neq p$, we can define a Hermitian form $h(x,y)$ on the Tate module
\begin{equation}
    T_\ell(E_0,A):=\Hom_{\Oo_F}(T_\ell(E_0),T_\ell(A))
\end{equation}
as in \cite[Section 2.3]{KR2} using the polarizations $\lambda_0,\lambda$ and Weil pairings on $E_0,A$.

Fix a Hermitian space $W$ over $F$  of signature $(1,n-1)$ that contains a self-dual lattice $\mathfrak a$ and a Hermitian space $W_0$ over $F$  of signature $(0,1)$ that contains a self-dual lattice $\mathfrak{a}_0$. Define
\begin{equation}\label{eq:VandL}
    V:=\Hom_F (W_0,W), \quad L:=\Hom_{\Oo_F}(\mathfrak{a}_0,\mathfrak{a}).
\end{equation}
Here $V$ and $L$ are equipped with Hermitian forms coming from the those on $W_0$ and $W$. Define $G:=\rU(W)$. Also define the group scheme $\mathrm{GU}(W)$ over $\Q$ by
\[\mathrm{GU}(W)(R)=\{g\in \GL_R(W\otimes R)\mid (gv,gw)=c(g)(v,w),\forall v,w\in W\otimes R\}\]
where $R$ is any $\Q$-algebra. Also define $Z:=\mathrm{Res}_{F/\Q} \bG_m=\mathrm{GU}(W_0)$ and
\begin{equation}\label{eq:tildeG}
    \tilde{G}:=Z\times_{\bG_m} \mathrm{GU}(W)
\end{equation}
where the maps from the factors on the right hand side to $\bG_m$ are $\mathrm{Nm}_{F/\Q}$ and the similitude character $c(g)$ respectively. We have an isomorphism of group schemes
\begin{equation}\label{eq:tildeGisomorphism}
   \tilde{G}\rightarrow Z\times \rU(W), (z,g)\mapsto (z,z^{-1}g).
\end{equation}
Let $K_G$ be the compact subgroup of $G(\bA_f)$ that stabilizes the lattice $\mathfrak{a}\otimes \hat{\Z}$ and $K_Z=\hat{\Z}^\times\subset Z(\bA_f)$. Under the isomorphism \eqref{eq:tildeGisomorphism}, define
\begin{equation}
    K:=K_Z\times K_G.
\end{equation}

Now define
$\cM\subset \cM_{(0,1)}\times \cM^{\Kra}_{(1,n-1)}$
to be the open and closed substack such that
\[(E_0,\iota_0,\lambda_0,A,\iota,\lambda,\cF_A)\in \cM(S) \]
if and only if there is an isomorphism of Hermitian $\Oo_F\otimes \Z_\ell$-modules
\begin{equation}
    T_\ell(E_{0,s},A_s)\cong L\otimes \Z_\ell
\end{equation}
for any geometric point $s\in S$ and prime $\ell$ that is not the same as the characteristic of $s$. 
Then $\cM$ is an integral model of the Shimura variety associated to the group $\tilde{G}$ with level structure defined by $K$.

Now we review the definition of special cycles. For $(E,\iota_0,\lambda_0, A,\iota, \lambda,\cF_A)\in \cM(S)$ where $S$ is an $\Oo_F$-scheme, consider the projective $\Oo_F$-module of finite rank
\[V'(E,A)=\Hom_{\Oo_F}(E,A).\]
On this module there is a Hermitian form $h'(x,y)$ defined by
\begin{equation}
    h'(x,y)=\iota_0^{-1}(\lambda_0^{-1}\circ y^\vee \circ \lambda \circ x),
\end{equation}
where $y^\vee$ is the dual homomorphism of $y$. It is proved in \cite[Lemma 2.7]{KR2} that $h'(x,y)$ is positive semi-definite. The following is \cite[Definition 2.8]{KR2}.
\begin{definition}\label{def:globalspecialcycle}
For $T\in \Herm_m(\Z)_{>0}$, the special cycle $\cZ(T)$ is the stack of collections $(E,\iota_0,\lambda_0,A,\iota ,\lambda,\cF_A,\bx)$ where
\[(E,\iota_0,\lambda_0,A,\iota ,\lambda,\cF_A)\in \cM(S)\]
and $\bx=(x_1,\ldots,x_m)\in \Hom_{\Oo_F}(E,A)^m$ such that
\[h'(\bx,\bx)=(h'(x_i,x_j))=T.\]
\end{definition}
When $t\in \Z_{>0}$, each component of $\cZ(t)$ is a divisor by \cite[Proposition 3.2.3]{HowardCMII}. In general, $\cZ(T)$ does not necessarily have the expected codimension which is the rank of $T$.

Let $\mathcal C =\{ \mathcal C_p\} $ be a incoherent collection of local  Hermitian spaces of rank $n$ such that $\mathcal C_\ell\cong V_\ell$ for all finite $\ell$ and $\mathcal C_\infty$ is positive-definite. For a non-singular Hermitian matrix $T$ of rank $n$ with values in $\Oo_F$, Let $V_T$ be the Hermitian space with Gram matrix $T$. Define
	\begin{equation}
		\mathrm{Diff}(T,\mathcal C)\coloneqq \{p \text{ a place of } \Q  \mid \mathcal C_p \text{ is not isomorphic to } (V_T)_p\}.
	\end{equation}
    Then $\mathrm{Diff}(T,\mathcal C)$ is a finite set consisting of places of $\Q$ inert or ramified in $F$.
	By \cite[Proposition 2.22]{KR2}, $\cZ(T)$ is empty if $|\mathrm{Diff}(T,\mathcal C)|>1$. If $\mathrm{Diff}(T,\mathcal C)=\{p\}$ for a finite prime $p$ inert or ramified in $F$, it is proved in loc.cit. that the support of $\cZ(T)$ is on the supersingular locus of $\cM$ over $\Spec \bar\F_p$. Let $e$ be the ramification index of $F_p/\Q_p$.  Define the arithmetic degree
	\begin{equation}\label{eq:degcZ}
\widehat{\mathrm{deg}}_T=\chi(\cZ(T),\Oo_{\cZ(t_1)}\otimes^{\mathbb{L}} \Oo_{\cZ(t_2)} \otimes^{\mathbb{L}} \Oo_{\cZ(t_n)})\cdot \log p^{2/e},
	\end{equation}
	where $\otimes^\bL$ denotes the derived tensor product on the category of coherent sheaves on $\cM$, $\chi$ is Euler characteristic and $t_i$ ($1 \le i \le n$) are the diagonal entries of $T$. When $\mathrm{Diff}(T,\mathcal C)=\{\infty\}$,
 $\cZ(T)$ is empty (\cite[Lemma 2.7]{KR2}) and the arithmetic degree $\widehat{\mathrm{deg}}_T(v)$ is the integration of a green current $G(T, v)$ ($v >0$ is a positive-definite Hermitian matrix of order $n$) defined by Liu (\cite{LiuarithmeticI}) and Garcia-Sankaran (\cite{GS}), see for example \cite[Equation (15.3.0.2)]{LZ}.
	
\subsection{Eisenstein series}\label{subsec:Eisenstein series}
	On the analytic side, let $\chi:\bA/\Q^\times \rightarrow \C^\times$ be the quadratic character associated to the extension $F/\Q$.
 Fix a character $\eta:\bA_F^\times\rightarrow \C^\times$ such that $\eta|_{\bA^\times}=\chi^n$. We consider an incoherent Eisenstein series $E(z,s,\Phi)$ associated to a section $\Phi=\otimes \Phi_p$ in a degenerate principal series representation $I(s, \eta)$ of $\rU(n,n)(\bA)$ (see \cite[\S 12]{LZ} or \cite{KR2}), where $\tau\in \mathbb{H}_n$ (see \eqref{eq:Siegel upper half plane}), $s\in \C$ and $\Phi_p$ is given  as follows. The section $\Phi_\infty$ is the standard weight $n$ section. When $p < \infty$ is unramified in $F$, $\Phi_p$ is the standard section associated to the characteristic function  of $L_p^n$ via the map $\lambda: S(\mathcal{C}_p^n) \rightarrow I(0, \eta_p)$:
     \[\lambda(\varphi)(g)=\omega(g)\varphi(0), \]
     where $\omega$ is the Weil representation of $\mathrm{U}(n,n)$ associated to the character $\chi$. When  $p$ is ramified in $F$, define
	\begin{equation}\label{eq:nonstandard section}
		\Phi_{p}=\Phi_p^0+\sum_{i=1}^{\lfloor \frac{n}{2}\rfloor} A_{p,\epsilon}^i(s) \cdot \Phi_p^i.
	\end{equation}
	Here $\Phi_p^0$ is the standard section associated to the characteristic function $L_p^n$, $\Phi_p^i$ is the standard section associated to the characteristic function of $(\cH_{n,i}^{\epsilon})^n$ at $p$ with  $\epsilon=-\chi_p(L_p)$, and
	\begin{equation}
		A_{p, \epsilon}^{i}(0)=0,\quad  \frac{d}{ds}A_{p}^{i}|_{s=0}=\frac{(-1)^n}{p^{2i}}\cdot c_{n,i}^\epsilon\cdot \log p,
	\end{equation}
 where $c_{n,i}^\epsilon$ are as in \eqref{eq:coeff}. Let $\psi$ be the standard additive character of $\bA/\Q$, i.e.,
\begin{equation}
    \psi_\infty(x)=\exp(2\pi i x),
 \quad \psi_\ell(x)=\exp(-2 \pi i \lambda(x)),
\end{equation}
where $\lambda$ is the canonical map $\Q_\ell \rightarrow \Q_\ell/\Z_\ell \hookrightarrow \Q/\Z$.
Let $E_T(\tau,s,\Phi)$ (respectively, $E'_T(\tau,\Phi)$) be the $T$th Fourier coefficient of $E(\tau,s,\Phi)$ (respectively, $E'(\tau, 0, \Phi)$) with respect to $\psi$. Then for $s\gg 0$ we have the following product formula (see for example \cite[\S 12.4]{LZ})
\begin{equation}\label{eq:product formula for E T}
    E_T(\tau,s,\Phi)=c_\infty \cdot \prod_{p<\infty} W_{T,p}(1,s,\Phi_p) \cdot q^T,
\end{equation}
where $c_\infty$ is a constant independent of $T$ calculated in \cite[Proposition 4.5]{LiuarithmeticI}, and $W_{T,p}(1,s,\Phi_p)$ is the local Whittaker integral defined in \cite[Equation (10.2)]{KR2}.

\subsection{An equivalent form of Conjecture \ref{conj:main}}
In this subsection we assume $p$ is a prime of $\Q$ ramified in $F$. Let $|\cdot|_p$ be the non-archimedean valuation on $F_p$ normalized so that $|\sqrt{d_F}|_p=\frac{1}{p}$.
For $\epsilon=\pm 1$, let $V_{p}^{\epsilon}$ be the (unique up to isomorphism) $F_p/\Q_p$-Hermitian space of dimension $n$ and sign $\epsilon$.
For any lattice $M_p$ of rank $n$ in $V_p^\epsilon$, let $\Phi_{M_p}\in I_p(s,\eta_p)$ be the standard section associated to the characteristic function of $M_p^n$.
By \cite[Proposition 9.7]{Shi2}, we have
\begin{equation}\label{eq:relation between W and alpha}
    W_{T,p}(1,r,\Phi_{M_p}) =\gamma_p (V_{p}^{\epsilon})^n \cdot |\det (M_p)|_p^n \cdot |d_F|_p^f \cdot  \alpha_v(M_p,L_T,X)|_{X=p^{-2r}},
\end{equation}
where $f=\frac{1}{2}n^2+\frac{1}{4}n(n-1)$, $\alpha_p(\cdot,\cdot,X)$ is the local density polynomial defined in \eqref{eq:definition of local density polynomial} at the place $p$ and $\gamma_p(V_{p}^{\epsilon})$ is an $8$-th root of unity defined in \cite[Equation (10.3)]{KR2}. By loc. cit., we know
\begin{equation}\label{eq:relation between gamma}
		\gamma_p (V_{p}^{\epsilon})=-\gamma_p (V_{p}^{-\epsilon}).
\end{equation}
For $T\in \Herm_n(F)$, choose a lattice $L_T$ in the Hermitian space $V_{p}^{\chi_p(T)}$ with Gram matrix $T$. Then equations \eqref{eq:relation between W and alpha} and \eqref{eq:relation between gamma} imply that Conjecture \ref{conj:main} at the place $p$ is equivalent to the following conjecture.
\begin{conjecture}\label{conj:local conjecture in terms of W}
Let $\bV$ be the space of special quasi-homomorphisms as in \eqref{eq:bV introduction} such that $\chi_p(\bV) =\epsilon$. Let $T\in \Herm_n(F)$ such that $\chi_p(T)=\epsilon$  and $L_T$ be a lattice of rank $n$ in $\bV$ with Gram matrix $T$. Then
		\[\mathrm{Int}(L_T)\cdot  \log p= \frac{W'_{T,p}(1,0,\Phi_p)}{W_{I_n^{-\epsilon},p}(1,0,\Phi_p)},\]
  where $\Phi_p$ is defined in \eqref{eq:nonstandard section}.
\end{conjecture}

 \subsection{(Global) arithmetic Siegel-Weil formula}\label{subsec:main global}
 Similar to \cite[Theorem 1.3.1]{LZ}, we have the following theorem.
	\begin{theorem}(Arithmetic Siegel-Weil formula for non-singular coefficients) \label{thm:main global}
		Assume that the fundamental discriminant of $F$ is $d_F \equiv 1 \pmod 8$ and that Conjecture \ref{conj:main}  holds for every $F_p$ with  $p|d_F$.
		For any non-singular Hermitian matrix $T$ with values in $\Oo_F$ of size $n$, we have
		\[E'_T(\tau, 0, \Phi)=C \cdot \widehat{\mathrm{deg}}_{T}(v) \cdot q^T, \quad  q^T=\exp(2\pi i \tr(T\tau)),\]
		where $C$ is an explicit constant that only depends on $F$ and $L$, $\widehat{\mathrm{deg}}_{T}(v)=\widehat{\mathrm{deg}}_{T}$ for positive-definite $T$, and $\tau=u + i v$.  In particular, the arithmetic Siegel-Weil formula holds for $n=2, 3$ for non-singular $T$.
	\end{theorem}
 \begin{proof} We sketch the main idea of the proof. When  $|\hbox{Diff}(T, \mathcal C)| >1 $, both sides are zero.
When  $\hbox{Diff}(T, \mathcal C) =\{ p\}$ for a finite prime $p\ne 2$ (as $d_F \equiv 1 \pmod 8$), then $T$ is positive-definite and the support of $\cZ(T)$ is on the supersingular locus of $\cM$ over $\Spec \bar\F_p$ although it has higher than expected dimension and needs `derivation' to make it correct dimensional cycle (we skip it here and just define its degree in the following). When $p$ is inert in $F$, the theorem is proved in \cite[Theorem 1.3.1]{LZ}. When $p$ is ramified in $F$, the theorem can be proved in a similar fashion assuming Conjecture \ref{conj:main}.
The key is that by the $p$-adic uniformization theorem (\cite[Chapter 6]{RZ}), for each component $\cZ$ of $\mathcal \cZ(T) (\bar\F_p)$, the arithmetic degree of $\cZ(T)$ supported on $\cZ$
\begin{equation}
		\chi(\cZ,\Oo_{\cZ(t_1)}\otimes^{\mathbb{L}} \Oo_{\cZ(t_2)} \otimes^{\mathbb{L}} \Oo_{\cZ(t_n)})\cdot \log p,
\end{equation}
is the same as  $\Int(L) \log p$ ($L$ has Gram matrix $T$) in Conjecture \ref{conj:main}. In particular this number is independent of the choice of $\cZ$ and depends only on $T$.
Assuming that Conjecture \ref{conj:main} (or rather its equivalent form \ref{conj:local conjecture in terms of W}) holds, this is then equal to
 $ c_{p, 1} W_{T, p}'(1, 0, \Phi_p)$ for some constant $c_{p, 1}\ne 0$.   Thus, $\widehat{\mathrm{deg}}_T$ is this number times the number of components of $\mathcal \cZ(T)(\bar \F_p)$, which can be counted via the Siegel-Weil formula.  Combining these results together with \eqref{eq:product formula for E T}, we can prove
 $$
 C_p\cdot \widehat{\mathrm{deg}}_T\cdot q^T = E_T'(\tau, 0, \Phi)
 $$
for come explicit constant $C_p$ independent of $T$. Similar argument holds when  $\hbox{Diff}(T, \mathcal C) =\{ \infty \}$ in which case the theorem is proved in \cite{LiuarithmeticI} and \cite{GS}.
Finally, one checks that $C_p$ is independent of the choice of $p$.
 \end{proof}

	\appendix
	
	\section{Calculation of primitive local density}\label{sec: calc of primitive local density}
	In this appendix, we provide the proofs of Propositions \ref{prop: ind for t} and \ref{prop: ind rank 2}. Throughout this section, $M$  is unimodular  of rank $m\ge 2$ unless clearly stated otherwise.   Let $\{ v_1, \cdots,  v_{2k}, v_{2k+1}, \cdots, v_{2k+m}\}$ be a basis of  $M^{[k]}=\cH^k\obot M$ with Gram matrix $\cH^k\obot \diag(I_{m-1}, \nu)$. Let $L$ be a Hermitian lattice of rank $n$ with Gram  matrix $T$.
	An isometric embedding $\varphi:L\to M$ is called primitive if its image in $M/\pi M$ has dimension $\mathrm{rank}_{\Oo_F} (L)$.  We call a  vector $v$ primitive in $M$ if $\pi^{-1}v \not \in M$, or  equivalently the natural embedding $\varphi:\spaF\{v\}\hookrightarrow M$ is primitive.  For a $v\in M^{[k]}$, we let $\Pr_{\cH^k}(w_i)$ be the projection of $w_i$ to $\cH^{k}$.
	\subsection{Proof of Proposition \ref{prop: ind for t} }\label{subsec: 1-4 5.9}
	The main purpose of this subsection is to prove the first four parts of Proposition \ref{prop: ind for t}. Part (5) of this proposition follows from Proposition \ref{prop:ind formula reducing size} and   Corollaries \ref{cor: beta2} and  \ref{cor: beta N2}.
	\begin{proof} For (1), choose $M(1)=\frac{t\pi }{2}v_1+v_2 \in M^{[k]}$ with  $q(M(1))=t$.
		Then
		\begin{align}\label{eq: perp of M(1)}
			M(1)^{\perp}&=\spaF\{\frac{-t\pi
			}{2}v_1+v_2,v_3,\ldots,v_{2k}, v_{2k+1}, \cdots, v_{2k+m}\}
			\\ \notag
			&\cong  \langle -t \rangle  \obot \cH^{k-1} \obot M ,\end{align}
		which is represented by $\diag(-t, \cH^{k-1}, S)$.
		It is easy to check
		$$
		|M^{[k]}:M(1)\obot M(1)^{\perp}|^{-1}|M(1)^{\vee}:M(1)|=|t \pi|_F |t \pi|_F^{-1}=1.
		$$
		
		For (2) and (3), assume first that  $M$ is isotropic (and unimodular).
		In this case, we may choose a basis $\{v_{2k+1}',\dots,v_{2k+m}'\}$ of $M$ with  Gram matrix  $\mathrm{Diag}(\cH_0,1,\dots,1,-\nu)$. Choose $M(0)= \frac{t}2 v_{2k+1}'+ v_{2k+2}'$ with $q(M(0))=t$.
		Then
		\begin{align*}
			M(0)^\perp&=\hbox{Span}\{v_1, \cdots, v_{2k}, -\frac{t}2 v_{2k+1}'+ v_{2k+2}', v_{2k+3}', \cdots, v_{2k+m}'\}
			\\
			&   \cong \cH^k \obot \hbox{Span}\{v_{2k+3}', \cdots, v_{2k+m}'\} \obot \langle -t \rangle.
		\end{align*}
		as claimed.  Moreover
		$$
		|M^{[k]}:M(0)\obot M(0)^{\perp}|^{-1}|M(0)^{\vee}:M(0)|=|t|_F  |t \pi|_F^{-1}=q.
		$$
		
		Next, assume that $M$ is anisotropic. In this case, $M$ has rank $2$ and has Gram matrix $\diag(1, \nu)$ with $\chi(M)= \chi(-\nu) =-1$.   In this case, $E=F_0(\sqrt{-\nu})$ is a unramified quadratic field extension of $F_0$, and $N_{E/F_0}\Oo_E^\times = \Oo_{F_0}^\times$. When $\mathrm{v}(t)=0$,  $t \in N_{E/F_0}\Oo_E^\times$, i.e.,  $t =a \bar a + b \bar b \nu$. Take $M(0) = a v_{2k+1} + b v_{2k+2}$. Then  $q(M(0))=t$, and
		$$
		M(0)^\perp= \hbox{Span}\{v_1, \cdots, v_{2k}, -\nu \bar b v_{2k+1} +\bar a v_{2k+2}\}= \cH^k\obot \langle t\nu \rangle,
		$$
		and
		$$
		|M^{[k]}:M(0)\obot M(0)^{\perp}|^{-1}|M(0)^{\vee}:M(0)|=|\pi|_F^{-1} =q.
		$$
		When $\mathrm{v}(t) >0$, $t \not \in N_{E/F_0}\Oo_E^\times$. So there is no primitive $M(0) \in M$ with $q(M(0))=t$.
		This proves (1)---(3) of Proposition \ref{prop: ind for t}.
		
		The proof of (4) follows from the following four lemmas.

		\begin{lemma}\label{lem:H}
			For  primitive vectors $w_1,\ w_2\in \cH_i$ with $q(w_1)=q(w_2)$, we can find an element $g\in \mathrm{U}(\cH_i)$ such that $g(w_1)=w_2.$
		\end{lemma}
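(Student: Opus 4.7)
The plan is to reduce the lemma to an explicit basis-extension statement: any primitive vector $w \in \cH_i$ with $q(w) = t$ can be completed to an $\Oo_F$-basis $\{w, w^{\star}\}$ of $\cH_i$ whose Gram matrix is $\bigl(\begin{smallmatrix} t & \pi^i \\ (-\pi)^i & 0 \end{smallmatrix}\bigr)$. Granting this, I apply the extension to $w_1$ and $w_2$ separately, producing $\Oo_F$-bases $\{w_1, w_1^{\star}\}$ and $\{w_2, w_2^{\star}\}$ of $\cH_i$ with identical Gram matrices. The unique $\Oo_F$-linear map $g$ determined by $g(w_1) = w_2$ and $g(w_1^{\star}) = w_2^{\star}$ then sends one $\Oo_F$-basis of $\cH_i$ to another (so $g$ is an $\Oo_F$-module automorphism of $\cH_i$) and respects the Hermitian form (since the two Gram matrices coincide), so $g \in \mathrm{U}(\cH_i)$ and $g(w_1) = w_2$ as required.

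To carry out the extension step, I would write $w = a e_1 + b e_2$ in the standard basis of $\cH_i$; primitivity forces at least one of $a, b$ to lie in $\Oo_F^{\times}$. Assume $b \in \Oo_F^{\times}$ (the case $a \in \Oo_F^{\times}$ is analogous with an obvious modification, taking $w^{\star} = \bar a^{-1} e_2$). Take $w^{\star} = (-1)^i \bar b^{-1} e_1$. A short sesquilinear computation using $(e_1, e_2) = \pi^i$, $(e_2, e_1) = (-\pi)^i$ and linearity (resp.\ conjugate-linearity) in the first (resp.\ second) variable yields $(w^{\star}, w^{\star}) = 0$ and $(w, w^{\star}) = \pi^i$, so the Gram matrix of $\{w, w^{\star}\}$ is as claimed. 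The change-of-basis matrix $\bigl(\begin{smallmatrix} a & (-1)^i \bar b^{-1} \\ b & 0 \end{smallmatrix}\bigr)$ has determinant $-(-1)^i b \bar b^{-1} \in \Oo_F^{\times}$, confirming that $\{w, w^{\star}\}$ is an $\Oo_F$-basis of $\cH_i$.

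There is no serious obstacle here — the whole argument collapses to the explicit construction of $w^{\star}$. The only care needed is to keep the sesquilinearity convention straight and to check, in both cases of primitivity, that the change-of-basis determinant is a unit.
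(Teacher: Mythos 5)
Your proof is correct, and it takes a genuinely different route from the paper's. The paper argues by explicit normalization: it first acts by a diagonal element $\begin{pmatrix}\bar a_2 & 0 \\ 0 & a_2^{-1}\end{pmatrix}$ to scale a unit coordinate to $1$, then by a unipotent element to bring $w$ to the canonical representative $\frac{q(w)\pi^{-i}}{2}v_1 + v_2$; this requires separate computations for $i$ odd (where $q(w)=(a_1-\bar a_1)\pi^i$) and $i$ even (where $q(w)=(a_1+\bar a_1)\pi^i$). Your argument instead completes each primitive $w$ to an $\Oo_F$-basis $\{w,w^\star\}$ whose Gram matrix is determined solely by $t=q(w)$, and then invokes the standard fact that a change of basis preserving the Gram matrix is an isometry. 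This is cleaner in two respects: it avoids the parity case split entirely, and it packages the ``normal form'' as a basis extension rather than as a composition of two explicit group elements. Your choice of sign $(-1)^i\bar b^{-1}$ in $w^\star$ is exactly what makes $(w,w^\star)=\pi^i$ come out uniformly (since $(-1)^i(-\pi)^i=\pi^i$), and the determinant check $-(-1)^i b/\bar b\in\Oo_F^\times$ (resp. $a\bar a^{-1}\in\Oo_F^\times$) correctly confirms you have an $\Oo_F$-basis. The only minor point worth spelling out when you write it up is that an $\Oo_F$-linear bijection matching two bases with the same Gram matrix automatically lands in $\mathrm{U}(\cH_i)$, which you do acknowledge. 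Nothing is missing.
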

		\begin{proof}
			We treat the case $i$ is odd first. Assume $v=a_1v_1+a_2v_2$. Then $v$ is primitive implies that $a_1$ or $a_2$ is a unit. Without loss of generality, we assume $a_2$ is a unit and we can further assume $a_2=1$ by the action of $\begin{pmatrix}\bar{a}_2&0\\0&a_2^{-1}\end{pmatrix}$. Now notice that $q(v)=(v,v)=(a_1-\bar{a}_1)\pi^{i}$. Hence we can write $a_1=\alpha+\frac{q(v)\pi^{-i}}{2}$, where $\alpha \in \Oo_{F_0}$. Now let $g=\begin{pmatrix}1&-\alpha\\0&1\end{pmatrix}$, and it is straightforward to check that $g\in \mathrm{U}(\cH_i)$ and $g(v)=\frac{q(v)\pi^{-i}}{2}v_1 + v_2 $.
			
			Now we deal with the case $i$ is even. Again, we can assume $v=a_1v_1+v_2$. Then $q(v)=(a_1+\bar{a}_1)\pi^i$. Hence we can write $a_1=\frac{q(v)}{2}\pi^{-i}+\beta \pi$, where $\beta\in \Oo_{F_0}$. Now let $g=\begin{pmatrix}1&-\beta \pi \\0&1\end{pmatrix}$, and it is straightforward to check that $g\in \mathrm{U}(\cH_i)$ and $g(v)=\frac{q(v)\pi^{-i}}{2}v_1+v_2$.
		\end{proof}

		\begin{lemma}\label{lem:primitive in H, vector}
			Assume $M$ is any lattice such that $\mathrm{v}(M)\geq i$. For $w_1,\ w_2 \in \cH_i^k\obot M$, if $\Pr_{\cH_{i}^k}(w_1)$  and $\Pr_{\cH_{i}^k}(w_2)$ are primitive and $q(w_1)=q(w_2)$, then there exists $g\in \mathrm{U}(\cH_i^k\obot M)$ with $g(w_1)=w_2$.
		\end{lemma}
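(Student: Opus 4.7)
The plan is to reduce to Lemma \ref{lem:H} by a two-step procedure: first transport the $\cH_i^k$-component of each $w_j$ into a single copy of $\cH_i$ using $\mathrm{U}(\cH_i^k)$, then kill the $M$-component via an Eichler-type isometry of $\cH_i\obot M$. Write $w_j=u_j+m_j$ with $u_j=\Pr_{\cH_i^k}(w_j)\in\cH_i^k$ primitive and $m_j\in M$.

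For the first step, fix the standard basis $\{v_1,v_2,\dots,v_{2k-1},v_{2k}\}$ of $\cH_i^k$ adapted to the orthogonal decomposition. Since $u_j$ is primitive, at least one of its coordinates is a unit in $\Oo_F$; after permuting the hyperbolic summands and applying the action of $\GL_1(\Oo_F)$ on the first plane (exactly as in the proof of Lemma \ref{lem:H}), I may assume the coefficient of $v_2$ is $1$. A short sequence of Eichler transvections within $\mathrm{U}(\cH_i^k)$ built from the isotropic vector $v_1$ (which is orthogonal to all $v_\ell$ with $\ell\geq 3$) then eliminates the remaining coordinates, placing each $u_j$ inside the first summand $\cH_i$. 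Extending these isometries by the identity on $M$ reduces the problem to the case $k=1$.

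With $u_j=a_jv_1+v_2$ and $a_j\in\Oo_F$, define $\sigma_j\in\mathrm{U}(\cH_i\obot M)$ by
\[
\sigma_j(v_1)=v_1,\quad \sigma_j(v_2)=v_2-m_j+\alpha_jv_1,\quad \sigma_j(m)=m+\gamma_{j,m}v_1\quad(m\in M),
\]
where the isometry relations $(\sigma_j(v_2),\sigma_j(m))=0$ and $(\sigma_j(v_2),\sigma_j(v_2))=0$ force
\[
\gamma_{j,m}=(m,m_j)/\pi^i,\qquad \alpha_j=-(m_j,m_j)/(2\pi^i).
\]
A direct expansion then shows
\[
\sigma_j(w_j)=\bigl(a_j+\alpha_j+\gamma_{j,m_j}\bigr)v_1+v_2\in\cH_i,
\]
still primitive and of the same norm $q(w_j)$. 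Applying Lemma \ref{lem:H} inside the first $\cH_i$ produces $g\in\mathrm{U}(\cH_i)$ with $g\sigma_1(w_1)=\sigma_2(w_2)$; extending $g$ by the identity on $M$ and the remaining hyperbolic summands, the composite $\sigma_2^{-1}g\sigma_1$ sends $w_1$ to $w_2$.

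The main obstacle is verifying the integrality of the coefficients $\gamma_{j,m}$ and $\alpha_j$, which is precisely where the hypothesis $\mathrm{v}(M)\geq i$ is used. Integrality of $\gamma_{j,m}=(m,m_j)/\pi^i$ is immediate from $\mathrm{val}_\pi((m,m_j))\geq i$ for all $m\in M$. For $\alpha_j$ one has to distinguish the parity of $i$: when $i$ is even, $(m_j,m_j)\in F_0$ with $\mathrm{val}_\pi((m_j,m_j))\geq i$ gives $(m_j,m_j)/\pi^i\in\Oo_{F_0}$, and division by $2$ is permissible since $p$ is odd; when $i$ is odd, the fact that $(m_j,m_j)\in F_0$ forces its $\pi$-valuation to be even and hence at least $i+1$, so $\alpha_j\in\pi\Oo_F\subset\Oo_F$. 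In parallel, the isometry identity $\alpha_j\pi^i+\bar{\alpha}_j(-\pi)^i=-(m_j,m_j)$ is verified by noting that $\alpha_j$ is real in the even case and purely imaginary (relative to $F/F_0$) in the odd case, matching the structure of $(m_j,m_j)/\pi^i\in F$.
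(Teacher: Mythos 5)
Your proof is correct, and it takes a genuinely different route from the paper's. The paper's argument first shows that $w_j$ lies in a rank-$2$ sublattice $M_j \cong \cH_i$ of $\cH_i^k\obot M$ (built as $\spaF\{w_j,v_2\}$ after normalizing a unit coordinate, using $\mathrm{v}(M)\ge i$ to ensure $\mathrm{val}_\pi(q(w_j))\ge i$), and then invokes Jacobowitz's uniqueness of Jordan splittings (\cite[Proposition 4.2]{J}) to produce an isometry of $\cH_i^k\obot M$ carrying $M_1$ to $M_2$, reducing to Lemma \ref{lem:H}. You instead construct the required isometry by hand: after normalizing the $\cH_i^k$-component to sit in one hyperbolic plane, you build an explicit unipotent Eichler-type transvection $\sigma_j\in\mathrm{U}(\cH_i\obot M)$ that annihilates the $M$-component of $w_j$, so that $\sigma_j(w_j)$ lands inside $\cH_i$, and then apply Lemma \ref{lem:H} there. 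Both arguments hit the hypothesis $\mathrm{v}(M)\ge i$ at the same pressure point — the paper needs it to invoke Jacobowitz, while you need it for integrality of the coefficients $\gamma_{j,m}=(m,m_j)/\pi^i$ and $\alpha_j=-(m_j,m_j)/(2\pi^i)$, and your parity discussion of $\alpha_j$ checks out since elements of $F_0$ have even $\pi$-valuation. The trade-off is that the paper's proof is shorter but rests on a nontrivial external classification result, whereas yours is longer but entirely self-contained and makes the role of the valuation hypothesis transparent. A small streamlining: your first step is actually subsumed by the second — after the permutation and normalization you can regard $\cH_i^{k-1}\obot M$ as the new $M$ (which still satisfies $\mathrm{v}\ge i$ since $\mathrm{v}(\cH_i)=i$) and apply the transvection $\sigma_j$ once, rather than doing separate Eichler transvections inside $\cH_i^k$.
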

		\begin{proof}
			Choose a basis $\{v_1,\ldots,v_{2k}\}$ of $\cH_i^{k}$ such that the associated  Gram matrix is $\cH_i^k$. We also choose a basis $\{v_{2k+1},\ldots,v_{2k+m}\}$ of $M$. Write $w_1=\sum_{i=1}^{2k+m}a_iv_i$. Since $\mathrm{Pr}_{\cH_i^k}(w_1)$ is primitive, $a_i$ is a unit for some $i\in\{1,\ldots,2k\}.$ Without loss of generality, we may assume $a_1=1$. Let $w'=w_1+\frac{(-1)^{i+1}q(w_1)\pi^{-i}}{2} v_2$, then
			$$
			q(w')=q(w_1)+(w_1,\frac{(-1)^{i+1}q(w_1)\pi^{-i}}{2} v_2)
			+(\frac{(-1)^{i+1}q(w_1)\pi^{-i}}{2} v_2,w_1)\\
			=0,
			$$
			and $(w', v_2) =(v_1, v_2)$. As a result, $M_1=\mathrm{Span}_{\Oo_F}\{w_1,v_2\}=\mathrm{Span}_{\Oo_F}\{w',v_2\} $ is isometric to $\cH_i$. Notice that $\mathrm{val}_{\pi}(q(w_1))\ge i$ is guaranteed by the assumption $\mathrm{v}(M)\ge i$.
			
			Similarly, we can show $w_2\in M_2$ for some $M_2$ that is isometric to $\cH_i$. However, the assumption $\mathrm{v}(M)\ge i$ and  \cite[Proposition $4.2$]{J} imply that there exist $g\in \mathrm{U}(\cH_i^k\obot M)$ such that $g(M_1)=M_2$. In particular, $g(w_1)\in M_2$. Since both $g(w_1)$ and $w_2$ are in $M_2$, the problem is reduced to Lemma \ref{lem:H}.
		\end{proof}

		\begin{lemma}\label{lem:unimodularL}
			For  primitive vectors $w_1,\ w_2\in M$ with $q(w_1)=q(w_2)$, we can find an element $g\in \mathrm{U}(M)$ such that $g(w_1)=w_2.$
		\end{lemma}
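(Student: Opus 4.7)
The plan is to reduce both primitive vectors $w_1$ and $w_2$ to standard position inside an orthogonal direct-summand sublattice of $M$ whose isometry class depends only on $t = q(w_1)=q(w_2)$, and then conclude by appealing to Jacobowitz's classification of unimodular Hermitian $\Oo_F$-lattices in odd residue characteristic, together with the rank-two case already handled in Lemma \ref{lem:H}. The argument splits according to whether $v(t)=0$.

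If $v(t)=0$, then $\Oo_F w_i$ is a rank-one unimodular sublattice of $M$, hence an orthogonal direct summand, yielding $M = \Oo_F w_i \obot N_i$ for $i=1,2$. Both $N_1$ and $N_2$ are unimodular of rank $m-1$ and of the same sign (determined by $\chi(M)$ and $\chi(t)$), hence isometric by Jacobowitz. Any isometry $\phi\colon N_1 \to N_2$ extends to $g \in \mathrm{U}(M)$ sending $w_1 \mapsto w_2$. If $v(t)>0$, then by part (3) of Proposition \ref{prop: ind for t} $M$ must be isotropic (in particular either $m \ge 3$ or $m=2$ with $\chi(M)=+1$); I will construct a rank-two unimodular sublattice $H_i \subset M$ containing $w_i$ and isometric to the hyperbolic plane $\cH_0$. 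Given such an $H_i$, unimodularity of $H_i$ forces $M = H_i \obot N_i'$ with $N_i'$ unimodular of rank $m-2$ and of sign $-\chi(M)$; combining an isometry $H_1 \to H_2$ which carries $w_1$ to $w_2$ (supplied by Lemma \ref{lem:H}) with an isometry $N_1' \to N_2'$ (supplied by Jacobowitz) completes the argument.

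The key construction, for $v(t)>0$, proceeds as follows. By primitivity of $w_i$ and unimodularity of $M$, the $\Oo_F$-linear form $(w_i,\cdot)\colon M \to \Oo_F$ is surjective, so there exists $u \in M$ with $(u, w_i)=1$. For $\alpha \in \Oo_{F_0}$, the replacement $u' = u + \alpha w_i$ satisfies $(u', w_i) = 1 + \alpha t$ and $q(u') = q(u) + 2\alpha + \alpha^2 t$. Since $v(t)>0$ and $p \neq 2$, Hensel's lemma applied to the polynomial $\alpha^2 t + 2\alpha + q(u)$ (whose derivative at $\alpha=-q(u)/2$ is a unit) yields a unique $\alpha \in \Oo_{F_0}$ with $q(u')=0$, while $(u', w_i) = 1+\alpha t$ remains a unit. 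The sublattice $H_i = \Oo_F w_i + \Oo_F u'$ then has unit Gram determinant, so it is unimodular, and it contains the isotropic vector $u'$, so $H_i \cong \cH_0$, as required.

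The main obstacle is precisely this last step: the simultaneous achievement of $(u', w_i) \in \Oo_F^\times$ and $q(u')=0$ hinges on the availability of Hensel's lemma, which in turn requires $p \neq 2$ (a standing hypothesis of the paper), together with $v(t)>0$ to ensure the quadratic term $\alpha^2 t$ is a controllable perturbation. Everything else is bookkeeping: the rank-one case reduces to a direct orthogonal decomposition, and once $H_i \cong \cH_0$ is in hand, Lemma \ref{lem:H} disposes of the two-dimensional part while Jacobowitz's classification of unimodular Hermitian lattices supplies the isometry on the complementary piece.
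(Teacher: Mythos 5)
Your proof is correct, and it takes a moderately different route from the paper's. The paper fixes a decomposition $M=\cH_0^k\obot M'$ with $M'$ anisotropic of rank $\le 2$, splits on whether the projections of $w_1,w_2$ to $\cH_0^k$ are both primitive (in which case it quotes Lemma~\ref{lem:primitive in H, vector} with $i=0$), and otherwise observes that $q(w_i)$ is a unit so $\Oo_Fw_i$ splits off; Lemma~\ref{lem:primitive in H, vector} in turn builds a hyperbolic summand through $w_i$ by explicit coordinate manipulation. You instead split on $v(t)$ and, in the $v(t)>0$ case, produce the hyperbolic summand $H_i\cong\cH_0$ through $w_i$ coordinate-freely: unimodularity of $M$ plus primitivity of $w_i$ makes $(w_i,\cdot)$ surjective, giving $u$ with $(u,w_i)=1$, and Hensel's lemma (using $p\ne 2$ and $v(t)>0$) adjusts $u'=u+\alpha w_i$ to an isotropic vector with $(u',w_i)$ still a unit. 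You then reduce to Lemma~\ref{lem:H} on $H_2$ and Jacobowitz's classification of the complements, avoiding Lemma~\ref{lem:primitive in H, vector} entirely. Both arguments ultimately split off a rank-one or hyperbolic unimodular summand containing $w_i$ and invoke the classification, so the strategies are cousins; yours is a bit more self-contained and conceptual, while the paper's avoids Hensel by picking explicit coordinates. Two small points worth spelling out in a final write-up: (i) the appeal to Lemma~\ref{lem:H} needs the minor intermediate step of fixing one isometry $\phi_0:H_1\to H_2$ and then post-composing with an element of $\rU(H_2)$ moving $\phi_0(w_1)$ to $w_2$; and (ii) the citation of Proposition~\ref{prop: ind for t}(3) is not strictly needed (nor circular, since its proof of parts (1)--(3) does not rely on this lemma)---your own construction already produces the isotropic $u'$, so if $M$ were anisotropic the hypothesis of the $v(t)>0$ case would simply be vacuous.
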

		\begin{proof}  Since $M$ is unimodular, we can decompose
			$$
			M=\cH_0^k \obot M',
			$$
			where $M'=0$ or an anisotropic unimodular Hermiatian lattice of rank $1$ or $2$. If  $\Pr_{\cH_{0}^k}(w_1)$ and $\Pr_{\cH_{0}^k}(w_2)$ are primitive, this is Lemma \ref{lem:primitive in H, vector}. If $\Pr_{\cH_{0}^k}(w_1)$ is not primitive, then  $\Pr_{M'}(w_1)$ is primitive and thus $q(\Pr_{M'}(w_1)) \in \OO_F^\times$. This implies that $q(w_2)=q(w_1)$ is a unit, and $M= \OO_F w_i \obot  (\OO_F w_i)^\perp$.  Therefore there is some $g \in \mathrm{U}(M)$ with $g(w_1)=w_2$.
		\end{proof}

		\begin{lemma} \label{lem:vector primitive in S}
			Assume that  $w_1,\ w_2\in M^{[k]}$ are primitive and that  $\Pr_{\cH^k}(w_1)$ and $\Pr_{\cH^k}(w_2)$ are not primitive. Then we can find $g\in \mathrm{U}(M^{[k]})$ such that $g(w_1)=w_2$.
		\end{lemma}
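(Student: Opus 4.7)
The plan is to write $w_i = h_i + m_i$ with $h_i = \Pr_{\cH^k}(w_i) \in \pi \cH^k$ and $m_i = \Pr_M(w_i) \in M$; because $w_i$ is primitive and $h_i \in \pi M_k$, $m_i$ must be primitive in $M$, and since $\cH^k$ is $\pi^{-1}$-modular one has $q(h_i) \in \pi_0 \OO_{F_0}$ and so $q(m_i) \equiv q(w_i) = t \pmod{\pi_0}$. Let $M(0) \in M$ be the fixed primitive vector of norm $t$ produced by parts (2)--(3) of Proposition \ref{prop: ind for t}. It suffices to produce, for each $i$, an isometry $\phi_i \in \mathrm{U}(M_k)$ with $\phi_i(w_i) \in M$, since Lemma \ref{lem:unimodularL} then transports $\phi_i(w_i)$ to $M(0)$ inside $\mathrm{U}(M) \subset \mathrm{U}(M_k)$.

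The heart of the argument is the case $\mathrm{v}(t)=0$: then $q(m_i) \in \OO_F^\times$, so $M = \OO_F m_i \obot M_i'$ with $M_i'$ unimodular of rank $m-1$ and sign $\chi(M_i') = \chi(M)\chi(t)$ (independent of $i$), and $\OO_F w_i \subset M_k$ is unimodular so that $M_k = \OO_F w_i \obot w_i^{\perp}$. A direct computation identifies $w_i^\perp$ as the image of the $F$-linear embedding $(x,y) \mapsto x - (x,h_i) m_i / q(m_i) + y$ from $\cH^k \obot M_i'$, and the induced Hermitian form differs from the standard one only by the rank-one term $(x_1, h_i)(h_i, x_2)/q(m_i)$ on the $\cH^k$ factor. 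By the matrix determinant lemma this rank-one perturbation rescales $\det(\cH^k)$ by the factor $t/q(m_i)$, which is $\equiv 1 \pmod{\pi_0}$ and hence lies in $\mathrm{Nm}(\OO_F^\times)$ by Hensel's lemma; combined with Jacobowitz's classification of $\pi^{-1}$-modular Hermitian lattices of a given even rank (see \cite{J}), this shows that the perturbed form is still isometric to $\cH^k$. Therefore $w_i^\perp \cong \cH^k \obot M' \cong M(0)^\perp$, and a Witt/Jacobowitz cancellation argument yields the desired $\phi_i$.

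When $\mathrm{v}(t) \geq 1$, Proposition \ref{prop: ind for t}(3) forces $M$ to be isotropic; I would first apply a Hermitian transvection along an isotropic vector $v^\ast \in \cH^k$ with $(h_i, v^\ast) \neq 0$ (such a $v^\ast$ exists because $h_i \neq 0$ lies in $\pi \cH^k$) in order to normalize $h_i$, and then repeat the splitting-and-cancellation argument from the previous paragraph, now with $\OO_F w_i$ realized as a $\pi^{\mathrm{v}(t)}$-modular direct summand of $M_k$ rather than a unimodular one.

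The main obstacle I expect is the verification that the rank-one perturbation of the Gram matrix of $\cH^k$ really does preserve the isometry class of the lattice: the determinant calculation alone is insufficient, and one must invoke the full classification of $\pi^{-1}$-modular Hermitian $\OO_F$-lattices of a given even rank in the ramified setting to conclude. A secondary difficulty in the case $\mathrm{v}(t) \geq 1$ is that the cancellation framework becomes less rigid when $\OO_F w_i$ fails to be unimodular, so one must carefully track how the isometries on $\OO_F w_i$ and on its orthogonal complement glue to an element of $\mathrm{U}(M_k)$.
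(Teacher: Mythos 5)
Your proposal takes a genuinely different route from the paper's in the central step. The paper's proof is elementary and uniform in $\mathrm{v}(t)$: writing $w_i$ in coordinates, some coefficient on a unit-norm $M$-basis vector $v_{2k+j}$ is a unit, and one modifies the $\cH^k$-basis by $v_\ell \mapsto v_\ell + c_\ell v_{2k+j}$ with explicit $c_\ell \in \Oo_F$ so that the resulting rank-$2k$ sublattice $M_{w_i}$ is both orthogonal to $w_i$ and still $\pi^{-1}$-modular, hence $\cong \cH^k$; an isometry $g\in\mathrm{U}(M_k)$ then carries $M_{w_i}$ to the standard $\cH^k$ and forces $g(w_i)\in M$, after which Lemma \ref{lem:unimodularL} finishes. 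Your alternative for $\mathrm{v}(t)=0$ --- splitting $M_k=\Oo_F w_i\obot w_i^\perp$ and matching $w_i^\perp$ with $M(0)^\perp$ --- is correct, and the ``obstacle'' you flag actually dissolves: the rank-one perturbation on the $\cH^k$ factor has entries of nonnegative $\pi$-valuation (one computes $(x,h_i)\in\Oo_F$ and $q(m_i)\in\Oo_{F_0}^\times$), so it preserves $\pi^{-1}$-modularity, and in the ramified odd-residue-characteristic case the $\pi^{-1}$-modular Hermitian lattice of a given even rank is unique, so the determinant check is superfluous rather than insufficient.

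The genuine gap is your $\mathrm{v}(t)\geq 1$ case. There $\Oo_F w_i$ simply cannot be an orthogonal direct summand of $M_k$: the Jordan type of $M_k=\cH^k\obot M$ has modular scales only $\pi^{-1}$ and $\pi^0$, so any rank-one modular orthogonal summand of $M_k$ must be unimodular, forcing $q(w_i)\in\Oo_{F_0}^\times$. When $\mathrm{v}(t)\geq 1$ the sum $\Oo_F w_i\obot(w_i^\perp\cap M_k)$ is a proper finite-index sublattice of $M_k$, and there is nothing to which your ``splitting-and-cancellation'' step can be applied; in particular the phrase ``with $\Oo_F w_i$ realized as a $\pi^{\mathrm{v}(t)}$-modular direct summand'' describes an impossible situation. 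The transvection you gesture at would, if pushed through, have to eliminate $\mathrm{Pr}_{\cH^k}(w_i)$ entirely and land $w_i$ in $M$ --- but then you are essentially reproducing the paper's basis modification and the orthogonal-complement machinery is unnecessary. As written, the $\mathrm{v}(t)\geq 1$ case of your proof does not go through.
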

		\begin{proof}
			Let $\{v_1,\ldots,v_{2k+m}\}$ be a basis of $\cH^k\obot M$, whose Gram matrix is $\cH^k\obot \mathrm{Diag}(1,\ldots,\nu)$ where $\nu$ is a unit. Assume $v\in M^{[k]}$ is primitive and $\Pr_{\cH^k}(v)$ is not primitive, then we can write $v=\sum_{i=1}^{2k}\pi a_i v_i + \sum_{j=2k+1}^{2k+m}a_jv_j$, where some $a_j$ is a unit for $2k+1\leq j \leq 2k+m.$ Again, without loss of generality, we may assume $a_{2k+m}=1.$ For $i\leq k$, we set
			\begin{align*}
				v_{2i-1}'=v_{2i-1}+\frac{\bar{a}_{2i}}{\nu} v_{2k+m},\quad v_{2i}'=v_{2i}+\frac{-\bar{a}_{2i-1}}{\nu} v_{2k+m}.
			\end{align*}
			Let $M_v=\spa_{\Oo_F}\{v_1',\ldots,v_{2k}'\}.$ Then it is easy to check that $M_v$ is perpendicular to $v$. Moreover, $M_v$ is isometric to $\cH^{k}$ since $\mathrm{val}_{\pi}((v'_{2i-1},v'_{2i}))=-1$ and $0\leq \mathrm{val}_{\pi}((v'_i,v'_j))$ for other $1\leq i,j\leq 2k$. Hence we can find $g_v\in \mathrm{U}(M^{[k]})$ such that $g_v(M_v)=\spaF\{v_1,\ldots,v_{2k}\},$ and $g_v(v)\in \spaF\{v_{2k+1},\ldots,v_{2k+m}\}=M$.
			
			Applying the above to $w_1$ and $w_2$, we can find $g_{w_1},g_{w_2}\in \mathrm{U}(M^{[k]})$ such that $g_{w_1}(w_1),\ g_{w_2}(w_2)\in M$. Now the problem is reduced to Lemma \ref{lem:unimodularL}, and the lemma is proved.
		\end{proof}
		
		According to Lemma  \ref{lem:primitive in H, vector} and Lemma \ref{lem:vector primitive in S},  a  primitive vector $v\in M^{[k]}$ is either in the same orbit of a  vector $M(1)\in \cH^k$ or a vector $M(0)\in M$. Lemma \ref{lem:H} implies that primitive vectors $M(1),M'(1)\in \cH^k$ with $q(M(1))=q(M'(1))$ lie in the same orbit.  Lemma \ref{lem:unimodularL} implies the similar result for primitive $M(0),M'(0)\in M$ with $q(M(0))=q(M'(0))$. A combination of the above proves Part $(4)$ of Proposition \ref{prop: ind for t}.
	\end{proof}

	\subsection{Proof of Proposition \ref{prop: ind rank 2} }\label{subsec: first part 5.10}
	In this subsection, we prove the first part of  Proposition \ref{prop: ind rank 2}, which we restate as follows for the convenience of the reader.

	\begin{proposition}\label{prop:A perbofHi} Let $L$ be a Hermitian $\Oo_F$-lattice of rank $2$ and $\mathrm{v}(L) >0$.
		Let $\varphi:L\rightarrow M^{[k]}$ be a primitive isometric embedding.
		Let $d(\varphi)$ be the dimension of the image of the map
		\begin{align*}
			\mathrm{Pr}_{\cH^k}\circ \varphi:L\rightarrow \cH^k
		\end{align*}
		in $\cH^k/\pi \cH^k$.
		Then
		$$
		\varphi(L)^\bot\cong (-L)\obot \cH^{k-d(\varphi)}\obot M^{(d(\varphi))}
		$$
		where $M^{(d(\varphi))}$ is unimodular of rank  equal to $(\mathrm{rank}(M)-2(2-d(\varphi)))$ and $\mathrm{det} M^{(d(\varphi))}=(-1)^{d(\varphi)}\mathrm{det} M$.
		In particular, if $d(\varphi)=1$ then $\mathrm{rank}(M)\geq 2$, and if $d(\varphi)=0$ then $\mathrm{rank}(M)\geq 4$.
	\end{proposition}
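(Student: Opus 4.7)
The plan is a case analysis on $d = d(\varphi) \in \{0, 1, 2\}$. For each value of $d$, the strategy is to apply suitable rank-two analogues of the transitivity Lemmas \ref{lem:H}, \ref{lem:primitive in H, vector}, \ref{lem:unimodularL}, \ref{lem:vector primitive in S} to move $\varphi(L)$ into a standard form under the action of $\mathrm{U}(M_k)$, and then to read off the orthogonal complement directly. In the case $d = 2$, a two-vector version of Lemma \ref{lem:primitive in H, vector} (obtained by applying the explicit unitary transformations in that proof successively to both basis vectors of $L$) transports $\varphi(L)$ into a fixed sublattice $\cH^2 \subset \cH^k$, while leaving $\cH^{k-2} \obot M$ pointwise fixed. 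The core point is then the orthogonal complement of a primitive rank-two sublattice of $\cH^2$ with Gram matrix $T$: a discriminant and Jordan-form argument identifies this complement with the Hermitian lattice with Gram matrix $-T$, yielding $\varphi(L)^\perp \cong (-L) \obot \cH^{k-2} \obot M$ with $M^{(2)} = M$.

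For $d = 1$, I would choose an $\Oo_F$-basis $\{e_1, e_2\}$ of $L$ so that $\mathrm{Pr}_{\cH^k} \varphi(e_1)$ is primitive in $\cH^k$ while $\mathrm{Pr}_{\cH^k} \varphi(e_2) \in \pi \cH^k$; replacing $e_2$ by $e_2 - c\, e_1$ for an appropriate $c \in \Oo_F$ allows one to further arrange $\varphi(e_2) \perp \varphi(e_1)$. Now Lemma \ref{lem:primitive in H, vector} places $\varphi(e_1)$ in standard form inside a single hyperbolic plane $\cH \subset \cH^k$, and Proposition \ref{prop: ind for t} applied to $\varphi(e_2) \in \pi \cH^{k-1} \obot M$ inside the unimodular lattice $\cH^{k-1} \obot M$ (whose projection to $M$ is primitive) gives $\varphi(e_2)^\perp \cong \cH^{k-1} \obot \langle -q(\varphi(e_2))\rangle \obot M^{(1)}$ with $M^{(1)}$ unimodular of rank $m-2$. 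Combining with the $\langle -q(\varphi(e_1))\rangle$ summand coming from the first hyperbolic plane, and using the orthogonality $\varphi(e_1) \perp \varphi(e_2)$ to reassemble these two rank-one pieces into a copy of $-L$, gives the desired formula.

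The main obstacle is the case $d = 0$. Here $\varphi(L) \subset \pi \cH^k \obot M$, and primitivity forces $\mathrm{Pr}_M \varphi(L)$ to generate a rank-two direct summand of $M$. The first task is to generalize Lemma \ref{lem:vector primitive in S} to a rank-two statement, namely that some element of $\mathrm{U}(M_k)$ transports $\varphi(L)$ entirely into $M$. The construction should mirror the proof of that lemma: using the $M$-components of $\varphi(e_i)$ one builds hyperbolic planes inside $\cH^k \obot M$ perpendicular to each $\varphi(e_i)$, and the hypothesis $\mathrm{v}(L) > 0$ is crucial for patching these two constructions together into a single element of $\mathrm{U}(M_k)$ (it ensures that the correction terms needed to make the two planes perpendicular lie in $\pi \cH^k \obot M$). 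Once $\varphi(L) \subset M$ is achieved, the heart of the argument is to show that the orthogonal complement of this primitive non-unimodular rank-two sublattice inside the unimodular $M$ decomposes as $(-L) \obot M^{(0)}$ with $M^{(0)}$ unimodular of rank $m-4$ and $\det M^{(0)} = \det M$. I would prove this by first extending $\varphi(L)$ to a primitive rank-four unimodular sublattice $N \subset M$, splitting off $M = N \obot M^{(0)}$ via the classification of Jordan blocks, and then computing $\varphi(L)^\perp_N \cong -L$ by the same $\cH^2$-type analysis as in the case $d = 2$, exploiting that a unimodular rank-four Hermitian lattice containing a non-unimodular primitive rank-two sublattice must be isometric to $\cH^2$ up to an overall scaling.
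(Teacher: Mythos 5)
Your overall strategy --- apply the transitivity lemmas to move $\varphi(L)$ into a rank-four sublattice of the form $\cH^{d}\obot I_{4-2d}$ and then compute the orthogonal complement there --- is exactly the approach taken in the paper, which factors the proof through Lemmas \ref{lem:cancel primitive} and \ref{lem: perp of prim lattice}. Your treatment of $d=2$ (successive application of Lemma \ref{lem:primitive in H, vector} to move $\varphi(L)$ into $\cH^2\subset\cH^k$, then compute) and of $d=0$ (transport into $M$, identify a rank-four unimodular $N\supset\varphi(L)$, argue $N\cong\cH_0^2$) both match the paper's argument in substance; in the $d=0$ case the hypothesis that forces $N$ split is the full condition $\mathrm{v}(L)>0$, not merely that $L$ is non-unimodular, and this is where the positivity is used.

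The genuine gap is in your $d=1$ case. You want to replace $e_2$ by $e_2-ce_1$ to arrange $\varphi(e_1)\perp\varphi(e_2)$ and then reassemble two rank-one orthogonal complements into $-L$, but the required $c=h(e_2,e_1)/h(e_1,e_1)$ need not lie in $\Oo_F$ (and cannot be absorbed into a change of $e_1$ either without destroying the condition $\mathrm{Pr}_{\cH^k}\varphi(e_2)\in\pi\cH^k$). Concretely, when $L\approx\cH_a$ with $a>0$ odd the fundamental invariant is $(a,a)$ with $a$ odd, which can never be realized by a diagonal Gram matrix over $\Oo_{F_0}$; in any basis one has $\mathrm{v}_\pi(h(e_1,e_2))=a$ while $\mathrm{v}_\pi(h(e_i,e_i))\ge a+1$, so the orthogonalization fails. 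In this situation the ``rank-one pieces'' $\langle -q(\varphi(e_i))\rangle$ may both be $\langle 0\rangle$, and there is no way to reassemble a copy of $-L\cong -\cH_a$ from them. The paper avoids this entirely: Lemma \ref{lem: perp of prim lattice} in the case $M_k=\cH\obot M$ computes an explicit basis of $\varphi(L)^\perp$ working directly with the non-orthogonal basis $\{w_1,w_2\}$ whose Gram matrix is $\cH_i$, and reads off that the resulting Gram matrix is again $\cH_i$ up to sign. To repair your argument you should drop the orthogonalization and instead, as in the paper, write out the complement of $\varphi(L)$ inside $\cH\obot I_2$ coordinate-by-coordinate, treating the $\cH_a$ and diagonal shapes of $L$ separately.
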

	\begin{proof}
		This proposition follows from Lemmas \ref{lem: perp of prim lattice} and \ref{lem:cancel primitive} below.
	\end{proof}
	
	\begin{lemma} \label{lem: perp of prim lattice} Let the notation be as in Proposition \ref{prop:A perbofHi}.
		If   $\mathrm{rank}(M^{[k]})\le 4$, then
		\[\varphi(L)^\bot\approx -L.\]
		In  particular,  such an $\varphi$ does not exist if $\chi(M^{[k]})=-1$ or $\mathrm{rank}(M^{[k]}) <4$.	
	\end{lemma}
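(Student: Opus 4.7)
The plan is to establish $\varphi(L)^\perp \approx -L$ by an inductive ``splitting-off'' argument using Proposition~\ref{prop: ind for t}, and then derive the non-existence statements as contrapositive consequences.

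First I would rule out $\mathrm{rank}(M_k) \le 3$. Since $m \ge 2$, the only sub-cases here are $k=0$ with $m \in \{2,3\}$, so $M_k = M$ is $\Oo_F$-unimodular; the Hermitian form then reduces modulo $\pi$ to a non-degenerate symmetric $\F_q$-bilinear form on $M/\pi M$ of dimension at most $3$. The hypothesis $\mathrm v(L)>0$ combined with primitivity forces $\overline{\varphi(L)}\subset M/\pi M$ to be a two-dimensional totally isotropic subspace, but the Witt index of a non-degenerate $\F_q$-form of dimension at most $3$ with $q$ odd is at most $1$, a contradiction.

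In the remaining case $\mathrm{rank}(M_k)=4$, I would diagonalize $L\approx\langle t_1\rangle\obot\langle t_2\rangle$ (handling the exceptional sub-case $L\cong\cH_a$ with two isotropic generators by first changing basis to a non-isotropic primitive vector) and pick a primitive representative $\bx_1\in L$ of $t_1$, where $\mathrm v(t_1)\ge 1$. By Lemmas~\ref{lem:primitive in H, vector} and~\ref{lem:vector primitive in S}, after applying an element of $\mathrm U(M_k)$ we may arrange $\varphi(\bx_1)$ to be one of the distinguished primitive vectors $M(i)$ of Proposition~\ref{prop: ind for t} for some $i\in\{0,1\}$. Parts (1)--(3) of that proposition then identify $\varphi(\bx_1)^\perp$ as a rank-$3$ Hermitian $\Oo_F$-lattice of explicit shape $\cH^{k-i}\obot M^{(i)}\obot\langle -t_1\rangle$, with $M^{(i)}$ unimodular of rank $m-2i$. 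A second application of the same splitting argument to $\bx_2\in L\cap\bx_1^\perp$ (again primitive in $\varphi(\bx_1)^\perp$, with $q(\bx_2)=t_2\in\pi\Oo_F$) produces a further direct summand $\langle -t_2\rangle$ and leaves $\varphi(L)^\perp=\varphi(\bx_1)^\perp\cap\varphi(\bx_2)^\perp$ as a rank-$2$ Hermitian $\Oo_F$-lattice with Gram matrix $\mathrm{diag}(-t_1,-t_2)$, i.e.\ isomorphic to $-L$.

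The ``in particular'' assertions follow by contraposition: the conclusion $\varphi(L)^\perp\approx -L$ forces the rank equality $\mathrm{rank}(M_k)=\mathrm{rank}(L)+\mathrm{rank}(\varphi(L)^\perp)=4$ and, by the multiplicativity of $\chi$ on orthogonal direct sums of two rank-$2$ Hermitian lattices, the sign equality $\chi(M_k)=\chi(L)\chi(-L)=\chi(L)^2=+1$. Hence neither $\mathrm{rank}(M_k)<4$ nor $\chi(M_k)=-1$ permits such a $\varphi$ to exist. The hard part will be the sub-case $(k,m)=(1,2)$ of the splitting step, where $\varphi(\bx_1)$ need not a priori lie entirely in either the $\cH$-summand or the $M$-summand of $M_k$; one must use Lemma~\ref{lem:vector primitive in S} carefully to reduce to one of the two orbits, and then verify that the subsequent splitting of $\bx_2$ inside the rank-$3$ complement still yields a diagonal Gram matrix rather than a hyperbolic-type block.
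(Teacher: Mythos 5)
Your Witt-index argument to exclude $\mathrm{rank}(M_k)\le 3$ is a clean alternative to the paper's (which derives this only as a byproduct of the explicit rank-$4$ computations): when $k=0$ the Hermitian form reduces mod $\pi$ to a non-degenerate symmetric $\F_q$-form on a space of dimension at most $3$, and the hypotheses $\mathrm{v}(L)>0$ and primitivity would force a $2$-dimensional totally isotropic subspace, contradicting Witt index $\le 1$. Likewise, once $\varphi(L)^\perp\approx -L$ is established, deducing $\chi(M_k)=+1$ (and hence non-existence when $\chi(M_k)=-1$) by comparing discriminants along the finite-index sublattice $\varphi(L)\obot\varphi(L)^\perp\subset M_k$ is sound.

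However, the central splitting argument breaks down exactly in the sub-case you flag as exceptional, $L\cong\cH_a$ with $a$ odd --- which is the only genuinely non-diagonalizable case, since $\cH_a$ with $a$ even is equivalent to a diagonal form. Every primitive $\bx_1\in\cH_a$ with $a$ odd has $\mathrm{v}_\pi\bigl(q(\bx_1)\bigr)\ge a+1$, and a primitive $\bx_2\in L\cap\bx_1^\perp$ spans with $\bx_1$ only a proper sublattice of $L$ of index $q$. Consequently $\mathrm{diag}(-t_1,-t_2)$, with $\mathrm{v}_\pi(t_i)\ge a+1$, has fundamental invariant at least $(a+1,a+1)$ and cannot be isometric to $-L\approx\cH_a$, whose fundamental invariant is $(a,a)$. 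So your claim that $\varphi(L)^\perp$ has Gram matrix $\mathrm{diag}(-t_1,-t_2)$ is not merely unproved but false, even though the set-theoretic identity $\varphi(L)^\perp=\varphi(\bx_1)^\perp\cap\varphi(\bx_2)^\perp$ is fine (orthogonal complements depend only on the $F$-span). The error is in treating the rank-$3$ lattice $\varphi(\bx_1)^\perp$ as though $\Oo_F\varphi(\bx_2)$ split off it orthogonally; it does not, and furthermore the second invocation of Proposition~\ref{prop: ind for t} is not licensed, since $\varphi(\bx_1)^\perp\approx\cH^{k-i}\obot M^{(i)}\obot\langle -t_1\rangle$ is not of the form $\cH^{k'}\obot(\text{unimodular})$ when $\mathrm{v}(t_1)>0$. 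The paper sidesteps all of this by choosing a basis $\{w_1,w_2\}$ of $\varphi(L)$ adapted to the Jordan type of $L$ (Gram matrix $\cH_i$ or diagonal), normalizing $w_1$ by an element of $\rU(M_k)$, and directly solving the two linear conditions $(w,w_1)=(w,w_2)=0$; this naturally produces an off-diagonal hyperbolic-type Gram matrix for $\varphi(L)^\perp$ when $L\approx\cH_i$, exactly what a diagonalize-and-split strategy cannot see.
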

	\begin{proof}
		First, assume $M^{[k]}=\cH^2$ and $L\approx \cH_i$ where $i>0$. Let $\varphi(L)=\spaF\{w_1,w_2\}$ such that the Gram matrix of $\{w_1,w_2\}$ is $\cH_i$. By Lemma \ref{lem:H}, we may assume $w_1=v_1$. Then we may write $w_2=a_1v_1+\pi^{i+1}v_2+a_3v_3+a_4v_4,$ and
		$\mathrm{min}\{\mathrm{v}_{\pi}(a_3),\mathrm{v}_{\pi}(a_4)\}=0$ by assumption. Without loss of generality, we may assume $a_3=1$. Now a direct calculation shows that
		\begin{align*}
			\varphi(L)^{\perp}=\spaF\{v_1+(-\pi)^{i+1}v_4,v_3+\bar{a}_4v_4\}.
		\end{align*}
		Its Gram matrix is
		\[\left(\begin{array}{cc}
			0 & (-\pi)^i \\
			\pi^i & (a_4-\bar{a}_4)\pi^{-1}
		\end{array}\right)=\left(\begin{array}{cc}
			0 & (-\pi)^i \\
			\pi^i & -a_1 (-\pi)^i-\bar{a}_1 \pi^i
		\end{array}\right)\approx \left(\begin{array}{cc}
			0 & -(-\pi)^i \\
			-\pi^i & 0
		\end{array}\right),\]
		hence
		\begin{align*}
			\varphi(L)^{\perp}\approx -L.
		\end{align*}
		
		Now we treat the case $M^{[k]}=\cH^2$ and $L\approx \diag(u_1(-\pi_0^a),u_2(-\pi_0)^b)$ where $0<a\le b$. Again,  let $\varphi(L)=\spaF\{w_1,w_2\}$ such that the Gram matrix of $\{w_1,w_2\}$ is $\diag(u_1(-\pi_0^a),u_2(-\pi_0)^b)$, and we can assume $w_1=v_1-\frac{q(w_1)\pi }{2}v_2$ without loss of generality. Then we may write $w_2=a_1(v_1+\frac{q(w_1)\pi }{2}v_2)+a_3v_3+a_4v_4$, hence $\mathrm{min}\{v_{\pi}(a_3),v_{\pi}(a_4)\}=0$ by assumption again. We may assume $a_3=1$ and a direct calculation shows that
		\begin{align*}
			\varphi(L)^{\perp}=\spaF\{v_1+\frac{q(w_1)\pi}{2}v_2-\bar{a}_1q(w_1)\pi v_4, v_3+\bar{a}_4v_4\}.
		\end{align*}
		Set $v_3'=v_1+\frac{q(w_1)\pi}{2}v_2-\bar{a}_1q(w_1)\pi v_4$ and $v_4'=a_1v_3'+v_3+\bar{a}_4v_4$. Then $\varphi(L)^\perp=\spaF\{v_3',v_4'\}$ and the Gram matrix of $\{v_3',v_4'\}$ is
		\begin{align*}
			\begin{pmatrix}
				-q(w_1)&0\\0&a_1\bar{a}_1q(w_1)-(a_3\bar{a}_4-\bar{a}_3a_4)\pi^{-1}
			\end{pmatrix}=
			\begin{pmatrix}
				-q(w_1)&0\\0&-q(w_2)
			\end{pmatrix}.
		\end{align*}

		Now assume $M^{[k]}=\cH\obot M$, where $M$ is unimodular of rank $2$. We only treat the case $L\approx \cH_i$ in detail, and the argument for $L$ represented by a diagonal matrix is similar. We assume that $M^{[k]}$ has a basis $\{v_1,\ldots,v_4\}$ with Gram matrix $\cH\obot \mathrm{Diag}(1,\nu)$ where $\nu$ is a unit.
		Let $\varphi(L)=\spaF\{w_1,w_2\}$ where the Gram matrix of $\{w_1,w_2\}$ is $\cH_i$. Then one can check that at least one of $w_1$ and $w_2$ is primitive in $\cH$.
		By Lemma \ref{lem:H}, we can assume that
		\[w_1\coloneqq \varphi(m_1)=v_1, w_2\coloneqq \varphi(m_2)=a_1 v_1 +\pi^{i+1}v_2 +a_3v_3 +a_4 v_4\]
		and
		\begin{equation}\label{eq:w_2norm=0again}
			(w_2,w_2)=a_1\pi^i-\bar{a}_1\pi^i+a_3 \bar{a}_3+a_4 \bar{a}_4 \nu=0.
		\end{equation}
		By our assumption we know that $\mathrm{min}\{\mathrm{v}_\pi(a_3),\mathrm{v}_\pi(a_4)\}=0$. Since we assume $i\geq 1$, \eqref{eq:w_2norm=0again} implies that both $a_3$ and $a_4$ are in $\Oo_{F_0}^\times$. This, in turn, implies that $-\nu\in \mathrm{Nm}_{F/F_0^\times}(\Oo_F^\times)=\Oo_{F_0}^2$. Hence $M^{[k]}\approx\cH\obot\cH_0$ and we can instead assume that $\{v_1,v_2,v_3,v_4\}$ has Gram matrix $ \cH\obot \cH_0$. We can further assume that
		\[w_1=v_1,w_2=a_1 v_1+ \pi^{i+1} v_2+v_3 +a_4 v_4\]
		with
		\[(w_2,w_2)=a_1\pi^i-\bar{a}_1\pi^i+a_4+\bar{a}_4=0.\]
		By direct calculation, it is easy to see that
		\[\varphi(L)^\bot=\mathrm{Span}_{\Oo_F}\{v_1-(-\pi)^i v_4,v_3-\bar{a}_4 v_4\}.\]
		Its Gram matrix is
		\[\left(\begin{array}{cc}
			0 & -(-\pi)^i \\
			-\pi^i & -a_4-\bar{a}_4
		\end{array}\right)=\left(\begin{array}{cc}
			0 & -(-\pi)^i \\
			-\pi^i & a_1\pi^i-\bar{a}_1\pi^i
		\end{array}\right)\approx \left(\begin{array}{cc}
			0 & -(-\pi)^i \\
			-\pi^i & 0
		\end{array}\right).\]

		Finally, assume $M^{[k]}$ is unimodular of rank $4$. We treat the case $L\approx \cH_i$ in detail, and the other cases follow from a similar argument. Let $\varphi(L)=\spaF\{w_1,w_2\}$ such that the Gram matrix of $\{w_1,w_2\}$ is $\cH_i$. Apparently $M^{[k]}$ contains a $\cH_0$. We can assume that $M^{[k]}$ has a basis $\{v_1,v_2,v_3,v_4\}$ with Gram matrix $ \cH_0\obot \mathrm{diag}\{1,\epsilon\}$ where $\epsilon\in \Oo_{F_0}^\times$. By Lemma \ref{lem:unimodularL} we can assume that $w_1=v_1$. Then we have
		\[w_2=a_1 v_1+\pi^{i} v_2+\sum_{j=3}^4 a_j v_j,\]
		and
		\begin{equation}\label{eq:w_2norm=0}
			(w_2,w_2)=a_1 (-\pi)^i+\bar{a}_1 \pi^i+a_3\bar{a}_3+a_4\bar{a}_4\epsilon=0.
		\end{equation}
		By our assumption we know that $\mathrm{min}\{\mathrm{v}_\pi(a_3),\mathrm{v}_\pi(a_4)\}=0$. Since we assume $i\geq 1$, \eqref{eq:w_2norm=0} implies that both $a_3$ and $a_4$ are in $\Oo_{F_0}^\times$. This in turn implies that $-\epsilon\in \mathrm{Nm}_{F/F_0^\times}(\Oo_F^\times)=\Oo_{F_0}^2$. Hence $M^{[k]}=\cH_0^2$ and we can instead assume that $\{v_1,v_2,v_3,v_4\}$ has Gram matrix $ \cH_0\obot \cH_0$. We can further assume that
		\[w_1=v_1,w_2=a_1 v_1+ \pi^i v_2+v_3 +a_4 v_4\]
		with
		\[(w_2,w_2)=a_1 (-\pi)^i+\bar{a}_1 \pi^i+a_4+\bar{a}_4=0.\]
		By a direct calculation, it is easy to see that
		\[\varphi(L)^\bot=\mathrm{Span}_{\Oo_F}\{v_1-(-\pi)^i v_4,v_3-\bar{a}_4 v_4\}.\]
		Its Gram matrix is
		\[\left(\begin{array}{cc}
			0 & -(-\pi)^i \\
			-\pi^i & -a_4-\bar{a}_4
		\end{array}\right)=\left(\begin{array}{cc}
			0 & -(-\pi)^i \\
			-\pi^i & a_1 (-\pi)^i+\bar{a}_1 \pi^i
		\end{array}\right)\approx \left(\begin{array}{cc}
			0 & -(-\pi)^i \\
			-\pi^i & 0
		\end{array}\right).\]
		
		Notice that, as a byproduct of the above argument, we actually also proved that if $\mathrm{rank}(M^{[k]})<4$ or $M$ is not split, then no such $\varphi$ exists.
		The lemma is proved.
		
	\end{proof}

	\begin{lemma}\label{lem:cancel primitive}
		Assume $\mathrm{v}(L)\ge 0$. Let $\varphi:L\rightarrow M^{[k]}$ be a primitive isometric embedding.
		Let $d(\varphi)$ be the dimension of $\mathrm{Pr}_{\cH^k}(\varphi(L))\otimes_{\Oo_F}\mathbb{F}_{\q}$
		in $\cH^k/\pi \cH^k$.
		Then there exist a $g\in \mathrm{U}(M^{[k]})$ such that
		\begin{align*}
			g(\varphi(L))\subset \cH^{d(\varphi)}\obot I_{4-2d(\varphi)}\subset M^{[k]},
		\end{align*}
		where $I_{4-2d(\varphi)}$ is a unimodular sublattice of $M^{[k]}$ with rank $4-2d(\varphi)$.
	\end{lemma}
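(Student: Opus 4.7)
The plan is to argue by case analysis on $d(\varphi)\in\{0,1,2\}$, in each case choosing a basis $\{w_1,w_2\}$ of $\varphi(L)$ adapted to the projection $\Pr_{\cH^k}$ and iteratively applying Lemmas \ref{lem:primitive in H, vector} and \ref{lem:vector primitive in S} to transport the basis into standard position. When $d(\varphi)\ge 1$, we choose $w_1$ so that $\Pr_{\cH^k}(w_1)$ is primitive, and when $d(\varphi)=2$, we additionally choose $w_2$ so that $\{\Pr_{\cH^k}(w_1),\Pr_{\cH^k}(w_2)\}$ are linearly independent in $\cH^k/\pi\cH^k$.

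For $d(\varphi)=2$, Lemma \ref{lem:primitive in H, vector} lets us place $w_1$ in the first hyperbolic plane $\cH_{(1)}$ of $\cH^k$. After subtracting an $\Oo_F$-multiple of $w_1$ from $w_2$ (which preserves $\varphi(L)$), we may assume $\Pr_{\cH_{(1)}}(w_2)=0$; the $d(\varphi)=2$ hypothesis then guarantees that the residual $w_2$ still has a projection to the remaining $\cH^{k-1}\obot M$ whose $\cH^{k-1}$-part is primitive. A second application of Lemma \ref{lem:primitive in H, vector} within the orthogonal complement of $\cH_{(1)}$ places $w_2$ in the second hyperbolic plane $\cH_{(2)}$, yielding $g(\varphi(L))\subset\cH_{(1)}\obot\cH_{(2)}=\cH^2$. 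For $d(\varphi)=0$, the primitivity of $\varphi$ forces both $\Pr_M(w_i)$ to be primitive in $M$ and linearly independent modulo $\pi$; iterating the technique of Lemma \ref{lem:vector primitive in S} then places both $w_1$ and $w_2$ inside $M$, and a Gram--Schmidt argument produces a unimodular rank-four sublattice $I_4\subset M$ containing $g(\varphi(L))$ (using $\mathrm{rank}(M)\ge 4$, which is automatic in this case by Proposition \ref{prop:A perbofHi}).

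The case $d(\varphi)=1$ is the most intricate. After placing $w_1$ in $\cH_{(1)}$ and eliminating the parallel component of $w_2$, the residual $w_2$ has $\cH^k$-projection entirely in $\pi\cH^k$ but a primitive $M$-component. We must move $w_2$ into $\cH_{(1)}\obot M$ using a unitary transformation that fixes $w_1$, hence preserves $\cH_{(1)}$. The transformation constructed in the proof of Lemma \ref{lem:vector primitive in S} is built explicitly from the coordinates of the vector being moved; after verifying that it can be chosen to act trivially on $\cH_{(1)}$ (by involving only the hyperbolic summands $\cH_{(j)}$ for $j\ge 2$), we obtain the desired reduction, after which Gram--Schmidt in $M$ yields a unimodular rank-two sublattice $I_2\subset M$ containing $\Pr_M(w_2)$, giving $g(\varphi(L))\subset\cH_{(1)}\obot I_2$.

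The main obstacle will be the coordinated control required in Case $d(\varphi)=1$: the second-stage transformation must be performed inside the stabilizer of $w_1\in\cH_{(1)}$, which rules out a naive iteration of Lemma \ref{lem:vector primitive in S}. The key technical point is that this stabilizer still contains enough transformations of the required form---specifically, those built from the $(\cH^{k-1}\obot M)$-coordinates of $w_2$ rather than from its $\cH_{(1)}$-coordinates---so that the move can be carried out without disturbing $w_1$.
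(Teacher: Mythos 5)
Your proposal follows the same strategy as the paper's proof: a case analysis on $d(\varphi)$, normalizing $w_1$ by Lemma \ref{lem:primitive in H, vector}, then handling the residual component of $w_2$ by applying Lemma \ref{lem:primitive in H, vector} or \ref{lem:vector primitive in S} to the orthogonal complement $\cH^{k-1}\obot M$, which automatically lies in the stabilizer of $w_1$. Your ``main obstacle'' in the $d(\varphi)=1$ case is indeed how the paper proceeds — one does not re-run Lemma \ref{lem:vector primitive in S} on all of $M_k$, but on the sublattice $\cH^{k-1}\obot M$, and this is exactly the restriction to the stabilizer of $w_1=v_1$ you describe.

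One intermediate claim in your $d(\varphi)=2$ step is inaccurate: after putting $w_1=v_1$, subtracting an $\Oo_F$-multiple of $w_1$ from $w_2$ does \emph{not} give $\Pr_{\cH_{(1)}}(w_2)=0$. With Gram matrix $\cH_i$ one has $(w_1,w_2)=\pi^i$, which forces a $v_2$-coefficient $\pi^{i+1}$ in $w_2$ that no multiple of $w_1=v_1$ can cancel. Consequently the second application of Lemma \ref{lem:primitive in H, vector} places only the $\cH^{k-1}\obot M$-component of $w_2$ inside $\cH_{(2)}$, so $g(w_2)$ lands in $\cH_{(1)}\obot\cH_{(2)}$ rather than in $\cH_{(2)}$ alone. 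Your final conclusion $g(\varphi(L))\subset\cH^2$ is unaffected, but the intermediate ``$\Pr_{\cH_{(1)}}(w_2)=0$'' and ``$w_2\in\cH_{(2)}$'' should be corrected. Also, invoking Proposition \ref{prop:A perbofHi} (for which this lemma is an ingredient) to get $\mathrm{rank}(M)\ge 4$ in the $d(\varphi)=0$ case is mildly circular; it is cleaner to note, as the paper does implicitly, that $\mathrm{rank}(M)\ge 4$ is a standing assumption in this case (and indeed forced by the existence of a primitive $\varphi$ with $d(\varphi)=0$, as established in Lemma \ref{lem: perp of prim lattice}). Your explicit Gram--Schmidt step to produce the unimodular complement $I_{4-2d(\varphi)}$, which the paper leaves implicit, is a welcome addition.
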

	\begin{proof}
		We prove the case for $L\approx \cH_i$ in detail, and the other cases are similar.
		Let $\{v_1,\ldots,v_{2k+m}\}$ be a basis of $M^{[k]}$ whose Gram matrix is $\cH^k\obot\mathrm{diag}\{1,\ldots,1,\nu\}$ where $\nu$ is a unit. Set $\varphi(L)=\spaF\{w_1,w_2\}$.
		
		\noindent
		Assume $d(\varphi)=2$. If $i=-1$, then there is nothing to prove. Therefore, we may assume $i>-1$. By Lemma \ref{lem:primitive in H, vector}, without loss of generality, we can assume that $ w_1=v_1$. Then
		\[w_2=a_1v_1+\pi^{i+1} v_2+\sum_{j=3}^{2k+m} a_j v_j.\]
		By the assumption that $d(\varphi)=2$, we know that
		\[\mathrm{min}\{\mathrm{v}_\pi(a_j)\mid 3\leq j \leq 2k\}=0.\]
		Hence applying Lemma \ref{lem:primitive in H, vector} to $\cH^{k-1}\obot M$, we can find a $g\in \mathrm{U}(M^{[k]})$ such that
		\[g w_1=v_1, \quad g w_2\in \cH^2\]
		where $\cH^2$ refers to the first direct summand in the decomposition $\cH^k\obot M=\cH^2\obot\cH^{k-2}\obot M$.

		When $d(\varphi)=1$, without loss of generality, we can assume $\Pr_{\cH^k}(w_1)$ is primitive. By Lemma \ref{lem:primitive in H, vector}, we can assume that $ w_1=v_1$. Then
		\[w_2=a_1v_1+\pi^{i+1} v_2+\sum_{j=3}^{2k+m} a_j v_j.\]
		By the assumption that $d(\varphi)=1$, we know that
		\[\mathrm{min}\{\mathrm{v}_\pi(a_j)\mid 3\leq j \leq 2k\}\geq 1.\]
		Since we assume $\varphi$ is primitive, we know that
		\[\mathrm{min}\{\mathrm{v}_\pi(a_j)\mid 2k+1\leq j \leq 2k+r\}=0.\]
		Then we are done by applying Lemma \ref{lem:vector primitive in S} to $\cH^{k-1} \obot M$.
		
		When $d(\varphi)=0$, without loss of generality, we may assume $w_1=v_{2k+1}+v_{2k+2}$ by Lemma \ref{lem:vector primitive in S}. Here, we pick $v_{2k+i}$ so that the corresponding Gram matrix is $\mathrm{Diag}(1,-1,1,\dots,-\nu)$ (this is possible since we assume $m\ge 4$). Since $\varphi$ is primitive with $d(\varphi)=0$, then
		\begin{align*}
			w_2=\sum_{i=1}^{2k}\pi a_iv_i+\sum_{i=2k+1}^{2k+m}a_iv_i,
		\end{align*}
		and
		\[\mathrm{min}\{\mathrm{v}_\pi(a_j)\mid 2k+3\leq j \leq 2k+r\}=0.\]
		We are done by applying Lemma \ref{lem:vector primitive in S} to $\cH^k\obot \spaF\{v_{2k+3},\dots,v_{2k+m}\}$.
	\end{proof}

	\subsection{Calculation of primitive local density}\label{subsec: primitive ld}
	In this subsection, we compute primitive local density polynomials and prove the formulas in  Propositions \ref{prop: ind for t} and \ref{prop: ind rank 2}. Assume $L$ is represented by a non-singular Hermitian matrix $T$ of rank $n\le 2$.
	We let $\bar{v}$  denote the image of $v$ in $M^{[k]}\otimes_{\Oo_F}\mathbb{F}_\q$. Let
	\begin{align*}
		& (M^{[k]})^n(i)\coloneqq  \{(v_j)\in M^{n,(n)}_{k}\mid \mathrm{Span}_{\mathbb{F}_\q}\{\mathrm{Pr}_{\cH^k}(\bar{v}_j),\ 1\le j\le n \} \text{ has rank $i$} \}
	\end{align*}
	where $M^{n,(n)}$ is as in \eqref{eq: def of M^{n,l}}, and
	\begin{align}\label{eq: beta integral version}
		&\beta_{i}(M,L,X)
		\coloneqq\int_{\Herm_{n}(F)} dY \int_{(M^{[k]})^n(i)}  \psi (\langle Y, T(\bx )-T \rangle)d\bx.
	\end{align}
	Notice that
	\begin{equation}
		\sum_{i=0}^{n}\beta_i(M, L,X)=\beta(M, L,X)^{(n)}
	\end{equation}
	is the primitive local density defined earlier, and we will shorten it as $\beta(M, L,X)$. Notice that if $L$ is of the form $\cH^{j}$, then $\beta(M, L,X)=\beta_{n}(M, L,X)$.

 First, by a variant of \cite{CY}, Chao Li and Yifeng Liu obtained the following formula of 	$\beta(\cH^k,L)$.
	\begin{lemma}\cite[Lemma $2.16$]{LL2}\label{lem:beta(H,L)}
		Let $b_1\le \cdots \le b_n$ be the unique integers such that $L^{\vee}/L\approx \Oo_F/(\pi^{b_1})\oplus \cdots \oplus \Oo_F/(\pi^{b_n})$.	Let $t_o(L)$ be the number of non-zero entries in $(b_1,\cdots,b_n)$.  Then
		\begin{align*}
			\beta(\cH^k,L)= \prod_{k-\frac{n+t_o(L)}{2}<i\le k} (1-\q^{-2i}).
		\end{align*}
	\end{lemma}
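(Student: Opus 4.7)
The plan is to induct on the rank $n$ of $L$, peeling off one Jordan summand at a time and leveraging the transitivity statements in Proposition~\ref{prop: ind for t}. The base case $n=0$ is trivial: $\beta(\cH^k,0)=1$ matches the empty product. For the inductive step, decompose $L = \Oo_F v \obot L'$ where $v$ spans a cyclic Jordan summand with $q(v)=t$. Starting from the integral presentation \eqref{eq: beta integral version} and integrating out the coordinate corresponding to $v$ first, one obtains a factorization of the shape
\begin{equation*}
\beta(\cH^k,L) \;=\; \beta_{1}(\cH^k,\langle t\rangle)\cdot \beta\bigl(\cH^{k-1}\obot\langle -t\rangle,\, L'\bigr),
\end{equation*}
where $\beta_1(\cH^k,\langle t\rangle)=1-\q^{-2k+\cdots}$ comes from Proposition~\ref{prop: ind for t}(5a), and the identification of the orthogonal complement uses Proposition~\ref{prop: ind for t}(1). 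The transitivity of $\mathrm{U}(\cH^k)$ on primitive vectors of fixed norm (Lemma~\ref{lem:primitive in H, vector}) ensures the factorization does not depend on the choice of $w=\varphi(v)$ lifting $v$.

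The difficulty is that $\cH^{k-1}\obot\langle -t\rangle$ is \emph{not} of the form $\cH^{k'}$, so the inductive hypothesis cannot be applied directly. To circumvent this, I would choose $v$ according to the Jordan structure of $L$. If the unimodular part $L_0$ of the Jordan decomposition is nonzero, take $v\in L_0$ with $t\in\Oo_{F_0}^{\times}$; then $t_o(L')=t_o(L)$, and one absorbs $\langle -t\rangle$ into the hyperbolic part only after a further reduction, producing exactly a single factor $(1-\q^{-2k})$. If instead $L_0=0$, take $v$ in the lowest Jordan block, so $\val(t)\ge 1$; Witt cancellation then upgrades $\cH^{k-1}\obot\langle -t\rangle$ effectively to $\cH^{k-1}$ modulo a rank-one correction of one strictly smaller Jordan block, reducing both $n$ and $t_o(L)$ by one and producing the appropriate $(1-\q^{-2i})$ factor.

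The main obstacle is the bookkeeping that keeps the recursion consistent as the complement drifts away from being purely hyperbolic. One clean way to force consistency is to \emph{compare with the full density}: by Lemma~\ref{lem: ind formula to prim ld}, $\alpha(\cH^k,L,X)$ is a finite sum over lattices $L\subset L'\subset L_F$ of $(\q^{n-2k}X)^{\ell(L'/L)}\beta(\cH^k,L',X)$, while Corollary~\ref{cor: L=H^i obot L_2} and Lemma~\ref{lem: cancel beta_2} compute $\alpha(\cH^k,\cH^j,X)$ and $\beta(\cH^k,\cH^j)$ explicitly. M\"obius-inverting the sum and matching coefficients in the two expressions then pins down $\beta(\cH^k,L)$ on all Jordan types simultaneously, and one verifies the closed form $\prod_{k-(n+t_o(L))/2<i\le k}(1-\q^{-2i})$ by induction on $t_o(L)$. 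This is the route taken (in a slightly different formulation) by the variant of \cite{CY} used in \cite[Lemma 2.16]{LL2}, and checking that the inverted sum telescopes correctly is the technical heart of the argument.
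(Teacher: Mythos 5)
Note first that the paper does not itself prove this lemma; it is quoted verbatim from \cite[Lemma 2.16]{LL2}, itself a Hermitian variant of the Cho--Yamauchi argument \cite{CY}, so there is no in-paper proof to compare yours against. Your sketch, as written, has genuine gaps. In the inductive route, the second factor is $\beta(\cH^{k-1}\obot\langle -t\rangle, L')$, whose ambient lattice is no longer pure hyperbolic, so the statement to be proved does not apply to it and the recursion is not self-contained. Your proposed fix also contains an error: if $t\in\Oo_{F_0}^{\times}$, then $\langle t\rangle^{\vee}/\langle t\rangle\cong\Oo_F/\pi$, so removing a unimodular rank-one summand drops $t_o$ by one, i.e.\ $t_o(L')=t_o(L)-1$, not $t_o(L')=t_o(L)$ as you write. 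What you would actually need is a primitive-density analogue of Lemma~\ref{lem:cancel} allowing you to discard the stray rank-one summand $\langle -t\rangle$ from the ambient lattice; Lemma~\ref{lem:cancel} itself does not apply (its hypothesis $\mathrm{v}(M')>i(T)$ fails for unimodular $\langle -t\rangle$), and supplying a primitive replacement is precisely the missing content.

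The M\"obius-inversion fallback does not close the gap either. Inverting Lemma~\ref{lem: ind formula to prim ld} (equivalently, rearranging Theorem~\ref{thm:ind formula reducing valuation}) writes $\beta(\cH^k,L,X)^{(n)}$ as a finite alternating sum of $\alpha(\cH^k,L',X)$ over overlattices $L'\supseteq L$; but the only closed form available to you is $\alpha(\cH^k,\cH^j,X)$, and the overlattices $L'$ appearing in the inversion need not contain a full hyperbolic plane. Matching coefficients against $\alpha(\cH^k,\cH^j,X)$ via Corollary~\ref{cor: L=H^i obot L_2} and Lemma~\ref{lem: cancel beta_2} therefore leaves $\beta(\cH^k,L')$ undetermined at most Jordan types, so the inversion is effectively circular. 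The actual argument of \cite{CY} and \cite{LL2} is neither a Jordan-block recursion nor a M\"obius inversion: one reduces $\beta(\cH^k,L)$ to a count of primitive isometric maps from $\overline{L}=L/\pi L$ into the mod-$\pi$ quadratic space $\cH^k/\pi\cH^k$ over $\F_q$ (in the spirit of Lemma~\ref{lem: cancel beta_0}), evaluates that finite count directly, and observes that it depends only on $n$ and $t_o(L)$.
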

	\begin{lemma}\label{lem: cancel beta_2} Assume $L$ is of rank $n$, then
		\begin{align*}
			\beta_{n}(M,L,q^{-2k})=\beta(\cH^k,L).
		\end{align*}
  In particular, if $L$ is of the form $\cH^j$, then
  \begin{align*}
\beta(M,L,q^{-2k})=	\beta_{n}(M,L,q^{-2k})=\beta(\cH^k,L).
		\end{align*}
	\end{lemma}
	\begin{proof}
 Recall that $T(\bx)=(\bx, \bx)$ is the moment matrix of $\bx \in (M^{[k]})^n$. For a  $\bx_2 \in M^n$, let  $T'(\bx_2)=T-T(\bx_2)$.
		Then
		\begin{align*}
			\beta_n( M,L,q^{-2k})
			&=\int_{\Herm_{n}(F)} dY \int_{M^n}   \int_{(\cH^k)^{n,(n)}}\psi (\langle Y,T({}^t(\bx_1,\bx_2))-T \rangle)d\bx_{1} d\bx_{2} \\
			&= \int_{\Herm_{n}(F)} dY \int_{M^n}   \int_{(\cH^k)^{n,(n)}} \psi (\langle Y, T(\bx_1)+T(\bx_2)-T\rangle)d\bx_1 d\bx_2\\
   	&= \int_{\Herm_{n}(F)} dY \int_{M^n}   \int_{(\cH^k)^{n,(n)}} \psi (\langle Y, T(\bx_1)-T'(\bx_2)\rangle)d\bx_1 d\bx_2
		\end{align*}
Notice that if $L$ and $L'$ are two Hermitian $\Oo_F$-lattices with moment matrix $T$ and $T'$ such that $T-T'\in \Herm_n(\Oo_{F_0})$, then $t_o(L)=t_o(L')$. Hence, for any $\bx_2 \in M^n$, we have  by Lemma \ref{lem:beta(H,L)}
\begin{align*}
   \beta(\cH^k,T'(\bx_2))&=
    \int_{\Herm_{n}(F)} dY     \int_{(\cH^k)^{n,(n)}} \psi (\langle Y, T(\bx_1)-T'(\bx_2)\rangle)d\bx_1  \\ &=\int_{\Herm_{n}(F)} dY     \int_{(\cH^k)^{n,(n)}} \psi (\langle Y, T(\bx_1)-T\rangle)d\bx_1\\
    &=\beta(\cH^k,T).
\end{align*}
Therefore
		\begin{align*}
		\beta_n(M,L,q^{-2k})
  &=\mathrm{vol}(M^n,d\bx_2) \cdot \int_{\Herm_{n}(F)} dY    \int_{(\cH^k)^{n,(n)}} \psi (\langle Y, T(\bx_1)-T\rangle)d\bx_1 \\
			&=  \int_{\Herm_{n}(F)} dY    \int_{(\cH^k)^{n,(n)}} \psi (\langle Y, T(\bx_1)-T\rangle)d\bx_1 \\
			&= \beta(\cH^k,L).
		\end{align*}
		
	\end{proof}
	
	Combining the above two lemmas, we have the following.
	\begin{corollary}\label{cor: beta2} \hfill
 \begin{enumerate}
     \item
 If $L$ is of rank $1$, then we have
		\begin{align*}
			\beta_{1}(M, L,X)=1-X.
		\end{align*}
\item		If $L$ is of rank $2$, then we have
		\begin{align*}
			\beta_{2}(M,L,X)=\begin{cases}
				(1-X)& \text{ if $L=\cH$},\\
				(1-X)(1-\q^2X)& \text{ otherwise}.
			\end{cases}
		\end{align*}
		\end{enumerate}
	\end{corollary}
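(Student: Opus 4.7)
The plan is to derive both formulas directly by combining Lemma \ref{lem: cancel beta_2} (the cancellation identity reducing $\beta_n$ to $\beta(\cH^k, L)$) with Lemma \ref{lem:beta(H,L)} (the explicit formula for $\beta(\cH^k, L)$ as a polynomial in $X = q^{-2k}$). The indices $i$ appearing in Corollary \ref{cor: beta2} correspond, in the notation of \eqref{eq: beta integral version}, to the $n$-th summand (the top orbit where the projection onto $\cH^k$ is primitive of full rank), so one is really computing $\beta_n(M,L,X)$ in each case.

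For the rank one case I would simply apply Lemma \ref{lem: cancel beta_2} with $n=1$; since $k \ge 1$ (taking $k$ large and interpreting $\beta_1$ as a polynomial in $X$), this yields $\beta_1(M,L,X) = \beta(\cH^k, L)$. Whether $L = \langle t\rangle$ is unimodular (so $t_o(L)=0$) or not (so $t_o(L)=1$), the range $k - \tfrac{1 + t_o(L)}{2} < i \le k$ in Lemma \ref{lem:beta(H,L)} contains only the integer $i = k$, giving $\beta(\cH^k, L) = 1 - q^{-2k} = 1 - X$ in either case.

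For the rank two case, again Lemma \ref{lem: cancel beta_2} gives $\beta_2(M,L,X) = \beta(\cH^k, L)$: when $L = \cH_{-1} = \cH$ this is the exceptional clause of Lemma \ref{lem: cancel beta_2}; in all other cases it is the $k \ge n = 2$ clause. To finish I would compute $t_o(L)$ from the Jordan decomposition. A direct check on a basis shows that for $L = \cH_{-1}$ one has $L^\sharp = \pi L$, hence $L^\vee = \partial_F^{-1} L^\sharp = \pi^{-1}\cdot \pi L = L$, so $L^\vee / L = 0$ and $t_o(\cH_{-1}) = 0$; Lemma \ref{lem:beta(H,L)} then yields the single factor $1 - q^{-2k} = 1 - X$. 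For any other rank two integral Hermitian lattice $L$, the Jordan decomposition forces at least one summand of $L^\vee/L$ to be nontrivial, so $t_o(L) \in \{1,2\}$, and in both subcases the range $k - \tfrac{2 + t_o(L)}{2} < i \le k$ contains exactly $\{k-1, k\}$, yielding $\beta(\cH^k, L) = (1 - q^{-2(k-1)})(1 - q^{-2k}) = (1 - q^2 X)(1 - X)$.

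The main (very mild) obstacle is the classification of $t_o(L)$ in the rank two case and, in particular, verifying that $\cH_{-1}$ is the only rank two Hermitian lattice in $\bV$ (up to equivalence) with $t_o(L) = 0$. This follows cleanly from the Jordan block list in \cite{J}: a vanishing $t_o$ forces every Jordan component to be $\pi^{-1}$-modular, and in rank two the only such possibility is $\cH_{-1}$ itself. Aside from this structural point, everything reduces to bookkeeping on the range of $i$ in Lemma \ref{lem:beta(H,L)}, so no further calculation is required.
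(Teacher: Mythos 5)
Your proof is correct and takes essentially the same route as the paper: the paper's own proof consists solely of the remark ``Combining the above two lemmas, we have the following,'' i.e.\ Lemma \ref{lem: cancel beta_2} to replace $\beta_n(M,L,X)$ by $\beta(\cH^k,L)$ (reading the result as a polynomial in $X=\q^{-2k}$), followed by Lemma \ref{lem:beta(H,L)} to evaluate $\beta(\cH^k,L)$ via $t_o(L)$; you carried out precisely this bookkeeping. One small clean-up: in rank two, the Jordan decomposition actually forces $t_o(L)\in\{0,2\}$ (never $1$) --- a diagonal $L$ has $L^\vee/L\cong \Oo_F/\pi^{2a_1+1}\oplus\Oo_F/\pi^{2a_2+1}$ and a hyperbolic $L=\cH_j$ has $L^\vee/L\cong(\Oo_F/\pi^{j+1})^2$ --- so $t_o=0$ iff $L=\cH_{-1}$ and otherwise $t_o=2$. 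Your statement ``$t_o(L)\in\{1,2\}$'' is harmless since both values yield the same set $\{k-1,k\}$ in the product of Lemma \ref{lem:beta(H,L)}, and your final answer agrees with the corollary.
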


	\begin{lemma}\label{lem: cancel beta_0} For an  $\Oo_F$-Hermitian lattice, let $\bar L= L/\pi L$ be its reduction modulo $\pi$ with resulting quadratic form.
		Let $r(\overline{M},\overline{L})$ to be the number of isometries from $\overline{L}$ to $\overline{M}$. Then
		\begin{align*}
			\beta_{0}(M,L,X)=X^n\beta(M,L) =\q^{-mn+n^2}r(\overline{M},\overline{L}) X^n.
		\end{align*}
	\end{lemma}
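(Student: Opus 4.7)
The proof splits into two parts matching the two equalities in the statement. I would address them in reverse order: first the explicit formula for $\beta(M,L)$, then the factorization.

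\emph{Step 1: $\beta(M,L) = q^{-mn+n^2} r(\overline M, \overline L)$.}
Unwinding the defining limit $\beta(M,L) = \lim_{d\to\infty} q^{-dn(2m-n)} |I^{\mathrm{prim}}(M,L,d)|$, I would partition $I^{\mathrm{prim}}(M,L,d)$ by the reduction $\bar\phi \colon \overline L \hookrightarrow \overline M$ mod $\pi$. Since $M$ is unimodular, $\overline M$ is a nondegenerate Hermitian space over $\mathbb{F}_q$, so there are exactly $r(\overline M,\overline L)$ primitive isometric reductions $\bar\phi$. For each such $\bar\phi$, nondegeneracy of $\overline M$ guarantees that the moment map $\phi \mapsto T(\phi)$ is smooth at the primitive point $\bar\phi$, so the lifts to $\phi \bmod \pi_0^d$ satisfying the isometry congruence modulo $\pi_0^d\Herm_n(\Oo_{F_0})^\vee$ form a smooth fiber of $\Oo_{F_0}$-dimension $n(2m-n)$. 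Counting these lifts and carefully comparing the natural Haar measures on $M^n$ and on $\Herm_n(\Oo_F)/\pi_0^d\Herm_n(\Oo_{F_0})^\vee$ produces $|I^{\mathrm{prim}}(M,L,d)| = r(\overline M,\overline L)\cdot q^{dn(2m-n)-mn+n^2}$ for $d$ large, which gives the claimed formula after dividing by $q^{dn(2m-n)}$.

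\emph{Step 2: $\beta_0(M,L,X) = X^n\beta(M,L)$.}
I would use the orthogonal splitting $M_k = \cH^k \obot M$. The rank-$0$ defining condition on $M_k^n(0)$ forces the $\cH^k$-projections $u_j$ of each $v_j$ into $\pi\cH^k$, while primitivity of $\bar v_j$ in $M_k^{n,(n)}$ forces the $M$-projections $m_j$ to satisfy $\bar m_j$ linearly independent in $\overline M$. This identifies $M_k^n(0) \cong (\pi\cH^k)^n \times M^{n,\mathrm{prim}}$ as measure spaces, with $\mathrm{vol}((\pi\cH^k)^n) = q^{-2kn} = X^n$ (using $\cH^\sharp = \pi\cH$). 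Orthogonality gives $T(\bx) = T(\bu) + T(\bm)$, and a short calculation with $\pi\bar\pi = -\pi_0$ shows every entry of $T(\bu)$ lies in $\pi\Oo_F$. Interpreting the $Y$-integration as Fourier inversion $\int dY\,\psi(\langle Y,A\rangle) = \delta_0(A)$, the integral becomes
\[
\beta_0(M,L,X) = \int_{(\pi\cH^k)^n} \beta(M, L^{(\bu)})\, d\bu,
\]
where $L^{(\bu)}$ is the rank-$n$ Hermitian lattice with Gram matrix $T - T(\bu)$. Since $T(\bu) \equiv 0 \pmod{\pi\Oo_F}$ entry-wise, the reductions $\overline{L^{(\bu)}} = \overline L$ agree as Hermitian $\mathbb{F}_q$-spaces; invoking Step 1 for both $L$ and $L^{(\bu)}$, $\beta(M,L^{(\bu)}) = q^{-mn+n^2}r(\overline M, \overline L) = \beta(M,L)$, independent of $\bu$. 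Integrating over $\bu$ yields $\beta_0(M,L,X) = X^n\beta(M,L)$.

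\emph{Main obstacle.} The main technical point is the bookkeeping in Step 1: correctly identifying the normalization factor $q^{-mn+n^2}$ via a comparison of the self-dual Haar measures on $M^n$ and on $\Herm_n(\Oo_F)/\pi_0^d\Herm_n(\Oo_{F_0})^\vee$, where the half-integral nature of the off-diagonal dual lattice (encoded by the different $\partial_F = (\pi)$) makes the exponent count delicate. Once Step 1 is established, Step 2 follows cleanly from orthogonality together with the observation that $T(\bu)$ vanishes entry-wise modulo $\pi$, which is precisely what is needed for the reductions $\overline{L^{(\bu)}}$ and $\overline L$ to coincide.
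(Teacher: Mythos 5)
Your proposal is correct, and it splits into one step that matches the paper and one that takes a genuinely different route. Step~1, the identity $\beta(M,L)=q^{-mn+n^2}r(\overline M,\overline L)$, is the argument the paper outsources to \cite[Theorem 3.12]{CY}: partition $I^{\mathrm{prim}}(M,L,d)$ by the reduction $\bar\phi$, and use that the moment map is submersive at a primitive $\bar\phi$ because $M$ is unimodular (nondegeneracy of $T$ plays no role). A careful count gives $|I^{\mathrm{prim}}(M,L,d)|=r(\overline M,\overline L)\cdot q^{(2d-1)mn-(d-1)n^2}$, so dividing by $q^{dn(2m-n)}$ yields the claimed $q^{-mn+n^2}$ prefactor; your ``delicate bookkeeping'' remark is exactly where this computation lives. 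Step~2 is where you diverge from the paper. The paper proves $\beta_0(M,L,X)=X^n\beta(M,L)$ by rerunning the argument of Lemma~\ref{lem: cancel beta_2} with the roles of $\cH^k$ and $M$ swapped: decompose the domain as $M^{n,(n)}\obot(\pi\cH^k)^n$, then absorb the $(\pi\cH^k)^n$ piece via the $\rU(M_k)$-action (Lemmas~\ref{lem:primitive in H, vector} and \ref{lem:cancel primitive}); this is self-contained and does not use the explicit formula. You instead exchange the order of integration to get $\beta_0(M,L,X)=\int_{(\pi\cH^k)^n}\beta(M,L^{(\bu)})\,d\bu$ and then invoke Step~1 to see $\beta(M,L^{(\bu)})=\beta(M,L)$, since $T(\bu)\equiv 0\ (\mathrm{mod}\ \pi)$ forces $\overline{L^{(\bu)}}=\overline L$. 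This is logically valid --- and you should note explicitly that the Step~1 formula applies to $L^{(\bu)}$ even when $T-T(\bu)$ is singular, which it does, because submersivity only needs $M$ unimodular --- but it makes the first identity depend on the second, whereas in the paper the two identities have independent proofs. The trade-off is that your route avoids the group-action/transitivity machinery (Lemmas~\ref{lem:primitive in H, vector}, \ref{lem:cancel primitive}, and the $\rU(M_k)$-invariance argument) entirely, at the cost of front-loading the explicit point count; the paper's route keeps the factorization elementary and only invokes the harder point count for the final closed formula.
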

	\begin{proof}
		The second identity follows from the same proof of  \cite[Theorem 3.12]{CY}.  Then a similar argument as in the proof of Lemma \ref{lem: cancel beta_2} gives the first identity. In this case, we need to replace $M^n\obot (\cH^{k})^{n,(n)}$ in the proof of Lemma \ref{lem: cancel beta_2} with $M^{n,(n)}\obot  (\pi \cH^{k})^n$. The factor $X^n$ shows up because $\mathrm{vol}((\pi\cH^k)^n)=(q^{-2k})^n$. We leave the details to the reader.
	\end{proof}

	Notice that  \cite[Lemma 3.2.1]{LZ2} provides a uniform formula for $|r(\overline{M},\overline{L})|$. As a result, we obtain the following corollaries.

	\begin{corollary}\label{cor: beta N2}
		Assume $L=\Oo_F \bx $ is of  rank   $1$ (we allow $q(\bx) =0$).
		\begin{enumerate}
			\item If $\mathrm{v}(L)=0$, then
			\begin{align*}\beta_0(M,L,X)=\begin{cases}
					(1+\chi(M) \chi(L)q^{-\frac{m-1}{2}})X & \text{ if $m$ is odd},\\
					(1-\chi(M)q^{-\frac{m}{2}})X  & \text{ if $m$ is even}.
			\end{cases}\end{align*}
			\item If $\mathrm{v}(L)>0$, then
			\begin{align*}\beta_0(M,L,X)=\begin{cases}
					(1-q^{1-m})X & \text{ if $m$ is odd},\\
					 (1-q^{1-m}+\chi(M)(q-1)q^{-\frac{m}{2}} )X & \text{ if $m$ is even}.
			\end{cases}\end{align*}
		\end{enumerate}
	\end{corollary}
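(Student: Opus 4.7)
The plan is to deduce both formulas directly from Lemma \ref{lem: cancel beta_0} by a residue-field point count. Specializing that lemma to $n=1$ gives
$$
\beta_0(M,L,X) \;=\; \q^{1-m}\, r(\overline{M},\overline{L})\, X,
$$
so the task reduces to evaluating $r(\overline{M},\overline{L})$, i.e.\ counting isometric embeddings of the one-dimensional reduction $\overline{L}$ into the $m$-dimensional quadratic $\F_\q$-space $\overline{M}$.

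First, I would identify the reductions explicitly. Since $F/F_0$ is ramified, $\Oo_F/\pi\Oo_F \cong \F_\q$ with trivial residual Galois action, so the Hermitian form on $M=I_{m,\chi(M)}$ with Gram matrix $\diag(1,\dots,1,\nu)$ reduces to the $\F_\q$-quadratic form $\overline Q(x)=x_1^2+\cdots+x_{m-1}^2+\bar\nu x_m^2$; its discriminant class is precisely $\chi(M)$. For $L=\Oo_F \bx$ of rank one, $\overline L$ is one-dimensional with form $a\mapsto \overline{q(\bx)}\,a^2$, and $r(\overline M,\overline L)$ is simply the number of vectors $v\in \overline M$ with $\overline Q(v)=\overline{q(\bx)}$. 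In the case $\val(L)=0$ this target value lies in $\F_\q^\times$ (with quadratic character $\chi(L)$), while in the case $\val(L)>0$ the target is $0$.

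Next, I would invoke the standard (and uniformly stated) point count from \cite[Lemma 3.2.1]{LZ2}, which is explicitly cited by the authors as the bridge between $r(\overline M,\overline L)$ and closed-form formulas. The relevant cases are:
\begin{itemize}
\item $m$ odd, $t\in\F_\q^\times$: \ $\#\{v:\overline Q(v)=t\}=\q^{m-1}+\chi(M)\chi(t)\,\q^{(m-1)/2}$;
\item $m$ odd, $t=0$: \ $\#\{v:\overline Q(v)=0\}=\q^{m-1}$;
\item $m$ even, $t\in\F_\q^\times$: \ $\#\{v:\overline Q(v)=t\}=\q^{m-1}-\chi(M)\,\q^{m/2-1}$;
\item $m$ even, $t=0$: \ $\#\{v:\overline Q(v)=0\}=\q^{m-1}+\chi(M)(\q-1)\q^{m/2-1}$.
\end{itemize}
Multiplying each count by $\q^{1-m}X$ and matching it to the corresponding case yields the stated formulas.

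There is essentially no obstacle beyond careful bookkeeping: the only thing to watch is that the sign conventions for $\chi$ on the discriminant of $\overline M$ agree with the definition of $\chi(M)$ in \eqref{eq:sign}, and that $\chi(L)=\chi(\overline{q(\bx)})$ in the $\val(L)=0$ case. Both are immediate from the definitions, so the proof is a two-line calculation once Lemma \ref{lem: cancel beta_0} and the cited point count are in hand.
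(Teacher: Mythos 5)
Your approach — specialize Lemma~\ref{lem: cancel beta_0} to $n=1$ and evaluate $r(\overline M,\overline L)$ via \cite[Lemma~3.2.1]{LZ2} — is exactly the paper's argument (the paper introduces Lemma~\ref{lem: cancel beta_0}, remarks that \cite[Lemma~3.2.1]{LZ2} gives the point count, and then lists Corollaries~\ref{cor: beta N2} and \ref{cor:beta0} as immediate consequences). However, there is one genuine slip that makes your final arithmetic fail in part~(2).

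You identify $r(\overline M,\overline L)$ with ``the number of vectors $v\in\overline M$ with $\overline Q(v)=\overline{q(\bx)}$.'' This is wrong: $r(\overline M,\overline L)$ counts isometric \emph{embeddings} of the one-dimensional space $\overline L$ into $\overline M$, i.e.\ injective $\F_\q$-linear maps preserving the forms, so it is the number of \emph{nonzero} $v\in\overline M$ with $\overline Q(v)=\overline{q(\bx)}$. (Equivalently: $\beta_0$ is built from $M_k^{n,(n)}$, the set of $\bx$ that remain linearly independent mod $\pi$, so the image must be a primitive, hence nonzero, vector.) When $\mathrm{v}(L)=0$ the target $\overline{q(\bx)}$ is a unit and the zero vector is automatically excluded, so part~(1) comes out right; but when $\mathrm{v}(L)>0$ the target is $0$, and your quoted point counts $\q^{m-1}$ ($m$ odd) and $\q^{m-1}+\chi(M)(\q-1)\q^{m/2-1}$ ($m$ even) each include $v=0$. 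Multiplying those as written by $\q^{1-m}X$ yields $X$ and $\bigl(1+\chi(M)(\q-1)\q^{-m/2}\bigr)X$ respectively, which are off from the stated formulas by exactly $\q^{1-m}X$ — precisely the contribution of the excluded zero vector. Subtracting~$1$ from each count before multiplying fixes the discrepancy, and with that correction your proof is complete and matches the paper's.
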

	
	\begin{corollary}\label{cor:beta0} Assume $L$ is of rank $2$. When $t(L)=1$, we assume that $L$ has Gram matrix  $T=\mathrm{Diag}(u_1,u_2(-\pi_0)^b)$ with $b >0$.
		\begin{enumerate}
			\item If $m$ is odd, then
			\begin{align*}
				&\beta_0(M,L,X)=\begin{cases}
					\q(1-\q^{1-m})X^2& \text{ if $t(L)=0$},\\
					\q(1+\chi(M)\chi( u_1)\q^{\frac{3-m}{2}})(1-\q^{1-m})X^2& \text{ if $t(L)=1$},\\
					\q(1-\q^{1-m})(1-\q^{3-m})X^2 & \text{ if $t(L)=2$}.
				\end{cases}
			\end{align*}
			\item If $m$ is even, then
			\begin{align*}
				&\beta_0(M,L,X)=\begin{cases}
					\q  ( 1-\chi(L)\q^{1-m}  +\chi(L)\chi(M)(\q-\chi(L))\q^{-\frac{m}{2}}  ) X^2 & \text{ if $t(L)=0$},\\
					\q(1-\chi(M)\q^{-\frac{m}{2}})(1-\q^{2-m})X^2 & \text{ if $t(L)=1$},\\
					\q ( (1-\q^{2-m})+\chi(M)(\q^2-1)\q^{-\frac{m}{2}} )(1-\q^{2-m})X^2 & \text{ if $t(L)=2$}.
				\end{cases}
			\end{align*}
		\end{enumerate}
	\end{corollary}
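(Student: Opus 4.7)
The plan is to reduce everything to Lemma \ref{lem: cancel beta_0}, which identifies $\beta_0(M,L,X)$ with a count of mod-$\pi$ isometries. Specializing the lemma to $n=2$ gives
\begin{equation*}
\beta_0(M,L,X)=\q^{-2m+4}\,r(\overline{M},\overline{L})\,X^2,
\end{equation*}
so the entire task is to compute $r(\overline{M},\overline{L})$ for each of the three types $t(L)\in\{0,1,2\}$, and then verify the stated formulas.

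First I would describe $\overline{L}$ and $\overline{M}$ explicitly as quadratic $\mathbb{F}_\q$-spaces. Since $F/F_0$ is ramified with $\pi_0=\pi^2$, reduction mod $\pi$ kills the Galois conjugation on $\Oo_F/\pi=\mathbb{F}_\q$, so the Hermitian form on $L$ becomes a symmetric $\mathbb{F}_\q$-bilinear form on $\overline{L}$, and the associated quadratic form is $q(\bar v)=\overline{(v,v)}$. For $M$ unimodular of rank $m$, the form $\overline{M}$ is a non-degenerate quadratic space of rank $m$ whose type (split vs.\ non-split in the even case; or the Hasse invariant in the odd case) is governed by $\chi(M)$; in the odd case, $\overline{M}\cong\mathbb{H}^{(m-1)/2}\obot\langle\chi(M)\rangle$ (up to scaling) and in the even case $\overline{M}\cong\mathbb{H}^{m/2}$ or its non-split companion according to $\chi(M)$. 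For $L$ of rank $2$: when $t(L)=0$, $\overline{L}$ is non-degenerate of rank $2$ with discriminant governed by $\chi(L)$; when $t(L)=1$, with $T=\diag(u_1,u_2(-\pi_0)^b)$ and $b>0$, the form $\overline{L}$ has a $1$-dimensional radical and a $1$-dimensional anisotropic quotient $\langle\bar u_1\rangle$; when $t(L)=2$, the form on $\overline{L}$ is identically zero.

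Next I would invoke the uniform counting formula \cite[Lemma 3.2.1]{LZ2} for the number of isometric embeddings of a quadratic $\mathbb{F}_\q$-space into another. In the $t(L)=2$ case, $r(\overline{M},\overline{L})$ is just the number of $2$-dimensional totally isotropic subspaces of $\overline{M}$, which by the standard Witt/Grassmannian count produces exactly the factor
\begin{equation*}
(1-\q^{1-m})(1-\q^{3-m})\quad\text{or}\quad (1-\q^{2-m})\bigl(1-\q^{2-m}+\chi(M)(\q^2-1)\q^{-m/2}\bigr)
\end{equation*}
after multiplying by $\q^{-2m+4}\cdot\q\cdot(\text{normalization})$; the two parities come from the formula for the number of isotropic lines/planes in $\overline{M}$. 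In the $t(L)=1$ case, one first chooses an embedding of the anisotropic line $\langle\bar u_1\rangle\hookrightarrow\overline{M}$ (count controlled by whether $\bar u_1$ is represented, yielding the factor $(1+\chi(M)\chi(u_1)\q^{(3-m)/2})$ when $m$ is odd and $(1-\chi(M)\q^{-m/2})$ when $m$ is even), and then extends by sending the radical vector into the radical of its orthogonal complement in $\overline{M}$; the remaining freedom gives the factor $(1-\q^{1-m})$ or $(1-\q^{2-m})$. In the $t(L)=0$ case, one applies the formula directly to count isometries between two non-degenerate quadratic spaces.

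The main bookkeeping obstacle is matching signs: one must be careful to translate $\chi(L)$ and $\chi(M)$ (Hermitian signs over $F/F_0$) into the discriminants of the reduced quadratic spaces, keeping track of the factor $(-1)^{n(n-1)/2}$ entering in \eqref{eq:sign}, and then feeding these into \cite[Lemma 3.2.1]{LZ2}. Once the dictionary is set up, the six cases in the corollary become direct substitutions; I would finish by cross-checking the three $t(L)=0$, odd-$m$ subcases (where $\chi(L)=1$ always since $L$ has rank $2$ and diagonal unit entries of equal valuation, forcing the simpler expression $\q(1-\q^{1-m})X^2$), and similarly verifying the even-$m$ formula specializes correctly when $\chi(L)=\pm1$. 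Multiplying the resulting count by $\q^{-2m+4}X^2$ yields exactly the formulas in the statement.
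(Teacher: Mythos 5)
Your approach is exactly the paper's: specialize Lemma \ref{lem: cancel beta_0} to $n=2$ so that $\beta_0(M,L,X)=\q^{-2m+4}\,r(\overline{M},\overline{L})\,X^2$, describe $\overline{M}$ and $\overline{L}$ as quadratic $\mathbb{F}_\q$-spaces, and count embeddings via the uniform formula of \cite[Lemma 3.2.1]{LZ2}; the paper states the corollary with precisely this justification and no further proof. One parenthetical claim in your write-up is wrong, however: for $L$ unimodular of rank $2$ one does \emph{not} have $\chi(L)=1$ always (e.g.\ $\chi(I_{2,-1})=-1$); the correct reason the $m$-odd, $t(L)=0$ formula is independent of $\chi(L)$ is that $r(\overline{M},\overline{L})=|O(\overline{M})|/|O(\overline{L}^\perp)|$, and for $m$ and $m-2$ both odd the orders $|O_m(\mathbb{F}_\q)|$ and $|O_{m-2}(\mathbb{F}_\q)|$ do not depend on the discriminant type, so the type of $\overline{L}$ drops out. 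The rest of the bookkeeping (identifying the radical in the $t(L)=1$ case, counting isotropic planes in the $t(L)=2$ case, tracking the parity-dependent error terms) is the right program and matches the stated formulas.
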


	Finally, we calculate $\beta_1(M,L,X)$.
	\begin{proposition}\label{prop:beta1 new method} Assume $L$ is as in Corollary \ref{cor:beta0}. Let $\delta_e(m)=1$ or $0$ depending on whether $m$ is even or odd.
		\begin{enumerate}
			\item  If $t(L)=2$, then
			\begin{align*}
				\beta_1(M,L,X)=
				\q(\q+1) ( (1-\q^{1-m})+\delta_{e}(m)\chi(M)(\q-1)\q^{-\frac{m}{2}} )X(1-X).
			\end{align*}
			\item 		
			If $t(L)=1$, then
			\begin{align*}
				&\beta_1(M,L,X) =\begin{cases}
					\q(1+\q-\q^{1-m}+\chi(M)\chi( u_1)q^{\frac{3-m}{2}})X(1-X) & \text{ if $m$ is odd},\\
					\q(1+\q-\q^{1-m}-\chi(M)\q^{-\frac{m}{2}})X(1-X) & \text{ if $m$ is even}.
				\end{cases}
			\end{align*}
			\item   If $t(L)=0$ and $\chi(L)=1$, i.e. $L\cong\cH_0$, then
			\begin{align*}
				\beta_1(M,L,X)&=
				\q   ( \q+1-2\q^{1-m}+\delta_{e}(m)\chi(M)(\q-1)\q^{-\frac{m}{2}} )X(1-X).
			\end{align*}
			\item  If $t(L)=0$ and $\chi(L)=-1$, then
		\begin{align*}
				\beta_1(M,L,X)&=
				\q  (\q+1) ( 1-\delta_{e}(m)\chi(M)\q^{-\frac{m}{2}} )X(1-X).
				\end{align*}
		\end{enumerate}
	\end{proposition}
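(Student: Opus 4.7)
The plan is to extract $\beta_1(M,L,X)$ from the identity
\[
\beta(M,L,X) = \beta_0(M,L,X) + \beta_1(M,L,X) + \beta_2(M,L,X),
\]
in which $\beta_0$ and $\beta_2$ are already supplied by Corollary~\ref{cor:beta0} and Corollary~\ref{cor: beta2} respectively. The remaining task is therefore to determine the total primitive density $\beta(M,L,X)$.

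To obtain it I would specialize Theorem~\ref{thm:ind formula reducing valuation} with $n = n_1 = 2$ and $L_2 = 0$, which rewrites
\[
\beta(M,L,X) = \alpha(M,L,X) - q^{2-m}X\!\!\sum_{L \stackrel{1}{\subset} L' \subset \pi^{-1}L}\!\! \alpha(M,L',X) + q^{5-2m}X^2\, \alpha(M, \pi^{-1}L, X).
\]
Every term on the right is the local density of a rank-$2$ lattice against the unimodular $M$, and such densities are given in closed form in Section~\ref{sec: eg application}, with the formulas depending on the parity of $m$, the sign $\chi(M)$, and the fundamental invariant of the rank-$2$ lattice involved.

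A useful structural observation is that the integration domain defining $\beta_i(M,L,X)$ is empty at $k = 0$ (that is, at $X = 1$) for each $i \ge 1$, so $\beta_1(M,L,1) = 0$; moreover, a primitive vector contributing to $\beta_1$ must use at least one ``slot'' in the $\cH^k$-part, which forces $\beta_1$ to have no constant term. Consequently $\beta_1(M,L,X)$ is automatically of the form $c(M,L)\cdot X(1-X)$, and only the constant $c(M,L)$ needs to be pinned down. It can be read off as the coefficient of $X$ in $\beta(M,L,X) - \beta_0(M,L,X) - \beta_2(M,L,X)$, which in turn reduces to evaluating $\alpha'(M,L,0)$ and $\alpha(M,L',0)$ from the Section~\ref{sec: eg application} formulas.

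The main labour then splits by the four cases. Cases (3)--(4) ($L$ unimodular) are easiest: $\pi^{-1}L$ is not integral, so the $X^2$-term vanishes, and the sum over $L \stackrel{1}{\subset} L'$ ranges over the $q+1$ lines in $\pi^{-1}L/L$, all of which give an $L'$ of fundamental invariant $(0,1)$. Case (2) ($t(L)=1$) requires enumerating intermediate lattices $L'$ by $\chi(L')$ and by their fundamental invariant, tracked via counts of isotropic and anisotropic lines. Case (1) ($t(L)=2$) is the most intricate and branches further according to whether $L \approx \cH_a$ or $L$ is diagonal. I expect the main obstacle to be the combinatorial bookkeeping in Case~(1): each Section~\ref{sec: eg application} formula carries factors $\chi(S)\chi(\cdot)$ that must be translated consistently into $\chi(M)$ and $\chi(L)$, and the parity of $m$ doubles the number of subcases in every branch.
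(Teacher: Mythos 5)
Your proposal is a genuinely different route from the paper's. The paper proves Proposition~\ref{prop:beta1 new method} by a direct combinatorial count: it partitions the set $J(M_k,L,d)$ of primitive embeddings with exactly one ``$\cH^k$-slot'' used into pieces $J_\alpha$ indexed by $\alpha\in\Oo_{F_0}/(\pi_0)\cup\{\infty\}$, counts each piece via Lemma~\ref{lem:primitive in H, vector}, and reduces everything to rank-one quantities $\beta_0(M,\langle \cdot\rangle,X)$ from Corollary~\ref{cor: beta N2}. You instead propose to invert Theorem~\ref{thm:ind formula reducing valuation} to express $\beta(M,L,X)$ in terms of $\alpha$'s and subtract the known $\beta_0,\beta_2$. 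This is workable in principle and gives a useful independent sanity check. Your structural observation that $\beta_1 = c\cdot X(1-X)$ is sound: $\beta_1(M,L,1)=0$ because $M_k^{n}(1)=\emptyset$ at $k=0$, and the vanishing of the constant term follows (more cleanly than from your ``slot'' heuristic) from $\beta_0 = X^2\beta(M,L)$ together with $\beta(M,L,X)-\beta_2(M,L,X)\to 0$ as $X\to 0$.

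However, there is a concrete error in your treatment of cases (3)--(4). You assert that the $q+1$ intermediate lattices $L\stackrel{1}{\subset}L'\subset\pi^{-1}L$ all have ``fundamental invariant $(0,1)$.'' This cannot happen: since $L$ is unimodular, $L\subsetneq L'$ forces $L'^\sharp\subset L^\sharp = L\subsetneq L'$, so $L'$ is \emph{never} integral and never has fundamental invariant $(0,1)$. In fact, writing $L'=L+\Oo_F\pi^{-1}z$ for $z\in L$ primitive, one checks $(\pi^{-1}z,\pi^{-1}z)=\pi_0^{-1}(z,z)$. If the line $\F_q\bar z$ is anisotropic in $L/\pi L$, then $(z,z)\in\Oo_{F_0}^\times$ and $L'$ contains a vector of norm $\notin\Oo_{F_0}$, so $L'$ does not embed in any $M_k$ and $\alpha(M,L',X)=0$. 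If the line is isotropic then $L'\approx\cH$ and $\alpha(M,\cH,X)=1-X$. So the sum $\sum_{L'}\alpha(M,L',X)$ collapses to $2(1-X)$ when $\chi(L)=1$ and to $0$ when $\chi(L)=-1$, not to $(q+1)\alpha(M,L'_{(0,1)},X)$. With your counting as stated, cases (3)--(4) would come out wrong. A similar isotropic/anisotropic dichotomy (including cases where the intermediate $L'$ simply fails to embed) needs to be tracked in cases (1)--(2) as well; this is essentially the bookkeeping you already flag as the main difficulty, but it is present in every case, not just case (1).
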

	\begin{proof}
	First we assume  $L=\cH_i$. We claim that
		\begin{align*}
			&\beta_1(M, \cH_i, X) =\begin{cases}
				\q(1-X)\left(2\beta_0(M,0,X)+\sum_{\alpha \in (\Oo_{F_0}/(\pi_0))^{\times}}\beta_0(M, \langle -2\alpha \rangle,X)  \right)& \text{ if $i=0$},   \\
				\q(\q+1) (1-X)\beta_0(M,0,X) & \text{ if $i\ge 1$}.
			\end{cases}
		\end{align*}
		Here $\alpha (M, 0, X)=\alpha(M, \Oo_F \bx, X)$ with $q(\bx) =0$ and $\bx \ne 0$.
		Assuming the claim, the proposition for $L=\cH_i$ follows from Corollary \ref{cor: beta N2}.

	To prove the claim, it suffices to show the identity for $X=q^{-2k}$ for sufficiently many $k\ge 0$.
		Recall
		\begin{align*}
			I(M^{[k]},L,d)=\{ \phi \in &  \mathrm{Hom}_{\Oo_F}(L/\pi_0^d L, M^{[k]}/ \pi_0^d M^{[k]}) \mid \\
			&(\phi(x),\phi(y))\equiv (x,y) \mod \pi^{2d-1},\  \forall x,y \in L\}.
		\end{align*}
		Let
		\begin{align*}
			J(M^{[k]},L,d)&\coloneqq \{\phi\in I(M^{[k]},L,d)\mid \mathrm{dim}_{\mathbb{F}_{\q}}\overline{\mathrm{Pr}_{\cH^k}(\phi(L))}=
			\mathrm{dim}_{\mathbb{F}_{\q}}\overline{\mathrm{Pr}_{M}(\phi(L))}=1\}.\end{align*}
		Then
		\begin{align*}
			\beta_1(M,L,q^{-2k})=\lim_{d\to \infty}\q^{-(4(2k+m)-4)d}|J(M^{[k]},L,d)|.
		\end{align*}  Let $\{l_1,l_2\}$ be a basis of $L$ with Gram matrix  $\cH_i$. For $\phi \in J(M^{[k]},L,d)$, it will be determined by $w_i=\phi(l_i)$. Let $w_{i,\cH}=\mathrm{Pr}_{\cH^k}(w_i)$, and  $w_{i,M}=\mathrm{Pr}_{M}(w_i)$. Since $\mathrm{rank}_{\mathbb{F}_{\q}}\overline{\mathrm{Pr}_{\cH^k}(\phi(L))}=1$,  $\mathrm{rank}_{\mathbb{F}_{\q}}\overline{\mathrm{Pr}_{\cH^k}(w_i)}=1$ for $i=1$ or $2$.
		
		Now we define a partition of $J(M^{[k]},L,d)$. Assume $\alpha \in  \Oo_{F_0}$.   Let
		\begin{align*}
			J_{\alpha}(M^{[k]},L,d)&\coloneqq \{\phi\in I(M^{[k]},L,d)\mid \mathrm{rank}_{\mathbb{F}_{\q}}\overline{w}_{1,\cH} =1,\
			\overline{w}_{2,\cH}=\alpha \overline{w}_{1,\cH}\}, \text{ and }\\
			J_{\infty}(M^{[k]},L,d)&\coloneqq \{\phi\in I(M^{[k]},L,d)\mid \mathrm{rank}_{\mathbb{F}_{\q}}\overline{w}_{2,\cH}=1,\ \overline{w}_{1,\cH}=0 \}.
		\end{align*}
		Then it is easy to verify
		\begin{align*}
			J(M^{[k]},L,d)=\bigcup_{\alpha\in \Oo_{F_0}/(\pi_0)} J_{\alpha}(M^{[k]},L,d)\cup J_{\infty}(M^{[k]},L,d).
		\end{align*}

		Now we compute $|J_{\alpha}(M^{[k]},L,d)|$. To determine a $\phi \in J_{\alpha}(M^{[k]},L,d)$, we choose $w_1=\phi(l_1)$ first. By definition, we have
		\begin{align}\label{eq: w_1, H_i}
			&	\lim_{d\to \infty}\q^{(2(2k+m)-1)d}\#\{w_1\in M^{[k]}/\pi_0^d M^{[k]} \mid \text{$w_{1,\mathcal{H}}$ is primitive, and } q(w_1)\equiv 0 \text{ mod } \pi_0^d  \}\\ \notag
			&=\beta_1(M^{[k]},0)=1-q^{-2k}.
		\end{align}
		
		Given such a $w_1$, now we find the number of $w_2=\phi(l_2)$ such that  $\phi$ lies in $J_{\alpha}(M^{[k]},L,d)$. By Lemma \ref{lem:primitive in H, vector}, we may assume $w_{1,S}=0$. Let $w_2=w_{2,M}+\alpha w_1+\pi w_\cH$, where $w_\cH\in \cH^k$. Then the corresponding $\phi$ lies in $J_{\alpha}(M^{[k]},L,d)$ if and only if
		\begin{align*}
			 \pi^i\equiv (w_1,w_2)\equiv(w_1,\pi w_\cH)   \mod \pi^{2d-1}  \end{align*}
		and
		\begin{align*}0\equiv q(w_2)&\equiv\mathrm{tr}((\alpha w_1,\pi w_\cH))-\pi_0 q(w_\cH)+q(w_{2,M}) \\ &\equiv\alpha\mathrm{tr}(\pi^i)-\pi_0 q(w_\cH)+q(w_{2,M}) \mod \pi^{2d-1} .
		\end{align*}

		First,
		\begin{align}\label{eq: pi w_H, H_i}
			\lim_{d\to \infty}\q^{-2d(2k-1)}\#\{\pi w_\cH \in \cH^k/\pi_0^d  \cH^k\mid(w_1,\pi w_{\cH})\equiv\pi^i\mod \pi^{2d-1}\}=\q^{1-2k}.
		\end{align}
		Second,  for each fixed $\pi w_\cH$  we have
		\begin{align}\label{eq: w_{2,M}, H_i}
			&\lim_{d\to \infty}\q^{(-2m+1)d} \#\{ w_{2,M} \in M/\pi_0^d  M\mid  w_{2,M} \text{ primitive, }  q(w_{2,M})\equiv -\alpha \mathrm{tr}(\pi^i)+\pi_0 q(w_\cH) \mod \pi^{2d-1}\}
			\\
			&=\beta(M,\langle -\alpha \mathrm{tr}(\pi^i)+\pi_0 q(w_\cH)\rangle)\notag
			\\
			&=\begin{cases}
				\beta(M, \langle -2 \alpha \rangle) &\hbox{if } i=0,
				\\
				\beta(M, 0)  &\hbox{if } i > 0.
			\end{cases}  \notag
		\end{align}
		By symmetry, $| J_{\infty}(M^{[k]},L,d)|=| J_{0}(M^{[k]},L,d)|$.
		Now a combination of \eqref{eq: w_1, H_i}, \eqref{eq: pi w_H, H_i} and \eqref{eq: w_{2,M}, H_i} implies that
		\begin{align*}
			\beta_1(M, \cH_i, q^{-2k})
			&=\lim_{d\to \infty}\q^{(-4(2k+m)+4)d}  \left(\sum_{\alpha\in \Oo_{F_0}/(\pi_0)}| J_{\alpha}(M^{[k]},L,d)|+| J_{\infty}(M^{[k]},L,d)|\right)
			\\
			&=\begin{cases}
				\q(1-X)\left(2\beta_0(M,0,q^{-2k})+\sum_{\alpha \in \Oo_{F_0}^{\times}/(\pi_0)}\beta_0(M,-2\alpha,q^{-2k})  \right) & \text{ if $i=0$},   \\
				\q(\q+1) (1-X)\beta_0(M,0,q^{-2k}) & \text{ if $i\ge 1$},
			\end{cases}
		\end{align*}
		as claimed.		
		
		Next, we assume $L$ has a basis $\{ l_1,  l_2\}$ whose Gram matrix is
		$\diag(u_1(-\pi_0)^a,u_2(-\pi_0)^b)$ with $0 \le a \le b$. Let  $w_i=\phi(l_i)$ as before. Then  the number of possible choices for $w_1$ is given by $$\q^{(2(2k+m)-1)d}\beta_1(M,\langle u_1(-\pi_0)^a \rangle , q^{-2k})$$ for sufficiently large $d$. We may assume $w_1=w_{1,\cH}$ without loss of generality. Let $w_2=w_{2,M}+\alpha w_1+\pi w_\cH$ as before. Then  $\phi$ lies in $J_{\alpha}(M^{[k]},L,d)$ if and only if
		\begin{align*}
		0	\equiv (w_1,w_2)\equiv(w_1,\alpha w_1)+(w_1,\pi w_\cH) \mod \pi^{2d-1}\end{align*}
		and
		\begin{align*}u_2(-\pi_0)^b&\equiv q(w_2)\equiv(w_{2,M}+\alpha w_{1}+\pi w_\cH , w_2)\\
			&\equiv q(w_{2,M})-\alpha^2 q( w_1)-\pi_0 q(w_\cH)\mod \pi^{2d-1}.
		\end{align*}	
		Now
		$$\lim_{d\to \infty}\q^{(-4k+2)d}\#\{\pi w_\cH \in \cH^k/\pi_0^d  \cH^k\mid (w_1,\pi w_{\cH})\equiv -(w_1,\alpha w_1)  \mod \pi^{2d-1}\}=\q^{1-2k},$$
		and for a fixed $\pi w_\cH$ we have
		\begin{align*}
			&\lim_{d\to \infty}\q^{(-2m+1)d} \#\{ w_{2,M} \in L_{S}/\pi_0^d  L_{S}\mid w_{2,M} \text{ primitive, }\\
			&\quad \quad \quad \quad \quad 	\quad \quad \quad q(w_{2,M})\equiv u_2(-\pi_0)^b+\alpha^2 q( w_1)+\pi_0 q(w_\cH)\mod \pi^{2d-1}\}\\
			&=\beta(M, \langle u_2(-\pi_0)^b+\alpha^2 q( w_1)+\pi_0 q(w_\cH)\rangle).
		\end{align*}
		Now this proposition follows from a similar argument as before, and we leave the details to the reader.
	\end{proof}

	\bibliographystyle{alpha}
	\bibliography{reference}
\end{document}